\newcommand{\Jm}{\mathfrak J}
\newcommand{\gaSix}{\gamma_\star}
\newcommand{\gaOne}{\gamma_1}
\newcommand{\Barrier}{\mathfrak B}
\newtheorem{lem}{Lemma B.\ignorespaces}
\newcommand{\R}{\mathbb{R}}
\newcommand{\ga}{\gamma}
\newcommand{\la}{\lambda}
\newcommand{\al}{\alpha}
\numberwithin{equation}{section}
\definecolor{backcolour}{rgb}{0.95,0.95,0.92}
\lstdefinestyle{mystyle}{
    backgroundcolor=\color{backcolour},   
}
\newtheorem{theorem}{Theorem}[section]
\newtheorem{lemma}[theorem]{Lemma}
\newtheorem{remark}[theorem]{Remark}
\newtheorem{definition}{Definition}[section]
\newtheorem{proposition}[theorem]{Proposition}
\title{On self-similar converging shock waves}
\author{Juhi Jang\thanks{Department of Mathematics, University of Southern California, Los Angeles, CA 90089, USA, and Korea Institute for Advanced Study, Seoul, Korea.  Email: juhijang@usc.edu.}, \  Jiaqi Liu\thanks{Department of Mathematics, University of Southern California, Los Angeles, CA 90089, USA.  Email: jiaqil@usc.edu.} \ and Matthew Schrecker\thanks{Department of Mathematics, University of Bath, Claverton Down, Bath, UK. Email: mris21@bath.ac.uk.}}
\date{}
\begin{document}


\maketitle

\abstract{In this paper, we rigorously prove the existence of self-similar converging shock wave solutions for the non-isentropic Euler equations for $\gamma\in (1,3]$. These solutions are analytic away from the shock interface before collapse, and the shock wave reaches the origin at the time of collapse. The region behind the shock undergoes a sonic degeneracy, which causes numerous difficulties for smoothness of the flow and the analytic construction of the solution. The proof is based on 
continuity arguments, nonlinear invariances, and barrier functions.}

\tableofcontents

\section{Introduction}\label{sec:intro}

The converging 
shock wave problem is a classical hydrodynamical problem in gas dynamics, where a spherical shock originates from  infinity or a large radius (for example, by a spherical piston) in a spherically symmetric medium and propagates towards the center of symmetry, becoming stronger as it approaches the origin. In finite time, the spherical shock collapses at the center. The problem was first discussed by Guderley in his seminal work \cite{Guderley42}  (see also Landau \cite{Landau87} and Stanyukovich \cite{Stanyukovich}).  Due to a wide range of applications such as detonation, laser fusion and chemical reactions, the theory of converging shocks has attracted a lot of attention in the mathematics and physics communities over several decades \cite{Axford81, Courant48, Guderley42, Jenssen18, Landau87, Lazarus81, Ponchaut06, Sed, Welsh67, ZRY67}, and  is still an active area of research \cite{Giron23, Jenssen23, Ramsey12}. In addition, imploding shock waves  are frequently used as a test problem in scientific computing and algorithms for compressible flows \cite{Giron23, Ramsey12}. A rigorous analysis, therefore, is not only of mathematical interest but also of practical importance as it lays out foundational evidence in support of these applications. 

It has been long known since Guderley that for an inviscid perfect gas, only a particular choice of similarity exponent would lead to a converging self-similar radially symmetric shock wave. Despite many works \cite{Courant48, Guderley42, Landau87, Lazarus81} regarding the numerical value of such a similarity exponent and the corresponding self-similar solutions based on phase portraits and numerics, a rigorous construction of self-similar converging shock wave solutions that are smooth away from the shock interface has remained elusive. In this paper, we give a rigorous construction of self-similar converging shock wave solutions described by the non-isentropic compressible Euler equations for an ideal perfect gas.

The Euler system for compressible gas flows in radial symmetry is given by the system of PDEs 
\begin{equation}\label{symmetric equations}
	\begin{aligned}
    \rho_t+\frac{1}{r^m}(r^m\rho u)_r&=0,\\
    (\rho u)_t +\frac{1}{r^m}\big(r^m(\rho u^2)\big)_r + p_r &=0,\\
    \Big[\rho \big(e +\cfrac{u^2}{2}\big)\Big]_t + \frac{1}{r^m}\Big[r^m\rho u\big( e +\cfrac{u^2}{2}+\cfrac{p}{\rho} \big)\Big]_r &=0,    
\end{aligned}
\end{equation}
where $\rho = \rho(t,r)\geq 0$ is the density, $u=u(r,t)$ is the radial fluid velocity,  
$p(t,r)\geq 0$ is the pressure, and $e(t,r)$ is the specific internal energy. Here $(t,r)\in \mathbb R \times \mathbb R_+$ and $m=1,2$ distinguishes flows with cylindrical or spherical symmetry. The equations in \eqref{symmetric equations} 
stand for the conservation of mass, 
momentum, and 
energy respectively. 
We consider an ideal perfect gas whose equation of state is given by 
\begin{align}
    p = (\gamma-1)\rho e = (\gamma-1)c_v\rho \theta,\label{pressure}
\end{align}
where $\gamma>1$ and $c_v$ are positive constants. The specific entropy $S$ is related to
\begin{align}
	p\rho^{-\gamma} = \text{Constant}\cdot \text{exp}(\frac{S}{c_v}).\label{entropy}
\end{align}
 By the conservation laws \eqref{symmetric equations}, the entropy $S$ remains constant along particle trajectories in smooth regions of the flow:
\begin{align}
	S_t+uS_r=0. \label{conservation law of entropy}
\end{align}
The sound speed is given by 
\begin{align}
    c = \sqrt{\cfrac{\gamma p}{\rho}}.\label{sound speed}
\end{align}
By taking $u$, $\rho$ and $c$ to be the main unknowns, the system \eqref{symmetric equations} takes the form away from vacuum 
\begin{align}
    \rho_t+(\rho u)_r+\cfrac{m\rho u}{r} &=0,\label{rho_t}\\
     u_t +uu_r+\frac{1}{\gamma\rho}(\rho c^2)_r &=0,
     \label{u_t}\\
     c_t+uc_r+\frac{\gamma-1}{2}c(u_r+\frac{mu}{r}) &=0.\label{c_t}
\end{align}

The system \eqref{rho_t}--\eqref{c_t} admits a three-parameter family of invariant scalings: the scaling transformation 
\begin{align}\label{scaling symmetry}
	\rho(t,r) \to \nu^\kappa \rho(\frac{t}{\nu^{\lambda}},\frac{r}{\nu}), \quad 
	u(t,r) \to \nu^{1-\lambda} u(\frac{t}{\nu^{\lambda}},\frac{r}{\nu}), \quad 
	c(t,r)\to \nu^{1-\lambda}  c(\frac{t}{\nu^{\lambda}},\frac{r}{\nu}), 
\end{align}
for $\nu>0, \ \lambda>0, \ \kappa\in \mathbb R$, 
leaves the system invariant. 
This scaling symmetry is intimately connected to the existence of self-similar solutions. Self-similarity is an important concept in hydrodynamics due to its universal nature and the possibility that self-similar solutions are  attractors for different physical phenomena in fluid and gas dynamics  \cite{Sedov, ZRY67}. In the physics literature  \cite{ZRY67}, two kinds of self-similar solutions have been discussed:  Type I if all self-similar parameters are completely determined from a dimensional analysis and Type II otherwise. Converging self-similar shock waves emerge as Type II solutions as the speed of collapse, which is a free parameter, is determined only \textit{a posteriori} through the regularity requirement of solutions. To analyze the converging shock wave problem, inspired by the scaling symmetry \eqref{scaling symmetry}, we introduce the similarity variable\footnote{This is consistent with some of the literature, for instance by Morawetz \cite{Morawetz51} and Lazarus \cite{Lazarus81}, while other authors use the equivalent similarity variable $y = {r}/{|t|^{\frac{1}{\lambda}}}$ (see \cite{Biasi21, Bilbao96, BCG22, Chisnell98, Merle22a}).} 
\begin{align}
    x = \frac{t}{r^{\lambda}}\label{similarity variable},
\end{align}
and the ansatz
\begin{align}
    u(t,r) & = -\frac{r}{\lambda t}V(x) = -\frac{r^{1-\lambda}}{\lambda}\frac{V(x)}{x},\label{ansatz for velocity}\\
    c(t,r) & = -\frac{r}{\lambda t}C(x) = -\frac{r^{1-\lambda}}{\lambda}\frac{C(x)}{x},\label{ansatz for sound speed}\\
    \rho(t,r) & = r^{\kappa}R(x)\label{ansatz for density}
\end{align}
where $\lambda>1$ and $\kappa$ are free parameters. This self-similar ansatz applied to \eqref{conservation law of entropy} in any region where the flow is smooth leads to an algebraic relation between $V$, $C$ and $R$: 
\begin{align}
	R(x)^{q+1-\gamma}(\cfrac{C(x)}{x})^2|1+V(x)|^q \equiv \text{constant},\label{entropy conserved gives RVC}
\end{align} where $q = \tfrac{2(\lambda-1)}{m+1}$. 
Therefore by plugging \eqref{ansatz for velocity}--\eqref{ansatz for density} to the Euler system  \eqref{rho_t}--\eqref{c_t} and using \eqref{entropy conserved gives RVC}, we obtain the system of ODEs for two unknowns $V(x)$, $C(x)$: 
\begin{equation}\label{ODE system}
	\begin{aligned}
		    \cfrac{d V}{d x} &= -\frac{1}{\lambda x}\frac{G(V(x),C(x);\gamma,z)}{D(V(x),C(x))} \ \ \text{ and } \ \ 
     \cfrac{d C}{d x} = -\frac{1}{\lambda x}\frac{F(V(x),C(x);\gamma,z)}{D(V(x),C(x))},
	\end{aligned}
\end{equation}
where
\begin{align}
   D(V,C) &=(1+V)^2 - C^2, \label{D(V,C)}\\
   G(V,C;\gamma,z) &= C^2[(m+1)V+2mz]-V(1+V)(\lambda+V)\label{G(V,C)},\\
   F(V,C;\gamma,z) &= C\big\{C^2[1+\frac{mz}{(1+V)}]- a_1(1+V)^2+a_2(1+V)-a_3\big\}\label{F(V,C)},
\end{align}
and
\begin{equation}\label{a_1234&z}
	\begin{aligned}
	z = \cfrac{\lambda -1 }{m\gamma}, \ \ \ a_1 = 1+\cfrac{m(\gamma-1)}{2}, \ \ \ a_2 =\cfrac{m(\gamma-1)+m z\gamma (\gamma-3)}{2}, \ \ \ a_3 = \cfrac{m z\gamma (\gamma-1)}{2} .
	\end{aligned}
\end{equation}
The derivation of the ODE system is standard and we have adopted the notation used by Lazarus \cite{Lazarus81}.

We seek a solution for which the shock converges towards the origin for $t<0$ along a self-similar path 
which is described by a constant value of the similarity variable $x$,
\begin{align}
	x \equiv -1 \quad \text{so that}\quad r_{shock} = (-t)^{\frac{1}{\lambda}}, \ \  t<0, \label{converging shock}
\end{align}
and  the shock reaches the origin at $t=0$.
Moreover, the flows on either side of the shock are assumed to be similarity flows with the same values of $\gamma$, $\lambda$, and $\kappa$ in \eqref{ansatz for velocity}-\eqref{ansatz for density}.
Under this assumption, we still require that the jump in the similarity variables is consistent with the standard Rankine-Hugoniot jump conditions across the shock.  
Let the subscript 0 and 1 denote evaluation immediately ahead of and behind the shock. The Rankine-Hugoniot conditions and Lax entropy condition, reformulated in the self-similar variables, are 
	\begin{align}
	1+V_1 & = \frac{\gamma-1}{\gamma+1}(1+V_0) + \frac{2C_0^2}{(\gamma+1)(1+V_0)},\nonumber\\
	C_1^2 &= C_0^2 + \frac{\gamma-1}{2}[(1+V_0^2)-(1+V_1)^2],\label{jump condition}\\
	R_1(1+V_1) &= R_0(1+V_0),\nonumber\\
	C_0^2&<(1+V_0)^2.\label{subsoundspeed}
	\end{align}
We assume that the fluid ahead of the shock is at rest and at a constant density and pressure. Then, by \eqref{ansatz for density}, we have 
	$\kappa = 0$ and $R(x)$ is a constant. For convenience, we let
\begin{align}
	R(x)\equiv 1\quad \text{for}\quad -\infty<x<-1. \label{R pre shock}
\end{align}
Also, by \eqref{sound speed}, the sound speed $c$ is also constant ahead of the shock. As we assume $\lambda>1$, it implies that $C$ must vanish identically there. By the assumption that the fluid is at rest before the shock,
\eqref{ansatz for velocity} implies that $V$ also must vanish identically there. Therefore, we have
\begin{align}
	V(x) = C(x) \equiv 0\qquad \text{for} \quad -\infty<x<-1 \label{V and C pre shock}
\end{align}
so that $(V_0,C_0,R_0)=(0,0,1)$. Obviously, \eqref{subsoundspeed} is satisfied. Then, applying \eqref{jump condition}, we get
	
\begin{align}
		V_1 &= -\frac{2}{\gamma+1},\label{V after shock}\\
	C_1 &= \frac{\sqrt{2\gamma(\gamma-1)}}{\gamma+1},\label{C after shock}\\
	R_1 & = \frac{\gamma+1}{\gamma-1}.\label{R after shock}
\end{align}
 As we are interested in solutions such that $u$, $c$ and $\rho$ are well-behaved at any location $r>0$ at $t=0$, we seek solutions  
 such that 
\begin{align}\label{behavior at collapse}
	 u(0,r) = -\frac{r^{1-\lambda}}{\lambda}\lim_{x\rightarrow 0}\frac{V(x)}{x}<\infty, \quad 
    c(0,r)  =-\frac{r^{1-\lambda}}{\lambda}\lim_{x\rightarrow 0}\frac{C(x)}{x}<\infty, \quad
\rho(0,r)  = R(0).
\end{align}
In particular, we require 
\begin{align}
	V(0)=C(0)=0\label{V and C at t,r=0}.
\end{align}
The converging shock wave problem is, for given adiabatic index $\gamma$, to find a smooth solution to \eqref{ODE system} for $-1<x<0$   connecting the shock interface represented by $(V_1,C_1)$ at $x=-1$ to the ultimate collapsed state $(0,0)$ at $x=0$. Together with the pre-shock state \eqref{V and C pre shock}, such a piecewise smooth solution to \eqref{ODE system} gives rise to a collapsing shock solution to the Euler system  \eqref{rho_t}--\eqref{c_t}. 

A key difficulty in solving the collapsing shock wave problem is that singularities of the dynamical system \eqref{ODE system} may occur when $D=0$ or $x=0$. The moveable singularity $D=0$ is associated with the so-called sonic singularity (the condition $D=0$ means exactly that the fluid speed and sound speed coincide), while the singularity at $x=0$ is a removable singularity which is due to the symmetry assumption. For smooth solutions, if $D=0$ at some point $x=x_{sonic}$, $G$ and $F$ must vanish at $x=x_{sonic}$. For our problem, $D(V_1,C_1)<0$ (\textit{cf.}  \eqref{D(V1,C1)<0}) and $D(0,0)=1>0$ and hence any smooth solution must pass through a sonic point ($D=0$) at which $G=F=0$. This triple vanishing property is not satisfied by generic values of $\lambda$, but it is expected that there exists a particular value of $\la$ allowing smooth passage through the sonic point. The main result of this paper is the existence (and, for a certain range of $\ga$, uniqueness) of this $\la$ which yields a converging shock wave solution.

\begin{theorem}[Informal statement]\label{thm-informal}
 (i) Let $\gamma\in(1,3]$.  Then there exists a collapsing shock solution to the non-isentropic Euler equations  \eqref{rho_t}-\eqref{c_t}.   
 
(ii) Moreover, suppose $\gamma\in(1,\frac53]$. Then there is a unique blow-up speed $\la$ such that the aforementioned solution exists.
\end{theorem}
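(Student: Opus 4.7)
The plan is to recast the problem as a shooting problem in the single parameter $\lambda>1$. For any candidate $\lambda$, the post-shock state $(V_1,C_1)$ at $x=-1$ is fixed by \eqref{V after shock}--\eqref{C after shock}, and the trajectory of \eqref{ODE system} emanating from it must (i) reach a sonic point $(V_*,C_*)$ at some $x_*\in(-1,0)$ where $D=G=F=0$ simultaneously, (ii) pass analytically through that sonic point, and (iii) extend as an analytic solution all the way to the origin $(0,0)$ at $x=0$, satisfying \eqref{V and C at t,r=0}. Since $D=G=0$ cuts out a curve of candidate sonic points $(V_*(\lambda),C_*(\lambda))$ depending smoothly on $\lambda$, the essential unknown becomes the single scalar equation $F(V_*(\lambda),C_*(\lambda);\gamma,z(\lambda))=0$, matching the one-parameter freedom.

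I would then carry out a local analysis at the sonic point. After the autonomous reparametrization $dx/d\tau=-\lambda x D(V,C)$, the point $(V_*,C_*)$ becomes an equilibrium of a planar system whose Jacobian has eigenvalues computable explicitly in terms of $\gamma$ and $\lambda$. Along the admissible (smooth) eigendirection one constructs a locally analytic solution of \eqref{ODE system} by a formal power series expansion $V(x)=V_*+\sum_{n\geq 1}v_n(x-x_*)^n$ and similarly for $C$, with convergence obtained via majorant or Cauchy--Kovalevskaya-type estimates that remain valid for $\gamma\in(1,3]$. This produces two branches leaving $(V_*,C_*)$: one toward $x=0$ and one toward $x=-1$.

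To extend each branch globally, I would construct invariant regions (\emph{barriers}) in the phase plane bounded by curves such as $V=0$, $C=0$, $C=-(1+V)$, and carefully chosen level sets of $G$ or $F$. For the branch toward $x=0$, one shows the flow is trapped in a region funneling into the corner $(0,0)$, while the removable singularity at $x=0$ is handled by verifying that $V(x)/x$ and $C(x)/x$ possess finite limits, giving \eqref{behavior at collapse}. For the branch toward $x=-1$, existence of an admissible $\lambda$ follows from a topological continuity argument: defining a target function $\Phi(\lambda)$ measuring the signed deviation of this branch from $(V_1,C_1)$ at $x=-1$, I would identify two disjoint open subsets of parameters where $\Phi(\lambda)$ has opposite signs (by examining $\lambda$ close to $1$ and close to an extreme admissible value), and connectedness of the parameter interval forces a zero of $\Phi$ and hence the sought $\lambda^*$. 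The main obstacle will be ensuring the barriers are genuinely forward-invariant throughout the range $\gamma\in(1,3]$: the quadratic structures of $G$ and $F$ require delicate case distinctions, and in particular the sonic denominator $D$ must never vanish a second time along the trajectory between the shock and the origin.

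For uniqueness when $\gamma\in(1,5/3]$, I would establish that the shooting map $\lambda\mapsto\Phi(\lambda)$ is strictly monotone. Differentiating \eqref{ODE system} with respect to $\lambda$ and applying a Gronwall-type comparison inside the invariant region produces a definite sign for $\partial\Phi/\partial\lambda$ provided certain derivatives of $G$ and $F$ with respect to $z$ retain a fixed sign along the trajectory. A direct computation confirms this monotonicity in the sub-range $\gamma\leq 5/3$, while beyond it the relevant signs need not be definite; the restriction is therefore structural, echoing the well-known change in character of the self-similar flow near $\gamma=5/3$, rather than a purely technical artifact.
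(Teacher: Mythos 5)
Your high-level architecture---local analytic expansion at a sonic point, a shooting map in $\lambda$ (or $z$), global control via invariant regions/barriers, and a continuity argument to close the shooting---matches the paper's framework, and the majorant-style convergence proof for the power series is essentially what is done in Section~\ref{sec:analytic}. However, there are two concrete gaps.

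First, you treat the sonic locus $\{D=G=F=0\}$ as if it selects a single candidate point $(V_*(\lambda),C_*(\lambda))$ varying smoothly with $\lambda$. In fact the intersection $D=0$, $G=0$, $F=0$ (in the relevant quadrant) generically contains \emph{two} points, $P_6$ and $P_8$ (cf.\ Lemma~\ref{triple roots}), which merge only at $z=z_M$. Deciding through which of the two the connecting orbit passes---and that this choice changes as $\gamma$ crosses a threshold---is not a side detail but the central combinatorial difficulty of the problem. A substantial part of the paper (Sections~\ref{sec:P6only}--\ref{sec:P8toleft}) is devoted to barrier estimates that exclude $P_8$-connections for small $\gamma$ and $P_6$-connections for large $\gamma$, and this exclusion is exactly what makes the later ``solve-to-the-right'' barriers usable. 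Your continuity argument, as written, does not produce a single well-defined shooting function $\Phi(\lambda)$, because for intermediate $\gamma$ the two sonic branches give two candidate trajectories and one must argue about both.

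Second, your uniqueness discussion misdiagnoses the source of the restriction on $\gamma$. The paper does not exploit any structural change at $\gamma=5/3$; it proves uniqueness for $\gamma\in(1,\gamma_\star]$ with $\gamma_\star\approx 1.7$, and the informal statement simply rounds this down to $5/3$ for readability. The actual restriction comes from the $P_6$ versus $P_8$ dichotomy: the uniqueness argument (Lemma~\ref{uniquenessP6}) shows that $P_6$-trajectories for distinct $z\in(z_g,z_M]$ cannot intersect in the relevant region, but to conclude global uniqueness of $\lambda$ one also needs to rule out a second solution passing through $P_8$, which is accomplished only for $\gamma\leq\gamma_\star$ via the barrier $B_{k_M}$. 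Your proposed monotonicity of $\partial\Phi/\partial\lambda$ is in the same spirit as Lemma~\ref{uniquenessP6}, but without addressing which sonic branch is active, it cannot by itself deliver uniqueness of the blow-up speed.
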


The precise statement of Theorem \ref{thm-informal} will be given in Theorem \ref{main theorem} after we discuss the basic structure of the phase portrait plane associated with the ODE system \eqref{ODE system} and introduce the set of important parameters appearing in our analysis in Section \ref{sec:2}. We remark that self-similar collapsing shock waves are solutions of unbounded amplitude (\textit{cf.}  \eqref{behavior at collapse}) and their continuation to expanding shock solutions are genuine weak solutions to the Euler system  \eqref{rho_t}--\eqref{c_t}, as shown by Jenssen-Tsikkou \cite{Jenssen18}. Before moving forward, we mention some works on compressible Euler flows with a focus on weak solutions and singularities.

The study of the compressible Euler equations has a long history and a correspondingly vast literature, much of it focused on the one-dimensional problem. As is well known, a fundamental difficulty 
in the analysis of the compressible Euler equations stems from the expected formation of singularities in the solutions, a phenomenon known since the time of Riemann and Stokes. For a survey of the literature on the 1D Euler equations, including existence of weak solutions and formation of singularities, we refer to \cite{Courant48, Dafermos16, Landau87} and the references therein. 

Although there is no general theory for the existence of weak solutions for the multi-dimensional problem, in recent years, the existence of weak entropy solutions for the isentropic system under the assumption of spherical symmetry has been established in \cite{Chen15, Chen18, Schrecker20} using the vanishing viscosity method from artificial viscosity solutions of certain auxiliary problems. This has been extended to cover more physical, density-dependent viscosities in \cite{Chen22}. The weak solutions constructed in these works are based on a finite energy method that allows for discontinuous and unbounded solutions to arise, especially at the origin. Earlier results, \cite{Chen97, Makino92, Makino94}, gave existence results on gases in an exterior region surrounding a solid ball, and relied on boundedness of solutions. 

The formation of singularities in the multi-dimensional compressible Euler equations was first rigorously established in  
\cite{Sideris85}. 
To better  understand the structure of the singularities, there has been much interest in the study of shock formation in solutions of the multi-dimensional compressible Euler equations. The first rigorous results are those in spherical symmetry of \cite{Yin04}, which studies the formation and development of shocks in spherical symmetry for perturbations of constant data for the non-isentropic system. The  work  
\cite{Christodoulou07} on shock formation for irrotational, isentropic, relativistic gases gives a truly multi-dimensional result and sharp understanding of the geometry of the solution at the blow-up time (see also \cite{Christodoulou16, ChrisMiao14}).  
In recent years, there have been further exciting  
developments on  shock formation 
to allow for non-trivial vorticity and entropy and to remove symmetry assumptions \cite{Abbrescia22, Buckmaster23, Luk18} while still showing the finite time formation of a singularity with sharp asymptotic behavior on approach to the blowup. 
Moreover, in \cite{Buckmaster22b} the authors  
have established the local-in-time continuation of a shock solution from the first blow-up time for the full, non-isentropic Euler equations; see also a recent work \cite{SV23} for the maximal development problem. 

As well as these shock solutions, other kinds of strong singularity  
have also been areas of active interest,  
especially the implosion solutions of Merle--Rapha\"el--Rodnianski--Szeftel, constructed in \cite{Merle22a} and whose finite-codimension stability is established in \cite{Merle22b}. 
These solutions of the isentropic Euler equations with $\ga$-law pressure (excluding a countable set of $\ga\in(1,\infty)$) have been constructed using a self-similar ODE analysis, and the authors must also handle the presence of triple points in the phase plane (the sonic points), through which the solutions must pass smoothly.  
The existence of these solutions has been extended to cover a wider range of $\ga$ in \cite{BCG22} and to allow for non-radial perturbations in  \cite{CGSS23}. Following these works, the construction of continuous (but not necessarily smooth) implosion solutions to the non-isentropic Euler equations has been achieved in \cite{Jenssen23} using a combination of analytic and numerical techniques. This result also discusses the continuation of the blowup solution past the first blowup time with an expanding shock wave solution.

\section{Basic structure of phase portrait and main result}\label{sec:2}

In this section, we discuss the basic structure of the phase portrait of the ODE system  \eqref{ODE system} and the main result of the paper along with the methodology. 
In our analysis, we will primarily make use of the following ODE associated with the system  
\eqref{ODE system}
\begin{align}
	\cfrac{dC}{dV} = \cfrac{F(V,C;\gamma,z)}{G(V,C;\gamma,z)},\label{ODE}
\end{align} 
which makes  the phase portrait analysis more accessible in the $(V,C)$ plane.

We denote the initial data point by
\begin{align}\label{initial}
	P_1=(V_1,C_1)
\end{align}
in the $(V,C)$ plane.

\begin{lemma}\label{C1V1}
For  $\gamma\in(1,3]$, the initial data points $V_1(\ga)$ and $C_1(\ga)$ given in \eqref{V after shock} and \eqref{C after shock} are monotone increasing with respect to $\gamma$.
\end{lemma}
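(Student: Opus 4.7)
The plan is to prove monotonicity directly by differentiation, treating the two quantities separately and exploiting the fact that both have simple rational expressions in $\gamma$.

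For $V_1(\gamma)=-\tfrac{2}{\gamma+1}$, a direct computation gives
\[
\frac{dV_1}{d\gamma}=\frac{2}{(\gamma+1)^2}>0
\]
on all of $(1,3]$, so monotonicity is immediate.

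For $C_1(\gamma)=\frac{\sqrt{2\gamma(\gamma-1)}}{\gamma+1}$, since the square root is monotone and $C_1>0$ on $(1,3]$, it suffices to show that $C_1^2(\gamma)=\frac{2\gamma(\gamma-1)}{(\gamma+1)^2}$ is increasing. Differentiating the rational function (quotient rule) and simplifying the numerator yields
\[
\frac{d(C_1^2)}{d\gamma}=\frac{(4\gamma-2)(\gamma+1)-2(2\gamma^2-2\gamma)}{(\gamma+1)^3}=\frac{6\gamma-2}{(\gamma+1)^3},
\]
which is strictly positive whenever $\gamma>\tfrac{1}{3}$, and in particular on $(1,3]$. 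Hence $C_1^2$, and therefore $C_1$, is strictly increasing on $(1,3]$, completing the proof.

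There is no real obstacle here: this is essentially a one-line computation for each coordinate. The only mild subtlety is that one should differentiate $C_1^2$ rather than $C_1$ itself to avoid carrying a square root; this is justified because $C_1>0$ on the interval in question, so monotonicity of $C_1^2$ transfers to monotonicity of $C_1$. The lemma is best viewed as a preparatory observation that parametrizes the initial data curve in $\gamma$ monotonically, which will presumably be used later to run a continuity argument in $\gamma$ over the phase plane.
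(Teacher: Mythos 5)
Your proof is correct and takes essentially the same approach as the paper: both argue by direct differentiation and checking positivity of the derivative. The only cosmetic difference is that you differentiate $C_1^2$ instead of $C_1$ (justified, as you note, by $C_1>0$), whereas the paper differentiates $C_1$ directly; the two computations agree, since $\frac{d(C_1^2)}{d\gamma}=2C_1C_1'=\frac{6\gamma-2}{(\gamma+1)^3}$.
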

\begin{proof}
The result follows from direct computation:
\begin{align*}
    V_1'(\gamma) = \cfrac{2}{(\gamma+1)^2}>0, \quad    C_1'(\gamma) = \cfrac{\gamma}{(\gamma+1)^2\sqrt{2\gamma(\gamma-1)}}>0.
\end{align*}
\end{proof}

\subsection{Roots of $F$, $G$ and $D$}

In this subsection, we summarize the critical points of the dynamical system \eqref{ODE system} and some fundamental monotonicity properties with respect to the parameters $z$ and $\gamma$. 

\

\noindent\textbf{Triple points} $F=G=D=0$.
The triple points at which $F=G=D=0$ are crucial to understanding the dynamics of solutions to the ODE system \eqref{ODE system}. On the one hand, at these points, generic trajectories will suffer a loss of regularity. On the other hand, at least one such point must be passed through for a trajectory to reach from the initial data $P_1$ to the origin.
\begin{lemma}[\cite{Lazarus81}]\label{triple roots}
	The solutions to $F=G=D=0$ are
	\begin{align}
		P_2 &=(-1,0),\\
		P_6 &= (V_6,C_6) = (\frac{-1+(\gamma-2)z-w}{2}, 1+V_6),\label{P6}\\
		P_7 &=(V_7,C_7) = (\frac{-1+(\gamma-2)z-w}{2},-1-V_7),\\
		P_8 &=(V_8,C_8) = (\frac{-1+(\gamma-2)z+w}{2}, 1+V_8),\label{P8}\\
		P_9 &=(V_9,C_9) = (\frac{-1+(\gamma-2)z+w}{2}, -1-V_9),
	\end{align}
where
\begin{align}
	w(z) = +\sqrt{1-2(\gamma+2)z+(\gamma-2)^2z^2}\label{w(z)}.
\end{align}
\end{lemma}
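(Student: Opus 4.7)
The plan is to solve the simultaneous system $D=0$, $G=0$, $F=0$ in stages, using $D=0$ to pin down $C$ in terms of $V$, then reducing $G=0$ to a single polynomial equation in $V$, and finally checking that $F=0$ follows automatically. Since $D(V,C) = (1+V)^2 - C^2$, the condition $D=0$ forces $C = \pm(1+V)$. I split into two branches: the degenerate branch where $V=-1$ (hence $C=0$), and the non-degenerate branch where $C = \pm(1+V)\neq 0$.

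The degenerate branch is immediate: at $V=-1$, $C=0$, the factor $V(1+V)$ annihilates the second term of $G$ while $C^2=0$ annihilates the first, so $G(-1,0)=0$; moreover $F$ carries an overall factor of $C$, so $F(-1,0)=0$ as well. This yields the point $P_2=(-1,0)$.

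For the non-degenerate branch, I substitute $C^2=(1+V)^2$ into $G$. Using the relation $\lambda = 1 + m\gamma z$ implicit in \eqref{a_1234&z} and factoring out $(1+V)$, a short manipulation brings $G=0$ to the quadratic
\begin{equation*}
V^2 + [1-(\gamma-2)z]\, V + 2z = 0.
\end{equation*}
The discriminant computes to $[1-(\gamma-2)z]^2 - 8z = 1 - 2(\gamma+2)z + (\gamma-2)^2 z^2 = w(z)^2$, and the quadratic formula yields the two advertised values $V = \tfrac{-1+(\gamma-2)z\pm w}{2}$. Combined with the two sign choices $C = \pm(1+V)$, this produces four candidate triple points.

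The critical remaining step is to show that $F=0$ is automatically satisfied at these candidates, so that no further constraint eliminates any of them. Substituting $C^2 = (1+V)^2$ into the bracket of $F$ and using the explicit formulas \eqref{a_1234&z} for $a_1,a_2,a_3$, the bracket reduces (after pulling out the nonzero factor $\tfrac{m(\gamma-1)}{2}$) to
\begin{equation*}
-(1+V)^2 + [1+(\gamma-2)z](1+V) - z\gamma.
\end{equation*}
Expanding and using the identity $z\gamma - (\gamma-2)z = 2z$, this polynomial coincides exactly with the $G$-quadratic above. Hence every solution of $G=0$ on $\{D=0\}$ also satisfies $F=0$, and the four non-degenerate candidates are precisely the triple points $P_6,P_7,P_8,P_9$. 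The main obstacle is not conceptual but algebraic bookkeeping: the substantive content of the lemma is the coincidence of the $F$- and $G$-quadratics after eliminating $C$, which is what forces the triple points to form a finite discrete set rather than a curve in the $(V,C)$ plane.
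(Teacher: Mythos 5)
The paper itself offers no proof of this lemma---it is cited directly to Lazarus---so there is no internal argument to compare against; your proof is a correct and essentially self-contained direct verification. The algebra checks out: substituting $C^2=(1+V)^2$ into $G=0$, factoring out $(1+V)$, and using $\lambda=1+m\gamma z$ does give the quadratic $V^2+[1-(\gamma-2)z]V+2z=0$ with discriminant $w^2$, and writing the $F$-bracket in the variable $u=1+V$ with the explicit $a_i$'s yields $\tfrac{m(\gamma-1)}{2}\bigl(-u^2+[1+(\gamma-2)z]u-\gamma z\bigr)$, which is $-\tfrac{m(\gamma-1)}{2}$ times the same quadratic, so $F=0$ is indeed automatic on the non-degenerate branch (the prefactor is nonzero since $\gamma>1$). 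One small point of rigor at $P_2$: as written, $F$ contains the term $C^3\cdot\tfrac{mz}{1+V}$, which is formally $0/0$ at $(-1,0)$, so ``$F$ carries an overall factor of $C$'' does not by itself give $F(-1,0)=0$; you should either evaluate $(1+V)F$ (a genuine polynomial, which vanishes there) or note that along $D=0$ one has $C^2=(1+V)^2$ so the offending term is $(1+V)^2\,mz\to 0$. With that cleaned up, the argument is complete.
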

\begin{remark}
	Since $w\geq 0$, we will always have $V_8 \geq V_6$ and $C_8\geq C_6$.
\end{remark}
\begin{remark}\label{zM}
	\eqref{V after shock} and \eqref{C after shock} imply $C_1>1+V_1$ immediately behind the shock, while the condition  \eqref{V and C at t,r=0} implies $C(0)<1+V(0)$. Since we require that $u$ and $c$ are all well behaved at any location away from the origin, the trajectory must at least continuously pass through the line $D(V,C)=0$ at some $x_0\in(-1,0)$. Comparing this with the ODE system \eqref{ODE system}, we see that we must have $F(x_0)=G(x_0)=0$ to ensure continuity. Thus, the trajectory can only pass through the sonic line $D =0$ at $P_6$ or $P_8$. As a consequence, $w(z)$ given by \eqref{w(z)} must be a real number, which gives us the constraint
	\begin{align}
		    z \leq z_M(\gamma) = (\sqrt{\gamma} +\sqrt{2})^{-2}\label{z_{max}}.
	\end{align}
	Recall that $z = \frac{\lambda-1}{m\gamma}$ from \eqref{a_1234&z}. That is, equivalently, we must have
\begin{align}
	\lambda\leq\lambda_M = m\gamma z_M+1 = \frac{m\gamma}{(\sqrt{\gamma} +\sqrt{2})^2}+1\label{lambda_{max}}.
\end{align}
We will henceforth restrict the range of parameters $\lambda$ and $z$ to $(1,\lambda_M]$ and $(0,z_M]$, respectively, and will use  both $z$ and $\lambda$ as convenient.
\end{remark}

\begin{remark}\label{bounds of w(z)}
	For $z\in(0,z_M]$, the function $w(z)$ defined by \eqref{w(z)} is a decreasing function in $z$ and satisfies
	\begin{align*}
		0\leq w(z)<1,
	\end{align*}
	where $w(z_M)=0$ and $w(z)\rightarrow 1$ when $z\rightarrow 0$.
\end{remark}

The following lemma establishes the monotonicity properties of 
the locations of $P_6$ and $P_8$ with respect to $z$.

\begin{lemma}\label{Lemma V_6/8 monoton}
For any $\gamma\in (1,3]$ , $V_6$ and $C_6$ are strictly increasing, and $V_8$ and $C_8$ are strictly decreasing with respect to  $z\nearrow z_M$. 
\end{lemma}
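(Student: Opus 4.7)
The plan is to reduce the claim to the monotonicity of $V_6$ and $V_8$ in $z$, since $C_6 = 1+V_6$ and $C_8 = 1+V_8$ by \eqref{P6} and \eqref{P8}. Differentiating the explicit formulas for $V_6,V_8$ yields
\[
\frac{dV_6}{dz} = \tfrac{1}{2}\bigl[(\gamma-2) - w'(z)\bigr], \qquad \frac{dV_8}{dz} = \tfrac{1}{2}\bigl[(\gamma-2) + w'(z)\bigr],
\]
so the task reduces to producing a sharp sign for $w'(z)$ and combining it with the (possibly variable) sign of $\gamma-2$.

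Differentiating $w(z)^2 = 1 - 2(\gamma+2)z + (\gamma-2)^2 z^2$ gives $w'(z) = [(\gamma-2)^2 z - (\gamma+2)]/w(z)$. First I would confirm that $w'(z) < 0$ throughout $(0,z_M)$ by computing $(\gamma-2)^2 z_M = (\sqrt\gamma-\sqrt 2)^2$ via the factorization $(\gamma-2)^2 = (\sqrt\gamma-\sqrt 2)^2(\sqrt\gamma+\sqrt 2)^2$ together with $z_M = (\sqrt\gamma+\sqrt 2)^{-2}$; this gives $(\gamma+2) - (\gamma-2)^2 z \ge (\gamma+2) - (\sqrt\gamma-\sqrt 2)^2 = 2\sqrt{2\gamma} > 0$ on $(0, z_M]$, consistent with Remark \ref{bounds of w(z)}.

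In the easy regimes $\gamma \ge 2$ for $V_6$ and $\gamma \le 2$ for $V_8$, the two summands in the formulas above have matching signs and the desired monotonicity follows immediately from $w'(z) < 0$. The two remaining cases---$\gamma<2$ for $V_6$ and $\gamma>2$ for $V_8$---both reduce to the single positive inequality
\[
(\gamma+2) - (\gamma-2)^2 z > |\gamma-2|\, w(z),
\]
where both sides are positive on $(0, z_M)$. Squaring and substituting the formula for $w(z)^2$, the $z$-dependent cross terms cancel exactly and the inequality collapses to the elementary identity $(\gamma+2)^2 - (\gamma-2)^2 = 8\gamma > 0$. Continuity of $V_6,V_8$ up to $z_M$ then upgrades strict monotonicity on $(0,z_M)$ to strict monotonicity on the closed interval $(0,z_M]$.

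The main obstacle, such as it is, is bookkeeping of signs across the four cases arising from pairing $V_6$ or $V_8$ with $\gamma$ larger or smaller than $2$. The pleasant observation is that the two nontrivial cases collapse to one squared inequality whose truth is witnessed by the algebraic identity $(\gamma+2)^2 - (\gamma-2)^2 = 8\gamma$; no nonlinear analysis is needed beyond this.
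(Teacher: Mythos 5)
Your proof is correct, and the reduction to the two symmetric nontrivial cases via the squaring trick is a pleasant observation, but it follows a slightly different route than the paper. The paper avoids the case analysis entirely: it uses the fact that $0 \le w(z) < 1$ on $(0,z_M]$ so that the positive fraction $\bigl[(\gamma+2)-(\gamma-2)^2z\bigr]/w$ is bounded below by its numerator, giving directly
\[
2V_6'(z) \ge (\gamma-2) + \bigl[(\gamma+2)-(\gamma-2)^2 z\bigr] = 2\gamma - (\gamma-2)^2 z > 0,
\]
and the analogous upper bound $2V_8'(z) < -4 + (\gamma-2)^2 z < 0$, where positivity/negativity of the linear expression is immediate from $(\gamma-2)^2 z < 1$. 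That argument is uniform in $\gamma$ and requires no squaring. Your version replaces the crude bound $w<1$ with the exact inequality $(\gamma+2)-(\gamma-2)^2 z > |\gamma-2|\,w(z)$, verified by squaring and the cancellation $(\gamma+2)^2-(\gamma-2)^2 = 8\gamma$; this is sharper but costs you a case split on the sign of $\gamma-2$ and a separate treatment of the two easy regimes. Both are valid; the paper's is a touch shorter.
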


\begin{proof} For any fixed $\gamma\in (1,3]$, we write $V_6 = V_6(z)$ and so $V_6'(z)= \frac{dV_6}{dz}$.  We compute 
\begin{align*}
2V_6'(z)= \gamma-2+\cfrac{ (\gamma+2)-(\gamma-2)^2z   
   }{\sqrt{1-2(\gamma+2)z+(\gamma-2)^2z^2}}.
\end{align*}
For any  $z\in(0, z_M]$, we have $w(z)=\sqrt{1-2(\gamma+2)z+(\gamma-2)^2z^2}< 1$ and $(\gamma-2)^2z<1<\gamma+2$. Hence $$2V_6'(z) \geq \gamma-2+ \big(\gamma +2 -(\gamma-2)^2z\big)=2\gamma -(\gamma-2)^2z>0.$$
Arguing similarly for $V_8$,
we have
\begin{align*}
    2V_8'(z) 
    &= \gamma-2-\cfrac{ (\gamma+2)-(\gamma-2)^2z   
   }{\sqrt{1-2(\gamma+2)z+(\gamma-2)^2z^2}}\\
   &< \gamma-2-\big(\gamma+2-(\gamma-2)^2z\big) <0.
\end{align*}
Since $C_6 = 1+V_6$ and $C_8 = 1+V_8$, the desired results follow.
\end{proof}

From the definitions of $P_6$ and $P_8$ in \eqref{P6} and \eqref{P8} and $z_M$ in \eqref{z_{max}}, we have 
\begin{equation}\label{V68(z_M) C68(z_M)}
	\begin{aligned}
		&V_6(z_M) = V_8(z_M) = \frac{-\sqrt{2}}{\sqrt{\gamma}+\sqrt{2}}, \quad 
		C_6(z_M) = C_8(z_M) = \frac{\sqrt{\gamma}}{\sqrt{\gamma}+\sqrt{2}}.
	\end{aligned}
\end{equation}
Therefore, by Lemma \ref{Lemma V_6/8 monoton}, we have that 
\begin{equation}\label{upper and lower bound for V/C_6/8}
	\begin{aligned}
	-1\leq V_6&\leq \frac{-\sqrt{2}}{\sqrt{\gamma}+\sqrt{2}} \leq V_8\leq 0, \quad 
	0\leq C_6\leq \frac{\sqrt{\gamma}}{\sqrt{\gamma}+\sqrt{2}} \leq C_8\leq 1.
\end{aligned}
\end{equation}

\begin{figure}
    \centering
    \includegraphics[width=0.6\textwidth]{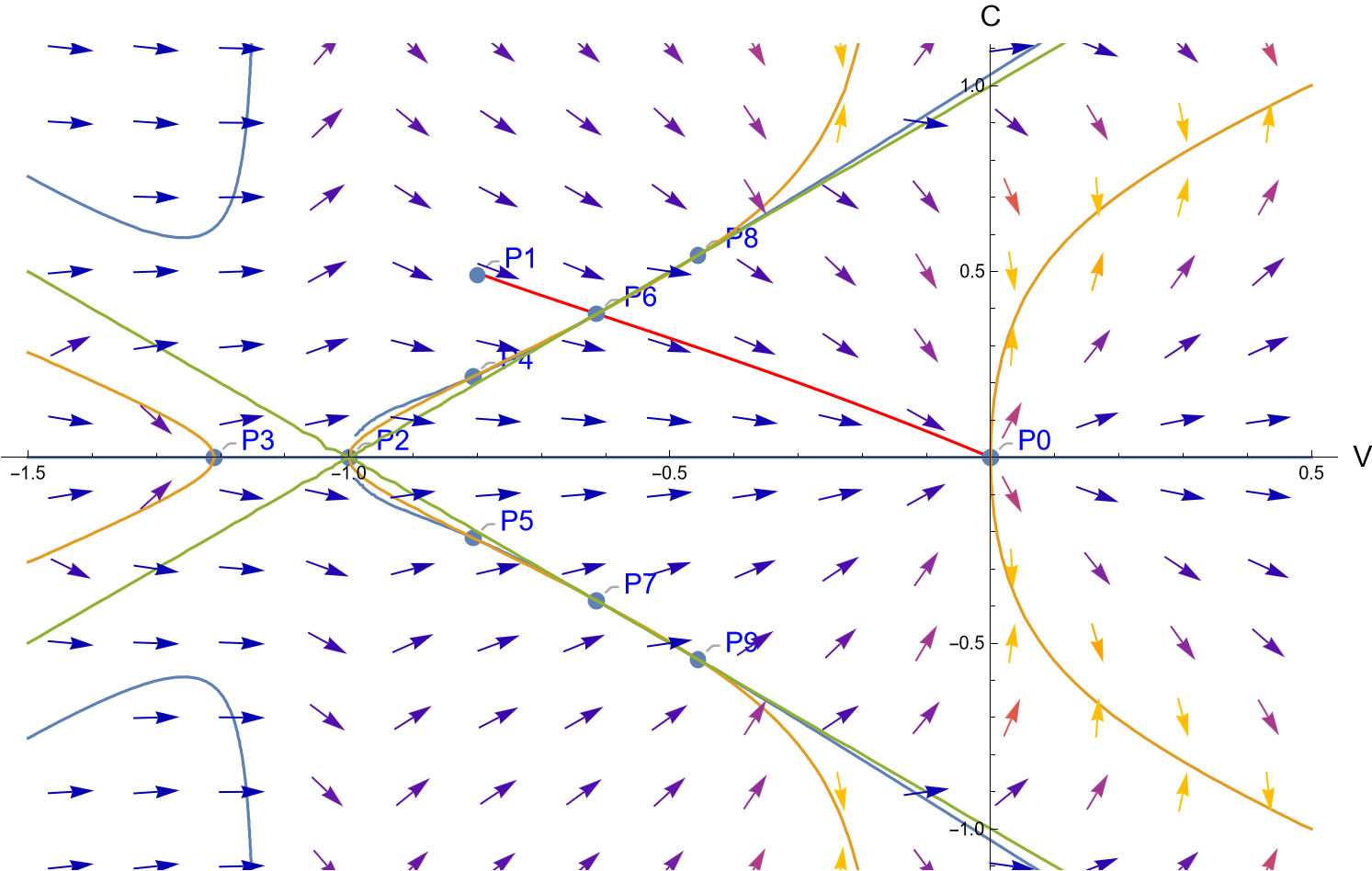}
    \caption{$m=1$, $\gamma = 1.5$, $z = 0.14$, green line: $D=0$, orange line: $G=0$, blue line: $F=0$, red line: solution trajectory}
    \label{fig:9 critical points}
\end{figure}

\noindent\textbf{Double roots} $F=G=0$. In addition to the triple points, there are also a number of stationary points of the ODE system \eqref{ODE system} at which $F=G=0$ but $D\neq0$.  To simplify notation, we define
\begin{align*}
	H(V) = \sqrt{\cfrac{V(1+V)(\lambda+V)}{(m+1)V+2mz}}.
\end{align*}
Then, the double points of the system may be directly  
computed as in the following lemma (\textit{cf.} \cite{Lazarus81}).

\begin{lemma}\label{double root}
	The solutions to $F=G=0$ and $D\neq 0$ are
	\begin{align}
		P_0 & = (0,0),\\
		P_3 &=(V_3,C_3) = (-\lambda, 0),\\
		P_4&=(V_4,C_4) = (\frac{-2\lambda}{\gamma+1+m(\gamma-1)}, H(V_4)),\label{P4}\\
		P_5&=(V_5,C_5) = (\frac{-2\lambda}{\gamma+1+m(\gamma-1)}, -H(V_5)).
	\end{align}
\end{lemma}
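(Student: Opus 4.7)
The plan is to split the analysis into two cases according to whether $C$ vanishes, exploiting the fact that $F$ has $C$ as an overall factor.

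In the case $C = 0$, the equation $F = 0$ is automatic, while $G = 0$ reduces to $-V(1+V)(\lambda+V) = 0$, so $V\in\{0,-1,-\lambda\}$. The middle root $V=-1$ forces $D=(1+V)^2-C^2=0$, recovering the triple point $P_2$ of Lemma~\ref{triple roots}, and is excluded by the hypothesis $D\neq 0$. The two surviving roots yield $P_0=(0,0)$ and $P_3=(-\lambda,0)$.

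In the case $C\neq 0$, I would divide $F=0$ by $C$ and use $G=0$ to solve for $C^2=V(1+V)(\lambda+V)/[(m+1)V+2mz]=H(V)^2$, noting that the denominator cannot vanish here, for otherwise $G=0$ would send us back to Case~1. Substituting this expression into the remaining factor of $F$ and clearing denominators reduces the problem to a single cubic equation in $V$,
\[
V(\lambda+V)(1+V+mz)\;=\;[(m+1)V+2mz]\bigl[a_1(1+V)^2-a_2(1+V)+a_3\bigr].
\]
Using $\lambda=1+m\gamma z$ and the definitions of $a_1,a_2,a_3$ in~\eqref{a_1234&z}, and in particular the identity $a_1-a_2+a_3=\lambda$ for the constant term, one checks by matching coefficients at each power $V^0,\dots,V^3$ that this cubic factors as $-\tfrac{m}{2}(\mu V+2\lambda)\,Q(V)$ with $\mu=\gamma+1+m(\gamma-1)$ and an explicit quadratic cofactor $Q$. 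The linear factor produces the claimed value $V=V_4=-2\lambda/\mu$, and then $C=\pm H(V_4)$ delivers $P_4$ and $P_5$.

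The main technical obstacle is showing that the quadratic cofactor $Q(V)$ contributes no new admissible solutions: any real root of $Q$ must either yield $H(V)^2\leq 0$ (so no real $C$), lie on the line $D=0$, or coincide with the values $0,-1,-\lambda$ already handled in Case~1. This reduces to a direct but slightly tedious polynomial verification using the parameter relations in~\eqref{a_1234&z}; once it is carried out, the enumeration claimed in the statement is complete.
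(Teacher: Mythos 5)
The paper offers no proof of this lemma, simply citing Lazarus, so your argument fills in content that the authors omitted. Your two-case split on $C$ and the reduction to the cubic $g_2f_1-g_1f_2=0$ is the right idea, and the identity $a_1-a_2+a_3=\lambda$ you invoke for the constant term is correct. Two refinements would tighten the write-up. First, you assert that $(m+1)V+2mz=0$ ``would send us back to Case 1''; that is not literally so (it forces $g_2(V)=0$, i.e.\ $V\in\{0,-1,-\lambda\}$, not $C=0$) — the correct observation is that for the paper's parameter range $z\in(0,z_M]$, $\lambda>1$, the polynomials $g_1$ and $g_2$ share no root, which one can check directly at $V=0,-1,-\lambda$. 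Second, and more substantively, the ``tedious polynomial verification'' you anticipate for the quadratic cofactor $Q(V)$ is unnecessary: since the triple points $P_6,P_8$ of Lemma~\ref{triple roots} satisfy $F=G=0$ with $C\neq 0$, their $V$-coordinates $V_6,V_8$ are automatically roots of your cubic, and once $V_4$ is identified as the third root one has $Q(V)=(V-V_6)(V-V_8)$ with no further computation. At those two $V$-values, $C^2=H(V)^2=(1+V)^2$ forces $D=0$, so both are excluded by hypothesis (and when $z>z_M$ they are not even real). The paper in fact uses this very factorization later — see Lemma~\ref{g1f2-g2f1}, where $q(V)=g_1f_2-g_2f_1=\frac{m[(m+1)(\gamma-1)+2]}{2}(V-V_4)(V-V_6)(V-V_8)$ is written down explicitly — so your computation is redundant with, and could be replaced by a pointer to, that result.
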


\begin{remark}
Since the solution $C$ of  \eqref{ODE system} must remain positive before the collapse $t<0$ in order to be physically meaningful, the points $P_2$, $P_3$, $P_5$,  $P_7$ and $P_9$ do not play a role in the construction of the solution before the collapse. 
\end{remark}

We observe that $D$, $F$, and $G$ at $P_1$ satisfy the following sign conditions:
	\begin{align}
		D(V_1,C_1)&<0,\label{D(V1,C1)<0}\\
		F(V_1,C_1) &>0,\\
		G(V_1,C_1) &<0.
	\end{align}
	Further details on the signs of $D$, $F$, and $G$ can be found in  \cite{Lazarus81}. 
	By \eqref{D(V1,C1)<0}, $P_1$ will be always located above the sonic line $D(V,C)=0$ as in  Figure 1.

\subsection{Main result and methodology}\label{subsec:mainresult}

Many authors have claimed that, for each $\gamma\in(1,3]$, there exists a $\lambda_{std}\text{ or }z_{std}$ such that the corresponding trajectory exists from $P_1$ to the origin $P_0$, analytically passes through the triple point $P_6$ or $P_8$ and is monotone decreasing to the origin, therefore describing a collapsing shock solution of the compressible Euler equations (see, for example, \cite{Courant48, Guderley42, Jenssen18, Landau87, Lazarus81}). The goal of this paper is to prove rigorously the existence of such a $z_{std}$ and the corresponding analytic solution to \eqref{ODE}. 

The self-similar solutions that we  construct are built by concatenating two trajectories in the phase-plane in such a way that we obtain an analytic solution of the ODE \eqref{ODE}.
\begin{itemize}
	\item The first trajectory connects $P_1$ to either $P_6$ or $P_8$ in the 2nd quadrant of the $(V,C)$-plane. To ensure the trajectory passes through $P_6$ or $P_8$ analytically, we need the trajectory to enter the triple point $P_6$ or $P_8$ with a specific slope.
	\item The second trajectory connects either $P_6$ or $P_8$ to the origin $P_0=(0,0)$, which is a stable node for \eqref{ODE system}. Since the first trajectory passes through $P_6$ or $P_8$ analytically, this second one is uniquely determined by the slope at $P_6$ or $P_8$.
\end{itemize}
Directly solving the initial value problem for  \eqref{ODE} poses 
complexity due to the non-linearity of $F(V,C;\gamma,z)$ and $G(V,C;\gamma,z)$ and the two parameters $\gamma$ and $z$. One significant challenge in this problem lies in the non-trivial nature of solutions around the triple points $P_6$ and $P_8$, which can be entered either along a primary or a secondary direction by solutions of \eqref{ODE}. Along the dominant, primary direction, the solutions will be only of finite regularity, and so we require the solutions to connect along the secondary direction to ensure analyticity. This property of analytic connection fails for generic choices of the parameter $z$, and so the isolated value, $z_{std}$, that enables this analytic connection must be carefully constructed. Moreover, it emerges that, for some ranges of $\gamma\in(1,3]$, the solution emanating from the initial condition $P_1$ will converge to $P_6$, while for other $\ga$, it will converge to $P_8$. We must therefore understand which of the triple points the solution from $P_1$ should connect to in order to identify $z_{std}$ and establish an analytic connection.

 To address these challenges effectively, we employ barrier functions for a number of purposes (\textit{cf.}  Definition \ref{def:barrier}).  First, to exclude connection from $P_1$ to $P_8$ (respectively $P_6$) for small (respectively large) values of $\ga$. Second, to establish an appropriate interval of candidate values of $z$ containing $z_{std}$. Employing the barrier function $B_{k_M}(V)=\sqrt{\frac{C^2_6(z_M)}{V_6(z_M)}V}$, we exclude connections to $P_8$ for small $\gamma$, and we will also exclude connection to $P_6$ for $\ga\geq 2$. In addition, the function $B_1(V)=\sqrt{-V}$ is essential for establishing connection from $P_1$ to $P_8$ for intermediate values of $\ga$. This motivates the following definitions.
 \begin{itemize}
 	\item $\gaSix$ is defined to be the value such that $P_1$ lies on the curve $C=B_{k_M}(V)$ (see  \eqref{gaSix}). $\gaSix \approx 1.7$.
	\item $\gaOne$ is defined to be the value such that $P_1$ lies on the curve $C=B_1(V)$ (see  \eqref{gaOne}). $\gaOne = 1+\sqrt{2}$.
 \end{itemize}
 
 As mentioned above, we also need to limit the window of possible $z$ values for which we may have an analytic connection from $P_1$ to either $P_6$ or $P_8$. This leads us to the following definitions of key values of $z$.
 
\begin{itemize}
	\item For any $\gamma\in(1,3]$, $z_M(\gamma)$ is defined to be the value such that $V_6(z_M)=V_8(z_M)$, which means $P_6$ and $P_8$ are coincident ({see} \eqref{z_{max}}).
	\item For any $\ga\in(1,3]$, $z_m(\ga)$ is defined to be the value such that $V_6(z_m)=V_1$, so that $P_6$ lies on the vertical line through $P_1$ ({see} \eqref{zm}).
	\item For any $\gamma\in(1,2]$, $z_g(\gamma)$ is defined to be the value such that $V_4=V_6$, which means $P_4$ and $P_6$ are coincident ({see} \eqref{zg}).
	\item For any $\gamma\in(\gaSix,\gaOne]$, $z_1(\gamma)$ is defined to be the value such that the curve $C=B_1(V)$ intersects the sonic line at $P_8$ (see  \eqref{z1}).
	\item For any $\gamma\in(\gaOne,3]$, $z_2(\gamma)$ is defined to be the value such that the curve $C=\sqrt{-\frac{3}{2}V}$ intersects the sonic line at $P_8$ (see \eqref{z2}).
\end{itemize}

\smallskip

We now state the main result of the paper. 

\begin{theorem} \label{main theorem} (i) For all $\gamma\in (1, 3]$, there is a monotone decreasing analytic solution to \eqref{ODE} connecting $P_1$ to the origin. 

(ii) For $\gamma \in (1, \gaSix]$, the solution is unique (in the sense of unique $z$) and it connects $P_1$ to the origin via $P_6$. The value of $z_{std}$ lies in $(z_g, z_M ]$. 

(iii) For $\gamma \in (\gaSix , 2)$, if such a solution connects through $P_6$, then $z_{std} \in (z_g,z_M]$ and $z_{std}$ gives the only such connection through $P_6$. If a solution connects through $P_8$, then $z_{std} \in (z_1,z_M]$. 

(iv) For $\gamma \in [2, 3]$, any such solution must connect through $P_8$ with  a $z_{std}$ value 
$z_{std} \in (z_1,z_M]$ if $\gamma \in  [2,\gaOne]$ or $z_{std} \in (z_2,z_M]$ if $\gamma \in (\gaOne,3]$. 
\end{theorem}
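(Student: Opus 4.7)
The plan is to cast Theorem~\ref{main theorem} as a shooting problem in the parameter $z\in(0,z_M]$ and isolate the unique value $z_{std}$ for which the trajectory $\mathcal T(z)$ of \eqref{ODE} starting at $P_1$ reaches $P_6$ or $P_8$ along the secondary (analytic) eigendirection of the linearized flow, so that the concatenation extends analytically through the triple point and onward to the origin $P_0$. The first step is a local analysis at $P_6$ and $P_8$: linearizing \eqref{ODE} at each triple point and applying a Briot--Bouquet type argument, one finds two invariant directions, a primary one along which solutions have only finite regularity and a secondary one carrying a unique analytic branch with a specific slope $k_\star(z)$ determined by the eigenvalue ratio. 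This reduces the global analyticity requirement to a single scalar entry-slope condition at the triple point.

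Next, I would use the barrier functions of Definition~\ref{def:barrier} to confine $\mathcal T(z)$ to the second quadrant above the sonic line and to funnel it to the correct triple point. Concretely, $B_{k_M}(V)=\sqrt{C_6^2(z_M)V/V_6(z_M)}$, which passes through the coincident point $P_6(z_M)=P_8(z_M)$, rules out connection to $P_8$ when $\gamma\le\gaSix$ (by the defining condition of $\gaSix$) and to $P_6$ when $\gamma\ge 2$; the barrier $B_1(V)=\sqrt{-V}$ serves the complementary role in the intermediate range $\gamma\in(\gaSix,\gaOne]$; and the auxiliary curve $C=\sqrt{-3V/2}$ handles $\gamma\in(\gaOne,3]$. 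Combined with the signs of $F,G,D$ on their zero loci and with the monotonicity of $P_6,P_8$ in $z$ (Lemma~\ref{Lemma V_6/8 monoton}), these barriers isolate the $z$-windows $(z_g,z_M]$, $(z_1,z_M]$, and $(z_2,z_M]$ appearing in (ii)--(iv) and channel every trajectory with $z$ in the corresponding window to the designated triple point.

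Within each such window I would then run a continuity argument on an entry-slope functional $\Phi(z)$ equal to the signed difference between the incoming slope of $\mathcal T(z)$ at the target triple point and $k_\star(z)$. At one endpoint of the window (for instance $z=z_g$ in (ii), where $P_4$ collides with $P_6$) the barrier constraints force $\mathcal T(z)$ to approach along the primary direction, yielding one sign of $\Phi$, while at $z=z_M$ (where $P_6$ and $P_8$ coalesce) a geometric comparison with the collapsing triple points produces the opposite sign. Continuity of $\Phi$ on the open $z$-window, which follows from smooth ODE dependence on parameters away from the degenerate locus, together with the intermediate value theorem gives $z_{std}$ with $\Phi(z_{std})=0$. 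Past the triple point, the analytic branch determined by Step~1 is continued uniquely by standard ODE theory into a neighborhood of the stable node $P_0$, where the linearization of \eqref{ODE system} forces analytic convergence to the origin. Monotonicity of $C$ along the trajectory is verified directly from the sign of $F/G$ enforced by the same barriers.

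For the uniqueness in (ii) and the ``only such connection through $P_6$'' clause in (iii), I would establish a monotonicity-in-$z$ principle for the family $\{\mathcal T(z)\}$: an ordering of two trajectories along the vertical line $\{V=V_1\}$ is preserved by the scalar ODE \eqref{ODE} all the way to the sonic line (using a comparison argument on the strip they bound), whence the entry slopes are monotone in $z$ and the condition $\Phi(z)=0$ can hold at most once. The hard part will be the sign change of $\Phi$ at the endpoints of each $z$-window, since this requires quantitative control of $\mathcal T(z)$ uniformly in $z$ right up to the sonic line, where \eqref{ODE} degenerates; it is precisely this step that forces the careful choice of the barriers $B_1,\ B_{k_M},\ \sqrt{-3V/2}$ and the thresholds $\gaSix,\gaOne,z_g,z_1,z_2,z_m$, and thereby produces the case split across parts (ii)--(iv).
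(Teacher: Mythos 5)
Your high-level plan captures the right ingredients (local branch analysis at the triple points, barriers to sort trajectories between $P_6$ and $P_8$, continuity in $z$, and a monotonicity argument for uniqueness), but the shooting scheme you propose has a structural flaw that the paper's construction is specifically designed to avoid, and the continuation to the origin is under-addressed.

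\textbf{The shooting direction and the functional $\Phi$.} You shoot \emph{forward} from $P_1$ and define $\Phi(z)$ as the difference between the entry slope of $\mathcal T(z)$ at the target triple point and the analytic slope $k_\star(z)$. This quantity is not well-defined for generic $z$: the forward trajectory from $P_1$ will typically cross the sonic line $D=0$ away from $P_6(z)$ and $P_8(z)$, so it never reaches a triple point at all, and there is no ``entry slope'' to measure. Even where the trajectory does reach a triple point, the triple point is a degenerate point of the ODE \eqref{ODE} (both $F$ and $G$ vanish), so one cannot continue past it along a chosen branch by ``standard ODE theory''; you would have to show both that the trajectory hits the triple point and that it enters along the secondary direction, and you have only the single parameter $z$ to tune. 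The paper sidesteps this entirely by shooting \emph{backward}: Theorem~\ref{analytic} and Lemma~\ref{local solution to the left} produce, for each admissible $z$, a unique analytic branch emanating from $P_*(z)$ along the secondary direction and extend it on $[V_1,V_*]$; the shooting functional is then simply $C(V_1;\gamma,z,P_*)-C_1$, which is well-defined and continuous for \emph{all} $z$ in the relevant interval, so the intermediate value theorem (Theorem~\ref{existence of zstd}, using the upper/lower solutions of Lemmas~\ref{lowersolutionP6}--\ref{uppersolutionP8}) applies cleanly. Your proposal therefore needs to be reorganized around backward shooting, or else you must supply a genuinely new argument showing that $\mathcal T(z)$ reaches a triple point and classifying the direction of approach for an interval of $z$, which is not sketched.

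\textbf{Continuation to the origin.} You assert that beyond the triple point the analytic branch is ``continued uniquely by standard ODE theory into a neighborhood of the stable node $P_0$, where the linearization ... forces analytic convergence.'' This misses the actual difficulty: the solution might exit the second quadrant (hit $V=0$ with $C>0$) before reaching $P_0$. The paper devotes Section~\ref{sec:solntoright} to excluding exactly this via Lemma~\ref{lemma:Cneq0} (the $V$-axis is never hit) and the upper-barrier Lemmas~\ref{right of P6}, \ref{P8rightga6toga1}, \ref{P8rightga1to3}, which require the sharpened $z$-windows $(z_g,z_M]$, $(z_1,z_M]$, $(z_2,z_M]$ established in Sections~\ref{sec:P6toleft}--\ref{sec:P8toleft}. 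Similarly, your comparison-principle sketch for uniqueness (``ordering at $V=V_1$ is preserved'') is not justified as stated; the paper's Lemma~\ref{uniquenessP6} proves non-intersection of the family $C(\cdot;\gamma,z,P_6)$ by an explicit computation of $\partial_z(dC/dV)$ combined with the lower barrier $B_k$, and crucially this is only established on the refined window $(z_g,z_M]$. These quantitative steps are where the work lies, and your proposal does not indicate how they would be carried out.
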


\smallskip 

\begin{remark}
To see that these solutions from Theorem \ref{main theorem} do indeed give solutions of the original self-similar problem (in the $x$ variable) is straightforward. Note that, given $C(V)$, one can solve for $V(x)$ (and hence $C(x)$) via the ODE $V'(x)=-\frac{1}{\lambda x}\frac{G(V(x),C(V(x)))}{D(V(x),C(V(x)))}$ simply by integrating, away from critical points. As $C(V)$ is an analytic function in $V$ and $G$ and $D$ are both analytic in $(V,C)$ with simple zeros at the triple points $P_6$ and $P_8$, repeated application of the chain rule establishes that the solution $V(x)$ remains smooth (indeed, analytic), as it passes through a unique sonic point $x_{\textup{sonic}}$ where $(V(x_{\textup{sonic}}),C(x_{\textup{sonic}}))$ is either $P_6$ or $P_8$. As $G(0,0)=0$, it is clear that if, for some $x_0\in(x_{\textup{sonic}},0)$, we hit $V(x_0)=C(x_0)=0$, then the ODE has a local, unique solution, which is the identically zero solution. But this extends backwards for all $x$, contradicting the initial data and the sonic time. So the solution cannot hit zero except at $x=0$. 
\end{remark}

\smallskip

Our strategy for proving the existence of the solutions constructed in Theorem \ref{main theorem} proceeds in three key stages, inspired by recent mathematical constructions of self-similar gravitational collapse \cite{GHJ21,GHJ23,GHJS22}, where the authors developed the shooting methods for self-similar non-autonomous ODE systems to connect smoothly  two behaviors at the center and at the far field through the sonic point.  
First, in Section \ref{sec:analytic0}, we construct local, analytic solutions around each of the triple points. That is, for all $z\in[z_m,z_M]$, we construct a local solution around $P_6$ and we construct a local solution around $P_8$ for all $z\in(0,z_M]$. In order to show the local existence of such solutions, we first choose a local branch 
at the triple points along the secondary direction of \eqref{ODE} with a negative slope $c_1<0$ (\textit{cf.}  Section \ref{local behavior}) and   
derive a formal recurrence relation for the Taylor coefficients of a power series $C(V;\ga,z)=\sum_{k=0}^\infty c_k(\ga,z)(V-V_*(\ga,z))^k$. 
Once we have found the recurrence relation for the higher order coefficients, a series of combinatorial estimates and an inductive argument allow us to bound coefficients to all orders and establish the convergence of the series in Theorem \ref{analytic}.

The second main step of the proof is to show the existence, for each $\ga\in(1,3]$, of a $z_{std}$ such that the local analytic solution from either $P_6$ or $P_8$, extended backwards in $V$, connects to $P_1$. This is achieved in Section \ref{sec:existence} via a continuity argument. We show first that the solution from $P_6$ for $z=z_m$ always passes below $P_1$ in the phase space, while there always exists a $z\in(0,z_M)$ such that the solution from $P_8$ passes above $P_1$. Then, depending on whether the solution for $z=z_M$ passes above or below $P_1$, we may apply a continuity argument to either $P_6$ or $P_8$ to establish the connection.

The third main step in the construction is to prove that the solution connecting $P_1$ smoothly to either $P_6$ or $P_8$ then continues to connect to the origin. In fact, the behavior of connecting to the origin is not limited only to the solution that connects to $P_1$, but holds for a non-trivial interval of $z$ around $z_{std}$, as the origin is an attractive point in the phase plane. 
A key difficulty is that the solution must connect from inside the second quadrant, else the velocity changes sign before collapse. We cannot, a priori, exclude the possibility that, for some range of $z$, the solution passes through the $C$-axis for some positive value of $C$ before converging to the origin from the first quadrant. To show that this does not occur, we apply careful barrier arguments to gain an upper bound on the solution which traps it into a region in the second quadrant in which it must converge to the origin. This notion is made precise in the following definition.

\begin{definition}[Lower barrier function and upper barrier function]\label{def:barrier}  We say that a differentiable function $ B(V)$ is a lower barrier for $C(V)$ on $(V_a, V_b)$ if $B(V) < C(V)$ on $(V_a, V_b)$, and a upper barrier if  $B(V) > C(V)$ on $(V_a, V_b)$. 
\end{definition}

In practice, $C(V)$ will be the solution of \eqref{ODE} and $B(V)$ is a specific differentiable function where we design $B$ such that at one end point, $C(V)$ is greater or less than $B(V)$, and show that the solution $C(V)$ stays above or below $B(V)$ as $V$ moves to the other end point. The latter part will be achieved by nonlinear invariances of \eqref{ODE}. Suppose we intend to show that $B$ is a lower barrier for $C$ and that $C(V_a)>B(V_a)$ (respectively $C(V_b)>B(V_b)$). We assume for a contradiction that there exists $\overline{V}\in(V_a,V_b)$ such that $C(\overline{V}) = B(\overline{V})$. By simple continuity and compactness arguments, there exists a minimal (respectively maximal) such $\overline{V}$, from which we deduce that, at $\overline{V}$, we must have 
$$\frac{d}{dV}(C-B)\big|_{\overline{V}}\leq 0, \ \text{ respectively } \ \frac{d}{dV}(C-B)\big|_{\overline{V}}\geq 0.$$ 
To derive a contradiction, we therefore prove that, whenever $C(\overline{V})=B(\overline{V})$, then we must have 
\begin{equation}\label{barrier_argument}
\frac{d}{dV}(C-B)\big|_{\overline{V}}> 0, \ \text{ respectively } \ \frac{d}{dV}(C-B)\big|_{\overline{V}}< 0. 
\end{equation}

\

As the self-similar blowup speed $z$ varies, the associated solutions from the triple points $P_6$ and $P_8$ efficiently explore a large portion of the phase space, with the solutions from $P_8$ in particular moving far up in the phase plane. In order, therefore, to apply the precise barrier arguments that will force the solution to the right of the triple point to converge to the origin, we in fact require better control on the range of $z$ (depending on $\ga$) for which the solution to the left connects to $P_1$, else we lose effective control on the trajectory to the right and cannot exclude the possibility that the trajectory passes through $V=0$ away from the origin. This improved control on $z$ also allows us to make more quantitative and qualitative statements concerning the behavior of the imploding shock solution, especially for $\ga\in(1,\ga_\star]$.

 To this end, we first limit the range of $\ga$ for which the connecting solution may come from $P_6$ or $P_8$. This is achieved in Sections \ref{sec:P6only}--\ref{sec:P8only}, in which we employ our first barrier arguments to the left in order to show that for $\ga\in(1,\gamma_\star]$, the solution must connect to $P_6$, and for $\ga\in[2,3]$, it must connect to $P_8$.

Following this, in Section \ref{sec:P6toleft}, we improve the range of $z$ for which the solution from $P_6$ (given $\ga\in(1,2]$) may connect to $P_1$, tightening the range $z\in[z_m,z_M]$ to the much sharper $z\in(z_g,z_M]$ by showing that the trajectory is bounded from above, for this range of $z$, by the solution to a simpler ODE that allows for explicit integration and estimation. This  improvement ensuring $z_{std}> z_g$ is essential, as the structure of the phase portrait changes fundamentally as $P_4$ crosses $P_6$ at $z=z_g$. 
We are then able also to show in Lemma \ref{uniquenessP6} that, for $\ga\in(1,2]$, there is at most one value of $z\in(z_g,z_M]$ for which the solution from $P_6$ may connect to $P_1$ by studying the derivative $\frac{\partial}{\partial z}(\frac{dC}{dV})$.  

The next section, Section \ref{sec:P8toleft}, contains the analogous sharpening of the possible range of $z$ for solutions from $P_8$. In it, we show that, for $\ga\in(\ga_\star,\ga_1]$, solutions with $z\in(0,z_1]$ cannot connect to $P_1$ by employing the barrier $B_1(V)=\sqrt{-V}$, while for $\ga\in(\ga_1,3]$, solutions  with $z\in(0,z_2]$ cannot connect to $P_1$ by employing the barrier $B_{\frac32}(V)=\sqrt{-\frac32V}$  (\textit{cf.} the definitions of $z_1$ and $z_2$ above).

Having established these tighter ranges of $z$, depending on $\ga$, for the existence of the imploding shock solution, in Section \ref{sec:solntoright} we are then able to prove that the solution must connect to the origin within the second quadrant. A simple proof in Lemma \ref{lemma:Cneq0} shows that the trajectories can never hit the $V$-axis, and so it suffices to find upper barriers connecting to the origin. Indeed, for $\ga\in(1,2]$, we show that the solutions from $P_6$ for all $z\in[z_g,z_M]$ admit $B_1(V)=\sqrt{-V}$ as an upper barrier, and the solutions from $P_8$ for any $\ga\in(\ga_\star,\ga_1]$ and $z\in(z_1,z_M]$ admit the same upper barrier. Finally, for the remaining range, $\ga\in(\ga_1,3]$ and $z\in(z_2,z_M]$, the barrier $B_{\frac32}(V)=\sqrt{-\frac32 V}$ is an upper barrier for the solution to the right.

Finally, in Section \ref{sec:mainthmproof}, we put together the earlier results in order to establish the proof of the main theorem.

\section{Local smooth solutions around sonic points}\label{sec:analytic0}

In this section, we show the existence of local analytic solutions around the triple point $P_*=P_6 \text{ or } P_8$: 
\[
C(V) = C_*+ \sum_{\ell =1}^{\infty}c_{\ell}(V-V_*)^{\ell},
\]    
where the Taylor coefficients $c_\ell=c_\ell(\ga,z)$ and with a choice of branch having a negative slope $c_1<0$. The first step is to show that it is always possible to choose a branch with $c_1<0$ for the admissible range $z\in (0, z_M]$ at $P_8$ and $z\in [z_m,z_M]$ at $P_6$ (see Section \ref{local behavior}). The second step is to derive a recursive formula to define $c_\ell$ for $\ell\ge 2$ and prove the convergence of the Taylor series with positive radius of convergence (see  Section \ref{sec:analytic}). 

\subsection{Choice of branch at $P_6$ and $P_8$}\label{local behavior}

Throughout this section, for ease of notation, we will denote by $P_*$ either $P_6$ or $P_8$. From \eqref{ODE}, we have $\frac{dC}{dV} = \frac{0}{0}$ at $P_*$. Therefore, for smooth solutions, by using L'H\^{o}pital's rule, we see that the slope $c_1$ at $P_*$ must solve the quadratic equation
\begin{align}\label{quadratic equation of c_1}
    -G_C(V_*,C_*)c_1^2+(F_C(V_*,C_*)-G_V(V_*,C_*))c_1+F_V(V_*,C_*) = 0,
\end{align}
where \begin{equation}\label{all partials}
	\begin{aligned}
	G_C(V_*,C_*) &= \frac{\partial G}{\partial C}\Big|_{(V_*,C_*)}=2C_*[(m+1)V_*+2mz],\\
	G_V(V_*,C_*) &= \frac{\partial G}{\partial V}\Big|_{(V_*,C_*)}=(m+1)C_*^2-3V_*^2-2(\lambda+1)V_*-\lambda,\\
	F_C(V_*,C_*) &= \frac{\partial F}{\partial C}\Big|_{(V_*,C_*)}=3C_*^2[1+\frac{mz}{(1+V_*)}]- a_1(1+V_*)^2+a_2(1+V_*)-a_3,\\
	F_V(V_*,C_*) &= \frac{\partial F}{\partial V}\Big|_{(V_*,C_*)}\!\begin{aligned}[t]
		&=C_*\Big\{-mz- 2a_1(1+V_*)+a_2\Big\}\\
		&=-mzC_*- 2a_1(1+V_*)^2+a_2(1+V_*)\\
		&=C_*^2-3a_1(1+V_*)^2+2a_2(1+V_*)-a_3.
	\end{aligned}
	\end{aligned}
\end{equation}
Solving the quadratic equation \eqref{quadratic equation of c_1}, we get
\begin{align}\label{c_1}
	c_1 = \frac{F_C(V_*,C_*)-G_V(V_*,C_*)\pm R(V_*,C_*)}{2G_C(V_*,C_*)},
\end{align}
where
\begin{equation}\label{R}
	R(V_*,C_*) = \sqrt{(F_C(V_*,C_*)-G_V(V_*,C_*))^2+4F_V(V_*,C_*)G_C(V_*,C_*)}.
\end{equation}
Since the first trajectory should be monotone decreasing from $P_1$ to $P_*$, we demand the slope $c_1$ at $P_*$ to be negative. In particular, solutions for  \eqref{quadratic equation of c_1} must be real, which requires the expression under the square root of \eqref{R} to be non-negative. 

In order to establish the necessary conditions for $R$ to be real and to understand the possible solutions of \eqref{quadratic equation of c_1}, 
we analyze the properties of the four partial derivatives in \eqref{all partials}.  
Using $G(V_*, C_*)=0$, $F(V_*, C_*)=0$ and $C_*= 1+V_*$, we see that 
\begin{align}
   G_C(V_*,C_*) &= \frac{G_C(V_*,C_*)C_*}{1+V_*} = 2V_*(\lambda +V_*) < 0 \ \text{ since }-1<V_*<0,\ \ \la>1,\label{G_C}\\
   G_V(V_*,C_*) &= (m+1)C_*^2-(\lambda+V_*)-2V_*(\lambda+V_*)-V_*(1+V_*),\label{G_V}\\
   F_C(V_*,C_*) &= 2C_*^2[1+\frac{(\lambda -1)}{\gamma(1+V_*)}] = 2C_*[C_*+\frac{(\lambda -1)}{\gamma}] >0,\label{F_C}\\
   F_V(V_*,C_*) &= -\frac{(\lambda -1)}{\gamma}C_*-a_1C_*^2+a_3- [1+\frac{(\lambda -1)}{\gamma C_*}]C_*^2.\label{F_V}
\end{align}
Summing \eqref{F_C} with \eqref{F_V} and summing \eqref{G_C} with \eqref{G_V} and applying the definitions of $a_1$ and $a_3$ from \eqref{a_1234&z}, we find the simpler identities
\begin{align}
    F_C(V_*,C_*) +F_V(V_*,C_*) &= -\frac{m(\gamma-1)}{2}C_*^2+\frac{(\gamma-1)(\lambda-1)}{2},\label{F_C+F_V=constant}\\
    G_C(V_*,C_*)+G_V(V_*,C_*) &= mC_*^2-(\lambda-1).\label{G_C+G_V=constant}
\end{align}
In turn, these identities imply, recalling \eqref{P6} and \eqref{P8},
\begin{align}
    F_C(V_*,C_*) +F_V(V_*,C_*) &= -\frac{\gamma-1}{2}(G_C(V_*,C_*)+G_V(V_*,C_*)),\label{F_C+F_V=constant(G_C+G_V)}\\
    G_C(V_6,C_6)+G_V(V_6,C_6)&=-mwC_6<0\label{G_C+G_V=C_6},\\
   G_C(V_8,C_8)+G_V(V_8,C_8)&=mwC_8>0\label{G_C+G_V=C_8}.
\end{align}

As a direct consequence, we first obtain the following. 

\begin{lemma}\label{c_1 at P_8 different signs, R^2>0}
	At $P_8$, for any $\gamma\in (1,3]$ and $\lambda\in(1,\lambda_M]$, $R(V_8,C_8)$ is real and strictly positive, and the two solutions of \eqref{quadratic equation of c_1} must have different signs.
\end{lemma}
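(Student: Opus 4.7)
The plan is to work directly with the quadratic \eqref{quadratic equation of c_1} and read off the sign of the discriminant and the sign of the product of its roots from the sign data already available for the partial derivatives $F_C, F_V, G_C, G_V$ at $P_8$. Viewing \eqref{quadratic equation of c_1} as $-G_C(V_8,C_8) c_1^2 + (F_C(V_8,C_8) - G_V(V_8,C_8)) c_1 + F_V(V_8,C_8) = 0$, the product of the two roots equals $-F_V(V_8,C_8)/G_C(V_8,C_8)$, while the discriminant is exactly $R(V_8,C_8)^2 = (F_C - G_V)^2 + 4 F_V G_C$. Hence both conclusions of the lemma reduce to showing that $F_V(V_8,C_8)\, G_C(V_8,C_8) > 0$; in fact I aim to show that both factors are strictly negative.

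First I would verify $G_C(V_8,C_8) < 0$ directly from the simplified expression in \eqref{G_C}, using $V_8 \in (-1,0)$ (a consequence of \eqref{upper and lower bound for V/C_6/8}) and $\lambda > 1$, so that the factors $V_8$ and $\lambda+V_8$ have determined signs. Similarly $F_C(V_8,C_8) > 0$ is immediate from \eqref{F_C} since $C_8 > 0$ and $(\lambda-1)/\gamma > 0$.

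The key step is to establish $F_V(V_8,C_8) < 0$, and for this I would combine the two algebraic identities \eqref{F_C+F_V=constant(G_C+G_V)} and \eqref{G_C+G_V=C_8}. The latter gives $G_C(V_8,C_8) + G_V(V_8,C_8) = m w(z) C_8 \geq 0$, with equality only at $z = z_M$, and feeding this into the former yields
\[ F_C(V_8,C_8) + F_V(V_8,C_8) = -\tfrac{(\gamma-1)m w(z)}{2} C_8 \leq 0. \]
Since $F_C(V_8,C_8) > 0$, this forces $F_V(V_8,C_8) \leq -F_C(V_8,C_8) < 0$.

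With $F_V$ and $G_C$ both strictly negative at $P_8$, the product of the two roots, $-F_V/G_C$, is strictly negative, so the roots of \eqref{quadratic equation of c_1} have opposite signs. Simultaneously, the discriminant $(F_C - G_V)^2 + 4 F_V G_C$ is a sum of a non-negative and a strictly positive term, so $R(V_8,C_8)$ is real and strictly positive. I do not anticipate serious obstacles here; the only subtlety is recognizing which pair of identities to combine to extract the sign of $F_V$ at $P_8$. Notably, the analogous step at $P_6$ would fail because \eqref{G_C+G_V=C_6} carries the opposite sign, which is precisely why the corresponding sign analysis at $P_6$ is more delicate and requires separate treatment.
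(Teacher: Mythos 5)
Your proof is correct and follows essentially the same route as the paper: establish $G_C(V_8,C_8)<0$ from \eqref{G_C}, derive $F_V(V_8,C_8)<0$ from \eqref{F_C+F_V=constant(G_C+G_V)}, \eqref{G_C+G_V=C_8} and the positivity \eqref{F_C} of $F_C$, and then read off that both the discriminant $(F_C-G_V)^2+4F_VG_C$ and the negativity of the product of roots follow from $F_VG_C>0$. One small aside: the paper writes the product of roots as $-2F_V/G_C$ (an apparent typo for $-F_V/G_C$, which you have correctly), but this does not affect either sign argument.
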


\begin{proof}
	We first obtain the sign of $F_V(V_8,C_8)$ from \eqref{F_C+F_V=constant(G_C+G_V)}, \eqref{G_C+G_V=C_8} and \eqref{F_C}:
	\begin{align*}
		 F_V(V_8,C_8) = -\frac{\gamma-1}{2}(G_C(V_8,C_8)+G_V(V_8,C_8))-F_C(V_8,C_8) <0.
	\end{align*}
	Thus, as we also have $G_C(V_8,C_8) < 0$ from \eqref{G_C}, it is clear that $R$ is real and positive.\\
	Applying \eqref{c_1}, the product of the two solutions of \eqref{quadratic equation of c_1} is given by $-2\frac{F_V(V_8,C_8)}{G_C(V_8,C_8)}$. As we have just shown that $F_V(V_8,C_8)$ and $G_C(V_8,C_8)$ are both negative, we conclude the proof.
\end{proof}

\begin{remark}\label{whyweneedzm}
	The situation at $P_6$ is different. For $\lambda$ sufficiently close to 1, $R(V_6,C_6)\not\in\R$, and so we require an appropriate range of $\la$ (equivalently of $z$) which  guarantees the 
	above properties at $P_6$. As the first trajectory connecting $P_1$ and $P_6$ is supposed to be monotone decreasing, it is sufficient to consider 
	 only those $V_6\geq V_1$. We therefore denote by $\la_m$ (equivalently $z_m$) 
	the value such that 
	\begin{align}\label{V6zm}
		V_6(\la_m)=V_1.
	\end{align}
	By a straightforward calculation, we have
\begin{align}
	\lambda_m &= \cfrac{m\gamma(\gamma-1)}{(2\gamma-1)(\gamma+1)}+1,\label{lambda_m}\\
	z_m &= \cfrac{(\gamma-1)}{(2\gamma-1)(\gamma+1)}\label{zm}.
\end{align}
It is straightforward to check that $\la_m<\la_M$ for any $\gamma\in(1,3]$. By Lemma \ref{Lemma V_6/8 monoton}, we 
have $V_6(\la)\geq V_1 $ for any $\la\in[\la_m,\la_M]$.

Moreover, by \eqref{D(V1,C1)<0},we have 
\begin{equation}\label{ineq:C6lam}
C_6(\la_m)=1+V_6(\la_m)=1+V_1<C_1.
\end{equation}

\end{remark}

We now show that within the new sonic window the quadratic equation \eqref{quadratic equation of c_1} at $P_6$ has two real solutions with different signs. 

\begin{lemma}\label{c_1 at P_6 must one neg and R^2>0}
	At $P_6$, for any $\gamma\in (1,3]$ and $\lambda\in [\lambda_m ,\lambda_M]$ where $\lambda_m$ is given by \eqref{lambda_m} and $\lambda_M$ is given by \eqref{lambda_{max}}, the two solutions of \eqref{quadratic equation of c_1} are both real and have different signs.
\end{lemma}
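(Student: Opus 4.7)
The plan is to reduce the claim to establishing a single strict inequality, $F_V(V_6, C_6) < 0$, and then to verify it by a concrete algebraic computation. To see why this reduction works, observe that the two roots of \eqref{quadratic equation of c_1} have product $F_V(V_6, C_6)/(-G_C(V_6, C_6))$. Since $G_C(V_6, C_6) = 2V_6(\lambda + V_6) < 0$ by \eqref{G_C}, the product is negative exactly when $F_V(V_6, C_6) < 0$. Moreover, if both $F_V(V_6, C_6)$ and $G_C(V_6, C_6)$ are negative, then $4 F_V G_C > 0$, so the discriminant $(F_C - G_V)^2 + 4 F_V G_C$ under the square root in \eqref{R} is strictly positive, ensuring two real, distinct roots.

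The next step is to derive a compact formula for $F_V(V_6, C_6)$. Combining \eqref{F_C+F_V=constant(G_C+G_V)} with \eqref{G_C+G_V=C_6} produces $F_C(V_6, C_6) + F_V(V_6, C_6) = \tfrac{1}{2} m(\gamma-1) w C_6$. Together with $F_C(V_6, C_6) = 2C_6(C_6 + mz)$ from \eqref{F_C} (using $(\lambda-1)/\gamma = mz$) and the identity $2C_6 = 1 + (\gamma-2)z - w$ coming directly from \eqref{P6}, this yields
$$F_V(V_6, C_6) = C_6 \bigl[a_1 w - 1 - (\gamma + 2m - 2)z\bigr].$$
Since $C_6 > 0$, the sign of $F_V(V_6, C_6)$ agrees with that of the bracket.

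The bracketed quantity is strictly decreasing in $z$ on $[z_m, z_M]$: $w(z)$ decreases in $z$ by Remark \ref{bounds of w(z)}, $a_1$ is a positive constant independent of $z$, and $(\gamma + 2m - 2) z$ is increasing since $\gamma + 2m - 2 > 0$. It therefore suffices to verify negativity at the left endpoint $z = z_m$. At $z_m$, the defining property $V_6(z_m) = V_1 = -2/(\gamma+1)$ (from \eqref{V6zm} and \eqref{V after shock}) gives $C_6(z_m) = (\gamma-1)/(\gamma+1)$, and solving $2C_6 = 1 + (\gamma-2)z_m - w(z_m)$ yields the explicit formula $w(z_m) = (3-\gamma)/(\gamma+1) + (\gamma-2) z_m$.

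Substituting these expressions together with the value of $z_m$ from \eqref{zm} into the inequality $a_1 w(z_m) < 1 + (\gamma + 2m - 2) z_m$ reduces the claim to an elementary polynomial inequality in $\gamma$, separately for each $m \in \{1, 2\}$, which can be verified directly on $(1, 3]$. The main obstacle is precisely this final algebraic verification, which, although elementary, requires careful bookkeeping; for $m = 1$ the inequality boils down to $\gamma^3 + 3\gamma^2 - 3\gamma - 1 > 0$ on $(1, 3]$, which vanishes at $\gamma = 1$ with positive derivative there and is therefore strictly positive for all $\gamma > 1$, while the case $m = 2$ admits a similarly elementary reduction.
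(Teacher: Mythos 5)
Your proposal follows the same overall strategy as the paper---reduce the lemma to the single inequality $F_V(V_6,C_6)<0$, then verify it on $[z_m,z_M]$ via a monotonicity-plus-endpoint argument---but the algebra you use to get there is different and cleaner. The paper argues by contradiction: it supposes $F_V\ge 0$, rewrites this as $V_6\le \tfrac{a_2-mz}{2a_1}-1$, and then checks that $\tfrac{a_2-mz}{2a_1}-1<V_6(\lambda_m)\le V_6(\lambda)$ by an explicit (linear in $\lambda$) computation. You instead derive the compact closed form $F_V(V_6,C_6)=C_6\bigl[a_1 w-1-(\gamma+2m-2)z\bigr]$ directly from \eqref{F_C+F_V=constant(G_C+G_V)}, \eqref{G_C+G_V=C_6}, \eqref{F_C}, and $2C_6=1+(\gamma-2)z-w$, then observe the bracket is decreasing in $z$ and only needs to be checked at $z=z_m$. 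This is a nice simplification the paper does not exploit; it makes the monotonicity transparent and turns the endpoint verification into a one-line polynomial computation rather than the paper's sign analysis of a linear function of $\lambda$.

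That said, two small points need tightening before this is a complete proof. First, for $m=1$ your reduction gives $\gamma^3+3\gamma^2-3\gamma-1>0$, and the assertion that a cubic ``vanishes at $\gamma=1$ with positive derivative there and is therefore strictly positive for all $\gamma>1$'' is not a valid inference in general (a cubic can have another root to the right of $1$). Here it is true, but you should say why: either note the factorization $\gamma^3+3\gamma^2-3\gamma-1=(\gamma-1)(\gamma^2+4\gamma+1)$ with the quadratic factor manifestly positive, or observe that the derivative $3\gamma^2+6\gamma-3$ is positive on all of $[1,\infty)$ so the cubic is increasing there. Second, you defer $m=2$ entirely; it does work out (the bracket at $z_m$ becomes $-\tfrac{\gamma-1}{\gamma+1}\cdot\tfrac{\gamma^2+3}{2\gamma-1}<0$), but since the lemma covers both $m=1$ and $m=2$ this needs to be written out rather than waved at.
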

\begin{proof}
	If $F_V(V_6,C_6)<0$, then by the same argument as in Lemma \ref{c_1 at P_8 different signs, R^2>0}, $R$ must be real and one of the solutions must be negative. 
	Suppose $F_V(V_6,C_6)\geq 0$. By \eqref{all partials}, as $C_6>0$, we then have
	$$-mz-2a_1(1+V_6)+a_2\geq 0.$$
	As $a_1=1+\frac{m(\gamma-1)}{2}>0,$ this is
	is equivalent to 
	$$V_6\leq \frac{a_2-mz}{2a_1}-1.$$
	We will now show that, in fact, for all $\la\in[\la_m,\la_M]$, the reverse inequality holds.
	Given $\gamma\in(1,3]$, we always have
	\begin{align*}
		\frac{a_2-mz}{2a_1}-1 - V_6(\lambda_m)& = \frac{a_2-mz}{2a_1}-1+\frac{2}{\gamma+1}\\
		&=\frac{1}{2a_1}\Big\{\frac{m(\gamma-1)+(\gamma-3)(\lambda-1)}{2}-\frac{\lambda-1}{\gamma}-\frac{m(\gamma-1)^2+2(\gamma-1)}{\gamma+1}\Big\}\\
		 &= \frac{1}{2a_1}\frac{\gamma(\gamma-1)(-m\gamma+3m-4)+(\gamma+1)(\gamma^2-3\gamma-2)(\lambda-1)}{2\gamma(\gamma+1)}\\
		 &=\begin{cases}
		 	& \frac{1}{2a_1}\frac{(\gamma^2-3\gamma-2)(\lambda-1)-\gamma(\gamma-1)}{2\gamma}, \quad\quad\text{when }m=1\\
		 	&\frac{1}{2a_1}\frac{(\gamma+1)(\gamma^2-3\gamma-2)(\lambda-1)-2\gamma(\gamma-1)^2}{2\gamma(\gamma+1)}, \quad\quad\text{when }m=2.
		 \end{cases}		 
	\end{align*}
	In each case, we see that $\frac{a_2-mz}{2a_1}-1 - V_6(\lambda_m)<0$, and so
	which means $V_6(\lambda_m)>\frac{a_2-mz}{2a_1}-1$. By Lemma \ref{Lemma V_6/8 monoton}, $V_6$ is strictly increasing in $\lambda\in (1 ,\lambda_M)$. We conclude that for $\gamma\in(1,3]$ and any $\lambda\in[\lambda_m,\lambda_M]$, we always have $F_V<0$. This means that two slopes at $P_6$ have different signs and $R(V_6,C_6)\in\R$.
\end{proof}
In conclusion, for each $\gamma\in (1,3]$ and for the appropriate range of $z$ at $P_*$,   
there exists exactly one negative slope 
\begin{align}\label{c1}
	c_1=\frac{F_C(V_*,C_*)-G_V(V_*,C_*)+ R(V_*,C_*)}{2G_C(V_*,C_*)}<0
\end{align}
which will be our choice of branch. Here we have used $G_C < 0$ by \eqref{G_C}. 

\subsection{Analyticity at $P_6$ and $P_8$}\label{sec:analytic}

As shown in the previous section, to have the first trajectory  
with negative slope, 
the ranges of $\lambda$ at $P_6$ and $P_8$ are taken differently. For notational convenience, we define
\begin{align}
	\Lambda = \begin{cases}
		[\lambda_m, \lambda_M] & \text{if } P_* = P_6, \\
		(1, \lambda_M] & \text{if } P_* = P_8.
	\end{cases}
\end{align}
	We write the formal Taylor series around the point $P_*$ as
\begin{align}\label{Taylor Series}
    C(V) = \sum_{\ell =0}^{\infty}c_{\ell}(V-V_*)^{\ell},
\end{align}
where $c_0=C_*$.  
In a neighborhood of $(V_*,C_*)$, we formally have 
\begin{align}\label{taylor expansion of dc/dv}
    \frac{dC}{dV} = \sum_{\ell =1}^{\infty}\ell c_{\ell}(V-V_*)^{\ell-1}.
\end{align}
Now, to simplify notation, we set
\begin{equation}\label{v(C^2)_l(c)^3_l taylor expansion}
     \left\{  
\begin{aligned}
    &v = V-V_*  
    \\
    &(c^2)_{\ell} = \sum_{\substack{i+j = \ell\\ i,j\geq 0}}c_ic_j, \\
    &(c^3)_{\ell} = \sum_{\substack{i+j+k = \ell\\ i,j,k\geq 0}}c_ic_jc_k .
\end{aligned}
\right. 
\end{equation}
With this notation, the following quantities have a simple expression: \begin{allowdisplaybreaks}
\begin{equation}\label{C^2C^3C'C^2 taylor expansion}
\begin{aligned}
    C^2 &= (\sum_{\ell =0}^{\infty}c_{\ell}v^{\ell})^2 = \sum_{\ell =0}^{\infty}(c^2)_{\ell}v^{\ell},\\
    C^3 &= (\sum_{1}^{\infty}c_{\ell}v^{\ell})^3 = \sum_{\ell =0}^{\infty}(c^3)_{\ell}v^{\ell},\\
    C'C^2 &= \frac{1}{3}(C^3)' =  \frac{1}{3}\sum_{\ell =1}^{\infty}\ell(c^3)_{\ell}v^{\ell-1}.
\end{aligned}
\end{equation}
\end{allowdisplaybreaks}

\


\begin{lemma}\label{lemma:Taylorcoeffs}
Suppose $C(V)$ defined by \eqref{Taylor Series} is an analytic solution of \eqref{ODE}. Then the following identity holds:
\begin{align}\label{taylor expansion from ode}
	\sum_{\ell\geq 2}(A_\ell c_\ell-B_\ell)v^\ell + [-G_C(V_*,C_*)c_1^2+[F_C(V_*,C_*)-G_V(V_*,C_*)]c_1+F_V(V_*,C_*)]c_0v = 0,
\end{align}
where for each $\ell\geq 2$,
\begin{align}
    A_{\ell} &= C_*\big[F_C(V_*,C_*)-G_C(V_*,C_*)c_1-\ell[G_V(V_*,C_*)+G_C(V_*,C_*)c_1]\big],\label{def:Aell}\\
    B_{\ell} &=\frac{(1+V_*)\Big[(m+1)V_*+2mz\Big]}{3}(\ell +1)\sum_{\substack{i+j+k = \ell+1\\ i,j,k\leq \ell-1}}c_ic_jc_k\notag\\
&-\Big[(1+V_*+mz)-\frac{(m+1)(1+2V_*)+2mz}{3}\ell\Big]\sum_{\substack{i+j+k = \ell\\ i,j,k\leq \ell-1}}c_ic_jc_k\notag\\&
  -\Big[1-\frac{m+1}{3}(\ell -1)\Big]\sum_{\substack{i+j+k = \ell-1}}c_ic_jc_k\label{def:Bell}\\
  &-\Big[\big[6V_*^2+(3\lambda + 6)V_*+2\lambda +1\big](\ell -1)-3a_1(1+V_*)^2+2a_2(1+V_*)-a_3\Big]c_{\ell-1}\notag\\
  &-\Big[(\lambda +2+4V_*)(\ell -2)-3a_1(1+V_*)+a_2\Big]c_{\ell-2}-\Big[\ell-3-a_1\Big]c_{\ell-3},\notag
\end{align}
where we use the convention $c_n=0$ for $n<0$.  
\end{lemma}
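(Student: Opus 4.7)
The proof is a direct coefficient-matching computation, and my plan has three steps: reduce the ODE to a polynomial identity, substitute the Taylor ansatz and re-expand in $v = V - V_*$, and then carefully peel off the highest coefficient $c_\ell$ at each order.

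First, I would rewrite \eqref{ODE} in the equivalent polynomial form $(1+V)\,G(V,C)\,C'(V) = (1+V)\,F(V,C)$, which eliminates the $(1+V)^{-1}$ factor present in $F$ and so works cleanly with power-series expansions. Introducing the local coordinate $v = V - V_*$ and using the triple-point identity $1 + V_* = C_*$ (built into \eqref{P6} and \eqref{P8}), every polynomial factor in $V$ becomes an explicit low-degree polynomial in $v$: for instance $(m+1)V + 2mz = [(m+1)V_* + 2mz] + (m+1)v$ and $(1+V)^k = (C_* + v)^k$ expanded by the binomial theorem, while $V(1+V)(\lambda + V)$ is a cubic in $v$ with coefficients depending only on $(V_*,\lambda)$.

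Next, substituting $C(V) = \sum_{\ell \geq 0} c_\ell v^\ell$ and using the convolution formulas \eqref{v(C^2)_l(c)^3_l taylor expansion} together with the identity $C^2 C' = \tfrac{1}{3}(C^3)'$ from \eqref{C^2C^3C'C^2 taylor expansion} converts both sides into formal power series in $v$. Matching the coefficient of $v^\ell$ yields one identity per $\ell$: the $\ell = 0$ case is automatic since $F(V_*, C_*) = G(V_*, C_*) = 0$; the $\ell = 1$ case is precisely the final bracket in \eqref{taylor expansion from ode}, which vanishes by the quadratic equation \eqref{quadratic equation of c_1} defining $c_1$; and for $\ell \geq 2$ the identity is of the form $A_\ell c_\ell - B_\ell = 0$, which is what must be verified.

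To identify $A_\ell$ I would extract the contributions linear in $c_\ell$, which come from the $\ell c_\ell v^{\ell-1}$ term of $C'$ paired with the constant and linear-in-$v$ parts of the factors multiplying $C'$ on the left, together with the $c_\ell v^\ell$ term of $C$ paired with constant-in-$v$ coefficients on the right. Using the evaluations \eqref{all partials} for $G_V, G_C, F_V, F_C$ and the relation $C_* = 1 + V_*$, the $c_\ell$-linear part collapses to the closed form \eqref{def:Aell}. The residual pieces, in which every $c_j$ then has $j \leq \ell - 1$ (the $c_\ell$ contributions to $(c^2)_\ell, (c^3)_\ell, (c^3)_{\ell+1}$ having been absorbed into $A_\ell$), reassemble into $B_\ell$ exactly as in \eqref{def:Bell}, with the summation restrictions $i,j,k \leq \ell - 1$ inherited directly from this peeling. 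The main obstacle is purely combinatorial bookkeeping: tracking each convolution sum to separate the $c_\ell$ part correctly, and checking, via the definitions \eqref{a_1234&z}, that the remaining terms regroup into the precise closed form stated; there is no genuine analytic difficulty at this step, as convergence and quantitative bounds on the $c_\ell$ belong to the subsequent induction.
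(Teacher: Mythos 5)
Your plan coincides with the paper's proof in Appendix~\ref{caltaylorexp}: clear the denominator to get $(1+V)F=(1+V)C'G$, expand everything in $v$ via \eqref{v(C^2)_l(c)^3_l taylor expansion}--\eqref{C^2C^3C'C^2 taylor expansion}, then match coefficients of $v^\ell$, peel off the $c_\ell$-linear part into $A_\ell$ and regroup the remainder as $B_\ell$. One point your description glosses over: at order $v^\ell$ the left side $(1+V)C'G$ also produces terms in $c_{\ell+1}$, both from the $(\ell+1)c_{\ell+1}v^\ell$ term of $C'$ multiplying the constant part of $(1+V)G$ and from $3c_0^2c_{\ell+1}$ inside $(c^3)_{\ell+1}$, so the claim that the residual pieces only involve $c_j$ with $j\le\ell-1$ does not hold before you observe that these two $c_{\ell+1}$ contributions combine to $-(\ell+1)(1+V_*)G(V_*,C_*)c_{\ell+1}=0$ because $P_*$ is a triple point. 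Once that cancellation (which is precisely what makes the recursion solvable one coefficient at a time) is recorded, the remainder of your bookkeeping is exactly the paper's computation.
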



\begin{proof}
The identity follows by substituting \eqref{Taylor Series}, \eqref{taylor expansion of dc/dv}, \eqref{v(C^2)_l(c)^3_l taylor expansion}, and \eqref{C^2C^3C'C^2 taylor expansion} into 
$$(1+V)F(V,C)=(1+V)\frac{dC}{dV}G(V,C)$$ and grouping the coefficients of $v^{\ell}$. For the details, we refer to  Appendix \ref{caltaylorexp}.
\end{proof}


Since we are seeking an analytic solution around the sonic point $P_*$, we demand that \eqref{taylor expansion from ode} holds for all $|v|<\epsilon$ where $\epsilon>0$ is sufficiently small.  
We therefore require that the coefficient of $v^\ell$ should be zero at every order $\ell\in\mathbb{N}$. In Section \ref{local behavior}, we have already shown the existence of $c_1<0$  satisfying
\begin{align*}
	-G_C(V_*,C_*)c_1^2+[F_C(V_*,C_*)-G_V(V_*,C_*)]c_1+F_V(V_*,C_*) = 0.
\end{align*}
For $\ell\geq 2$, we directly obtain the recursive relation for $c_\ell$,
\begin{align}\label{recursive relation}
    A_{\ell}c_{\ell} = B_{\ell},
\end{align}
where we note from \eqref{def:Bell} that $B_\ell$ involves only coefficients $c_i$ for $0\le i\le \ell -1$.  
To ensure the solvability of $c_{\ell}$  for all $\ell \geq 2$, it is obvious that we require $A_\ell\neq 0$, and so we need the following non-vanishing condition:
\begin{equation}\tag{NVC}\label{nonvanish condition}
	\begin{aligned}
			F_C(V_*,C_*)-G_C(V_*,C_*)c_1-\ell[G_V(V_*,C_*)+G_C(V_*,C_*)c_1]\neq 0 \text{ for any }\ell\geq 2.
	\end{aligned}
\end{equation}
In the following lemma, we show that \eqref{nonvanish condition} holds for any $\lambda\in\Lambda$. 

\begin{lemma}\label{nonvanish condition lemma}
	Let $\gamma\in(1,3]$, $P_*\in\{P_6,P_8\}$. Then, for any $\lambda\in\Lambda$ and $\ell\geq 2$,
	  \eqref{nonvanish condition} is satisfied.
\end{lemma}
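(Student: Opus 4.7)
My plan is to establish the strictly stronger statement that $E_\ell := F_C(V_*,C_*)-G_C(V_*,C_*)c_1-\ell\bigl[G_V(V_*,C_*)+G_C(V_*,C_*)c_1\bigr]$ is strictly negative for every $\ell\ge 2$. The first step is to substitute the explicit formula \eqref{c1} for $c_1$, equivalently $2G_C c_1 = F_C - G_V + R$, into $E_\ell$. A direct algebraic simplification then yields the clean identity
$$E_\ell \;=\; \tfrac{1}{2}\bigl[(1-\ell)(F_C+G_V) - (1+\ell)\,R\bigr].$$
For $\ell\ge 2$ one has $1-\ell\le -1<0$ and $1+\ell\ge 3>0$, so the proof reduces to the two positivity statements $R>0$ and $F_C+G_V>0$: granted these, both summands on the right-hand side are nonpositive, with the second strictly negative, forcing $E_\ell<0$.

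The positivity $R>0$ has essentially already been proved. Lemmas \ref{c_1 at P_8 different signs, R^2>0} and \ref{c_1 at P_6 must one neg and R^2>0} show that on the respective admissible ranges of $\lambda$ the quadratic \eqref{quadratic equation of c_1} has two distinct real roots (of opposite signs), which is equivalent to $R^2>0$.

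The crux of the argument is therefore the uniform positivity $F_C+G_V>0$ at $V_*=V_6$ or $V_8$. Inserting $C_*=1+V_*$ into \eqref{F_C} and \eqref{G_V} and collecting terms, I expect the clean decomposition
$$F_C+G_V \;=\; \underbrace{\bigl[(m+3-\lambda)+2(m+2-\lambda)V_* + mV_*^2\bigr]}_{=:\,B(V_*)} \;+\; \frac{2(\lambda-1)(1+V_*)}{\gamma}.$$
The second summand is strictly positive, since $\lambda>1$ and $V_*>-1$ strictly: at $P_6$, this uses $V_6\ge V_1=-\tfrac{2}{\gamma+1}>-1$ via the hypothesis $\lambda\ge\lambda_m$ together with Lemma \ref{Lemma V_6/8 monoton}; at $P_8$, it follows from $V_8\ge V_8(z_M)=-\tfrac{\sqrt{2}}{\sqrt{\gamma}+\sqrt{2}}>-1$ via \eqref{V68(z_M) C68(z_M)} and the same monotonicity. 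For the parabola $B$, whose leading coefficient is $m>0$, the vertex lies at $V_*=(\lambda-m-2)/m$. The key observation is the uniform bound $\lambda\le\lambda_M=1+\tfrac{m\gamma}{(\sqrt{\gamma}+\sqrt{2})^2}<2$ for all $\gamma\in(1,3]$ and $m\in\{1,2\}$ (the worst case $\gamma=3,\, m=2$ yields $\lambda_M-1=\tfrac{6}{5+2\sqrt{6}}<1$), which places the vertex at or below $V_*=-1$. Hence $B$ is non-decreasing on $[-1,0]$, so $B(V_*)\ge B(-1)=\lambda-1>0$, and both contributions to $F_C+G_V$ are strictly positive.

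The only obstacle I anticipate is the algebraic book-keeping needed to arrive at the decomposition above and to check the vertex location uniformly across the admissible parameter range; no continuity or barrier arguments are required at this stage, and the entire proof reduces to an elementary sign analysis.
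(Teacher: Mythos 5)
Your proof is correct. The algebraic identity
$$E_\ell \;=\; F_C - G_C c_1 - \ell\bigl[G_V + G_C c_1\bigr] \;=\; \tfrac12\bigl[(1-\ell)(F_C+G_V) - (1+\ell)R\bigr]$$
follows cleanly from $2G_C c_1 = F_C - G_V + R$, and your decomposition $F_C + G_V = B(V_*) + \tfrac{2(\lambda-1)(1+V_*)}{\gamma}$ with $B(V) = mV^2 + 2(m+2-\lambda)V + (m+3-\lambda)$ checks out on expansion. The vertex-location argument is sound since $\lambda_M < 2$ uniformly over $\gamma\in(1,3]$ and $m\in\{1,2\}$ places the vertex of $B$ strictly below $-1$, and $B(-1)=\lambda-1>0$.

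The route, however, is genuinely different from the paper's. The paper also notes that $f(\ell):=E_\ell$ is linear in $\ell$ with $f(1) = -R < 0$, but it establishes the negative slope by proving $G_V > 0$ directly (a separate quadratic analysis for $m=1$ and a simple linear rearrangement for $m=2$) together with $G_C c_1 > 0$ from the branch choice. You instead absorb $c_1$ entirely via the explicit formula, landing on the symmetric expression in $F_C + G_V$ and $R$, and then prove $F_C + G_V > 0$ via a new parabola-plus-positive-term decomposition that treats $m=1$ and $m=2$ uniformly. Both arguments are elementary sign analyses; yours avoids the $m=1/m=2$ case split at the cost of one extra algebraic identity. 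The two proofs are related by the identity $G_V + G_C c_1 = \tfrac12(F_C + G_V + R)$, so the positivity you need (of $F_C+G_V$ and $R$) is equivalent to the positivity the paper needs (of $G_V$ and $G_C c_1$) only modulo that relation; establishing one pair does not trivially give the other, which is why this counts as an independent proof.

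One minor imprecision: you describe ``two distinct real roots of opposite signs'' as \emph{equivalent} to $R^2>0$. It is strictly stronger (opposite signs encodes the additional sign condition $F_V G_C > 0$), but the implication you actually use — that the cited lemmas give $R>0$ — is correct.
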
 

\begin{proof}
	Recalling \eqref{G_V} and $C_*=1+V_*$, we first see
	 \begin{align*}
	 	G_V(V_*,C_*) &= (m+1)C_*^2-(\lambda+V_*)-2V_*(\lambda+V_*)-V_*(1+V_*)\\
	 	&=(m-2)V_*^2+2(m-\lambda)V_*+(m+1-\lambda).
	 \end{align*}
	 When $m=1$, this gives that
	 \begin{align*}
	 	G_V(V_*,C_*) = -V_*^2+2(1-\lambda)V_*+2-\lambda
	 \end{align*}
	 is quadratic with respect to $V_*$. Furthermore, $G_V(-1,0)=\lambda-1>0$ and $G_V(0,1) = 2-\lambda\geq 2-\lambda_M = 1-\frac{\gamma}{\sqrt{\gamma} +\sqrt{2}}>0$ for any $\gamma\in(1,3]$. Thus, as $V_*\in(-1,0)$ for all $\la\in\Lambda$, we obtain $G_V(V_*,C_*)>0$.
	 
	 When $m=2$, we have
	 \begin{align*}
	 	G_V(V_*,C_*) = 2(2-\lambda)V_*+3-\lambda=2(2-\lambda)(1+V_*)+\lambda-1>0.
	 \end{align*} 
	 Therefore, for any $\gamma\in(1,3]$ and $\lambda\in\Lambda$, $G_V(V_*,C_*)>0$.
	 	Denote
	\begin{align*}
		f(\ell)=:F_C(V_*,C_*)-G_C(V_*,C_*)c_1-\ell[G_V(V_*,C_*)+G_C(V_*,C_*)c_1].
	\end{align*}
	Notice that $f(\ell)$ is a linear equation with respect to $\ell$. When $\ell=1$, using \eqref{c1}, we see $f(1) = F_C(V_*,C_*)-G_V(V_*,C_*)-2G_C(V_*,C_*)c_1 = -R(V_*,C_*)<0$. Thus, $f(\ell)<0$ for any $\ell\geq 2$ since we have just shown $G_V(V_*,C_*)>0$ and $G_C(V_*,C_*)c_1>0$ by \eqref{G_C} and \eqref{c1}.
		
	In conclusion,\eqref{nonvanish condition} is satisfied for any $\lambda\in\Lambda$ 
	at $P_*$.
\end{proof}

Since $A_{\ell}\neq 0$ by Lemma \ref{nonvanish condition lemma}, we can rewrite
\begin{align}
	c_\ell = \cfrac{B_\ell}{A_\ell}.\label{c_l equation}
\end{align}
In the following, we estimate the growth of $B_L$ under the inductive growth assumption on $c_\ell$ for $2\le \ell \le L-1$.  

\begin{lemma}\label{lemma for B_L upperbound}
For any fixed $\gamma\in(1,3]$ and $\lambda\in\Lambda$, let $\alpha\in (1,2)$ be given. Then, there exists a constant $K_*=K_*(\gamma)>1$ such that if $K\geq K_*$ and $L\geq 5$, then if also the following inductive assumption holds,
\begin{align}\label{c_l assumption}
    |c_{\ell}| \leq \cfrac{K^{\ell-\alpha}}{\ell^3},\quad 2\leq \ell \leq L-1,
\end{align}
then we have
\begin{align}\label{B_L upperbound}
    |B_L| \leq \beta \cfrac{K^{L-\alpha}}{L^2} \Big(
    \cfrac{1}{K^{\alpha-1}}+\frac{1}{K}\color{black}\Big)
\end{align}
for some constant $\beta=\beta(\gamma,\lambda)$.
\end{lemma}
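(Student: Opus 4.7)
The plan is to bound each of the six terms comprising $B_L$ in \eqref{def:Bell} separately, using the inductive hypothesis \eqref{c_l assumption}, and then sum. The terms split into two classes: three cubic convolution sums $\sum c_ic_jc_k$ with total index $N=i+j+k\in\{L+1,L,L-1\}$ and prefactors of order $L$, and three single-coefficient terms $c_{L-1},c_{L-2},c_{L-3}$ with prefactors at most linear in $L$. Observe first that $c_0=C_*$ and $c_1$ given by \eqref{c1} are uniformly bounded in $(\gamma,\lambda)$, so I fix $M_0=M_0(\gamma,\lambda)\geq 1$ as a uniform upper bound for $\max(|c_0|,|c_1|,1)$. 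The single-coefficient terms are immediate: the hypothesis gives $L\cdot|c_{L-k}|\lesssim L\cdot K^{L-k-\alpha}/(L-k)^3\lesssim K^{L-\alpha}/(L^2K^k)$ for $k=1,2,3$, each fitting into the $1/K$ portion of the target.

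For each cubic convolution sum I split the indices according to how many of $i,j,k$ lie in the ``boundary'' set $\{0,1\}$ versus the ``interior'' $\{2,\dots,L-1\}$ where the inductive bound applies. The key combinatorial ingredient is the standard two-fold estimate $\sum_{i+j=N,\,i,j\geq 2}1/(i^3j^3)\leq C/N^3$ (obtained by fixing the larger index at $\geq N/2$), together with its three-fold analogue $\sum_{i+j+k=N,\,i,j,k\geq 2}1/(i^3j^3k^3)\leq C/N^3$ obtained by iterating the two-fold version. When all three indices are interior, the bound $|c_ic_jc_k|\leq K^{N-3\alpha}/(i^3j^3k^3)$ with the $O(L)$ prefactor yields a contribution of order $K^{L+1-3\alpha}/L^2=K^{L-\alpha}/L^2\cdot K^{1-2\alpha}$, which for $\alpha>1$ is dominated by $K^{L-\alpha}/(L^2K)$. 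When exactly one index, say $i\in\{0,1\}$, is boundary, the factor $|c_i|$ is absorbed into $M_0$ and the two-fold estimate yields $\lesssim M_0K^{L+1-i-2\alpha}/L^2$: the case $i=0$ produces the $1/K^{\alpha-1}$ term $M_0K^{L-\alpha}/(L^2K^{\alpha-1})$, while $i=1$ produces the $1/K$ term. When two indices lie in $\{0,1\}$ and the third index $k$ is interior, the constraint $k\leq L-1$ excludes certain triples (such as $(0,0,L+1)$ and $(0,1,L)$ for the first sum) but leaves admissible ones like $(1,1,L-1)$; each such contributes $\lesssim M_0^2K^{k-\alpha}/k^3$ with $k\leq L-1$, giving $\lesssim M_0^2K^{L-\alpha}/(L^2K)$.

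Summing the six bounds yields $|B_L|\leq\beta K^{L-\alpha}/L^2\cdot(1/K^{\alpha-1}+1/K)$, where $\beta=\beta(\gamma,\lambda)$ collects the uniform bounds on the polynomial coefficients appearing in \eqref{def:Bell}, the combinatorial constants, and $M_0$. The principal technical obstacle lies in the bookkeeping: for each of the three cubic sums, one must check which boundary and two-boundary triples are admissible under the constraint $i,j,k\leq L-1$, ensure that admissible triples always feature at least one interior index in the range $2\leq k\leq L-1$ where the inductive hypothesis applies, and track the power of $K$ in each contribution to verify it is dominated by $\max(K^{1-\alpha},K^{-1})$. The assumption $L\geq 5$ is precisely what ensures that the admissible two-boundary triples are disjoint from pure-interior triples and from the single-coefficient terms, while the restriction $\alpha\in(1,2)$ makes both small parameters $K^{1-\alpha}$ and $K^{1-2\alpha}$ naturally dominated by the two terms $K^{-(\alpha-1)}$ and $K^{-1}$ of the target bound.
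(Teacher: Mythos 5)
Your proposal is correct and follows essentially the same strategy as the paper's proof: decompose each of the three cubic convolution sums according to which indices fall in $\{0,1\}$ (where $c_0,c_1$ are bounded constants) versus $\{2,\dots,L-1\}$ (where the inductive hypothesis applies), apply the two- and three-fold combinatorial estimates of Lemma B.1, and track powers of $K$ to land in $K^{1-\alpha}$ or $K^{-1}$. The paper simply writes out the decompositions term by term while you describe them more abstractly, but the bookkeeping and the role of $L\geq 5$ (ensuring $L-3\geq 2$ so the inductive bound covers $c_{L-3}$ and the ratios $L/(L-j)$ stay bounded) are the same in substance.
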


For the proof, we will require the following result from \cite{GHJS22} to estimate certain combinations of coefficients. 
\begin{lem}[{\cite[Lemma B.1]{GHJS22}}]
There exists a universal constant $a>0$ such that for all $L\in \mathbb{N}$, the following inequalities hold
\begin{align*}
    \sum_{\substack{i+j+k = L\\ i,j,k\geq 1}}\cfrac{1}{i^3j^3k^3}& \leq \cfrac{a}{L^3},\\
    \sum_{\substack{i+j = L\\ i,j\geq 1}} \cfrac{1}{i^3j^3} &\leq \cfrac{a}{L^3}.
\end{align*}
\end{lem}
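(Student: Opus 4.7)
The plan is to establish each inequality via a pigeonhole on the largest summation index. Since indices summing to $L$ with all values at least $1$ force at least one index to be a definite fraction of $L$, each convolution sum is controlled by a constant multiple of $L^{-3}$ times a product of single-index sums, which are in turn bounded by powers of $\zeta(3) = \sum_{n\geq 1} n^{-3} < \infty$.

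For the two-index sum, by the symmetry of $(i,j)$ and the observation that $\max(i,j) \geq L/2$,
\[
\sum_{\substack{i+j=L\\ i,j\geq 1}} \frac{1}{i^3 j^3} \;\leq\; 2 \sum_{\substack{i+j=L\\ j\geq L/2}} \frac{1}{i^3 j^3} \;\leq\; \frac{16}{L^3}\sum_{i\geq 1} \frac{1}{i^3} \;=\; \frac{16\,\zeta(3)}{L^3},
\]
using that $j\geq L/2$ yields $1/j^3 \leq 8/L^3$ and then dropping the upper constraint on $i$. If $L$ is odd, the condition ``$j \geq L/2$'' is interpreted as $j\geq \lceil L/2\rceil$, which only strengthens the bound. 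The edge case $L=1$ is vacuous as no admissible $(i,j)$ exists.

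For the three-index sum, at least one of $i,j,k$ must be $\geq L/3$; applying a factor $3$ to cover the three cases for which index is largest (with harmless overcounting at ties),
\[
\sum_{\substack{i+j+k=L\\ i,j,k\geq 1}} \frac{1}{i^3 j^3 k^3} \;\leq\; 3 \sum_{\substack{i+j+k=L\\ k\geq L/3}} \frac{1}{i^3 j^3 k^3} \;\leq\; \frac{3\cdot 27}{L^3} \Bigl(\sum_{n\geq 1}\frac{1}{n^3}\Bigr)^{\!2} \;=\; \frac{81\,\zeta(3)^2}{L^3}.
\]
Here I again bound $1/k^3 \leq 27/L^3$ using $k\geq L/3$ and drop the constraint $i+j = L-k$ to decouple the remaining double sum into a product of two copies of $\zeta(3)$. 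Setting $a := 81\,\zeta(3)^2$ then yields a uniform constant valid for both inequalities (since $81\,\zeta(3)^2 > 16\,\zeta(3)$), and the edge cases $L\in\{1,2\}$ are again vacuous.

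There is essentially no substantive obstacle here; the argument is a routine pigeonhole bound, and the only care needed is the small overcount factor ($2$ or $3$) that absorbs ties among the maximal indices, which merely weakens the constant without affecting the $L^{-3}$ decay rate. The constants could be sharpened by more careful bookkeeping, but for the purposes of the inductive convergence argument in Section~\ref{sec:analytic}, any finite $a$ suffices.
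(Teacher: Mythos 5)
Your proof is correct. Note that the paper itself does not prove this lemma at all: it is quoted verbatim from \cite[Lemma B.1]{GHJS22} and used as a black box, so there is no in-paper argument to compare against. Your pigeonhole argument --- splitting according to which index is largest (hence at least $L/2$ or $L/3$), bounding that factor by $8/L^3$ or $27/L^3$, and using the injectivity of the remaining indices under the constraint to decouple into $\zeta(3)$ or $\zeta(3)^2$ --- is the standard route for such convolution estimates and gives a complete, self-contained verification with the explicit constant $a=81\,\zeta(3)^2$, which is more than the paper needs since any finite universal constant suffices for the inductive argument.
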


\begin{proof}[Proof of Lemma \ref{lemma for B_L upperbound}]
First, by using the induction assumption \eqref{c_l assumption} and Lemma B.1, we have
\begin{align*}
    \Big|\sum_{\substack{i+j+k = L\\ i,j,k\leq L-1}}c_ic_jc_k\Big| 
    & = \Big|6c_0c_1c_{L-1}+3c_0\sum_{j=2}^{L-2}c_jc_{L-j}+ 3c_1^2c_{L-2} +3c_1\sum_{j=2}^{L-3}c_jc_{L-1-j}+\sum_{\substack{i+j+k = L\\ i,j,k\geq 2}}c_ic_jc_k\Big|\\
    & \leq 6\big|c_0c_1\big|\cfrac{K^{L-1-\alpha}}{(L-1)^3}+3|c_0|K^{L-2\alpha}\sum_{j=2}^{L-2}\frac{1}{j^3(L-j)^3}\\
    &\qquad + 3c_1^2\cfrac{K^{L-2-\alpha}}{(L-2)^3} +3|c_1|K^{L-1-2\alpha}\sum_{j=2}^{L-3}\frac{1}{j^3(L-1-j)^3}+K^{L-3\alpha}\sum_{\substack{i+j+k = L\\ i,j,k\geq 2}}\frac{1}{i^3j^3k^3}\\
    &\lesssim \cfrac{K^{L-1-\alpha}}{(L-1)^3}+\cfrac{K^{L-2\alpha}}{L^3}+\cfrac{K^{L-2-\alpha}}{(L-2)^3}+\cfrac{K^{L-1-2\alpha}}{(L-1)^3} + \cfrac{K^{L-3\alpha}}{L^3}\\
    &\lesssim \cfrac{K^{L-1-\alpha}}{L^3},
\end{align*}
where we have used that $c_0$ and $c_1$ are bounded by a constant depending on $\gamma$ and $\la$ as well as the assumptions $\al\in(1,2)$ and $K\geq 1$ and, moreover, $L-2> 2$, so that the inductive assumption applies still to $c_{L-2}$. Note that as $L\geq 5$, there exists a universal constant $C>0$ such that $\frac{L}{L-1},\frac{L}{L-2},\frac{L}{L-3}\leq C$. 
Next, a similar argument yields
\begin{align*}
\Big|\sum_{\substack{i+j+k = L-1}}c_ic_jc_k\Big| 
    & = \Big|3c_0^2c_{L-1}+6c_0c_1c_{L-2}+3c_0\sum_{j=2}^{L-3}c_jc_{L-1-j}+ 3c_1^2c_{L-3}+3c_1\sum_{j=2}^{L-4}c_jc_{L-2-j}+\sum_{\substack{i+j+k = L-1\\ i,j,k\geq 2}}c_ic_jc_k\Big|\\
    &\lesssim \cfrac{K^{L-1-\alpha}}{(L-1)^3}+\cfrac{K^{L-2-\alpha}}{(L-2)^3}+\cfrac{K^{L-1-2\alpha}}{(L-1)^3}+\cfrac{K^{L-3-\alpha}}{(L-3)^3}+\cfrac{K^{L-2-2\alpha}}{(L-2)^3} + \cfrac{K^{L-1-3\alpha}}{(L-1)^3}\\
    &\lesssim \cfrac{K^{L-1-\alpha}}{L^3}, 
\end{align*}
where we again note that as $L\geq 5$, $L-3\geq 2$, so that the inductive assumption applies to $c_{L-3}$. Again using similar arguments, we bound
\begin{align*}
	\Big|\sum_{\substack{i+j+k = L+1\\ i,j,k\leq L-1}}c_ic_jc_k\Big| &=\Big|3c_0\sum_{\substack{j+k = L+1\\ j,k\leq L-1}}c_jc_k+3c_1^2c_{L-1}+3c_1\sum_{\substack{j+k = L\\ j,k\leq L-2}}c_jc_k+\sum_{\substack{i+j+k = L+1\\ i,j,k\geq 2}}c_ic_jc_k\Big|\\
	  &\lesssim \cfrac{K^{L+1-2\alpha}}{(L+1)^3}+ \cfrac{K^{L-1-\alpha}}{(L-1)^3}+\cfrac{K^{L-2\alpha}}{L^3}+ \cfrac{K^{L+1-3\alpha}}{(L+1)^3}\\
	  &\lesssim  \cfrac{K^{L+1-2\alpha}}{L^3}. 
\end{align*}
Now we  estimate $B_L$, recalling the definition in \eqref{def:Bell}, by employing these three combinatorial estimates as
\begin{align*}
    |B_L| & \lesssim (L+1)\bigg(\Big|\sum_{\substack{i+j+k = L+1\\ i,j,k\leq L-1}}c_ic_jc_k\Big| + \Big|\sum_{\substack{i+j+k = L\\ i,j,k\leq L-1}}c_ic_jc_k\Big| + \Big|\sum_{\substack{i+j+k = L-1}}c_ic_jc_k\Big| +|c_{\ell-1}|+|c_{\ell-2}|+|c_{\ell-3}|\bigg) \\
    & \lesssim (L+1)\bigg(\cfrac{K^{L+1-2\alpha}}{L^3} + \cfrac{K^{L-1-\alpha}}{L^3} + \cfrac{K^{L-1-\alpha}}{L^3} + \cfrac{K^{L-1-\alpha}}{L^3} + \cfrac{K^{L-2-\alpha}}{L^3}+ \cfrac{K^{L-3-\alpha}}{L^3}\bigg)\\
    &\lesssim \cfrac{K^{L-\alpha}}{L^2} \Big( 
    \cfrac{1}{K^{\alpha-1}}+\frac{1}{K}\Big),
\end{align*}
where have used that there exists a universal constant $C>0$ such that $\frac{L+1}{L}\leq C$ for all $L\geq 5$.
\end{proof}

We next justify the inductive growth assumption on $c_\ell$. 

\begin{lemma}\label{lemma for c_l}
For any fixed $\gamma\in(1,3]$ and $\lambda\in\Lambda$, let $\alpha\in(1,2)$ be given. Let $c_{\ell}$ be the coefficients in the formal Taylor expansion of $C(V)$ around $(V_*,C_*)$ solving the recursive relation of Lemma \ref{lemma:Taylorcoeffs}. Then there exists a constant $K=K(\ga,\la)>1$ such that  
$c_\ell$ satisfies the bound
\begin{align}\label{c_l condition}
    |c_{\ell}| \leq \cfrac{K^{\ell-\alpha}}{\ell^3}. 
\end{align}
\end{lemma}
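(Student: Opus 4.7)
The plan is to argue by strong induction on $\ell$, using the recursion $c_\ell = B_\ell/A_\ell$ coming from \eqref{recursive relation}, together with the growth estimate already obtained for $B_L$ in Lemma \ref{lemma for B_L upperbound}. The choice of the constant $K$ will be made at the end so that it simultaneously handles the finite number of base cases and validates the inductive step.

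\textbf{Step 1: Lower bound on $|A_\ell|$.} Recall from \eqref{def:Aell} that
\[
A_\ell = C_*\bigl[\,F_C(V_*,C_*) - G_C(V_*,C_*)c_1 - \ell\bigl(G_V(V_*,C_*) + G_C(V_*,C_*)c_1\bigr)\bigr].
\]
In the proof of Lemma \ref{nonvanish condition lemma} it was shown that $G_V(V_*,C_*) > 0$ for all $\gamma\in(1,3]$ and $\lambda\in\Lambda$, and from \eqref{G_C} and \eqref{c1} we have $G_C(V_*,C_*)c_1 > 0$. Hence $G_V + G_C c_1$ is bounded below by a positive constant $\mu_0 = \mu_0(\gamma,\lambda)$ and $C_* > 0$, so there exists $\mu = \mu(\gamma,\lambda) > 0$ and $\ell_0 = \ell_0(\gamma,\lambda) \geq 5$ with
\[
|A_\ell| \;\geq\; \mu\,\ell \qquad \text{for all } \ell \geq \ell_0.
\]

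\textbf{Step 2: Base cases.} For $2 \leq \ell \leq \max(\ell_0-1,4)$, the values $c_\ell$ are determined by \eqref{c_l equation} as a finite algebraic expression in $c_0 = C_*$, $c_1$, and the parameters $\gamma,\lambda$, and they are all finite in view of Lemma \ref{nonvanish condition lemma}. In particular there is a constant $M_0 = M_0(\gamma,\lambda)$ with $|c_\ell| \leq M_0$ for all these $\ell$. We then choose $K \geq K_*$ large enough (with $K_*$ from Lemma \ref{lemma for B_L upperbound}) that
\[
M_0 \leq \frac{K^{\ell-\alpha}}{\ell^3} \qquad \text{for } 2 \leq \ell \leq \max(\ell_0-1,4),
\]
which is possible since $\alpha < 2$ and the range of $\ell$ is finite. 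This establishes \eqref{c_l condition} on the base range.

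\textbf{Step 3: Inductive step.} Assume \eqref{c_l condition} holds for $2\le \ell \le L-1$ with $L \geq \max(\ell_0, 5)$. Lemma \ref{lemma for B_L upperbound} then gives
\[
|B_L| \;\leq\; \beta \, \frac{K^{L-\alpha}}{L^2}\Bigl(\frac{1}{K^{\alpha-1}} + \frac{1}{K}\Bigr),
\]
with $\beta=\beta(\gamma,\lambda)$, while Step 1 yields $|A_L|\ge \mu L$. Combining these in $c_L = B_L/A_L$ from \eqref{c_l equation},
\[
|c_L| \;\leq\; \frac{\beta}{\mu}\Bigl(\frac{1}{K^{\alpha-1}}+\frac{1}{K}\Bigr)\frac{K^{L-\alpha}}{L^3}.
\]
Since $\alpha > 1$, the prefactor $\frac{\beta}{\mu}(K^{1-\alpha} + K^{-1})$ tends to $0$ as $K\to \infty$, so enlarging $K$ if necessary (depending only on $\gamma,\lambda,\alpha$) we can ensure this prefactor is $\leq 1$. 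This closes the induction and proves \eqref{c_l condition}.

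\textbf{Main obstacle.} The principal difficulty is not any single step but the \emph{simultaneous} calibration of $K$: it must be large enough to (i) absorb the explicit finite base values $c_2,\dots,c_{\ell_0-1}$ into the envelope $K^{\ell-\alpha}/\ell^3$, and (ii) render the combinatorial prefactor $\frac{\beta}{\mu}(K^{1-\alpha}+K^{-1})$ strictly less than $1$ in the inductive step. The use of $\alpha \in (1,2)$ rather than $\alpha = 1$ is essential here: it is precisely the gap between $K^{L-\alpha}/L^2$ in the bound on $B_L$ and the additional factor of $L$ from $A_L$ that allows the final division by $\mu L$ to produce $K^{L-\alpha}/L^3$ with a coefficient that can be made small, since $K^{1-\alpha} \to 0$.
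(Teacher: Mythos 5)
Your proof is correct and follows essentially the same route as the paper's: strong induction, with $\ell=2,3,4$ (and any finitely many more) absorbed into base cases via a choice of $K$, the bound $|A_L|\gtrsim L$ coming from the linearity of $A_L$ in $L$ and its nonvanishing, and the inductive step closed by combining Lemma \ref{lemma for B_L upperbound} with $|A_L|\geq\mu L$ and sending $K$ large so that the prefactor $\tfrac{\beta}{\mu}(K^{1-\alpha}+K^{-1})$ drops below $1$. The only cosmetic difference is that you explicitly introduce a cutoff $\ell_0$ before which the linear lower bound for $|A_\ell|$ kicks in, whereas the paper simply asserts $\eta_1 L\leq |A_L|$ for all $L\geq 5$; since $f(\ell)=F_C-G_Cc_1-\ell(G_V+G_Cc_1)$ is negative for all $\ell\geq 2$ (so $|A_\ell|$ is strictly increasing in $\ell$ and $|A_\ell|/\ell$ is bounded away from $0$), the bound in fact holds for all $\ell\geq 2$ and the introduction of $\ell_0$ is unnecessary, though harmless.
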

\begin{proof} We argue by induction on $\ell$.  
When $\ell =2,3,4$, it is clear from \eqref{c_l condition}, the forms of $A_\ell$ and $B_\ell$ defined by \eqref{def:Aell}--\eqref{def:Bell}, and the non-vanishing condition \eqref{nonvanish condition} that there exists a constant $K(\gamma,\lambda)$ such that $c_2$, $c_3$, and $c_4$ satisfy the bounds. 
Suppose for some $L\geq 5$, \eqref{c_l condition} holds for all $2\leq \ell \leq L-1$. Then we may apply Lemma \ref{lemma for B_L upperbound}   
and with the recursive relation \eqref{c_l equation}, we obtain 
\begin{align*}
    |c_L| \leq \cfrac{\beta}{|A_L|} \cfrac{K^{L-\alpha}}{L^2} \Big( 
    \cfrac{1}{K^{\alpha-1}}+\frac{1}{K}\color{black}\Big),
\end{align*}
where $A_L = C_*\big[F_C(V_*,C_*)-G_C(V_*,C_*)c_1-L[G_V(V_*,C_*)+G_C(V_*,C_*)c_1]\big]$. As $A_L$ is linear in $L$ and non-zero for all $L$, there exists constants $\eta_1 = \eta_1(\gamma,\lambda)$ and $\eta_2= \eta_2(\gamma,\lambda)$ such that $\eta_1 L\leq |A_L| \leq \eta_2 L$ for all $L\ge 5$. Therefore,
\begin{align*}
    |c_L| \leq \beta\frac{1}{\eta_1 L} \cfrac{K^{\ell-\alpha}}{L^2} \Big( 
    \cfrac{1}{K^{\alpha-1}}+\frac{1}{K}\color{black}\Big). 
\end{align*}
Choosing $K$  sufficiently large, as $\al>1$, it is clear that the estimate \eqref{c_l condition} holds for $\ell = L$, thus concluding the proof.
\end{proof}

We are now ready to prove the main result of this section. 

\begin{theorem}\label{analytic}
For any fixed $\gamma\in(1,3]$ and $\lambda\in\Lambda$, there exists $\epsilon=\epsilon(\ga,\la)>0$ such that the Taylor series
\begin{align}\label{local analytic solution}
	C(V) = \sum_{\ell =0}^{\infty} c_{\ell}(V-V_*)^{\ell}
\end{align}
 converges absolutely on the interval $(V_*-\epsilon,V_*+\epsilon)$. Moreover, $C(V)$ is the unique analytic solution to \eqref{ODE}.
\end{theorem}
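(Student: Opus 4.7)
The plan is to assemble the pieces already established. First I would invoke Lemma \ref{lemma for c_l}, which supplies a constant $K=K(\gamma,\lambda)>1$ such that the Taylor coefficients $c_\ell$ produced by the recursion \eqref{c_l equation} satisfy $|c_\ell|\leq K^{\ell-\alpha}/\ell^3$ for all $\ell\geq 2$. Combined with the boundedness of $c_0=C_*$ and of the chosen branch $c_1$ from \eqref{c1}, this gives
\[
\sum_{\ell=0}^\infty |c_\ell|\,|V-V_*|^\ell \;\leq\; |c_0|+|c_1|\,|V-V_*|+K^{-\alpha}\sum_{\ell\geq 2}\frac{(K|V-V_*|)^\ell}{\ell^3},
\]
so the series converges absolutely on $(V_*-\epsilon,V_*+\epsilon)$ for any $\epsilon<1/K$, by comparison with a convergent geometric series. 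Standard power series theory then shows that the sum $C(V)$ is analytic in this interval and may be differentiated termwise.

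Next I would check that the analytic function so defined actually solves \eqref{ODE}. By construction, the coefficients $c_\ell$ were chosen so that each power of $v=V-V_*$ in the identity \eqref{taylor expansion from ode} of Lemma \ref{lemma:Taylorcoeffs} vanishes: the $v^1$ term vanishes because $c_1$ solves the quadratic \eqref{quadratic equation of c_1}, and the $v^\ell$ terms for $\ell\geq 2$ vanish because of the recursion $A_\ell c_\ell=B_\ell$, whose solvability is guaranteed by Lemma \ref{nonvanish condition lemma}. Hence the analytic identity
\[
(1+V)F(V,C(V)) \;=\; (1+V)\,C'(V)\,G(V,C(V))
\]
holds on $(V_*-\epsilon,V_*+\epsilon)$. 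Since $1+V_*=C_*>0$ at both $P_6$ and $P_8$ by \eqref{upper and lower bound for V/C_6/8}, we have $1+V\neq 0$ and $G(V,C(V))\neq 0$ on a (possibly smaller) punctured neighborhood of $V_*$, and dividing yields $C'(V)=F(V,C(V))/G(V,C(V))$ there; by continuity the identity $C'(V_*)=c_1$ is also consistent with the L'H\^opital value. Shrinking $\epsilon$ if necessary gives a genuine analytic solution of \eqref{ODE} on $(V_*-\epsilon,V_*+\epsilon)$.

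For uniqueness, suppose $\widetilde C(V)=\sum_{\ell\geq 0}\widetilde c_\ell(V-V_*)^\ell$ is any analytic solution of \eqref{ODE} in a neighborhood of $V_*$ with $\widetilde c_0=C_*$ and $\widetilde c_1=c_1$ (the negative branch selected in Section \ref{local behavior}). Substituting the Taylor expansion of $\widetilde C$ into $(1+V)F=(1+V)\widetilde C'G$ and identifying coefficients produces exactly the relations of Lemma \ref{lemma:Taylorcoeffs}; by Lemma \ref{nonvanish condition lemma}, $A_\ell\neq 0$ for all $\ell\geq 2$, so the recursion forces $\widetilde c_\ell=c_\ell$ inductively, and hence $\widetilde C=C$.

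The step that requires the most care is the passage from the formal termwise identity \eqref{taylor expansion from ode} to the pointwise ODE: one must verify that the rearrangements implicit in manipulating the Cauchy products \eqref{v(C^2)_l(c)^3_l taylor expansion}--\eqref{C^2C^3C'C^2 taylor expansion} are justified on the interval of absolute convergence, and that $G(V,C(V))$ does not vanish identically on any one-sided neighborhood of $V_*$ (which follows from $G_C(V_*,C_*)c_1+G_V(V_*,C_*)\neq 0$, a consequence of the non-vanishing condition \eqref{nonvanish condition} at $\ell=1$ together with $c_1\neq 0$). Once this is in hand, the theorem follows.
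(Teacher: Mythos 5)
Your proposal is correct and, for the central convergence estimate, follows exactly the paper's route: fix $\alpha\in(1,2)$, invoke Lemma \ref{lemma for c_l} to bound $|c_\ell|\leq K^{\ell-\alpha}/\ell^3$, and compare the tail of the Taylor series to a convergent geometric series, taking $\epsilon<1/K$. The paper's own proof of Theorem \ref{analytic} stops there; it does not explicitly verify that the convergent series solves \eqref{ODE} or that it is the unique analytic solution. You supply those two missing verifications, and both are sound: the vanishing of the coefficient of $v^\ell$ in the identity of Lemma \ref{lemma:Taylorcoeffs} for every $\ell$ gives the pointwise ODE after dividing by $(1+V)G(V,C(V))$, which is legitimate because $1+V_*=C_*>0$ and $G_V(V_*,C_*)+G_C(V_*,C_*)c_1>0$ (both established in the proof of Lemma \ref{nonvanish condition lemma}) make $V_*$ a simple zero of $G(V,C(V))$; and uniqueness follows because any analytic solution with the prescribed $c_0=C_*$ and $c_1$ must have Taylor coefficients satisfying the same recursion $A_\ell c_\ell = B_\ell$, with $A_\ell\neq 0$ by Lemma \ref{nonvanish condition lemma}. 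So your proof is a more complete version of the paper's, not a different one.
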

\begin{proof}
	Let $\alpha \in (1,2)$ be fixed and suppose $|V-V_*|<\epsilon$, where $\epsilon>0$ is to be chosen later. By \eqref{c_l condition} in Lemma \ref{lemma for c_l}, there exists a constant $K(\alpha,\gamma,\lambda)$ such that
	\begin{align*}
		|\sum_{\ell =2}^{\infty} c_{\ell}(V-V_*)^{\ell}| \leq \sum_{\ell =2}^{\infty} \cfrac{K^{\ell-\alpha}}{\ell^3}\epsilon^{\ell}\leq \sum_{\ell=2}^\infty  (K\epsilon)^\ell< \infty,
	\end{align*}
	provided $\epsilon <\frac{1}{K}$.
	Thus, $\sum_{\ell=0}^{\infty} c_{\ell}(V-V_*)^{\ell}$ converges absolutely for $V \in (V_*-\epsilon,V_*+\epsilon)$ with $0<\epsilon<\frac{1}{K}$.
\end{proof}

\begin{remark}\label{continuous dependence of the local analytic solution on gamma, z}
Notice that the local analytic solution $C(V)$ obtained in Theorem \ref{analytic} depends on $\gamma$ and $z$. Since all the coefficients $c_\ell$ for any $\ell\geq 1$ are continuous functions of $(\gamma,\la)$ for $\gamma\in (1,3]$ and $\lambda\in \Lambda$ by \eqref{nonvanish condition} and \eqref{G_C}, by standard compactness and uniform convergence, we deduce that $C=C(V;\gamma,\la)$ is a continuous function of $(\gamma,\la)$ (equivalently continuous in $(\ga,z)$) on its domain.  
\end{remark}
\begin{lemma}\label{sign of dC/dV to the left}
The local analytic solution \eqref{local analytic solution}, propagated by \eqref{ODE} to the left of the sonic point $P_*$, except at the sonic point itself,  remains strictly away 
from the zeros of $F$, $G$, and $D$. In addition, the solution to the left of the sonic point $P_*$ satisfies $F>0$, $G<0$, $D<0$ and $\frac{dC}{dV} <0$.
\end{lemma}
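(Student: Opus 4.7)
The plan is to establish the signs in two stages: first via a Taylor computation at $P_*$ giving the local signs of $D$, $G$, $F$ just to the left of $V_*$; then extending these signs to the full left-propagation via a contradiction argument.

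For the local step, using $C(V) = C_* + c_1(V - V_*) + O((V-V_*)^2)$ with $c_1 < 0$, I would compute the first derivatives of $D$, $G$, $F$ along the trajectory at $V_*$:
\begin{align*}
\frac{d}{dV}D\Big|_{V_*} = 2C_*(1 - c_1), \qquad
\frac{d}{dV}G\Big|_{V_*} = G_V + G_C c_1, \qquad
\frac{d}{dV}F\Big|_{V_*} = F_V + F_C c_1.
\end{align*}
Using $c_1 < 0$ together with the signs $F_V < 0$, $F_C > 0$, $G_C < 0$ established in Section \ref{local behavior} and $G_V > 0$ from the proof of Lemma \ref{nonvanish condition lemma}, each of these derivatives has a definite sign: strictly positive for $D$ and $G$ (both summands of the same sign), strictly negative for $F$ (both summands negative). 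Since $D$, $G$, and $F$ all vanish at $V_*$, Taylor's theorem immediately yields $D < 0$, $G < 0$, $F > 0$ for $V$ slightly less than $V_*$, and hence $\frac{dC}{dV} = F/G < 0$ on some interval $(V_* - \delta, V_*)$.

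For the global extension, I would argue by contradiction: suppose $V_a \in (-\infty, V_*)$ is the largest value at which one of $F$, $G$, $D$ vanishes on the trajectory. The case $D = 0$ at $V_a$ is cleanest: the singular $1/(1+V)$ term in $F$ forces the trajectory to stay in $V > -1$ (else $F$ would blow up, incompatible with $F > 0$ bounded), so $C(V_a) = 1 + V_a$; then $H(V) := C(V) - (1+V)$ vanishes at both $V_a$ and $V_*$ with $H > 0$ strictly in between, and Rolle's theorem gives $V_m \in (V_a, V_*)$ with $H'(V_m) = 0$, i.e., $\frac{dC}{dV}\big|_{V_m} = 1$, directly contradicting $\frac{dC}{dV} = F/G < 0$ on $(V_a, V_*)$. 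The case $G = 0$ at $V_a$ (with $F > 0$) corresponds to the autonomous trajectory becoming tangent to a vertical line in the phase plane, identifying $V_a$ as the natural left endpoint of the domain of the single-valued analytic function $C(V)$, with $G < 0$ strictly in the interior by assumption.

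The delicate remaining case is $F(V_a, C(V_a)) = 0$, where $\frac{dC}{dV}\big|_{V_a} = 0$; this is where I expect the main obstacle. To handle it, I would exploit the global structure of the zero set $\{F = 0\}$ in the relevant portion of the phase plane. The non-trivial branch of $\{F=0\}$ passes through $P_*$ with slope $-F_V/F_C > 0$, transverse to the trajectory's slope $c_1 < 0$, so the trajectory enters the region above this curve where $F > 0$; a topological/invariance argument, using the strict monotonicity $\frac{dV}{d\tau} = G < 0$ along the trajectory together with the fact that $\{F=0\}$ and $\{G=0\}$ intersect only at the triple/double points catalogued in Lemmas \ref{triple roots}--\ref{double root}, then shows the trajectory cannot return to $\{F=0\}$ without first exiting the region $\{G < 0, D < 0\}$, which has already been excluded. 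Combining these three cases concludes the extension argument.
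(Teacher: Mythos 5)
Your local Taylor computation is correct and in fact makes explicit what the paper's own (very terse) proof leaves implicit: the signs of $F_V+F_Cc_1$, $G_V+G_Cc_1$, and $2C_*(1-c_1)$ do pin down the signs of $F$, $G$, $D$ on a one-sided neighborhood of $V_*$, using $c_1<0$, $F_V<0$, $F_C>0$, $G_C<0$ and $G_V>0$. Your $D=0$ case is also clean: since $D<0$ means $C>1+V$ for $V>-1$, the function $H(V)=C(V)-(1+V)$ vanishes at $V_a$ and $V_*$ and is positive between, so Rolle forces $C'(V_m)=1>0$ somewhere, contradicting $C'<0$. This is a genuinely nice argument.

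The gap is in the other two cases. For $G=0$ you do not actually derive a contradiction — you merely re-describe the configuration as a ``natural left endpoint.'' But there is a clean contradiction available: parametrize the orbit autonomously by $\dot V=G$, $\dot C=F$; at a first hitting time $\tau_a$ of $\{G=0\}$ with $F>0$ one has $\dot V(\tau_a)=0$ and $\ddot V(\tau_a)=G_V\dot V+G_C\dot C=G_C F<0$ (since $G_C=2Cg_1<0$ on $V\le V_*$), so $\dot V>0$ just before $\tau_a$, contradicting $\dot V=G<0$ there. For $F=0$, the situation is genuinely harder, and your ``topological/invariance argument'' is only a sketch: it asserts but does not show that the trajectory cannot touch $\{F=0\}$ inside $\{G<0,D<0\}$. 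The analogous parametric computation gives $\ddot C(\tau_a)=F_V G$ at the first touching point, and ruling out the touch requires $F_V<0$ at $(V_a,C_a)$ — a sign you only establish \emph{at the sonic point itself} (Lemmas 3.3--3.4), not at an arbitrary point of $\{F=0\}$ with $V_1<V_a<V_*$. Alternatively one must verify $F_V<0$ (equivalently the sign of $f_2f_1'-f_2'f_1$) along the relevant arc of $\{F=0\}$, and that computation is nowhere carried out. Without it, the $F=0$ case remains open, and this is the delicate point you yourself flag but do not close.
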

\begin{proof}
The result is owing to our  choice of the negative branch $c_1$ (see  \eqref{c1}). The sign conditions then follow from $F_C(V_*,C_*)>0$ and $G_C(V_*,C_*)<0$ from  \eqref{F_C} and \eqref{G_C}. 
\end{proof}

\section{Solving to left: basic setup and constraints on connections}\label{sec:leftbasic}

In this section, we introduce the basic setup for the continuity argument for the first trajectory 
to the left of the sonic point and also show that the solutions of \eqref{ODE system} starting from the initial point $P_1$ can't connect to $P_8$ for $\gamma\in (1,\gaSix]$ and can't connect to $P_6$ for $\gamma\in [2,3]$. We will use $z$ only (instead of $\la$) in the following sections. The corresponding range of $z$ for $P_*$ is given by
\begin{align}
	\mathcal{Z}(\gamma;P_*)=\begin{cases}
		&[z_m(\gamma),z_M(\gamma)] \quad \text{ when } P_*=P_6,\\
		&(0,z_M(\gamma)]\quad \text{ when } P_*=P_8.
	\end{cases}
\end{align}

\subsection{Connection from the sonic point to the initial point $P_1$}\label{sec:existence}

By Theorem \ref{analytic}, the following 
 problem
	\begin{align}\label{ivp p*}
	\begin{cases}
		&\cfrac{dC}{dV} = \cfrac{F(V,C;\gamma,z)}{G(V,C;\gamma,z)},\quad V\in[V_1,V_*],\\
		&C(V_*) = C_*,\\
		&\frac{dC}{dV}(V_*)= c_1,
	\end{cases}
\end{align}
where $c_1$ is as defined in \eqref{c1},
has a local analytic solution. To prove the existence of a trajectory connecting $P_1$ and $P_*$, we will first show that the local analytic solution obtained from Theorem \ref{analytic} extends smoothly as a strictly monotone decreasing solution to the left to $C:[V_1,V_*]\to\R_+$. Secondly, we will show that for any $\gamma\in(1,3]$, there exists a $z_{std}\in\mathcal{Z}(\gamma;P_*)$ such that the local analytic solution from either $P_6$ or $P_8$ can be extended smoothly to $P_1$ by using a continuity argument.

\begin{lemma}\label{local solution to the left}
Let $P_*=(V_*,C_*)$ be either $P_6$ or $P_8$ and suppose $C:[V_*-\epsilon,V_*]\to\R_+$ is the local, analytic solution to \eqref{ivp p*} guaranteed by Theorem \ref{analytic}. Then the solution extends smoothly to the left to $C:[V_1,V_*]\to\R_+$.
\end{lemma}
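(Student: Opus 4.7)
Plan: The plan is to apply a standard maximal interval of existence argument for the ODE \eqref{ODE} with initial data at $V_*$. Let $V_\dagger \in [V_1, V_*)$ denote the infimum of the maximal left $V$-interval on which the local analytic solution from Theorem \ref{analytic} extends as a solution of \eqref{ODE}, and suppose for contradiction that $V_\dagger > V_1$. By Lemma \ref{sign of dC/dV to the left}, the strict sign conditions $F > 0$, $G < 0$, $D < 0$, and $\frac{dC}{dV} < 0$ persist throughout $(V_\dagger, V_*]$, so $C$ is strictly monotone decreasing in $V$ and satisfies $C(V) \geq C_*$.

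The first quantitative step is to derive an a priori upper bound on $C$. From $G(V_*, C_*) = 0$ and $C_* = 1+V_* > 0$ one checks that $(m+1)V_* + 2mz < 0$; since $V_1 > -1$, both $1+V$ and $-((m+1)V + 2mz)$ are then bounded below by positive constants on $[V_1, V_*]$. Inspecting the polynomial expressions \eqref{G(V,C)} and \eqref{F(V,C)} yields a pointwise estimate $|F(V, C)/G(V, C)| \leq A + BC$ on $[V_1, V_*] \times [C_*, \infty)$ for constants $A, B > 0$ depending only on $(\gamma, z)$ (the $C^3$ term in $F$ being controlled by the dominant $C^2$ term in $G$). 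Applying Gronwall's inequality to $C$ traversed backward in $V$ then gives the uniform bound $C(V) \leq M(\gamma, z)$ on $(V_\dagger, V_*]$.

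By monotonicity and boundedness, $C_\dagger := \lim_{V \to V_\dagger^+} C(V)$ exists in $[C_*, M]$, and since $V_\dagger > V_1 > -1$ and $C_\dagger > 0$, the limit point $(V_\dagger, C_\dagger)$ lies in the interior of the relevant region. By continuity $G(V_\dagger, C_\dagger) \leq 0$, and if the inequality were strict then $F/G$ would be analytic in a neighborhood of $(V_\dagger, C_\dagger)$ and Picard-Lindel\"of would extend the solution past $V_\dagger$, contradicting maximality.

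The main obstacle is therefore to exclude the degenerate case $G(V_\dagger, C_\dagger) = 0$. I would handle this by passing to the $x$-parameterization of the full system \eqref{ODE system}, which is analytic in $(V, C, x)$ wherever $D \neq 0$ and $x \neq 0$; both hold at and near $(V_\dagger, C_\dagger, x_\dagger)$ by Lemma \ref{sign of dC/dV to the left} (yielding $D < 0$ strictly) and the requirement $x < 0$. The trajectory $(V(x), C(x))$ thus extends analytically through $x_\dagger$, and the analytic function $g(x) := G(V(x), C(x))$ vanishes at $x_\dagger$. A direct computation gives $g'(x_\dagger) = G_C(V_\dagger, C_\dagger) \cdot \frac{dC}{dx}(x_\dagger)$, which is nonzero since $G_C \neq 0$ away from the triple points and $\frac{dC}{dx} \neq 0$ (from $F > 0$). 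Hence $g$ has a simple zero at $x_\dagger$ and changes sign across it, so for $x$ slightly less than $x_\dagger$ one would have $G(V(x), C(x)) > 0$ on a segment of the trajectory with $V(x) \in (V_1, V_*)$, contradicting the strict sign $G < 0$ guaranteed by Lemma \ref{sign of dC/dV to the left}. This forces $G(V_\dagger, C_\dagger) < 0$ and closes the contradiction argument.
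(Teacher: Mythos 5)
You take a more explicit route than the paper's terse proof: where the paper simply asserts (invoking Lemma \ref{sign of dC/dV to the left}) that "the only obstruction to continuation past $V_0$ is blow-up of $C$" and then rules out blow-up via the bound $\frac{d\log C}{dV}\geq -A$ for $C\geq M(\gamma,z)$, you make the dismissal of the degenerate limit point explicit through the $x$-parameterization. That extra step is a genuine contribution in rigor, but three details need repair.

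First, the pointwise estimate $|F(V,C)/G(V,C)|\leq A+BC$ on all of $[V_1,V_*]\times[C_*,\infty)$ is false: this set intersects the zero curve of $G$ (namely $C=\sqrt{g_2(V)/g_1(V)}$), along which $|F/G|$ is unbounded. You must restrict to $C\geq M$ for $M$ large, where $|G|\geq\tfrac12 C^2|(m+1)V+2mz|$ is uniformly positive; that is exactly what the paper does, and the conclusion $C\leq \tilde M(\gamma,z)$ then follows by treating $C<M$ trivially.

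Second, your sign in the final contradiction is reversed. Using $G_C(V_\dagger,C_\dagger)=2C_\dagger[(m+1)V_\dagger+2mz]<0$ and $\frac{dC}{dx}(x_\dagger)=-\frac{F}{\lambda xD}<0$ (since $F>0$, $D<0$, $x_\dagger<0$), one gets $g'(x_\dagger)>0$, so $g(x)>0$ for $x$ slightly \emph{greater} than $x_\dagger$. This corresponds to $V(x)\in(V_\dagger,V_*)$ — inside the domain where Lemma \ref{sign of dC/dV to the left} already gives $G<0$ — so the contradiction is cleaner than what you wrote and needs no extension of the trajectory to $V<V_\dagger$.

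Third, the computation of $g'(x_\dagger)$ uses $F(V_\dagger,C_\dagger)>0$ and $D(V_\dagger,C_\dagger)<0$. The latter is safe because $C_\dagger\geq C_*=1+V_*>1+V_\dagger$; the former only follows from continuity as $F\geq 0$, and the possibility $F(V_\dagger,C_\dagger)=0$ (making $(V_\dagger,C_\dagger)$ a double point such as $P_4$) should at least be flagged and ruled out. The paper shares this gap implicitly, but if you are supplying the missing argument, it is worth closing fully.
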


\begin{proof} We argue by contradiction. Suppose that the maximal time of existence of the solution is $(V_0,V_*]$ for some $V_0\geq V_1$. By Remark \ref{sign of dC/dV to the left}, $C(V)>C(V_*)$ on ($V_0,V_*)$. Moreover, as the right hand side of the ODE \eqref{ivp p*} is locally Lipschitz away from zeros of $G$, we see that the only obstruction to continuation past $V_0$ is blow-up of $C$, i.e., $\limsup_{V\to V_0^+}C(V)=\infty$.

Now, from the explicit forms of $F$ and $G$, we observe that there exists $M=M(\ga,z)>0$ such that if $C\geq M$, for all $V\in[V_1,V_*-\epsilon]$, we have 
\begin{align*}
G(V,C;\gamma,z) = &\,C^2[(m+1)V+2mz]-V(1+V)(\lambda+V)\leq \frac12 C^2[(m+1)V+2mz]<0,\\
F(V,C;\ga,z) = &\,C\big\{C^2[1+\frac{mz}{(1+V)}]- a_1(1+V)^2+a_2(1+V)-a_3\big\} \leq \frac32 C^3[1+\frac{mz}{(1+V)}]
\end{align*}
and $F>0$.
Thus, 
\begin{align*}
\frac{dC}{dV}=\frac{F(V,C;\gamma,z)}{G(V,C;\gamma,z)}\geq 3 \frac{C[1+\frac{mz}{(1+V)}]}{[(m+1)V+2mz]}.
\end{align*}
Thus, as $V$ is contained in a bounded set, we see that there exists a constant $A>0$ such that whenever $C(V)\geq M$, we have
$$\frac{d \log C}{dV}\geq -A,$$
and hence $C$ is necessarily bounded on $(V_0,V_*)$, contradicting the assumption.
\end{proof}

By Theorem \ref{analytic} and Lemma \ref{local solution to the left}, \eqref{ivp p*} has a smooth solution on $[V_1, V_*]$. 
We use $C(V;\gamma,z,P_*)$ to denote the solution of \eqref{ivp p*} at $V\in[V_1,V_*]$. By the fundamental theorem of calculus,
\begin{align}\label{C(V;gamma,z,P_*)}
	C(V;\gamma,z,P_*) = \int_{V_*(\gamma,z)}^{V} \frac{dC(V;\gamma,z)}{dV}  dV +C_*(\gamma,z) = \int_{V_*(\gamma,z)}^{V} \cfrac{F(V,C;\gamma,z)}{G(V,C;\gamma,z)}  dV +C_*(\gamma,z).
\end{align}
It is clear from the expressions for $(V_6,C_6)$ and $(V_8,C_8)$ in \eqref{P6}, \eqref{P8} as well as the continuous dependence of the local, analytic solution on $\ga$, $z$ (\textit{cf.}  Remark \ref{continuous dependence of the local analytic solution on gamma, z}), and the continuity properties of $F$ and $G$ that this is a continuous function with respect to $\gamma$ and $z$. 
 Moreover, the initial value $V_1$ only depends on $\gamma$. Hence, for any fixed $\gamma$ and a sonic point $P_*\in\{P_6,P_8\}$, if we can show that there exists a $\underline{z}\in\mathcal{Z}(\gamma;P_*)$ such that $C(V_1;\gamma,z,P_*)\leq C_1$ and a $\overline{z}\in\mathcal{Z}(\gamma;P_*)$ such that $C(V_1;\gamma,z,P_*)\geq C_1$, then we conclude that there exists a $z_{std}$ such that $\underline{z}\leq z_{std}\leq \overline{z}$ and $C(V_1;\gamma,z_{std},P_*)= C_1$. This motivates the introduction of upper and lower solutions: 

\begin{definition}(Upper and lower solution). Let $\gamma\in (1,3]$ and $z\in\mathcal{Z}(\gamma;P_*)$. Let $C(\cdot;\gamma,z,P_*):[V_1,V_*]\to\R_+$ be the analytic solution obtained from Theorem \ref{analytic} and Lemma \ref{local solution to the left}. We say that $\overline{z}(\gamma;P_*)$ gives an upper solution for $P_*$ if
	\begin{align}\label{upper solution def}
		C(V_1;\gamma,\overline{z}(\gamma;P_*),P_*) > C_1.
	\end{align} 
	We say that $\underline{z}(\gamma;P_*)$ gives a lower solution for $P_*$ if
	\begin{align}\label{lower solution def}
		C(V_1;\gamma,\underline{z}(\gamma;P_*),P_*) < C_1.
	\end{align} 

\end{definition}

The proof of the existence of an analytic solution connecting $P_1$ and either $P_6$ or $P_8$ proceeds as follows. 
We will first show that $P_6$ always admits a lower solution and $P_8$ always admits an upper solution. It will then follow that, depending on whether $C(V;\ga,z_M,P_6)=C(V;\ga,z_M,P_8)$ gives a lower or an upper solution (or connects to $P_1$), at least one of $P_6$ and $P_8$ has both an upper and a lower solution, thus concluding the proof.

First we show the existence of a lower solution for $P_6$.

\begin{lemma}\label{lowersolutionP6}
	Let $\gamma\in(1,3]$. Then there exists $\underline{z}(\gamma;P_6)\in\mathcal{Z}(\gamma;P_6)$ such that  $C(V;\gamma,\underline{z}(\gamma),P_6)$ is a lower solution for $P_6$.
\end{lemma}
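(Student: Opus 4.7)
The plan is to take $\underline{z}(\gamma;P_6) = z_m(\gamma)$ as defined in \eqref{zm}, and show that this value realizes the lower solution condition essentially by construction. Recall from \eqref{V6zm} that $z_m$ is precisely the value of $z$ at which $V_6(z_m) = V_1$. Consequently, at $z = z_m$ the sonic point $P_6$ is located at
\[
\bigl(V_6(z_m), C_6(z_m)\bigr) = \bigl(V_1, \, 1 + V_1\bigr),
\]
so the interval $[V_1, V_6(z_m)]$ on which the analytic solution $C(\cdot;\gamma, z_m, P_6)$ is defined by Theorem \ref{analytic} together with Lemma \ref{local solution to the left} collapses to the single point $\{V_1\}$.

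At this degenerate interval, the value of the extended solution is simply the sonic value itself, namely $C(V_1;\gamma, z_m, P_6) = C_6(z_m) = 1 + V_1$. The strict inequality \eqref{ineq:C6lam}, which was itself deduced from the jump condition $D(V_1,C_1) < 0$ in \eqref{D(V1,C1)<0}, then yields
\[
C(V_1;\gamma, z_m, P_6) = 1 + V_1 < C_1,
\]
which is exactly the lower solution condition \eqref{lower solution def}. Hence $\underline{z}(\gamma;P_6) = z_m(\gamma) \in \mathcal{Z}(\gamma;P_6)$ is an admissible lower solution for every $\gamma \in (1,3]$.

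There is no substantive obstacle in this step: the argument reduces to the observation that $z_m$ was defined precisely so that $P_6$ sits on the vertical line $\{V = V_1\}$, combined with the geometric fact that $P_6$ lies strictly on the sonic line $C = 1 + V$ whereas $P_1$ lies strictly above it (so $C_1 > 1 + V_1$). The nontrivial content of the overall continuity argument for $P_6$ is postponed to the matching upper-solution analysis, and the analogous existence of upper and lower solutions for $P_8$ treated in the subsequent lemmas of this section.
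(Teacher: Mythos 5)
Your proof is correct and follows essentially the same route as the paper: both take $\underline{z} = z_m(\gamma)$, observe that $V_6(z_m) = V_1$ so the interval degenerates to a point with $C(V_1;\gamma,z_m,P_6) = C_6(z_m) = 1+V_1$, and conclude via \eqref{ineq:C6lam}. Your explicit remark that the integration interval collapses to $\{V_1\}$ is a slightly cleaner way of phrasing why no actual ODE propagation is needed.
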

\begin{proof}
	This  follows simply from  Remark \ref{whyweneedzm} and the monotonicity of $C(V;\gamma,{z},P_6)$. When $z=z_m(\gamma)$, we have $V_6(z_m(\gamma)) = V_1(\gamma)$ and $C(V_1;\gamma,z_m(\gamma),P_6)=C_6(z_m(\gamma)) < C_1(\gamma)$ by \eqref{ineq:C6lam}. Thus, $C(V;\gamma,z_m(\gamma),P_6)$ gives a lower solution for $P_6$.
\end{proof}

Next we show  the existence of an upper solution for $P_8$.

\begin{lemma}\label{uppersolutionP8}
Let $\gamma\in(1,3]$. Then there exists $\overline{z}(\gamma;P_8)\in\mathcal{Z}(\gamma;P_8)$ such that  $C(V;\gamma,\overline{z}(\gamma;P_8),P_8)$ is an upper solution for $P_8$.
\end{lemma}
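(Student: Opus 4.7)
The strategy is to exploit the limiting behavior of the sonic point $P_8(z)$ as $z\to 0^+$. From the explicit formula \eqref{P8} together with $w(0)=1$, one reads off $V_8(z)\to 0$ and $C_8(z)\to 1$ as $z\to 0^+$. On the other hand, for $\gamma\in(1,3]$ we have the elementary bound
\[
C_1=\frac{\sqrt{2\gamma(\gamma-1)}}{\gamma+1}<1,
\]
which reduces to $\gamma^2-4\gamma-1<0$, valid for $\gamma<2+\sqrt{5}$ and in particular throughout $(1,3]$. Also, $V_1=-2/(\gamma+1)<0$. Together, these observations mean that as $z\to 0^+$ the point $P_8(z)$ migrates into the region $\{V>V_1\}\cap\{C>C_1\}$ of the phase plane.

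Concretely, I would fix $\gamma\in(1,3]$ and invoke the continuity of $V_8$ and $C_8$ in $z$ (visible from \eqref{P8}) to pick $\overline z(\gamma;P_8)\in(0,z_M(\gamma)]$ small enough that both
\[
V_8(\overline z)>V_1\quad\text{and}\quad C_8(\overline z)>C_1
\]
hold simultaneously. For this choice of $z$, Theorem~\ref{analytic} provides the local analytic solution around $P_8$ with the negative branch $c_1$ from \eqref{c1}, and Lemma~\ref{local solution to the left} extends it smoothly to $C(\,\cdot\,;\gamma,\overline z,P_8):[V_1,V_8(\overline z)]\to\mathbb R_+$.

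To conclude, I would invoke Lemma~\ref{sign of dC/dV to the left}, which guarantees that along this extended trajectory one has $dC/dV<0$ everywhere to the left of the sonic point. Hence $C$ is strictly decreasing in $V$ on $[V_1,V_8(\overline z)]$, yielding
\[
C(V_1;\gamma,\overline z,P_8)>C(V_8(\overline z);\gamma,\overline z,P_8)=C_8(\overline z)>C_1,
\]
which is exactly condition \eqref{upper solution def}. I do not anticipate any serious obstacle: unlike the argument for $P_6$ at $z=z_m$, the situation at $P_8$ is especially clean because $z$ may be driven freely to the lower end of the admissible window $(0,z_M]$, and the monotonicity along the leftward extension has already been secured by the choice of the negative branch in Section~\ref{local behavior}. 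The only point requiring a moment's care is the continuous dependence of the extended trajectory on $z$ down to small values; but this is assured by Remark~\ref{continuous dependence of the local analytic solution on gamma, z} combined with the strict separation from the zeros of $F$, $G$, $D$ along the leftward flow.
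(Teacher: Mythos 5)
Your proposal is correct and follows essentially the same route as the paper: let $z\to 0^+$ so that $C_8(z)\to 1 > C_1(\gamma)$, then extend the analytic local solution from $P_8$ leftward to $V_1$ and use the strict monotonicity $dC/dV<0$ (Lemma~\ref{sign of dC/dV to the left}) to deduce $C(V_1;\gamma,\overline z,P_8) > C_8(\overline z) > C_1$. The only cosmetic differences are that the paper obtains $C_1<1$ via the monotonicity of $C_1(\gamma)$ (Lemma~\ref{C1V1}) and the monotonicity of $C_8$ in $z$ (Lemma~\ref{Lemma V_6/8 monoton}) rather than your direct inequality and continuity; these are interchangeable.
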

\begin{proof}
	By Lemma \ref{C1V1}, $C_1(\gamma)\leq C_1(3)=\frac{\sqrt{3}}{2}<1$ for each $\gamma\in(1,3]$. By Lemma \ref{Lemma V_6/8 monoton}, $C_8(\ga,z)$ is monotone decreasing with respect to $z$ for $0<z\leq z_M$. Since $C_8(\gamma,0) = 1$, there exists a sufficiently small $\overline{z}(\gamma;P_8)\in\mathcal{Z}(\gamma;P_8)$ such that $C_8(\overline{z}(\gamma;P_8))>C_1$. By the monotonicity of $C(V;\gamma,\overline{z}(\gamma;P_8),P_8)$ with respect to $V$, we conclude that $C(V;\gamma,\overline{z}(\gamma;P_8),P_8)> C_1(\ga)$. Thus, $C(V;\gamma,\overline{z}(\gamma;P_8),P_8)$ is an upper solution for $P_8$. 
\end{proof}

We  now prove the main result of this section.

\begin{theorem}\label{existence of zstd}
	Let $\gamma\in(1,3]$. Then there exists a $P_*\in\{P_6,P_8\}$ and a corresponding $z_{std}(\gamma;P_*)\in\mathcal{Z}(\gamma;P_*)$ such that the local analytic solution obtained  from  Theorem \ref{analytic}  extends smoothly from $P_*$ to $P_1$.
	\end{theorem}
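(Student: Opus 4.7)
The plan is to combine the lower solution at $P_6$ (Lemma \ref{lowersolutionP6}) and the upper solution at $P_8$ (Lemma \ref{uppersolutionP8}) via an intermediate value argument, using the fact that $P_6$ and $P_8$ coincide at $z = z_M$. The central observation is that at $z = z_M$ we have $V_6(z_M) = V_8(z_M)$ and $C_6(z_M) = C_8(z_M)$ by \eqref{V68(z_M) C68(z_M)}, and the two local analytic branches emanating from this single sonic point coincide, so $C(V_1;\gamma,z_M,P_6) = C(V_1;\gamma,z_M,P_8)$ defines a single real number $C_{\textrm{end}}(\gamma)$.

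First, I would invoke the continuity of $z \mapsto C(V_1;\gamma,z,P_*)$ on $\mathcal{Z}(\gamma;P_*)$, which follows from Remark \ref{continuous dependence of the local analytic solution on gamma, z} and Lemma \ref{local solution to the left}: the coefficients $c_\ell(\gamma,z)$ depend continuously on $(\gamma,z)$ by \eqref{nonvanish condition} and \eqref{G_C}, so the local analytic solution varies continuously, and the extension to $V=V_1$ provided by the a priori upper bound in the proof of Lemma \ref{local solution to the left} together with continuous dependence on parameters for the ODE away from the set $\{G=0\}$ yields continuity of $z \mapsto C(V_1;\gamma,z,P_*)$.

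Then I split into three cases based on the sign of $C_{\textrm{end}}(\gamma) - C_1(\gamma)$:
\begin{itemize}
\item If $C_{\textrm{end}}(\gamma) = C_1$, the solution through $P_6 = P_8$ at $z = z_M$ is already the desired connection, so take $z_{std} = z_M$.
\item If $C_{\textrm{end}}(\gamma) > C_1$, then $z = z_M$ gives an upper solution for $P_6$, while $z = z_m$ gives a lower solution for $P_6$ by Lemma \ref{lowersolutionP6}. By the intermediate value theorem applied to the continuous function $z \mapsto C(V_1;\gamma,z,P_6) - C_1$ on $[z_m,z_M]$, there is some $z_{std} \in [z_m, z_M]$ with $C(V_1;\gamma,z_{std},P_6) = C_1$, giving the analytic connection through $P_6$.
\item If $C_{\textrm{end}}(\gamma) < C_1$, then $z = z_M$ gives a lower solution for $P_8$, while Lemma \ref{uppersolutionP8} yields some small $\overline{z} \in (0,z_M)$ providing an upper solution for $P_8$. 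The intermediate value theorem on $[\overline{z}, z_M]$ produces $z_{std} \in [\overline{z}, z_M]$ with $C(V_1;\gamma,z_{std},P_8) = C_1$, giving the analytic connection through $P_8$.
\end{itemize}

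In each case the local analytic solution from Theorem \ref{analytic} extends, by Lemma \ref{local solution to the left}, smoothly all the way to $V_1$, and at $V_1$ it matches $C_1$, so the connection from $P_*$ to $P_1$ is established. The main subtlety I expect is verifying the continuity of $z \mapsto C(V_1;\gamma,z,P_*)$ up to the endpoint $z = z_M$, since the two branches $P_6$ and $P_8$ merge there and the discriminant $R(V_*,C_*)$ in \eqref{R} vanishes, potentially making $c_1$ degenerate; this requires a careful uniform bound on the Taylor coefficients $c_\ell(\gamma,z)$ so that the convergence radius does not collapse as $z \to z_M$, but an inspection of the proof of Lemma \ref{lemma for c_l} shows that the constant $K(\gamma,\lambda)$ can be chosen locally uniformly in $z$ away from any point where $A_\ell$ degenerates, and at $z = z_M$ the two local branches coalesce continuously into the single local solution, preserving continuity of the map at the endpoint.
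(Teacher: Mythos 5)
Your proposal is correct and follows essentially the same approach as the paper: the three-way case split on $C(V_1;\gamma,z_M,P_*)$ versus $C_1$ at the coincidence point $z=z_M$, combined with the lower solution for $P_6$ (Lemma \ref{lowersolutionP6}), the upper solution for $P_8$ (Lemma \ref{uppersolutionP8}), and the intermediate value theorem is precisely the paper's argument. Your closing remarks on continuity as $z\to z_M$ and on the degeneracy of $R(V_*,C_*)$ are a reasonable supplementary check that the paper leaves implicit (deferring to Remark \ref{continuous dependence of the local analytic solution on gamma, z}).
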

\begin{proof}
	By Lemma \ref{local solution to the left}, the domain of the local analytic solution extends smoothly (analytically) to $V=V_1$. It remains to show that for each $\gamma\in (1,3]$ 
	there exist $P_*$ and $z_{std}(\gamma;P_*)\in\mathcal{Z}(\gamma;P_*)$ such that $C(V_1;\gamma,z_{std}(\gamma),P_*) = C_1$. Recall that when $z=z_M$, $P_6$ coincides with $P_8$. Therefore, for $P_*= P_6=P_8$ with $z=z_M$, there are three possibilities: 
	\begin{enumerate}
		\item If $C(V_1;\gamma,z_M(\gamma),P_*) < C_1(\gamma)$, then $z=z_M(\gamma)$ gives a lower solution for $P_8$. Then, by using the continuity argument and Lemma \ref{uppersolutionP8}, there exists a $z_{std}\in(\overline{z}(\gamma;P_8),z_M(\gamma))$ such that $C(V_1;\gamma,z_{std},P_8)=C_1$.
		\item If $C(V_1;\gamma,z_M(\gamma),P_*) = C_1(\gamma)$, then $z=z_M(\gamma)$ gives the solution.
		\item If $C(V_1;\gamma,z_M(\gamma),P_*) > C_1(\gamma)$, then $z=z_M(\gamma)$ gives an upper solution for $P_6$. Then, by using the continuity argument and Lemma \ref{lowersolutionP6}, there exists a $z_{std}\in(\underline{z}(\gamma;P_6),z_M(\gamma))$ such that $C(V_1;\gamma,z_{std},P_6)=C_1$.
	\end{enumerate}
This concludes the proof. 	
\end{proof}

\subsection{No connection to $P_8$ for $\gamma\in (1,\gaSix]$} \label{sec:P6only}

Now that we have established the existence of an analytic solution to \eqref{ODE} connecting $P_1$ to either $P_6$ or $P_8$ for each $\gamma\in(1,3]$, we seek to understand better the nature of the solutions in order to connect the solution through the triple point to the origin. The first step in showing this is to prove that, for $\ga\in(1,\ga_\star]$, for $\ga_\star$ defined below in \eqref{gaSix}, the connection must be to $P_6$. 

For notational convenience,  
we define a constant
\begin{align}\label{k(r,z)}
	k(\gamma,z) = -\frac{(1+V_6(\ga,z))^2}{V_6(\ga,z)},
\end{align}
and, for each $\ga\in(1,2]$, $z\in[z_m,z_M]$, we define a barrier function 
\begin{equation}
B_k(V)=\sqrt{-k(\ga,z)V}.
\end{equation}

\begin{lemma}\label{lower barrier P6}
For any $\gamma\in (1,2]$ and $z\in[z_m,z_M]$, the curve $C=B_k(V)$ is a lower barrier (in the sense of Definition \ref{def:barrier}) for the solution of 
	\begin{align}\label{ivp p6-epsilon}
		\begin{cases}
			&\cfrac{dC}{dV} = \cfrac{F(V,C;\gamma,z)}{G(V,C;\gamma,z)},\quad V\in[V_1,\overline{V}],\\
			&C(\overline{V})= \overline{C},
		\end{cases}
	\end{align}
	where $\overline{V}\in(V_1,V_6)$ and $\overline{C}>B_k(\overline{V})$. 
	In particular, for any $z\in[z_g,z_M]$ the curve  $C=B_k(V)$ is a lower barrier for the solution of the 
	problem \eqref{ivp p*} with $P_*=P_6$. Recall that $z_g\in\mathcal{Z}(\ga;P_6)$ is the value of $z$ such that $P_4=P_6$, see \eqref{zg}.
\end{lemma}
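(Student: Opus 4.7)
The plan is to apply the general barrier argument described after \eqref{barrier_argument}. First I would record two elementary observations: since $V_6<0$ we have $k(\gamma,z)>0$, so $B_k(V)=\sqrt{-kV}$ is real and positive on $[V_1,0)$, and using $k=-C_6^2/V_6$ one checks $B_k(V_6)=C_6$, so the barrier curve passes through $P_6$ itself. Its derivative is $B_k'(V)=-k/(2B_k)$. Next I would set up the contradiction in the usual way: given $\overline{C}>B_k(\overline{V})$, let $V^\#$ be the largest value in $[V_1,\overline{V})$ at which $C(V^\#)\leq B_k(V^\#)$; by continuity $C(V^\#)=B_k(V^\#)$ and $C>B_k$ on $(V^\#,\overline{V}]$, so the right derivative at $V^\#$ satisfies $(C-B_k)'(V^\#)\geq 0$. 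To derive a contradiction I would show that whenever $C=B_k$ at a point $V\in(V_1,V_6)$,
\[
\frac{dC}{dV}-B_k'(V) \;=\; \frac{F(V,B_k)}{G(V,B_k)}+\frac{k}{2B_k} \;=\; \frac{2B_k\,F(V,B_k)+k\,G(V,B_k)}{2B_k\,G(V,B_k)} \;<\;0.
\]

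Substituting $C^2=-kV$ into \eqref{G(V,C)} gives $G(V,B_k)=-V\,\Psi(V)$ with $\Psi(V):=k[(m+1)V+2mz]+(1+V)(\lambda+V)$, and the identity $G(V_6,C_6)=0$ forces $\Psi(V_6)=0$; this lets me factor $\Psi$ as a quadratic with one known root and pin down the sign of $G$ along the barrier for $V\in(V_1,V_6)$ via Vieta's formulas together with the parameter restrictions $z\in[z_m,z_M]$. One expects $G<0$ there, consistent with Lemma \ref{sign of dC/dV to the left}. In parallel, $2B_k F+kG$ becomes a polynomial in $V$ with coefficients depending on $\gamma,z$ which again vanishes at $V=V_6$ because both $F(V_6,C_6)$ and $G(V_6,C_6)$ vanish; hence it factors as $(V-V_6)\cdot Q(V)$. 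The crux is to show $Q(V)$ has the definite sign on $(V_1,V_6)$ needed to make the displayed quotient strictly negative. The specific choice of $k$ in \eqref{k(r,z)} is precisely what encodes $G(V_6,C_6)=0$ and produces this factorisation; the constraint $\gamma\in(1,2]$ enters in controlling the coefficients $a_1,a_2,a_3$ in \eqref{a_1234&z} that appear in the residual factor $Q$.

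For the ``in particular'' statement I need a starting point $\overline{V}<V_6$ at which $C(\overline{V};\gamma,z,P_6)>B_k(\overline{V})$, since the solution from $P_6$ begins exactly on the barrier. I would compare slopes at $P_6$: the barrier has slope $C_6/(2V_6)<0$ there, while the solution has the negative slope $c_1$ selected in \eqref{c1}. Showing $c_1<C_6/(2V_6)$ (both negative) means the solution ascends more steeply than the barrier as $V$ decreases, so in a punctured left neighborhood of $V_6$ one has $C>B_k$; applying the first part of the lemma on $[V_1,\overline V]$ for any $\overline V$ slightly less than $V_6$ then finishes the proof. The restriction $z\in[z_g,z_M]$ should be what makes this slope comparison work: at $z=z_g$ the double root $P_4$ coincides with $P_6$ and the two roots of \eqref{quadratic equation of c_1} interact, and for $z\in(z_g,z_M]$ the negative branch $c_1$ is driven strictly below $C_6/(2V_6)$. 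The main obstacle will be the polynomial computation of the previous paragraph: determining the sign of $Q(V)$ uniformly in $(\gamma,z)\in(1,2]\times[z_m,z_M]$ requires careful use of $C_6=1+V_6$, the explicit formulas \eqref{a_1234&z}, and the bounds \eqref{upper and lower bound for V/C_6/8}, and this algebraic bookkeeping is where the real work lies.
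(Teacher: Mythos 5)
Your setup is correct and matches the paper's strategy at a high level: you correctly identify that $B_k(V_6)=C_6$, reduce the barrier property to the sign of $F/G + \tfrac12\sqrt{k/(-V)}$ at touching points, note this quantity vanishes at $V_6$ because $F$ and $G$ both vanish there, and identify the slope comparison $c_1 < C_6/(2V_6)$ as the missing ingredient for the ``in particular'' claim when starting exactly at $P_6$. Where you diverge from the paper is in the proposed verification of the sign. You suggest clearing denominators, factoring the resulting quartic $(1+V)(2B_kF + kG)$ as $(V-V_6)\cdot Q(V)$, and determining the sign of the residual cubic $Q$ on $(V_1,V_6)$; you also propose to pin down the sign of $G$ along the barrier by factoring the quadratic $\Psi(V)$ with $G = -V\Psi$ using Vieta's formulas and the known root $V_6$. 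The paper instead works directly with $\Barrier_k(V,z,m) := \tfrac{2}{\sqrt{-kV}}\bigl(F + \tfrac12\sqrt{k/(-V)}\,G\bigr)$, observes $\Barrier_k(V_6)=0$, and shows $\partial_V\Barrier_k < 0$ on $[V_1,V_6)$ for $m=1,2$ by fairly explicit bounding, taking the sign of $G$ along the barrier as given from Lemma~\ref{sign of dC/dV to the left}. Your factoring route would also succeed in principle and is in some ways cleaner conceptually (it explains \emph{why} the sign holds, by making the root at $V_6$ explicit), but it does not obviously simplify the bookkeeping: you would still have to establish that the cubic $Q$ has no roots in $(V_1,V_6)$ uniformly over $(\gamma,z)\in(1,2]\times[z_m,z_M]$ and over $m=1,2$, which is comparable work to the paper's monotonicity estimate. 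Note also a small imprecision: $2B_kF + kG$ is rational, not polynomial, until you multiply through by $(1+V)$.

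Two more substantive comments. First, your proposal identifies the correct reduction for the ``in particular'' part but does not prove the slope inequality $c_1 < C_6/(2V_6)$, which is where the heaviest algebra lies (the paper relegates this to Appendix~\ref{proofofC'(V6)<C6/2V6}). Second, your explanatory guess about the role of $z_g$ is off: the paper's Appendix~\ref{proofofC'(V6)<C6/2V6} in fact verifies $c_1 < C_6/(2V_6)$ on the full window $z\in[z_m,z_M]$ (it explicitly enlarges the $z$-domain to facilitate the computation), so the restriction to $[z_g,z_M]$ in the ``in particular'' statement is not what makes the slope comparison true. The restriction to $z\geq z_g$ is used elsewhere in the paper---notably in Lemma~\ref{g1f2-g2f1} and Lemma~\ref{uniquenessP6}, where the location of $V_4$ relative to $V_6$ matters---and the lemma statement is simply phrased to match its eventual use, not because the barrier property fails for $z\in[z_m,z_g)$.
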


\begin{proof}
	We begin by showing that the second claim follows from the first one.  
	Observe that, assuming the first part of the lemma is proved, it is sufficient to verify that there exists some interval $[\overline{V},V_6)$  
	such that the solution $C(V)$ to the  
	problem \eqref{ivp p*} satisfies $C(V)>B_k(V)$ for $V\in(\overline{V},V_6)$. 
	This claim follows one we verify that the derivative at $V_6$  
	satisfies the inequality
	\begin{align}\label{C'(V6)<C6/2V6}
			\frac{d C}{d V}(V_6)=c_1 < -\frac{1}{2}\sqrt{\cfrac{k(\gamma,z)}{-V_6}} = \frac{1+V_6}{2V_6}.
		\end{align}
		The proof of this inequality for $\ga\in(1,2]$ and $z\in[z_g,z_M]$ is given in Appendix \ref{proofofC'(V6)<C6/2V6}. 
		
		We therefore focus on proving the first claim. Suppose that $C(V)$ is a solution to the problem \eqref{ivp p6-epsilon}.
We will apply the barrier argument (\textit{cf.} \ref{barrier_argument}) to show that as the initial point $C(\overline{V})>B_k(\overline{V})$, this inequality is propagated by the ODE.
As the solution to the ODE \eqref{ivp p6-epsilon} remains monotone by Lemma \ref{sign of dC/dV to the left}, it is clear that it cannot meet a sonic point.
	Our goal is to show that for any $\gamma\in (1,2]$, $z\in[z_g,z_M]$ and $V\in[V_1,V_6)$,
	\begin{align}\label{barrier inequality}
		\cfrac{F(V,\sqrt{-k(\gamma,z)V};\gamma,z)}{G(V,\sqrt{-k(\gamma,z)V};\gamma,z)}+\frac{1}{2}\sqrt{\cfrac{k(\gamma,z)}{-V}}<0.
	\end{align}
	Since $G(V,\sqrt{-k(\gamma,z)V};\gamma,z)<0$ for any $\gamma\in (1,2]$, $z\in[z_m,z_M]$ and $V\in[V_1,V_6)$ by Lemma \ref{sign of dC/dV to the left}, it is sufficient to show that
	\begin{align}\label{barrier inequality 22}
		F(V,\sqrt{-k(\gamma,z)V};\gamma,z)+\frac{1}{2}\sqrt{\cfrac{k(\gamma,z)}{-V}}G(V,\sqrt{-k(\gamma,z)V};\gamma,z)>0.
	\end{align}
	By direct computations, we obtain 
	\begin{align}
		& \cfrac{2}{\sqrt{-k(\gamma,z)V}} \left( F(V,\sqrt{-k(\gamma,z)V};\gamma,z)+\frac{1}{2}\sqrt{\cfrac{k(\gamma,z)}{-V}}G(V,\sqrt{-k(\gamma,z)V};\gamma,z)\right)\notag\\
		&=(m-1-m\gamma)V^2+[-2-m(\gamma-1)+m \gamma z(\gamma-2)+(m-1)k(\gamma,z)]V+\cfrac{2mk(\gamma,z)z}{1+V}-1-m\gamma z \notag\\
		&=: \Barrier_k(V,z,m). \label{def_Bk}
	\end{align}
	Since $\cfrac{\sqrt{-k(\gamma,z)V}}{2}>0$, \eqref{barrier inequality 22} is equivalent to  
	the positivity of $\Barrier_k(V,z,m)$:  
	\begin{align}
		\Barrier_k(V,z,m) 
		>0\label{sign_Bk}
	\end{align}
	for any $\gamma\in (1,2]$, $z\in[z_m,z_M]$ and $V\in[V_1,V_6)$.

	As  $\Barrier_k(V_6,z,m)=0$ for any $\gamma\in (1,2]$ and $z\in[z_m,z_M]$ (due to $B_k(V_6)=C_6$ and the vanishing of $F$ and $G$ at $(V_6,C_6)$), we will conclude that $\Barrier_k(V,z,m)>0$ by demonstrating that 
	for $V\in[V_1,V_6)$, 
	\begin{equation}\label{demonstrate00}
	\frac{\partial\Barrier_k(V,z,m)}{\partial V}<0.
	\end{equation}
	The $V$ derivative of $\Barrier_k$ is given by 
	\begin{align*}
		\frac{\partial\Barrier_k(V,z,m)}{\partial V} = 2(m-1-m\gamma)V-2-m(\gamma-1)+m\gamma z(\gamma-2)+(m-1)k(\gamma,z)-\cfrac{2mk(\gamma,z)z}{(1+V)^2}.
	\end{align*}
	When $m=1$, for any $V\in[V_1,V_6)$,
	\begin{align*}
		\frac{\partial\Barrier_k(V,z,1)}{\partial V} = -2\gamma V-1-\gamma+(\gamma-2)\gamma z+\cfrac{2(1+V_6)^2z}{V_6(1+V)^2}<-2\gamma V_1-1-\gamma+(\gamma-2)\gamma z+\cfrac{2z}{V_6} =: I,
	\end{align*}
	where we have used $-2\gamma V<-2\gamma V_1$ and $\cfrac{2(1+V_6)^2z}{V_6(1+V)^2}<\cfrac{2z}{V_6}$ for any $V\in(V_1,V_6)$.
	Recalling $\eqref{V after shock}$ and $\eqref{P6}$, we deduce 
	\begin{align*}
		I 
		&= -\frac{(\gamma-1)^2}{\gamma+1}+[(\gamma-2)\gamma+\frac{2}{V_6}]z
		<-\frac{(\gamma-1)^2}{\gamma+1}+[(\gamma-2)\gamma+\frac{2}{V_1}]z
		=-\frac{(\gamma-1)^2}{\gamma+1}+[\gamma(\gamma-3)-1]z<0
	\end{align*}
	for any $\gamma\in(1,2]$ and $z\in[z_m,z_M]$, which in turn leads to \eqref{demonstrate00} for $m=1$. 
	
	When $m=2$, for any $V\in[V_1,V_6)$,
	\begin{align*}
		\frac{\partial\Barrier_k(V,z,2)}{\partial V}&= 2(1-2\gamma)V-2\gamma+2(\gamma-2)\gamma z-\frac{(1+V_6)^2}{V_6}+\frac{4(1+V_6)^2z}{V_6(1+V)^2}\\
		&<2(1-2\gamma)V_1-2\gamma+2(\gamma-2)\gamma z-\frac{(1+V_6)^2}{V_6}+\frac{4z}{V_6} =: II + III 
	\end{align*}
	since $(1-2\gamma) V<(1-2\gamma) V_1$ and $\cfrac{2(1+V_6)^2z}{V_6(1+V)^2}<\cfrac{2z}{V_6}$ for any $V\in(V_1,V_6)$, where $II $ and $III$ denote 
	\begin{align*}
		II&:=2(1-2\gamma)V_1-2\gamma+2(\gamma-2)\gamma z \ \ \text{ and }\ \ 
		III:=-\frac{(1+V_6)^2}{V_6}+\frac{4z}{V_6}.
	\end{align*}
	By \eqref{V after shock},
	\begin{align*}
		II = -\frac{2(\gamma-1)(\gamma-2)}{\gamma+1}+2(\gamma-2)\gamma z.
	\end{align*}
	Using \eqref{P6} and \eqref{w(z)}, we rewrite $III$ as 
	\begin{align*}
		III 
		&=\frac{-1-(\gamma-2)^2z^2-w^2-2(\gamma-2)z+2w+2(\gamma-2)zw+16z}{4V_6}\\
		&=\frac{-(1+(\gamma-2)^2z^2-2(\gamma+2)z)-w^2+2w+2(\gamma-2)zw+(12-4\gamma)z}{4V_6}\\
		&=\frac{2(1-w)w+4(3-\gamma)z+2[(\gamma-2)w+2]z}{4V_6}.
	\end{align*}
By Remark \ref{bounds of w(z)} and the fact that $-1<V_6<0$  for any $\gamma\in(1,2]$ and $z\in[z_m,z_M]$, we have
	\begin{align*}
		III&<-\frac{2(1-w)w+4(3-\gamma)z+2[(\gamma-2)w+2]z}{4}<-\frac{4(3-\gamma)+2[(\gamma-2)w+2]}{4}z
		<(\gamma-\frac{7}{2})z.
	\end{align*}
	Then, $II+III$ is bounded by 
	\begin{align*}
		II+III < -\frac{2(\gamma-1)(\gamma-2)}{\gamma+1}+2(\gamma-2)\gamma z+(\gamma-\frac{7}{2})z< -\frac{2(\gamma-1)(\gamma-2)}{\gamma+1}+4(\gamma-2)\gamma z
	\end{align*}
	where we have used $\gamma-\frac{7}{2}<2(\gamma-2)\gamma$ for any $\gamma\in(1,2]$.  
	Since $z_m = \frac{\gamma-1}{(2\gamma-1)(\gamma+1)}$ by \eqref{zm},
	\begin{align*}
		II+III < -\frac{2(\gamma-1)(\gamma-2)}{\gamma+1}+4(\gamma-2)\gamma z& \le -\frac{2(\gamma-1)(\gamma-2)}{\gamma+1}+\frac{4\gamma(\gamma-1)(\gamma-2)}{(2\gamma-1)(\gamma+1)} \\
		&= \frac{2(\gamma-1)(\gamma-2)}{\gamma+1}(\frac{2\gamma}{2\gamma-1}-1)\leq 0.
	\end{align*}
	Therefore,  \eqref{demonstrate00} holds for $m=2$, $\gamma\in (1,2]$, $z\in[z_m,z_M]$ and $V\in[V_1,V_6)$, thereby completing the proof. 
\end{proof}

	Next, we establish a uniform upper barrier for the forward solution trajectory of \eqref{ODE} with the initial value $P_1$ for a particular range of $\gamma$ and $z\in[z_m(\gamma),z_M(\gamma)]$ to demonstrate there is no connection from $P_1$ to $P_8$.

By \eqref{k(r,z)} and \eqref{V68(z_M) C68(z_M)},
\begin{align}\label{k(r,zM)}
	k(\gamma, z_M) = -\frac{(1+V_6(z_M))^2}{V_6(z_M)} = \frac{\gamma}{2+\sqrt{2\gamma}}.
\end{align}
 We define $\gaSix$ to be the value such that
\begin{align}\label{gaSix}
	C_1(\gaSix)=\sqrt{-k(\gaSix, z_M)V_1(\gaSix)}.
\end{align}

\begin{lemma}\label{upper barrier C1}
	For any $\gamma\in (1,\gaSix]$ and $z\in[z_m,z_M]$, the curve $B_{k_M}(V)=\sqrt{-k(\gamma,z_M)V}$ is an upper barrier for the solution of 
	\begin{align}\label{ivp I V6}
		\begin{cases}
			&\cfrac{dC}{dV} = \cfrac{F(V,C;\gamma,z)}{G(V,C;\gamma,z)},\quad V\in[V_1,V_6),\\
			&C(V_1) = C_1.
		\end{cases}
	\end{align}
\end{lemma}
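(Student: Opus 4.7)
The plan is to apply the barrier argument outlined around \eqref{barrier_argument}, in three stages.

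\textbf{Step 1: Initial condition check.} I would first verify $C(V_1)=C_1 \le B_{k_M}(V_1)$ for $\gamma\in(1,\gaSix]$. Using \eqref{V after shock}, \eqref{C after shock}, and \eqref{k(r,zM)},
$$\frac{C_1^2}{B_{k_M}(V_1)^2} = \frac{(\gamma-1)(2+\sqrt{2\gamma})}{\gamma+1},$$
which is strictly monotone increasing in $\gamma$ and equals one precisely at $\gamma=\gaSix$ by \eqref{gaSix}. Hence the inequality holds, with equality only at $\gamma=\gaSix$. In that equality case I would additionally check that the initial slope of the solution satisfies $\frac{dC}{dV}(V_1) < B_{k_M}'(V_1)$, which will follow from the pointwise inequality established in Step 3 and ensures that the trajectory drops strictly below the barrier for $V$ slightly larger than $V_1$.

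\textbf{Step 2: Reduction to a polynomial inequality.} Suppose for contradiction that there exists a minimal $\overline V\in(V_1,V_6)$ with $C(\overline V)=B_{k_M}(\overline V)$. By \eqref{barrier_argument}, one must then have $\frac{dC}{dV}(\overline V) \ge B_{k_M}'(\overline V)$, so deriving the strict reverse inequality produces a contradiction. Since $G<0$ by Lemma \ref{sign of dC/dV to the left}, multiplying through by $G$ and then scaling by $\frac{2}{\sqrt{-k_M V}}>0$ transforms the required inequality into the positivity condition
$$\Barrier_{k_M}(V,z,m) > 0 \quad \text{for all } V\in[V_1,V_6(z)),\ z\in[z_m,z_M],\ \gamma\in(1,\gaSix],$$
where $\Barrier_{k_M}$ is obtained from \eqref{def_Bk} by substituting $k=k_M=k(\gamma,z_M)$.

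\textbf{Step 3: Positivity of $\Barrier_{k_M}$.} For the cylindrical case $m=1$, I would exploit the monotonicity
$$\frac{\partial \Barrier_k(V,z,1)}{\partial k} = \frac{2z}{1+V} > 0,$$
combined with $k_M \ge k(\gamma,z)$ for all $z\in[z_m,z_M]$. This last comparison holds because $V_6(z)$ is increasing in $z$ by Lemma \ref{Lemma V_6/8 monoton} and the map $V\mapsto -\frac{(1+V)^2}{V}$ is increasing on $(-1,0)$. Consequently,
$$\Barrier_{k_M}(V,z,1) \ge \Barrier_{k(\gamma,z)}(V,z,1) > 0$$
on $[V_1,V_6(z))$ by Lemma \ref{lower barrier P6}. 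For the spherical case $m=2$, $\frac{\partial \Barrier_k(V,z,2)}{\partial k} = V + \frac{4z}{1+V}$ is not of definite sign in general on the region of interest, so this comparison argument fails. In its place I would multiply by $1+V>0$ and analyze the resulting cubic polynomial in $V$ directly, following the same term-by-term strategy as in the $m=2$ analysis of Lemma \ref{lower barrier P6}, but now invoking the stronger restriction $\gamma\le\gaSix$ (rather than $\gamma\le 2$) to close the estimate.

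\textbf{Main obstacle.} The chief technical difficulty lies in this final $m=2$ verification: unlike Lemma \ref{lower barrier P6}, the barrier $B_{k_M}$ does not pass through the sonic point $P_6(\gamma,z)$ for $z<z_M$, so $\Barrier_{k_M}$ is not identically zero at $V=V_6(z)$, and one cannot conclude positivity merely from a sign of $\partial_V\Barrier_{k_M}$. A careful global estimate exploiting the precise cutoff $\gamma\le\gaSix$ and the joint constraints on $(V,z)\in R=\{(V,z):z\in[z_m,z_M],\ V\in[V_1,V_6(z))\}$ appears to be the necessary substitute.
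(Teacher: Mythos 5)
Your Steps 1 and 2 and the $m=1$ branch of Step 3 are correct and match the paper's proof closely (the paper verifies $C_1\le B_{k_M}(V_1)$ by a slightly different monotonicity computation, but the content is identical, and the reduction to the positivity of $\Barrier_{k_M}$ via $G<0$ is exactly as you describe). The comparison $\Barrier_{k_M}\ge\Barrier_k>0$, using $k(\gamma,z_M)\ge k(\gamma,z)$ from Lemma \ref{Lemma V_6/8 monoton}, is also the paper's route for $m=1$.

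However, your $m=2$ step contains a genuine gap. You assert that $\partial_k\Barrier_k(V,z,2)=V+\tfrac{4z}{1+V}$ is ``not of definite sign in general on the region of interest,'' so the comparison fails and you would need some unspecified alternative cubic analysis. This is incorrect: on the relevant domain the quantity \emph{is} strictly positive, and the comparison argument goes through for $m=2$ just as it does for $m=1$. The reason is that $(1+V)\bigl(V+\tfrac{4z}{1+V}\bigr)=V^2+V+4z$, and since $\gamma\le 2$ gives $V_6\le -\tfrac{\sqrt2}{\sqrt\gamma+\sqrt2}\le -\tfrac12$ (by \eqref{upper and lower bound for V/C_6/8}), the map $V\mapsto V^2+V$ is decreasing on $[V_1,V_6)\subset(-1,-\tfrac12]$, so $V^2+V+4z>V_6^2+V_6+4z$ there; the latter evaluates (using \eqref{P6} and \eqref{w(z)}) to $\tfrac14\bigl[2(\gamma-2)^2z^2+2(2-\gamma)zw+2(6-\gamma)z\bigr]>0$ for $\gamma\in(1,2]$. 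Hence $\partial_k\Barrier_k>0$ on $[V_1,V_6)$, and $\Barrier_{k_M}(V,z,2)\ge\Barrier_k(V,z,2)>0$ by \eqref{sign_Bk}. Your ``main obstacle'' discussion correctly observes that $\Barrier_{k_M}$ does not vanish at $V_6(z)$ when $z<z_M$, which is why one cannot replay the endpoint-plus-monotonicity argument from Lemma \ref{lower barrier P6} directly; but the comparison to $\Barrier_k$, which you already invoke for $m=1$, is precisely the mechanism that sidesteps this, and it works for $m=2$ as well. Relatedly, the stronger restriction $\gamma\le\gaSix$ is needed only for Step 1 (the initial-condition check), not to close the $m=2$ barrier estimate, so the closing suggestion in your proposal is misplaced.
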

\begin{proof}

We begin by verifying that $P_1=(V_1,C_1)$ lies on or below the curve defined by $B_{k_M}(V)$.
	Note that 
	\begin{align*}
		C_1 - \sqrt{-k(\gamma, z_M)V_1} =  \frac{\sqrt{2\gamma(\gamma-1)}}{\gamma+1}-\sqrt{\frac{\sqrt{2}\gamma}{(\sqrt{\gamma}+\sqrt{2})(\gamma+1)}}
		=\cfrac{\frac{2\gamma(\gamma-1)}{\gamma+1}-\frac{\sqrt{2}\gamma}{\sqrt{\gamma}+\sqrt{2}}}{\sqrt{2\gamma(\gamma-1)}+\sqrt{\frac{\sqrt{2}\gamma(\gamma+1)}{\sqrt{\gamma}+\sqrt{2}}}}\,.
	\end{align*}
	Since 
	\begin{align*}
		\frac{d}{d\gamma}\left[\frac{2\gamma(\gamma-1)}{\gamma+1}-\frac{\sqrt{2}\gamma}{\sqrt{\gamma}+\sqrt{2}}\right] &= \frac{2(\gamma^2+2\gamma-1)}{(\gamma+1)^2}-\frac{\sqrt{2\gamma}+4}{2(\sqrt{\gamma}+\sqrt{2})^2}\\
		&=-\frac{4}{(\gamma+1)^2}-\frac{1}{\sqrt{2}(\sqrt{\gamma}+\sqrt{2})}-\frac{1}{(\sqrt{\gamma}+\sqrt{2})^2}+2>0
	\end{align*}
	because $-\frac{4}{(\gamma+1)^2}>-1$, $-\frac{1}{\sqrt{2}(\sqrt{\gamma}+\sqrt{2})}>-\frac{1}{3}$, and $-\frac{1}{(\sqrt{\gamma}+\sqrt{2})^2}>-\frac{1}{2}$ for any $\gamma\in(1,\gaSix]$, and since  $C_1(\gaSix)=\sqrt{-k(\gaSix, z_M)V_1}$ by the definition of $\gaSix$, 
	we deduce that $C_1 -  \sqrt{-k(\gamma,z_M)V_1}\leq 0$ when $\gamma\leq\gaSix$ with equality only when $\gamma=\gaSix$. Hence, $P_1$ is located below the curve $B_{k_M}(V)$ for $\gamma< \gaSix$, and $P_1$ lies on the curve  $B_{k_M}(V)$ when $\gamma=\gaSix$.

	We will now employ a barrier argument (\textit{cf}.\eqref{barrier_argument}) to establish that the curve $B_{k_M}(V)$ serves as an upper barrier for the solution 
	of \eqref{ivp I V6}. Specifically, we will show  
	that for all $\gamma\in (1,\gaSix]$, $z\in[z_m,z_M]$, and $V\in[V_1,V_6)$,
	\begin{align*}
		\cfrac{F(V,\sqrt{-k(\gamma,z_M)V};\gamma,z)}{G(V,\sqrt{-k(\gamma,z_M)V};\gamma,z)}+\frac{1}{2}\sqrt{\cfrac{k(\gamma,z_M)}{-V}}<0.
	\end{align*}
		 By Lemma \ref{sign of dC/dV to the left}, $G(V,\sqrt{-k(\gamma,z_M)V};\gamma,z)<0$ for any $\gamma\in (1,\gaSix]$, $z\in[z_m,z_M]$ and $V\in[V_1,V_6)$.
	Hence, using the same 
	procedure as outlined in Lemma \ref{lower barrier P6} and recalling \eqref{def_Bk}, 
	it is enough to show that
	\begin{align*}
		\Barrier_{k(z_M)}(V,z,m)&:=(m-1-m\gamma)V^2+[-2-m(\gamma-1)+m(\gamma-2)\gamma z+(m-1)k(\gamma,z_M)]V\\
		&\quad+\cfrac{2mk(\gamma,z_M)z}{1+V}-1-m\gamma z>0.
	\end{align*}
	As  $-1<V_6<0$ for any $z\in[z_m,z_M]$,  by Lemma \ref{Lemma V_6/8 monoton}, we have
	\begin{align}
		\frac{\partial k(\gamma,z)}{\partial z} = (\frac{1}{V_6^2}-1)\frac{\partial V_6}{\partial z}>0.
	\end{align} 
	Thus, we have 
	\begin{align}\label{increasing of k}
		k(\gamma,z_M)\ge k(\gamma,z)\quad \text{for any }z\in[z_m,z_M].
	\end{align}
	When $m=1$, recalling \eqref{def_Bk} and using \eqref{increasing of k} and \eqref{sign_Bk}, we deduce that 
	\begin{align*}
		\Barrier_{k(z_M)}(V,z,1) 
		\ge \Barrier_k(V,z,1) >0
	\end{align*}
	for any $z\in[z_m,z_M]$ and $V\in[V_1,V_6)$.

When $m=2$, by \eqref{upper and lower bound for V/C_6/8}, we have $-1<V_6\leq \frac{-\sqrt{2}}{\sqrt{\gamma}+\sqrt{2}}\leq -\frac{1}{2}$ for any $\gamma\in(1,2]$ and $z\in[z_m,z_M]$. Thus,
	\begin{align*}
		k(\gamma,z_M)V + \frac{4zk(\gamma,z_M)}{1+V}  
		= \frac{k(\gamma,z_M)}{1+V}(V^2+V+4z) > \frac{k(\gamma,z_M)}{1+V}(V_6^2+V_6+4z) 
	\end{align*}
	for any $V\in[V_1,V_6)$. By direct computations, we obtain 
	\begin{align*}
		V_6^2+V_6+4z 
		&=\frac{2(\gamma-2)^2z^2+2(2-\gamma)zw+2(6-\gamma)z}{4}>0.
	\end{align*}
	Therefore, by \eqref{increasing of k} and and \eqref{sign_Bk}, we obtain $\Barrier_{k(z_M)}(V,z,2)\ge \Barrier_k(V,z,2)> 0$, thereby completing the proof. 
\end{proof}

\begin{proposition}\label{onlypass P6}
For any $\gamma\in(1,\gaSix]$, the analytic solution to \eqref{ODE} which connects $P_1$ to either $P_6$ or $P_8$, guaranteed by Theorem \ref{existence of zstd} with the initial condition $C(V_1)=C_1$, can only connect to $P_6$. 
\end{proposition}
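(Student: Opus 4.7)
The plan is to exclude the $P_8$-connection case of the proof of Theorem \ref{existence of zstd} by showing the one-sided bound $C(V_1;\gamma,z_M,P_*)\geq C_1$ at the degenerate parameter $z=z_M$, where $P_*=P_6(z_M)=P_8(z_M)$. Once this inequality is secured, the trichotomy used in the proof of Theorem \ref{existence of zstd} places us in either Case 2 (equality, $z_{std}=z_M$, so $P_*$ is trivially both $P_6$ and $P_8$) or Case 3 ($z_{std}\in(\underline{z}(\gamma;P_6),z_M)$, a $P_6$-connection by construction), while Case 1 (the only scenario producing a bona fide $P_8$-connection) is ruled out.

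To obtain the key inequality $C(V_1;\gamma,z_M,P_*)\geq C_1$, I will chain two of the barrier estimates already at our disposal. Since $\gamma\in(1,\gaSix]\subset(1,2]$ and $z=z_M\in[z_g,z_M]$, the second part of Lemma \ref{lower barrier P6} applies to the analytic solution issuing from $P_*=P_6(z_M)$: the curve $B_k(V)=\sqrt{-k(\gamma,z_M)V}$, which coincides with $B_{k_M}(V)$ at $z=z_M$, serves as a lower barrier on $(V_1,V_*)$. Passing to the limit $V\to V_1^+$ yields $C(V_1;\gamma,z_M,P_*)\geq B_{k_M}(V_1)$. Next, the opening computation in the proof of Lemma \ref{upper barrier C1}, together with the definition \eqref{gaSix} of $\gaSix$ and the positivity of the $\gamma$-derivative of $\frac{2\gamma(\gamma-1)}{\gamma+1}-\frac{\sqrt{2}\gamma}{\sqrt{\gamma}+\sqrt{2}}$, already establishes $B_{k_M}(V_1)\geq C_1$ for every $\gamma\in(1,\gaSix]$ (with equality only at $\gamma=\gaSix$). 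Concatenating the two inequalities delivers $C(V_1;\gamma,z_M,P_*)\geq C_1$, as claimed.

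With this inequality, the proposition follows directly from the case analysis in the proof of Theorem \ref{existence of zstd}: the strict condition $C(V_1;\gamma,z_M,P_*)<C_1$ defining Case 1 is violated, and both of the remaining cases produce a $P_6$-connection (in Case 2 this is trivial from $P_6=P_8$, and in Case 3 it follows from the continuity argument using Lemma \ref{lowersolutionP6} as a lower solution at $z=z_m$ together with the upper-solution behavior at $z=z_M$).

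The main obstacle I anticipate is the legitimacy of invoking Lemma \ref{lower barrier P6} at the degenerate parameter $z=z_M$, where $P_6$ and $P_8$ coalesce and $w(z_M)=0$. Concretely, the second part of Lemma \ref{lower barrier P6} relies on the derivative inequality \eqref{C'(V6)<C6/2V6} persisting at the coincident sonic point; one must confirm that the appendix-provided estimate remains valid at the endpoint $z_M\in[z_g,z_M]$. Modulo this check, the proof is a direct synthesis of the two barrier lemmas already proven in this section.
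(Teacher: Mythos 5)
Your argument establishes only that the constructive trichotomy in the proof of Theorem~\ref{existence of zstd} falls into Case~2 or Case~3 at $z=z_M$, and that those two cases produce a $P_6$-connection. But Proposition~\ref{onlypass P6} makes the stronger claim that \emph{no} $P_8$-connection exists for any admissible $z$. Ruling out Case~1 at the single parameter value $z_M$ does not exclude the possibility that some other $z'\in(0,z_M)$ satisfies $C(V_1;\gamma,z',P_8)=C_1$, coexisting with the $P_6$-connection you found. The proposition is in fact used later (in the proof of Theorem~\ref{main theorem}(ii)) precisely in this stronger sense --- ``any such $z_{std}$ must connect $P_1$ to $P_6$'' --- so the stronger statement is what must be proved.

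The paper's proof instead works with the \emph{forward} trajectory from $P_1$ and shows it cannot reach $P_8(z)$ for any $z\in(0,z_M)$, split into two regimes. For $z\in(0,z_m]$, the monotonic-decrease of the trajectory (Lemma~\ref{sign of dC/dV to the left}) together with the fact that $C_8(z)\ge C_1|_{\gamma=2}>C_1(\gamma)$ for $z\le \frac{2}{3(\gamma+4)}$ and $z_m<\frac{2}{3(\gamma+4)}$ rules out the connection outright; you do not address this regime at all. For $z\in[z_m,z_M)$, the paper invokes the \emph{full} upper-barrier statement of Lemma~\ref{upper barrier C1} to obtain $C(V_6;\gamma,z)<\sqrt{-k(\gamma,z_M)V_6(z)}$, and then shows by a monotonicity-in-$z$ computation that $\sqrt{-k(\gamma,z_M)V_6(z)}<C_8(z)$, placing the trajectory strictly below $P_8(z)$. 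You use the same two lemmas (Lemma~\ref{lower barrier P6} and the opening estimate in Lemma~\ref{upper barrier C1}) but only to compare values at the single endpoint $V_1$ for $z=z_M$, which is too weak. The secondary concern you raise --- legitimacy of Lemma~\ref{lower barrier P6} at the degenerate point $z_M$ --- is benign (the lemma's range is $[z_g,z_M]$, a closed interval, and Appendix~\ref{proofofC'(V6)<C6/2V6} covers the endpoint), but it is not the main issue; the missing content is the sweep over all $z\in(0,z_M)$.
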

\begin{proof}
 When $z=z_M$, we have 
 $P_6 = P_8$, thus obviating the need for further discussion.  
 If $z\in(0,z_M)$, by Theorem \ref{existence of zstd}, it is equivalent to demonstrating that the solution trajectory can not connect to $P_8$. We will discuss $z\in(0,z_m]$ and $z\in[z_m,z_M)$ separately.

Let $z\in(0,z_m]$ be given.  
We observe that when $C_8(z)\geq C_1|_{\ga=2}$, the solution trajectory cannot connect to $P_8$, since the solution of \eqref{ODE} with $C(V_1)=C_1$ is decreasing by 
Lemma \ref{sign of dC/dV to the left}. We further note  
that $z = \frac{2}{3(\gamma+4)}$ leads to $C_8(z) = C_1(2)$. By Lemma \ref{C1V1} and Lemma \ref{Lemma V_6/8 monoton}, 
$C_8(z)\geq C_1|_{\ga=2}>C_1(\gamma)$ for any $\gamma\in(1,\gaSix]$ and $z\leq \frac{2}{3(\gamma+4)}$.  
On the other hand, it is easy to check $z_m<\frac{2}{3(\gamma+4)}$: 
	\begin{align*}
		z_m-\frac{2}{3(\gamma+4)} = \frac{\gamma-1}{(2\gamma-1)(\gamma+1)}-\frac{2}{3(\gamma+4)} = \frac{(2-\gamma)(\gamma-5)}{(\gamma-\frac{1}{2})(\gamma+1)(\gamma+4)} <0,
	\end{align*}
and hence, the conclusion follows for $z\in(0,z_m]$.

When $z\in[z_m,z_M)$, we have $C(V_6;\gamma, z)<\sqrt{-k(\gamma,z_M) V_6(z)}$ by Lemma \ref{upper barrier C1}. 
Therefore, in order to show that this solution can not connect to $P_8$, it is sufficient to show that 
	\begin{align*}
		\sqrt{-k(\gamma,z_M)V_6(z)} < C_8(z), \ \text{ equvalently } \  
		-k(\gamma,z_M)V_6(z)-C_8^2(z)<0.
	\end{align*}
	Since $-k(\gamma,z_M)V_6(z_M)-C_8^2(z_M)=C_6(z_M)^2-C_8(z_M)^2=0$  
	by \eqref{V68(z_M) C68(z_M)} and \eqref{k(r,zM)}, the proof will be complete upon  
	showing that $-k(\gamma,z_M)V_6(z)-C_8^2(z)$ is monotone increasing in $z$. Now, differentiating with respect to $z$ (for any fixed $\gamma\in (1,\gaSix)$),  
	and recalling $\frac{dC_8}{dz}<0$,
	\begin{align*}
		\frac{d}{dz}\left(-k(\gamma,z_M)V_6(z)-C_8^2(z)\right) = -\sqrt{\frac{\gamma}{2}}C_8(z_M)\frac{dV_6}{dz}-2C_8(z)\frac{dC_8}{dz}
		>C_8(z_M)\left(-\sqrt{\frac{\gamma}{2}}\frac{dV_6}{dz}-2\frac{dC_8}{dz}\right).
	\end{align*}
	The inner bracket is
	\begin{align*}
		-\sqrt{\frac{\gamma}{2}}\frac{dV_6}{dz}-2\frac{dC_8}{dz} &= -\sqrt{\frac{\gamma}{2}}(\frac{\gamma-2}{2}+\frac{1}{2}\frac{(\gamma+2)-(\gamma-2)^2z}{w})-(\gamma-2-\frac{(\gamma+2)-(\gamma-2)^2z}{w})\\
		&= (2-\gamma)(\frac{1}{2}\sqrt{\frac{\gamma}{2}}+1)+\frac{(\gamma+2)-(\gamma-2)^2z}{w}(1-\frac{1}{2}\sqrt{\frac{\gamma}{2}})>0
	\end{align*}
	where we have used $z<z_M=\frac{1}{\gamma+2+2\sqrt{2\gamma}}<\frac{1}{4}$ and $\gaSix<2$ to conclude the positivity. 	
\end{proof}

\subsection{No connection to $P_6$ for $\gamma\in [2,3]$} \label{sec:P8only}

In this subsection, we shall employ another  
barrier function $B_s(V)$ 
to demonstrate that for $\gamma\in[2,3]$, the solution trajectory originating at $P_1$ and propagated 
by \eqref{ODE} can only establish a connection with $P_8$.

We define
\begin{align*}
	B_s(V) = -\sqrt{\frac{\gamma}{2}}V.
\end{align*}
From 
\eqref{V68(z_M) C68(z_M)}, we observe that
\begin{align}\label{-sqrt(gamma/2)V8=C8}
	\frac{C_8(z_M)}{V_8(z_M)} = -\sqrt{\frac{\gamma}{2}}.
\end{align}
First we will show that the solution trajectory of \eqref{ODE}  
starting from $P_1$  remains above the curve $B_s(V)$ for  $V\in[V_1,-\sqrt{\frac{2}{\gamma}}C_8(z))$.

\begin{lemma}\label{lower barrier sqrt(gamma/2)V}
	For any $\gamma\in [2,3]$ and $z\in (0,z_M]$, the curve $B_s(V)=-\sqrt{\frac{\gamma}{2}}V$ is a lower barrier for the solution of
	\begin{align}\label{ivp I gamma [2,3]}
		\begin{cases}
			&\cfrac{dC}{dV} = \cfrac{F(V,C;\gamma,z)}{G(V,C;\gamma,z)},\quad V\in[V_1,-\sqrt{\frac{2}{\gamma}}C_8(z)),\\
			&C(V_1) = C_1.
		\end{cases}
	\end{align}
\end{lemma}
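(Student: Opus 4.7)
The strategy follows the barrier argument template established in Lemmas \ref{lower barrier P6} and \ref{upper barrier C1}. I would proceed in three steps.

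\emph{Step 1 (Initial condition at $V_1$).} First I check that $P_1$ lies on or above the line $C = B_s(V)$. A direct computation using \eqref{V after shock} and \eqref{C after shock} gives
\[
C_1 + \sqrt{\tfrac{\gamma}{2}}\, V_1 \;=\; \frac{\sqrt{2\gamma(\gamma-1)} - \sqrt{2\gamma}}{\gamma+1} \;=\; \frac{\sqrt{2\gamma}\,(\sqrt{\gamma-1} - 1)}{\gamma+1},
\]
which is nonnegative precisely when $\gamma \geq 2$, with equality only at $\gamma = 2$. Thus $P_1$ lies strictly above $B_s$ for $\gamma \in (2, 3]$ and on it for $\gamma = 2$, consistent with the stated range.

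\emph{Step 2 (Barrier argument).} Following \eqref{barrier_argument}, I would suppose for contradiction that there is a smallest $\overline{V} \in (V_1, -\sqrt{2/\gamma}\,C_8(z))$ with $C(\overline{V}) = B_s(\overline{V})$. By Lemma \ref{sign of dC/dV to the left} and the observation that the curve $C = -\sqrt{\gamma/2}\,V$ stays strictly below the sonic curve $C = 1 + V$ on the interior of the interval (touching it only at the right endpoint when $z = z_M$, thanks to \eqref{-sqrt(gamma/2)V8=C8}), one has $G(V, B_s(V); \gamma, z) < 0$ throughout. Hence it suffices to prove
\[
F(V, B_s(V); \gamma, z) + \sqrt{\tfrac{\gamma}{2}}\, G(V, B_s(V); \gamma, z) > 0 \qquad \text{for } V \in [V_1, -\sqrt{2/\gamma}\,C_8(z)),
\]
which, upon dividing by $G < 0$, yields $\tfrac{dC}{dV}\big|_{\overline{V}} < -\sqrt{\gamma/2} = B_s'(\overline{V})$, contradicting the minimality of $\overline{V}$.

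\emph{Step 3 (Positivity of the barrier quantity).} Substituting $C = -\sqrt{\gamma/2}\,V$ into \eqref{G(V,C)}--\eqref{F(V,C)}, factoring out the positive quantity $-\sqrt{\gamma/2}\,V$, and simplifying, I reduce the inequality to positivity of an explicit polynomial expression $\mathfrak{B}_s(V, z, m)$ analogous to $\mathfrak{B}_k$ in \eqref{def_Bk}. The crucial observation is that $V_{\mathrm{end}}(z) := -\sqrt{2/\gamma}\,C_8(z)$ is designed so that $(V_{\mathrm{end}}(z_M), C_8(z_M)) = P_8 = P_6$ by \eqref{-sqrt(gamma/2)V8=C8}, which forces $\mathfrak{B}_s(V_{\mathrm{end}}(z_M), z_M, m) = 0$. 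I would then aim to establish $\mathfrak{B}_s(V_{\mathrm{end}}(z), z, m) \geq 0$ for $z \in (0, z_M]$ together with $\partial_V \mathfrak{B}_s < 0$ on $[V_1, V_{\mathrm{end}}(z))$, so that $\mathfrak{B}_s > 0$ on the full interval by monotonicity; the cases $m = 1$ and $m = 2$ would be treated separately, as in Lemma \ref{lower barrier P6}. For the edge case $\gamma = 2$, since $P_1$ lies on $B_s$, I would additionally verify the strict slope inequality $\tfrac{dC}{dV}\big|_{(V_1,C_1)} > -\sqrt{\gamma/2}$ at $\gamma = 2$ by direct substitution into $F/G$, guaranteeing the solution detaches upward from $B_s$ immediately.

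\emph{Main obstacle.} As in Lemma \ref{lower barrier P6}, the principal technical difficulty lies in proving the uniform sign and monotonicity of $\mathfrak{B}_s$ across $(\gamma, z, m) \in [2,3] \times (0, z_M] \times \{1, 2\}$. Sharpness is particularly delicate as $z \nearrow z_M$ where the barrier quantity degenerates to zero at the right endpoint, so the algebra will likely require a careful splitting of $\mathfrak{B}_s$ into named sub-terms as in the treatment of $I$, $II$, $III$ in Lemma \ref{lower barrier P6}, with separate and possibly quite different arguments for the planar ($m = 1$) and spherical ($m = 2$) cases.
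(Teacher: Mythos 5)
Your Step 1 matches the paper, and the overall strategy (reduce to a polynomial sign condition) is the one the paper uses. However, Step 2 contains a genuine logical error in the direction of the inequality. If $\overline V$ is the smallest value in $(V_1,\,-\sqrt{2/\gamma}\,C_8(z))$ with $C(\overline V)=B_s(\overline V)$ and $C>B_s$ just to the left, then the minimality of $\overline V$ forces $\frac{d}{dV}(C-B_s)|_{\overline V}\le 0$, i.e.\ $C'(\overline V)\le -\sqrt{\gamma/2}$. To derive a contradiction one must therefore show $C'(\overline V)>-\sqrt{\gamma/2}$, equivalently $\frac{F}{G}+\sqrt{\gamma/2}>0$, and since $G<0$ along the curve this is $F+\sqrt{\gamma/2}\,G<0$. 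You instead set up the opposite inequality $F+\sqrt{\gamma/2}\,G>0$ and then claim that $C'(\overline V)<-\sqrt{\gamma/2}$ contradicts minimality; it does not — it is exactly what minimality already gives. This reverses the sign you need to prove in Step 3 as well (the paper's $\Barrier>0$ corresponds to $F+\sqrt{\gamma/2}\,G<0$ after factoring out $-m\sqrt{\gamma/2}\,V^2<0$, not to the inequality you announce). Note also that your parenthetical handling of the edge case $\gamma=2$ correctly states the needed slope inequality $C'(V_1)>-\sqrt{\gamma/2}$, which is inconsistent with your Step 2 — a sign that the confusion is genuine rather than typographical.

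A secondary geometric slip: you assert that the curve $C=-\sqrt{\gamma/2}\,V$ lies strictly below the sonic line $C=1+V$ on the interior of the interval. In fact, since $D(V,B_s(V))=(1+V)^2-\tfrac{\gamma}{2}V^2=\bigl(1+(1-\sqrt{\gamma/2})V\bigr)\bigl(1+(1+\sqrt{\gamma/2})V\bigr)<0$ for $V\in[V_1,V_8(z_M))$ and $\gamma\ge 2$, the barrier lies strictly \emph{above} the sonic line, touching it only at $V_8(z_M)$. The conclusion $G<0$ along the barrier still needs an argument (the paper's citation of Lemma \ref{sign of dC/dV to the left} is itself somewhat loose here, since that lemma speaks of the solution curve rather than the barrier curve), but your stated geometric reasoning is backwards. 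Finally, while evaluating $\mathfrak{B}_s$ at the $z$-dependent endpoint $V_{\mathrm{end}}(z)=-\sqrt{2/\gamma}\,C_8(z)$ is workable in principle, the paper instead extends the interval to the $z$-independent point $V_8(z_M)$ (using $-\sqrt{2/\gamma}\,C_8(z)<V_8(z_M)$ for $z<z_M$) and establishes $\partial_V\Barrier<0$ on $[V_1,V_8(z_M))$ and $\partial_z\Barrier(V_8(z_M),z)<0$, which cleanly reduces the check to the single point $(V_8(z_M),z_M)$ where $\Barrier$ vanishes.
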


\begin{proof}
	To show that $B_s(V)$ is a lower barrier of the solution of \eqref{ivp I gamma [2,3]}, we first verify that the initial point $P_1=(V_1,C_1)$ lies on or above the curve $B_s(V)$. This follows from 
	\begin{align*}
		C_1+\sqrt{\frac{\gamma}{2}}V_1 = \frac{\sqrt{2\gamma(\gamma-1)}}{\gamma+1}-\sqrt{\frac{\gamma}{2}}\frac{2}{\gamma+1} = \frac{\sqrt{2\gamma}}{\gamma+1}(\sqrt{\gamma-1}-1)\geq 0
	\end{align*}
	for any $\gamma\in[2,3]$ where the equality holds when $\gamma=2$. 
	
	Next, we  employ a barrier argument (\textit{cf.}  \eqref{barrier_argument}) to show that $B_s(V)$ is a lower barrier for the solution trajectory of \eqref{ivp I gamma [2,3]}. Specifically, we aim to prove that for any $\gamma\in [2,3]$, $z\in(0,z_M]$, and $V\in[V_1,-\sqrt{\frac{2}{\gamma}}C_8(z))$,
	\begin{align}\label{barrier I gamma 1-2}
		\cfrac{F(V,-\sqrt{\frac{\gamma}{2}}V;\gamma,z)}{G(V,-\sqrt{\frac{\gamma}{2}}V;\gamma,z)}+\sqrt{\frac{\gamma}{2}} 
		>0.
	\end{align}
	By Lemma \ref{sign of dC/dV to the left}, $G(V,-\sqrt{\frac{\gamma}{2}}V;\gamma,z)<0$ for any $\gamma\in [2,3]$, $z\in(0,z_M]$ and $V\in[V_1,-\sqrt{\frac{2}{\gamma}}C_8(z))$, and so it is sufficient to prove that
	\begin{align}
		F(V,-\sqrt{\frac{\gamma}{2}}V;\gamma,z)+\sqrt{\frac{\gamma}{2}}
		G(V,-\sqrt{\frac{\gamma}{2}}V;\gamma,z)<0.
	\end{align}
	By \eqref{F(V,C)} and \eqref{G(V,C)}, we have
	\begin{align*}
		F(V,-\sqrt{\frac{\gamma}{2}}V;\gamma,z)+\sqrt{\frac{\gamma}{2}}G(V,-\sqrt{\frac{\gamma}{2}}V;\gamma,z) 	
		= -m\sqrt{\frac{\gamma}{2}}V^2\Big[(-\gamma+\frac{1}{2})V+\frac{z\gamma V}{2(1+V)}+\frac{(\gamma z-1)(\gamma-1)}{2}-z\gamma\Big].
	\end{align*}
	Since $-m\sqrt{\frac{\gamma}{2}}V^2<0$, it is sufficient to show that 
	\begin{align}\label{barBar P8 only}
		\Barrier(V,z):=(-\gamma+\frac{1}{2})V+\frac{z\gamma V}{2(1+V)}+\frac{(\gamma z-1)(\gamma-1)}{2}-z\gamma>0
	\end{align}
	for any $\gamma\in [2,3]$, $z\in(0,z_M]$ and $V\in[V_1,-\sqrt{\frac{2}{\gamma}}C_8(z))$.
	
	By Lemma \ref{Lemma V_6/8 monoton}, $C_8(z) > C_8(z_M)$ for any $z\in(0,z_M)$. Thus, 
	\begin{align*}
		-\sqrt{\frac{2}{\gamma}}C_8(z) < -\sqrt{\frac{2}{\gamma}}C_8(z_M) = V_8(z_M).
	\end{align*} 
	Therefore, if we can establish the validity of \eqref{barBar P8 only} for all $V\in[V_1,V_8(z_M))$, it trivially holds for all $V\in[V_1,-\sqrt{\frac{2}{\gamma}}C_8(z))$.
	Notice that for any $z\in(0,z_M]$ and $V\in[V_1,V_8(z_M))$, we have
	\begin{align*}
		\frac{\partial \Barrier(V,z)}{\partial V} = -\gamma+\frac{1}{2}+\frac{z\gamma}{2(1+V)^2}\leq -\gamma+\frac{1}{2}+\frac{\gamma z_M}{2(1+V_1)^2}
		=\left(\frac{(\gamma+1)^2}{2(\gamma-1)^2}z_M-1\right)\gamma+\frac{1}{2}.
	\end{align*} 
Given that $z_M(\gamma) = \frac{1}{\gamma+2+2\sqrt{2\gamma}}$ and $\frac{\gamma+1}{\gamma-1} = 1+\frac{2}{\gamma-1}$ are both positive and monotonically decreasing functions in $\gamma$, it follows that $\frac{(\gamma+1)^2}{(\gamma-1)^2}z_M(\gamma)-1$ is also monotone decreasing  in $\gamma$.  
Hence for all $\gamma\in[2,3]$, we have 
	\begin{align*}
		\frac{(\gamma+1)^2}{2(\gamma-1)^2}z_M-1 \leq -\frac{7}{16} \  \ \text{ and } \ \  \left(\frac{(\gamma+1)^2}{2(\gamma-1)^2}z_M-1\right)\gamma+\frac{1}{2} \leq -\frac{7}{8}+\frac{1}{2} = -\frac{3}{8}<0,
	\end{align*} 
	which implies that for any $\gamma\in[2,3]$, $z\in(0,z_M]$ and $V\in[V_1,V_8(z_M))$,
	\begin{align*}
		\Barrier(V,z) > \Barrier(V_8(z_M),z).
	\end{align*}
	
	To finish the proof of \eqref{barrier I gamma 1-2}, it is now sufficient to show that $ \Barrier(V_8(z_M),z)\geq 0$. By \eqref{V68(z_M) C68(z_M)}, $V_8(z_M)$ is independent of $z$ so that 
	\begin{align*}
		\frac{\partial\Barrier(V_8(z_M),z)}{\partial z} = \frac{\gamma V_8(z_M) }{2(1+V_8(z_M))}+\frac{\gamma(\gamma-3)}{2} <0.
	\end{align*}
	Hence, we obtain 
	\begin{align*}
		\Barrier(V_8(z_M),z) \geq \Barrier(V_8(z_M),z_M),
	\end{align*}
	where the equality holds when $z=z_M$. By \eqref{-sqrt(gamma/2)V8=C8} and Lemma \ref{triple roots},
	\begin{align*}
		\Barrier(V_8(z_M),\gamma,z_M) = \cfrac{F(V_8(z_M),C_8(z_M),\gamma,z_M)+\sqrt{\frac{\gamma}{2}}G(V_8(z_M),C_8(z_M),\gamma,z_M)}{-m\sqrt{\frac{\gamma}{2}}V_8(z_M)^2} =0.
	\end{align*}
	In conclusion, we have shown that for $\gamma\in[2,3]$, $z\in(0,z_M]$ and $V\in[V_1,V_8(z_M))$, $\Barrier(V,z) > 0$, thereby completing the proof. 
	\end{proof}

\begin{proposition}\label{onlypass P8}
For any $\gamma\in[2,3]$, the analytic solution to \eqref{ODE} connecting $P_1$ to either $P_6$ or $P_8$, guaranteed by Theorem \ref{existence of zstd}, can only connect to $P_8$. 
\end{proposition}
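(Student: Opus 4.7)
The approach parallels Proposition \ref{onlypass P6}: I will use the lower barrier $B_s(V)=-\sqrt{\gamma/2}\,V$ from Lemma \ref{lower barrier sqrt(gamma/2)V} to exclude a connection through $P_6$. Since $P_6=P_8$ at $z=z_M$, it suffices to show that for $z<z_M$ the trajectory from $P_1$ cannot pass through $P_6$. Assuming otherwise, a contradiction will follow from two observations: $P_6$ lies strictly below $B_s$, and $V_6$ lies strictly inside the domain $[V_1,-\sqrt{2/\gamma}\,C_8(z))$ on which $B_s$ is a valid lower barrier, so that $C(V_6) > B_s(V_6) > C_6$.

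For the first observation, $C_6(z) < B_s(V_6(z))$ reduces, via $C_6=1+V_6$, to the inequality $V_6(z) < -1/(1+\sqrt{\gamma/2})$. The identity $V_6(z_M) = -\sqrt{2}/(\sqrt{\gamma}+\sqrt{2}) = -1/(1+\sqrt{\gamma/2})$ from \eqref{V68(z_M) C68(z_M)}, together with the strict monotonicity of $V_6$ in $z$ from Lemma \ref{Lemma V_6/8 monoton}, yields this for $z<z_M$. For the second observation, set $H(z):=-\sqrt{\gamma/2}\,V_6(z) - C_8(z)$; the same identity gives $H(z_M)=0$, and a direct computation using the explicit formulas for $V_6'(z)$ and $V_8'(z)=C_8'(z)$ from the proof of Lemma \ref{Lemma V_6/8 monoton} yields
\begin{equation*}
H'(z) = -\tfrac{1}{2}(\gamma-2)\bigl(\sqrt{\gamma/2}+1\bigr) - \tfrac{(\gamma+2)-(\gamma-2)^2 z}{2\,w(z)}\bigl(\sqrt{\gamma/2}-1\bigr).
\end{equation*}
For $\gamma \in [2,3]$, both $\gamma-2\geq 0$ and $\sqrt{\gamma/2}-1\geq 0$, while $(\gamma+2)-(\gamma-2)^2 z > 0$ on $(0,z_M]$, so $H'(z)\leq 0$ and hence $H(z)\geq 0$ with strict inequality for $z<z_M$. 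This gives $V_6(z)<-\sqrt{2/\gamma}\,C_8(z)$.

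Combining the two observations, for any $z\in(z_m,z_M)$ Lemma \ref{lower barrier sqrt(gamma/2)V} yields $C(V_6) > B_s(V_6) > C_6$, contradicting the passage of the trajectory through $P_6$. The remaining boundary case $z=z_m$ is immediate, since $V_6(z_m)=V_1$ while $C_1>1+V_1=C_6(z_m)$ by \eqref{D(V1,C1)<0}. The main technical effort lies in the sign analysis of $H'(z)$; the crucial observation is that the pivotal value $-1/(1+\sqrt{\gamma/2})$ coincides exactly with $V_6(z_M)$, which makes both required inequalities degenerate precisely at $z=z_M$ and hold strictly for $z<z_M$.
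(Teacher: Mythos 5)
Your proposal follows essentially the same route as the paper's proof: handle $z=z_M$ trivially, then for $z<z_M$ invoke Lemma \ref{lower barrier sqrt(gamma/2)V} by checking (a) that $P_6$ lies strictly below $B_s$, which you correctly reduce via $C_6=1+V_6$ to $V_6(z)<V_6(z_M)$ and monotonicity, and (b) that $V_6(z)$ lies in the domain of validity of the barrier. For (b) the paper substitutes the explicit expressions for $V_6$ and $C_8$ and bounds the resulting algebraic combination directly, while you differentiate $H(z)=-\sqrt{\gamma/2}\,V_6(z)-C_8(z)$ and show $H'\le 0$ with $H(z_M)=0$; both are legitimate and give the same result, so this is a cosmetic difference rather than a different method.

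One small flaw: your claim that $H(z)>0$ (hence $V_6(z)<-\sqrt{2/\gamma}\,C_8(z)$) holds strictly for $z<z_M$ fails at $\gamma=2$. There $\sqrt{\gamma/2}-1=0$ and $\gamma-2=0$, so $H'(z)\equiv 0$ and in fact $H(z)\equiv 0$ for all $z$ (indeed $V_6=\tfrac{-1-w}{2}=-C_8$ identically when $\gamma=2$). Thus for $\gamma=2$ the point $V_6$ sits exactly on the right endpoint of the half-open interval $[V_1,-\sqrt{2/\gamma}\,C_8(z))$, so Lemma \ref{lower barrier sqrt(gamma/2)V} does not directly assert $C(V_6)>B_s(V_6)$. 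The conclusion still stands, but one must invoke continuity of $C$ and $B_s$ to pass to $C(V_6)\ge B_s(V_6)$, and then rely on the \emph{strict} inequality $B_s(V_6)>C_6$ from observation (a) to obtain the contradiction $C(V_6)>C_6$. (The paper's phrasing sidesteps this by stating a subset relation of open intervals, which remains valid even when the endpoints coincide.) With that one-line patch, your argument is complete.
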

\begin{proof}
	When $z=z_M$, the points $P_6$ and $P_8$ coincide, rendering any further discussion unnecessary.
	For $z\in(0,z_M)$, by Theorem \ref{existence of zstd}, it is equivalent to showing that the solution trajectory cannot connect to $P_6$.
	
	By \eqref{upper and lower bound for V/C_6/8} and Lemma \ref{Lemma V_6/8 monoton}, for any $\gamma\in[2,3]$ and $z\in(0,z_M)$, it holds 
	\begin{align*}
		C_6(z)< C_8(z_M).
	\end{align*}
	 Moreover, from \eqref{P6} and \eqref{P8}, 
	\begin{align*}
		V_6(z)+\sqrt{\frac{2}{\gamma}}C_8(z) 
		&=\frac{\sqrt{\frac{2}{\gamma}}-1+(\sqrt{\frac{2}{\gamma}}+1)(\gamma-2)z+(\sqrt{\frac{2}{\gamma}}-1)w}{2}<\frac{\sqrt{\frac{2}{\gamma}}-1+(\sqrt{\frac{2}{\gamma}}+1)(\gamma-2)z_M}{2}=0
	\end{align*}
	since $z_M = \frac{1}{(\sqrt{\gamma}+\sqrt{2})^2}$, which shows that $(V_1,V_6(z)) \subset (V_1,-\sqrt{\frac{2}{\gamma}}C_8(z))$. Thus, $P_6$  always lies below the curve $\{B_s(V)\,|\,V\in (V_1,-\sqrt{\frac{2}{\gamma}}C_8(z))\}$ for $z\in(0,z_M)$.
	On the other hand, by Lemma \ref{lower barrier sqrt(gamma/2)V}, the solution trajectory of \eqref{ivp I gamma [2,3]} is always above $B_s(V)$ on $ (V_1,-\sqrt{\frac{2}{\gamma}}C_8(z))$.  
	Therefore, we conclude that the solution cannot connect to $P_6$.
\end{proof}

\section{Solving to left: $P_6$}\label{sec:P6toleft}

In this section, we will refine our analysis and the existence result around $P_6$  for $\gamma\in(1,2]$ by deriving  
an appropriate upper bound for the backwards solution of \eqref{ODE} starting from $P_6$  
to determine a more precise sonic window for $z_{std}(\gamma;P_6)$. In addition, we will prove that, for $\ga\in(1,2]$, the value $z_{std}(\gamma;P_6)$ is unique when it exists.

\subsection{Existence for $\gamma\in (1,2]$}

Recall from Section \ref{subsec:mainresult} that, for $\ga\in(1,2]$, $z_g$ is defined to be the value of $z$ such that $V_4(z_g)=V_6(z_g)$. In this subsection, we rigorously demonstrate that for $\gamma \in (1,2]$ and $z \in [z_m,z_g]$, the analytic solution to \eqref{ODE} backwards from $P_6$  guaranteed by Theorem \ref{analytic} and defined on the domain $[V_1,V_6]$ by Lemma \ref{local solution to the left} is indeed 
a lower solution for $P_6$, which yields an improvement of the range of $z_{std}(\gamma;P_6)$ to $[z_g, z_M]$. 
 We remark that 
$z_m<z_g$  for any $\gamma\in(1,2]$ and $m=1,2$. A proof of this simple fact may be found in  Appendix \ref{zm<zg}.

In what follows, recalling the definitions \eqref{G(V,C)}--\eqref{a_1234&z}, 
we use the notation
\begin{align}
	G(V,C) &= C^2g_1(V)-g_2(V),\label{Ggg}\\
	F(V,C) &= C(C^2f_1(V)-f_2(V))\label{Fff}
\end{align}
where 
\begin{align}
	g_1(V) &= (m+1)V+2mz,\label{g1}\\
	g_2(V) &=V(1+V)(m\gamma z+1+V),\label{g2}\\
	f_1(V) &= 1+\frac{mz}{(1+V)},\label{f1}\\
	f_2(V)&= a_1(1+V)^2-a_2(1+V)+a_3.\label{f2}
\end{align} We rewrite $\frac{dC}{dV} = \frac{F(V,C)}{G(V,C)}$ as 
\begin{align}\label{L}
	\cfrac{d\log{C}}{dV}=\frac{1}{C} \frac{dC}{dV}= \cfrac{C^2f_1(V)-f_2(V)}{C^2g_1(V)-g_2(V)}.
\end{align}
 For $\ga\in(1,2]$, $z_g=z_g(\gamma)$  is defined to be the value such that $$V_4(\gamma,z_g)=V_6(\gamma,z_g).$$ 
 In fact, there exists $\ga_g\in(2,3)$ such that $z_g$ defined in this way is well-defined for $\gamma\in (1,\gamma_g]$, while for $\ga\in(\ga_g,3]$, $V_4$ meets $V_8$ at $z_g$ (defined in an equivalent manner). A detailed discussion of  $\gamma_g$ and $z_g$ is given in  \cite{Lazarus81}. However, for our analysis, we require an understanding of $z_g$ only in the range $\ga\in(1,2]$. The value $z_g$ admits an explicit representation as  
 \begin{align}\label{zg}
	z_g =\begin{cases}
		\cfrac{\sqrt{\gamma^2+(\gamma-1)^2}-\gamma}{\gamma(\gamma-1)} \quad &\text{ when } m=1,\\
		\cfrac{\sqrt{(2\gamma^2-\gamma+1)^2 +2\gamma(\gamma-1)[4\gamma(\gamma-1)+\frac{8}{3}]}-(2\gamma^2-\gamma+1)}{\gamma[4\gamma(\gamma-1)+\frac{8}{3}]}\quad &\text{ when } m=2.
	\end{cases}
\end{align}
We claim that for any $\gamma\in(1,2]$, $z\in[z_m,z_g]$ gives a lower solution for $P_6$. Recalling the definition of a lower solution, \eqref{lower solution def}, 
it is enough to show that 
\begin{align}
		\log C(V_1;\gamma,z,P_6):=-\int_{V_1}^{V_6(z)} \frac{d\log C}{dV} dV + \log C_6(z) <  \log C_1.
\end{align}
Solving this inequality directly is not a trivial task, since the integral is implicit as the integrand involves not only $V$ but also $C$ (\textit{cf.} \eqref{L}).  
To simplify our approach and avoid the complications associated with this implicit integral, we will derive 
an explicit lower bound for $\frac{d\log C}{dV}$ for any $\gamma\in(1,2]$, $z\in[z_m,z_g]$ and $V\in[V_1,V_6(z))$.

\begin{lemma}\label{g1f2-g2f1}
	For any $\gamma\in(1,2]$ and $z\in[z_m,z_g]$, the solution obtained from Theorem \ref{analytic} and Lemma \ref{local solution to the left} satisfies 
	\begin{align}\label{upper bound of solution V4 zg}
			-\int_{V_1}^{V_6(z)}\frac{d\log C}{dV} dV <-\int_{V_1}^{V_6(z)}\frac{f_1(V)}{g_1(V)} dV.
	\end{align}
\end{lemma}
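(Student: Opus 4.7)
My plan is to reduce the integral inequality to a pointwise inequality between the integrands, and then to establish the latter by an algebraic factorization. Using the notation \eqref{Ggg}--\eqref{f2}, one computes from \eqref{L}
\begin{equation*}
\frac{d\log C}{dV} - \frac{f_1(V)}{g_1(V)} = \frac{f_1(V)g_2(V) - f_2(V)g_1(V)}{g_1(V)\,G(V,C)}.
\end{equation*}
On $[V_1, V_6)$ one has $G < 0$ by Lemma \ref{sign of dC/dV to the left}, and $g_1 < 0$ because $g_1$ is linear with positive slope $m+1$ while $g_1(V_6) = g_2(V_6)/C_6^2 < 0$ (since $V_6(1+V_6)(m\gamma z + 1 + V_6) < 0$ as $V_6 \in (-1,0)$, $1+V_6 = C_6 > 0$, and $m\gamma z + 1 + V_6 > 0$). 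Hence $g_1 G > 0$ on this interval, and the lemma reduces to showing
\[
Q(V) := f_1(V) g_2(V) - f_2(V) g_1(V) > 0 \qquad \text{for } V\in [V_1, V_6).
\]

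The key observation is that $Q$ is a cubic polynomial in $V$ admitting the factorization
\begin{equation*}
Q(V) = A(V - V_4)(V - V_6)(V - V_8), \qquad A = -\tfrac{m(2+(m+1)(\gamma-1))}{2} < 0.
\end{equation*}
Indeed, at either sonic point $V_*\in\{V_6, V_8\}$, the identities $C_*^2 g_1(V_*) = g_2(V_*)$ and $C_*^2 f_1(V_*) = f_2(V_*)$ imply $Q(V_*) = (C_*^2 f_1 - f_2) g_1 = 0$, so $V_6, V_8$ are two roots. A direct expansion reads off the leading coefficient $A$; combining $Q(0) = -2mz(1+m\gamma z)$ with the Vieta identity $V_6 V_8 = 2z$ (immediate from \eqref{P6}, \eqref{P8} and \eqref{w(z)}), the remaining root is
\begin{equation*}
\frac{-Q(0)}{A\, V_6 V_8} = -\frac{2(1+m\gamma z)}{2+(m+1)(\gamma-1)} = -\frac{2\lambda}{\gamma + 1 + m(\gamma-1)} = V_4,
\end{equation*}
using $1 + m\gamma z = \lambda$ from \eqref{a_1234&z} and the formula for $V_4$ in Lemma \ref{double root}.

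With the factorization in hand, the sign of $Q$ on $[V_1, V_6)$ is controlled by the assumption $z \leq z_g$. For $V \in [V_1, V_6)$, the factors $V - V_6$ and $V - V_8$ are both negative since $V_8 \geq V_6 > V$. By the definition $V_4(z_g) = V_6(z_g)$ together with the monotonicity of $V_4$ (decreasing in $z$) and $V_6$ (increasing in $z$ by Lemma \ref{Lemma V_6/8 monoton}), we have $V_4 \geq V_6$ for all $z \in [z_m, z_g]$, hence $V - V_4 < 0$ as well. The product of three negative factors with the negative leading coefficient $A$ gives $Q(V) > 0$. Integrating the strict pointwise inequality $\frac{d\log C}{dV} > \frac{f_1}{g_1}$ over $[V_1, V_6(z)]$ and negating yields \eqref{upper bound of solution V4 zg}. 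The main technical step is recognizing the factorization of $Q$, and in particular identifying $V_4$ as its third root via the identity $1+m\gamma z = \lambda$; once this is established, the rest is elementary sign analysis.
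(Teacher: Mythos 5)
Your proof is correct and takes essentially the same approach as the paper: reduce to a pointwise comparison of integrands, show $g_1 G > 0$ on $[V_1,V_6)$, and factor the cubic $f_1 g_2 - f_2 g_1$ (your $Q$, the paper's $-q$) with roots $V_4,V_6,V_8$, then conclude from $V_4\geq V_6$ for $z\in[z_m,z_g]$. The only notable variation is cosmetic: you identify the third root $V_4$ via Vieta's formulas using $Q(0)=-2mz\lambda$ and $V_6V_8=2z$, whereas the paper simply observes that the double-point identities $C_4^2 g_1(V_4)=g_2(V_4)$ and $C_4^2 f_1(V_4)=f_2(V_4)$ make $V_4$ a root directly; both are valid, and your derivation of $g_1<0$ via $g_1(V_6)=g_2(V_6)/C_6^2<0$ is also fine (and arguably cleaner than the paper's estimate $z<z_M<\tfrac15$).
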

\begin{proof}
	By direct computations, we have 
	\begin{align*}
		\cfrac{d\log{C}}{dV} - \frac{f_1(V)}{g_1(V)} = \cfrac{C^2f_1(V)-f_2(V)}{C^2g_1(V)-g_2(V)}-\frac{f_1(V)}{g_1(V)}
		=-\frac{g_1(V)f_2(V)-f_1(V)g_2(V)}{[C^2g_1(V)-g_2(V)]g_1(V)}.
	\end{align*}
	We will show this 
	function is positive for any $\gamma\in(1,2]$, $z\in[z_m,z_g]$, and $V\in[V_1,V_6(z))$.
	
	By \eqref{upper and lower bound for V/C_6/8} and the fact that for $\ga\in(1,2]$, $z<z_M<\frac{1}{5}$, we have $g_1(V)<0$ for $V\in[V_1,V_6)$. On the other hand, by Lemma \ref{sign of dC/dV to the left}, $G(V,C)=C^2g_1(V)-g_2(V)<0$ for any $V\in[V_1,V_6)$. Therefore, it is sufficient to show that 
	\begin{align*}
		q(V):=g_1(V)f_2(V)-f_1(V)g_2(V) < 0.
	\end{align*}
	Note that $q(V)$ is a cubic polynomial in $V$. Also, $F(V,C)=G(V,C)=0$ at $P_4$, $P_6$ and $P_8$ which implies $C_k^2g_1(V_k)=g_2(V_k)$ and $C_k^2f_1(V_k)=f_2(V_k)$ for $k=4,6,8$. Consequently, $V_4$, $V_6$ and $V_8$ are three roots of $g_1f_2-g_2f_1=0$. Thus,
	\begin{align*}
		q(V) =\frac{m[(m + 1)(\gamma- 1) + 2]}{2} (V-V_4)(V-V_6)(V-V_8).
	\end{align*}
	According to \eqref{upper and lower bound for V/C_6/8}, we have $V_6\leq V_8$ with the equality when $z=z_M$. Therefore, the sign of $q(V)$ depends on the location of $V_4$. If we can show that $V_4\geq V_6$ for $z\in[z_m,z_g]$,  then 
	$q(V)<0$ for any $V\in[V_1,V_6)$. We claim that
	\begin{align}\label{V6<V4}
		V_4(z)\geq V_6(z) \ \text{ for } \  z\in[z_m,z_g]
	\end{align}
	where the equality holds when $z=z_g$. By using \eqref{P4}, we have
	\begin{align*}
		\frac{dV_4(z)}{dz} = \frac{d}{dz} \left( \frac{-2m\gamma z-2}{(m+1)\gamma+1-m} \right)= \frac{-2m\gamma}{(m+1)\gamma+1-m} <0,
	\end{align*}
	which implies $V_4$ is a decreasing function in $z$. By Lemma \ref{Lemma V_6/8 monoton}, $V_6(z)$ is an increasing function in $z$. 
	From the definition of $z_g$ \eqref{zg}, $V_4(z_g)=V_6(z_g)$ for any $\gamma\in(1,2]$. We have shown  \eqref{V6<V4}, which leads to 
	\begin{align}\label{f1/g1 <0}
		\cfrac{d\log{C}}{dV} - \frac{f_1(V)}{g_1(V)} >0.
	\end{align}
	This completes the proof of \eqref{upper bound of solution V4 zg}.
\end{proof}

Motivated by Lemma \ref{g1f2-g2f1}, we define 
\begin{align}
	\delta(V_1;z) := -\int_{V_1}^{V_6(z)}\frac{f_1(V;z)}{g_1(V;z)}dV + \log C_6(z) \label{ln(widetilde(C))} 
\end{align}
where we have used the notations $f_1(V;z)$ and $g_1(V;z)$ for $f_1(V)$ and $g_1(V)$ to emphasize the dependence of $f_1$ and $g_1$ on $z$. By Lemma \ref{g1f2-g2f1}, we have for any $\gamma\in(1,2]$ and $z\in(z_m,z_g]$,
\begin{align*}
	\log C(V_1;\gamma,z,P_6) <	\delta(V_1;z).
\end{align*}
Our next step is to show that for any $\gamma\in(1,2]$, $z=z_g$ gives a lower solution for $P_6$.
\begin{lemma}\label{zglower}
	For any $\gamma\in (1,2]$ and $z=z_g$, 
	\begin{align}\label{zginequality}
	\delta(V_1;z_g)<\log C_1.
	\end{align}
\end{lemma}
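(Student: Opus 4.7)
The plan is to compute $\delta(V_1;z_g)$ in closed form via a partial-fractions integration, and then to reduce the inequality $\delta(V_1;z_g)<\log C_1$ to a single transcendental inequality in $\gamma$ that can be verified on $(1,2]$ case-by-case in $m\in\{1,2\}$.

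First, the integrand admits the decomposition
\[
\frac{f_1(V;z)}{g_1(V;z)}=\frac{(1+V)+mz}{(1+V)\bigl[(m+1)V+2mz\bigr]}=\frac{A(z)}{1+V}+\frac{B(z)}{(m+1)V+2mz},
\]
with $A(z)=\tfrac{mz}{2mz-(m+1)}$ and $B(z)=\tfrac{m+1+mz(m-1)}{m+1-2mz}$ obtained by elementary partial fractions. The proof of Lemma \ref{g1f2-g2f1} already gives $g_1(V)<0$ on $[V_1,V_6(z))$, while $1+V>0$ there, so the integration interval avoids both simple poles. Term-wise integration, together with $C_6(z)=1+V_6(z)$, therefore yields
\[
\delta(V_1;z)=\log C_6(z)-A(z)\log\frac{C_6(z)}{1+V_1}-\frac{B(z)}{m+1}\log\frac{(m+1)V_6(z)+2mz}{(m+1)V_1+2mz}.
\]

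Specializing to $z=z_g$, the identity $V_6(z_g)=V_4(z_g)=-\tfrac{2(m\gamma z_g+1)}{(m+1)\gamma+1-m}$, together with the explicit formula \eqref{zg} for $z_g$ and the values $V_1=-\tfrac{2}{\gamma+1}$, $C_1=\tfrac{\sqrt{2\gamma(\gamma-1)}}{\gamma+1}$, turns every factor in the display above into an explicit algebraic function of $\gamma$ alone. The desired inequality \eqref{zginequality} is thereby reduced to a concrete scalar inequality $\Phi_m(\gamma)<0$ on $(1,2]$, with separate expressions in the cases $m=1$ and $m=2$.

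The main obstacle is verifying $\Phi_m(\gamma)<0$ on $(1,2]$. I would proceed case-by-case in $m$. At $\gamma=2$ the surds in $z_g$ collapse and the inequality can be checked by direct algebraic evaluation. As $\gamma\to 1^+$ the situation is delicate: both $\log C_1\to-\infty$ and the integral degenerates (since $V_1$, $V_6(z_m)$, and $V_6(z_g)$ all tend to $0$), so a matched Taylor expansion in $(\gamma-1)$ is required to ensure that the leading-order terms respect the strict inequality. For the interior of the interval I would attempt to prove monotonicity of $\Phi_m$ in $\gamma$; since $A(z)$, $B(z)$, $V_6(z_g)$, and $z_g$ itself all depend on $\gamma$, the differentiation is voluminous but reduces to polynomial and radical inequalities. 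Should direct monotonicity fail, one may subdivide $(1,2]$ into finitely many sub-intervals and sign $\Phi_m'$ on each by elementary estimates, which suffices since all quantities involved are algebraic in $\gamma$.
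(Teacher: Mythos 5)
Your partial-fractions integration and the resulting closed form for $\delta(V_1;z)$ are correct (and after simplification they agree exactly with the paper's formula \eqref{ln(widetilde(C)(V1))}), and the reduction to a scalar inequality $\Phi_m(\gamma)<0$ is the same reduction the paper performs. The problem is that the reduction is the easy part of this lemma; essentially the entire content of the proof is verifying $\Phi_m(\gamma)<0$ on $(1,2]$, and your proposal leaves that verification as a sketch of possible strategies rather than an argument.

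Concretely, the approaches you list diverge from what actually works and are not shown to work. You suggest proving monotonicity of $\Phi_m$ in $\gamma$, but the paper does not claim or use any such monotonicity, and there is no reason given why it should hold; if it fails, ``subdividing $(1,2]$ and signing $\Phi_m'$'' is not an argument but a promissory note, and since $\Phi_m$ involves logarithms it is not even purely algebraic, so naively ``sign $\Phi_m'$'' does not reduce to radical inequalities as claimed. You also raise the concern that as $\gamma\to1^+$ both sides diverge and a ``matched Taylor expansion'' is required; this signals that you have not found the cancellation structure that the paper exploits. What the paper actually does for $m=1$ is split $\delta(V_1;z_g)-\log C_1=\tfrac12(I+II)$, observe $I<0$, use the elementary bound $\tfrac{1}{z_g-1}>-2$ to turn $II$ into a product of explicit factors, and then bound the whole expression by the fixed negative constant $\log\tfrac{2(\sqrt5+1)^3}{(\sqrt5+2)^3}$, so no asymptotic matching near $\gamma=1$ is needed; for $m=2$ it first establishes $z_g<\tfrac18$ and then estimates the two resulting terms separately, again with concrete algebraic inequalities (some delegated to the polynomial-sign propositions in Appendix E). None of this is in your proposal, so the hard step of the lemma is missing.
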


\begin{proof}
We first evaluate $\delta(V_1;z)$  
in \eqref{ln(widetilde(C))} by  
using \eqref{f1} and \eqref{g1}  to calculate the integral explicitly as
\begin{align}
	\delta(V_1;z)  
	&= \cfrac{(m^2-m)z+(m+1)}{(2mz-m-1)(m+1)}\log\frac{(m+1)V_6+2mz}{(m+1)V_1+2mz}+\cfrac{mz}{2mz-m-1}\log\frac{1+V_1}{1+V_6}+\log (1+V_6)\label{ln(widetilde(C)(V1))}.
\end{align}
By \eqref{V after shock} and \eqref{C after shock}, 
\begin{align}\label{1+V1, logC1}
	1+V_1 &= \frac{\gamma-1}{\gamma+1}\text{ and } \log C_1 = \frac{1}{2} \log (1+V_1) + \frac{1}{2}\log \frac{2\gamma}{\gamma+1}.
\end{align}
To study the remainder of the expression for $\delta(V_1;z)$, we treat $m=1$ and $m=2$ separately. 
 
When $m=1$,  
by \eqref{P4} and \eqref{zg},
\begin{align}
	V_6(z_g) & =V_4(z_g)= -\frac{\gamma z_g+1}{\gamma} = -z_g-\frac{1}{\gamma}\label{V4m1},\\
	z_g &= \frac{\sqrt{\gamma^2+(\gamma-1)^2}-\gamma}{\gamma(\gamma-1)} = \frac{\gamma-1}{\gamma(\sqrt{\gamma^2+(\gamma-1)^2}+\gamma)}<\frac{1}{2}.\label{zgm1}
\end{align}
Together with \eqref{ln(widetilde(C)(V1))} and \eqref{1+V1, logC1}, we then have
\begin{align*}
    \delta(V_1;z_g)-\log C_1=\frac{1}{2}\log\frac{(1+V_4)(\gamma+1)}{2\gamma}+\frac{1}{2(z_g-1)}\log[\frac{(V_4+z_g)(1+V_1)}{(V_1+z_g)(1+V_4)}] =: \frac{1}{2}(I+II).
\end{align*}
For $I$, we have $1+V_4<1$ and $\frac{\gamma+1}{2\gamma}<1$ for any $\gamma\in (1,2]$. Thus, $I<0$.
From \eqref{zgm1}, $\frac{1}{z_g-1}>-2$. Moreover,  
\begin{align*}
	I+II &< \log\frac{(1+V_4)(\gamma+1)}{2\gamma}-2\log[\frac{(V_4+z_g)(1+V_1)}{(V_1+z_g)(1+V_4)}]\\
	 &=\log [\frac{(\gamma+1)(\gamma-1)}{2\gamma^2}(2-z_g(\gamma+1))^2(1-\frac{1}{(\sqrt{\gamma^2+(\gamma-1)^2}+\gamma)})^3]\\
	 &<\log \frac{(\gamma+1)(\gamma-1)}{\gamma^2}+\log\frac{2-z_g(\gamma+1)}{2}+\log[(2-z_g(\gamma+1))(1-\frac{1}{\sqrt{5}+2})^3]\\
	 &<\log\frac{2(\sqrt{5}+1)^3}{(\sqrt{5}+2)^3}<0
\end{align*}
where we have used that $2-z_g(\gamma+1)<2$ and $\cfrac{1}{(\sqrt{\gamma^2+(\gamma-1)^2}+\gamma)}$ is a decreasing function. This concludes the proof in the case $m=1$.

\

When $m=2$, for any $\gamma\in(1,2]$, by \eqref{V after shock}, \eqref{P4}, \eqref{zM} and \eqref{zg},
\begin{align}
	V_4 & = -\frac{4\gamma z_g+2}{3\gamma-1},\label{V4m2}\\
	z_g &=  \frac{2(\gamma-1)}{\sqrt{(2\gamma^2-\gamma+1)^2 +2\gamma(\gamma-1)[4\gamma(\gamma-1)+\frac{8}{3}]}+(2\gamma^2-\gamma+1)}.\label{zgm2}
\end{align}
We first claim that $z_g<\frac{1}{8}$. By direct computations,  
for any $\gamma\in(1,2]$,
\begin{align*}
	&(16(\gamma-1)-(2\gamma^2-\gamma+1))^2-(2\gamma^2-\gamma+1)^2 -2\gamma(\gamma-1)[4\gamma(\gamma-1)+\frac{8}{3}] \\
	&= 8(\gamma-1)(-\gamma^3-7\gamma^2+\frac{106}{3}\gamma-36)<0
\end{align*}
where the cubic polynomial is negative for any $\gamma\in(1,2]$ as shown in Proposition \ref{poly: lemma 5.2-1}. This then implies 
$$16(\gamma-1)-(2\gamma^2-\gamma+1) < \sqrt{(2\gamma^2-\gamma+1)^2 +2\gamma(\gamma-1)[4\gamma(\gamma-1)+\frac{8}{3}]},$$
 and hence
 $$\frac{2(\gamma-1)}{\sqrt{(2\gamma^2-\gamma+1)^2 +2\gamma(\gamma-1)[4\gamma(\gamma-1)+\frac{8}{3}]}+(2\gamma^2-\gamma+1)} < \frac{1}{8},$$
that is,
\begin{align}
	z_g<\frac{1}{8},\label{zgm2upperbound} 
\end{align}
 as claimed.
By 
\eqref{ln(widetilde(C)(V1))}, \eqref{1+V1, logC1}, \eqref{V4m2} and \eqref{zgm2}, we have
\begin{align*}
    &\delta(V_1;z_g)-\log C_1 \\
 &=\frac{2z_g+3}{2(12z_g-9)}\log [\frac{(3V_4+4z_g)^2}{(3V_1+4z_g)^2}\frac{2\gamma}{\gamma-1}]+\frac{6z_g-9}{4(12z_g-9)}\log\frac{(1+V_4)^4}{(1+V_1)^4} +\frac{7z_g-3}{12z_g-9}\log\frac{\gamma-1}{2\gamma}\\
 &<\frac{2z_g+3}{2(12z_g-9)}\log [\frac{(3V_4+4z_g)^2}{(3V_1+4z_g)^2}\frac{2\gamma}{\gamma-1}]+\frac{6z_g-9}{4(12z_g-9)}\log\frac{(\gamma-1)(1+V_4)^4}{2\gamma(1+V_1)^4}
\end{align*}
where we have used \eqref{zgm2upperbound} in the inequality.  
Now, we  show that both terms are negative. We compute
\begin{align*}
	\frac{(3V_4+4z_g)^2}{(3V_1+4z_g)^2}\frac{2\gamma}{\gamma-1} =(\cfrac{\frac{-12\gamma z_g-6}{3\gamma-1}+4z_g}{\frac{-6}{\gamma+1}+4z_g})^2\frac{2\gamma}{\gamma-1}=\frac{(-6-4z_g)^2}{(-6+4z_g(\gamma+1))^2}\frac{2\gamma(\gamma+1)^2}{(3\gamma-1)^2(\gamma-1)}.
\end{align*}
Note 
that $\frac{2\gamma(\gamma+1)^2}{(3\gamma-1)^2(\gamma-1)}>1$ for $\gamma\in(1,2]$.
Also, $z_g>0$ implies
\begin{align*}
	\frac{(-6-4z_g)^2(\gamma+1)^2}{(-6+4z_g(\gamma+1))^2}>1.
\end{align*}
Hence, 
\begin{align}
	\frac{2z_g+3}{2(12z_g-9)}\log [\frac{(3V_4+4z_g)^2}{(3V_1+4z_g)^2}\frac{2\gamma}{\gamma-1}]<0
\end{align}
because $12z_g-9<0$. As for the second term, we first note that 
\begin{align*}
	\frac{(\gamma-1)(1+V_4)^4}{2\gamma(1+V_1)^4} &= \frac{(\gamma+1)^4}{2\gamma}\cfrac{(3(\gamma-1)-4\gamma z_g)^4}{(\gamma-1)^3(3\gamma-1)^4}\\
	&=\frac{81(\gamma+1)^4}{(3\gamma-1)^4}\frac{\gamma-1}{2\gamma}(1-\frac{8}{3(\sqrt{(\frac{2\gamma^2-\gamma+1}{\gamma})^2 +8[(\gamma-1)^2+\frac{2(\gamma-1)}{3\gamma}]}+\frac{2\gamma^2-\gamma+1}{\gamma}})^4.
\end{align*} 
Moreover, for any $\gamma\in(1,2]$, 
\begin{align*}
	(\frac{2\gamma^2-\gamma+1}{\gamma} )' &= 2-\frac{1}{\gamma^2}>0 \ \ \text{ and } \ \ 
	((\gamma-1)^2+\frac{2(\gamma-1)}{3\gamma})' = \frac{2}{3\gamma^2}+2\gamma-2>0,
\end{align*}
which implies 
\begin{align*}
	&(1-\frac{8}{3(\sqrt{(\frac{2\gamma^2-\gamma+1}{\gamma})^2 +8[(\gamma-1)^2+\frac{2(\gamma-1)}{3\gamma}]}+\frac{2\gamma^2-\gamma+1}{\gamma}})^4< (1-\frac{8}{3(\sqrt{(\frac{7}{2})^2 +8[1+\frac{1}{3}]}+\frac{7}{2}})^4
	< \frac{1}{4}.
\end{align*}
Therefore, for any $\gamma\in (1,2]$, we deduce that  
\begin{align*}
	\frac{(\gamma-1)(1+V_4)^4}{2\gamma(1+V_1)^4} <\frac{81(\gamma-1)(\gamma+1)^4}{8\gamma(3\gamma-1)^4}<1
\end{align*}
where the last inequality is shown in Proposition \ref{poly: lemma 5.2-2}. We then have  
\begin{align}
	\frac{6z_g-9}{4(12z_g-9)}\log\frac{(\gamma-1)(1+V_4)^4}{2\gamma(1+V_1)^4} <0,
\end{align}
thereby completing the proof. 
\end{proof}

\begin{proposition}\label{prop:zglower}
	For any $\gamma\in(1,2]$, $z\in[z_m,z_g]$ gives a lower solution for $P_6$.
\end{proposition}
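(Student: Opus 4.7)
The plan is to reduce Proposition~\ref{prop:zglower}, via Lemma~\ref{g1f2-g2f1}, to the uniform upper bound $\delta(V_1;z)\le \log C_1$ for $z\in[z_m,z_g]$, and then to verify this inequality by combining the two endpoint values with a monotonicity argument for $\delta(V_1;z)$ in $z$. Indeed, Lemma~\ref{g1f2-g2f1} (whose key ingredient \eqref{V6<V4} is valid precisely on $[z_m,z_g]$) gives $\log C(V_1;\gamma,z,P_6)<\delta(V_1;z)$ on this whole interval, so once $\delta(V_1;z)\le \log C_1$ is established, $C(V_1;\gamma,z,P_6)<C_1$ follows, which is exactly the definition \eqref{lower solution def} of a lower solution for $P_6$.

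At the two endpoints the bound on $\delta$ is essentially already available. When $z=z_m$, the integral in~\eqref{ln(widetilde(C))} collapses since $V_6(z_m)=V_1$, leaving $\delta(V_1;z_m)=\log C_6(z_m)=\log(1+V_1)<\log C_1$ by~\eqref{ineq:C6lam}. When $z=z_g$, Lemma~\ref{zglower} supplies exactly the inequality $\delta(V_1;z_g)<\log C_1$.

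To interpolate between the endpoints, I would prove monotonicity $\frac{d\delta}{dz}>0$ on $[z_m,z_g]$, from which $\delta(V_1;z)\le \delta(V_1;z_g)<\log C_1$ immediately follows. Differentiating the explicit formula~\eqref{ln(widetilde(C)(V1))} in $z$, the Leibniz contribution from the moving endpoint $V_6(z)$ can be simplified using the sonic identity $C_6^2 g_1(V_6)=g_2(V_6)$ together with $C_6=1+V_6$ and the positivity $\frac{dV_6}{dz}>0$ from Lemma~\ref{Lemma V_6/8 monoton}; the remaining partial-$z$ derivatives of the rational and logarithmic coefficients in~\eqref{ln(widetilde(C)(V1))} can then be grouped and bounded using the explicit upper bounds $z_g<\tfrac{1}{2}$ for $m=1$ and $z_g<\tfrac{1}{8}$ for $m=2$ from~\eqref{zgm1} and~\eqref{zgm2upperbound}.

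The main obstacle will be controlling the sign of $\frac{d\delta}{dz}$ cleanly: the mixture of logarithmic and rational terms in~\eqref{ln(widetilde(C)(V1))} requires careful cancellation, and it seems likely that the argument must be split into the cylindrical ($m=1$) and spherical ($m=2$) cases, mirroring the case-split in Lemma~\ref{zglower}. If a tidy monotonicity proof proves too delicate, a backup is to estimate $\delta(V_1;z)-\log C_1$ directly for all $z\in[z_m,z_g]$ using~\eqref{ln(widetilde(C)(V1))}, extending the endpoint calculation of Lemma~\ref{zglower} by carrying $z$ as a parameter through the same logarithmic and rational bounds.
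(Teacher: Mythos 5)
Your proposal is correct and matches the paper's strategy exactly: reduce via Lemma~\ref{g1f2-g2f1} to bounding $\delta(V_1;z)$ by $\log C_1$, then combine the $z=z_g$ endpoint estimate of Lemma~\ref{zglower} with the monotonicity $\frac{d\delta}{dz}>0$. For the monotonicity step you will find it cleaner to differentiate the integral representation~\eqref{ln(widetilde(C))} directly by the Leibniz rule rather than the closed form~\eqref{ln(widetilde(C)(V1))}: the three resulting terms $-\int_{V_1}^{V_6}\partial_z(f_1/g_1)\,dV$, $-\frac{dV_6}{dz}\frac{f_1(V_6)}{g_1(V_6)}$, and $\frac{C_6'}{C_6}$ are each manifestly positive (using $\partial_z(f_1/g_1)<0$, $g_1(V_6)<0<f_1(V_6)$, and $V_6'=C_6'>0$), with no $m=1,2$ case-split required.
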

\begin{proof}
	We first show $\delta(V_1;z)$  
	is an increasing function. 
 From \eqref{ln(widetilde(C))} and \eqref{ln(widetilde(C)(V1))} 
	\begin{align*}
		\frac{d \delta(V_1;z)}{d z} 
		=& -\int_{V_1}^{V_6(z)}\frac{-2m+m(m-1)V}{(1+V)((m+1)V+2mz)^2} dV -\frac{dV_6(z)}{dz}\frac{f_1(V_6(z);z)}{g_1(V_6(z);z)} +\frac{1}{C_6(z)}\frac{dC_6(z)}{dz}>0
	\end{align*}
	where we have used $\frac{dV_6(z)}{dz}=\frac{dC_6(z)}{dz}>0$ by Lemma \ref{Lemma V_6/8 monoton} and $\frac{f_1(V_6(z);z)}{g_1(V_6(z);z)}<0$ by \eqref{f1/g1 <0}. 
	By  Lemma \ref{g1f2-g2f1} and Lemma \ref{zglower}, we then have
	\begin{align*}
		\log C(V_1;\gamma,z,P_6) <\delta(V_1;z) \leq \delta(V_1;z_g) < \log C_1
	\end{align*}
	for any $\gamma\in(1,2]$ and $z\in[z_m,z_g]$. This finishes the proof.
\end{proof}

\subsection{Uniqueness of $z_{std}$ for $P_6$ when $\gamma\in (1,2]$}

Recall $z_{std}(\gamma;P_6)$ is the value of $z$ such that the solution $C(V;\gamma,z_{std}(\gamma;P_6),P_6 )$ (\textit{cf.} \eqref{C(V;gamma,z,P_*)}) satisfies 
	\begin{align}
		\begin{cases}
			&\cfrac{d C}{d V} = \cfrac{F(V,C;\gamma,z_{std})}{G(V,C;\gamma,z_{std})},\\
			& C(V_1) = C_1,\quad C(V_6) = C_6,\\
			& \frac{d C}{d V}(V_6,C_6)=c_1.
		\end{cases}
	\end{align} 
By Proposition \ref{onlypass P8}, we know that for $\gamma\in [2,3]$ the solution can only connect to $P_8$ and therefore, we focus on   
$\gamma\in(1,2]$ for further analysis of $z_{std}(\gamma;P_6)$. 
Within this range of $\gamma$, 
 we demonstrate  the uniqueness of $z_{std}$ for $P_6$. 
This is achieved by showing 
that for any fixed $\gamma\in(1,2]$, the solution trajectories $C(V;\gamma,z,P_6)$ of \eqref{ivp p*} starting from $P_6$ 
do not intersect for different values of $z\in [z_g,z_M]$. In particular, at most one such trajectory can connect to $P_1$.

\begin{lemma}\label{uniquenessP6}
	For any $\gamma\in(1,2]$ fixed, the solution trajectories  $C(V;\gamma,z,P_6)$ do not intersect for different $z\in[z_g,z_M]$ in the interval $[V_1,V_6)$.
\end{lemma}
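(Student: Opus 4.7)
The plan is to argue by contradiction via a monotonicity-in-$z$ comparison. Fix $\gamma\in(1,2]$, take $z_1<z_2$ in $[z_g,z_M]$, and denote $C_i(V):=C(V;\gamma,z_i,P_6)$ for $i=1,2$, which are analytic solutions defined on $[V_1,V_6(z_i)]$ by Theorem \ref{analytic} and Lemma \ref{local solution to the left}. The goal is to show $C_1(V)\neq C_2(V)$ for every $V\in[V_1,V_6(z_1))$.

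First, I would verify initial separation at $V=V_6(z_1)$. By Lemma \ref{Lemma V_6/8 monoton}, $V_6$ and $C_6$ are strictly increasing in $z$, so $V_6(z_1)<V_6(z_2)$ and $C_6(z_1)<C_6(z_2)$; combined with the strict monotone decrease of $C_2$ on $[V_1,V_6(z_2)]$ from Lemma \ref{sign of dC/dV to the left}, this yields
\begin{equation*}
C_2(V_6(z_1))>C_2(V_6(z_2))=C_6(z_2)>C_6(z_1)=C_1(V_6(z_1)).
\end{equation*}
Now suppose for contradiction $C_1$ and $C_2$ meet at some $V\in[V_1,V_6(z_1))$, and let $V^\star$ be the largest such meeting point, with $C^\star:=C_1(V^\star)=C_2(V^\star)$. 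By continuity and maximality, $C_2>C_1$ on $(V^\star,V_6(z_1)]$, so
\begin{equation*}
\left.\frac{d(C_2-C_1)}{dV}\right|_{V^\star}\ge 0,\quad\text{i.e.,}\quad \frac{F(V^\star,C^\star;\gamma,z_2)}{G(V^\star,C^\star;\gamma,z_2)}\ge \frac{F(V^\star,C^\star;\gamma,z_1)}{G(V^\star,C^\star;\gamma,z_1)}.
\end{equation*}

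To obtain the desired contradiction, I would establish the strict monotonicity
\begin{equation*}
\frac{\partial}{\partial z}\!\left(\frac{F(V^\star,C^\star;\gamma,z)}{G(V^\star,C^\star;\gamma,z)}\right)<0 \quad\text{for every } z\in[z_1,z_2],
\end{equation*}
so that the mean value theorem yields the strict reverse inequality. Since $G<0$ along the trajectory by Lemma \ref{sign of dC/dV to the left}, this sign condition reduces to $F_z G-F G_z<0$, with $F_z$ and $G_z$ computed explicitly from \eqref{F(V,C)}, \eqref{G(V,C)}, and \eqref{a_1234&z} (recalling $\lambda=1+m\gamma z$). The main obstacle is this algebraic sign check. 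To localize $(V^\star,C^\star)$ I would exploit $V^\star\in(V_1,V_6(z))$, $z\in[z_g,z_M]$, the fact that $C^\star>C_6(z)$ by strict monotone decrease of $C_i$, and the lower barrier $C^\star>\sqrt{-k(\gamma,z)\,V^\star}$ from Lemma \ref{lower barrier P6}. On this localized region the inequality reduces to an algebraic inequality in $(V,C,\gamma,z)$ whose sign can be verified by direct manipulation, treating $m=1$ and $m=2$ separately.
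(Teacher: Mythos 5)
Your proposal is correct and follows essentially the same strategy as the paper's proof: find a rightmost point $(V^\star,C^\star)$ where two trajectories would coincide, use the mean value theorem in $z$ to reduce the problem to the strict negativity of $\frac{\partial}{\partial z}\bigl(F/G\bigr)$ at a fixed $(V,C)$, and then use the lower barrier of Lemma \ref{lower barrier P6} to localize $(V^\star,C^\star)$ and verify the algebraic sign condition. The substantive computation — showing $F_z G - F G_z <0$, which factors through the expression $h(V_0,C_0,z)= ((m-1)V_0-2)C_0^2+(m+1)\tfrac{\gamma-1}{2}\gamma V_0^2(1+V_0)$ and is handled by the barrier — is identical in both arguments.

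Where you differ, and where your route is a genuine (modest) improvement: the paper first asserts that every intermediate trajectory $C(V;\gamma,z,P_6)$ for $z\in[z_s,z_t]$ also passes through $(V_0,C_0)$, using this to conclude $\tfrac{\partial C}{\partial z}(V_0,C_0,z)=0$ and then applying the mean value theorem to $z\mapsto C'(V_0,z)$. That intermediate-passage claim requires a nontrivial topological argument (an intermediate trajectory is, a priori, free to exit the triangle through the $z_s$- or $z_t$-curve rather than through $(V_0,C_0)$), and the paper does not spell it out. Your version avoids it: by comparing the slopes $\tfrac{F(V^\star,C^\star;\gamma,z_i)}{G(V^\star,C^\star;\gamma,z_i)}$ at the \emph{same} point $(V^\star,C^\star)$ for the two fixed values $z_1<z_2$, the mean value theorem applies directly to the explicit function $z\mapsto F(V^\star,C^\star;\gamma,z)/G(V^\star,C^\star;\gamma,z)$, with no statement about trajectories for intermediate $z$ needed. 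One small point you should make explicit when you carry out the sign check: the barrier in Lemma \ref{lower barrier P6} is $z$-dependent, $B_k(V)=\sqrt{-k(\gamma,z)V}$, and you only know directly that $C^\star>\sqrt{-k(\gamma,z_i)V^\star}$ for $i=1,2$; to apply it at the MVT point $\tilde z\in(z_1,z_2)$, invoke the monotonicity $\partial_z k>0$ from \eqref{increasing of k}, which gives $\sqrt{-k(\gamma,\tilde z)V^\star}<\sqrt{-k(\gamma,z_2)V^\star}<C^\star$. With that in place, the rest of the algebra goes through exactly as in the paper.
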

\begin{proof}
	We argue by contradiction. 
For any fixed $\gamma\in(1,2]$, we write $C'(V,C,z)= \frac{dC(V,C,z)}{dV}$. We suppose that there exist $z_s,z_t\in[z_g,z_M]$ and $z_s<z_t$ such that  $C(V;\gamma,z_s,P_6)$ and  $C(V;\gamma,z_t,P_6)$ intersect at a point $(V_0,C_0)$ where $V_1\leq V_0<V_6(z_s)$. By the continuity of the solution curves with respect to both $V$ and $z$ (see Remark \ref{continuous dependence of the local analytic solution on gamma, z}), we may assume without loss of generality that $(V_0, C_0)$ is the first such intersection point to the left of $P_6(z_s)$ and $P_6(z_t)$. In particular,  there are no other intersection points within the triangular region enclosed  
by the curves $\{(V,C(V;\gamma,z_s,P_6))\,|\, V\in[V_0,V_6(z_s)]\}$, $\{(V,C(V;\gamma,z_t,P_6))\,|\, V\in[V_0,V_6(z_t)]\}$ and $\{(V_6(z),C_6(z))\,|\, z\in[z_s,z_t]\}$. 			
	Then, we have $C(V_0,z) = C_0$ for all $z\in[z_s,z_t]$ so that for all $z\in(z_s,z_t)$,
	\begin{align}\label{dC/dz(V_0,C_0,z)=0}
		\frac{\partial C}{\partial z}(V_0,C_0,z)=0
	\end{align} 	
	and $C'(V_0,C_0,z_s) \leq C'(V_0,C_0,z_t)$. By the Mean Value Theorem, there exists a $\tilde z\in(z_s,z_t)$ such that $\frac{\partial C'}{\partial z} (V_0, C_0,\tilde z)\geq 0$. We will show $\frac{\partial C'}{\partial z}(C_0,V_0,z)< 0$ for all $z\in(z_s,z_t)$ to reach the contradiction. 
	
	By direct computations from the explicit forms of $F$ and $G$ from \eqref{G(V,C)}--\eqref{F(V,C)} and using  \eqref{dC/dz(V_0,C_0,z)=0}, we have, for any  $z\in(z_s,z_t)$ 
	\begin{align*}
		&\frac{\partial C'}{\partial z}(C_0,V_0,z) \\ 
		&=\cfrac{C_0(\frac{mC_0^2}{1+V_0}+\frac{m\gamma(\gamma-3)}{2}(1+V_0)-\frac{m\gamma(\gamma-1)}{2})G(V_0,C_0,z)-(2mC_0^2-m\gamma V_0(1+V_0))F(V_0,C_0,z)}{G^2(V_0,C_0,z)}.
	\end{align*}
	Since $z_s,z_t\in[z_g,z_M]$, it is sufficient to show that 
	\begin{align}
			\frac{\partial C'}{\partial z}(C_0,V_0,z) <0
	\end{align}
	for any $z\in(z_g,z_M)$.
		Substituting \eqref{F(V,C)} and \eqref{G(V,C)} into the above formula and simplifying the expression, we arrive at 
	\begin{align*}
		\frac{\partial C'}{\partial z}(C_0,V_0,z) = \cfrac{mC_0(C_0^2-(1+V_0)^2)[((m-1)V_0-2)C_0^2+(m+1)\frac{\gamma-1}{2}\gamma V_0^2 (1+V_0)] }{G^2(V_0,C_0,z)(1+V_0)}.
	\end{align*}
	Notice that, for any $V_0\in[V_1,V_6)$ and $z\in (z_g,z_M)$
	\begin{align*}
		C_0>0 ,\quad C_0^2-(1+V_0)^2>0,\quad G^2(V_0,C_0,z)(1+V_0)>0.
	\end{align*}
	Thus in order to show $\frac{\partial C'}{\partial z}(C_0,V_0,z)<0$, it is enough to show
	\begin{align*}
		h(V_0,C_0,z):= ((m-1)V_0-2)C_0^2+(m+1)\frac{\gamma-1}{2}\gamma V_0^2 (1+V_0)<0.
	\end{align*}
	By Lemma \ref{lower barrier P6}, $C=\sqrt{\frac{(1+V_6)^2}{V_6}V}$ is a lower barrier of $C(V;\gamma, z, P_6)$ 
	with $z\in (z_g,z_M)$. Hence, 
	\begin{align*}
		h(V_0,C_0,z) &\leq \frac{(1+V_6)^2}{V_6}((m-1)V_0-2)V_0+(m+1)\frac{\gamma-1}{2}\gamma V_0^2 (1+V_0)\\
		&=V_0\Big[\frac{(1+V_6)^2}{V_6}((m-1)V_6-2) +(m+1)\frac{\gamma-1}{2}\gamma V_0 (1+V_0) \Big]\\
		&< \frac{V_0(1+V_6)}{V_6}\Big[(1+V_6)((m-1)V_6-2) +(m+1)\frac{\gamma-1}{2}\gamma V_6^2 \Big]
	\end{align*}
	because $V_0<V_6< V_6(z_M|_{\gamma=2})=-\frac{1}{2}$ by Lemma \ref{Lemma V_6/8 monoton} and $m=1, 2$.  
	Denote
	\begin{align*}
		q(V_6) :=(1+V_6)((m-1)V_6-2) +(m+1)\frac{\gamma-1}{2}\gamma V_6^2 .
	\end{align*}
	Our goal is to show $q(V_6)<0$. Again, by Lemma \ref{Lemma V_6/8 monoton}, we have $-\frac{1}{2}=V_6(z_M|_{\gamma=2})> V_6> V_6(z_g) = V_4(z_g)$ for any $z\in (z_g,z_M)$. Thus, if $q(V_4(z_g))\leq 0$ and $q(-\frac{1}{2})\leq 0$, then $q(V_6)<0$ for all $z\in (z_g,z_M)$ since the coefficient of $V_6^2$ is positive. 
	We first note that 
	\begin{align*}
		q(-\frac{1}{2})=-1-\frac{m-1}{4}+\frac{(m+1)(\gamma-1)\gamma}{8} <0
	\end{align*}
	for any $\gamma\in(1,2]$ and $m=1,2$. For $q(V_4(z_g))$, we claim that $V_4(z_g)$ is a negative zero of $q$. To this end, by using $V_4(z_g)= V_6(z_g)$ and $C_4(z_g)= C_6(z_g)$, we rewrite $q(V_4(z_g))$ as 
	\begin{align*}
	q(V_4(z_g))&=\frac{1}{1+V_4(z_g)} \left[ (1+V_6(z_g))^2 ((m-1) V_4(z_g) -2) + (m+1)\tfrac{\gamma-1}{2}\gamma (1+V_4(z_g)) V_4(z_g)^2\right] \\
	&=\frac{1}{1+V_4(z_g)} \left[ C_4(z_g)^2( (m-1) V_4(z_g) -2) + (m+1)\tfrac{\gamma-1}{2}\gamma (1+V_4(z_g)) V_4(z_g)^2\right] .
	\end{align*}
	Using \eqref{P4}, we replace $ C_4(z_g)$ by $H( V_4(z_g))$ to obtain 
	\begin{align*}
	q(V_4(z_g))=\frac{V_4(z_g)}{ (m+1) V_4(z_g) + 2mz} \tilde q (V_4(z_g)),
	\end{align*}
	where 
	\begin{align*}
	\tilde q (V_4(z_g)) &= (V_4(z_g)+1+m\gamma z_g) ( (m-1) V_4(z_g) -2) + (m+1)\tfrac{\gamma-1}{2}\gamma V_4(z_g) ((m+1) V_4(z_g) + 2mz_g) \\
	&=
	\begin{cases}
	2\gamma (\gamma-1)V_4(z_g)^2 +2 (\gamma(\gamma-1)z_g -1) V_4(z_g) - 2(1+\gamma z_g), & m=1, \\ 
	(\tfrac{9}{2}\gamma(\gamma-1)+1) V_4(z_g)^2 + (2\gamma(3\gamma-2) z_g -1) V_4(z_g) - 2(1+2\gamma z_g), & m=2.
	\end{cases}
	\end{align*}
	It is routine to check that $\tilde q$ has two roots $-z_g -\frac{1}{\gamma}$, $\frac{1}{\gamma-1}$ when $m=1$ and $- \frac{4\gamma z_g+2}{3\gamma-1}$ and $\frac{2}{3\gamma-2}$ when $m=2$. Therefore by \eqref{V4m1} and \eqref{V4m2}, $\tilde q (V_4(z_g))=0$ and $q (V_4(z_g))=0$.  This finishes the proof of $q(V_6)<0$ and  $\frac{\partial C'}{\partial z}(C_0,V_0,z)<0$ for all $z\in (z_g, z_M)$, which contradicts our assumption. 
Therefore we conclude that $C(V;\gamma,z_s,P_6)$ can not intersect $C(V;\gamma,z_t,P_6)$ if $z_s \neq z_t$. 
	\end{proof}

\section{Solving to left: $P_8$}\label{sec:P8toleft}
In this section, we again employ suitable barrier functions to delineate a more precise range for $z$ in which $z_{std}(\gamma;P_8)$ resides. By Proposition \ref{onlypass P6}, we may focus on $\gamma\in (\gaSix,3]$ for further analysis of $z_{std}(\gamma;P_8)$.

\subsection{Conditional existence for $\gamma\in(\gaSix, \gaOne]$}
As described in Section \ref{subsec:mainresult}, we define $\gaOne$ to be the value such that $C_1 = \sqrt{-V_1}$. A simple calculation then establishes that
\begin{align}\label{gaOne}
	\gaOne = 1+\sqrt{2}.
\end{align} 
By Lemma \ref{C1V1}, $P_1$ is below $B_1(V)=\sqrt{-V}$ when $\gamma\leq\gaOne$. We will show that the solution trajectory, originating at $P_1$ and propagated by \eqref{ODE}, remains below $C=B_1(V)$ within a specific range of $V$, the exact bounds of which will be established subsequently, when $\gamma\leq\gaOne$.

For each $\ga$, we define $z_1$ to be the value such that $C_8(z_1) = \sqrt{-V_8(z_1)}$. Then 
\begin{align}\label{z1}
	z_1 = \frac{\sqrt{5}-1}{2(1+\sqrt{5}+\gamma)}.
\end{align}
Since $V_8(z_1)=C_8(z_1)-1$ and $1-C_8^2(z_1)=C_8(z_1)$, 
\begin{align}
	V_8(z_1) = \frac{\sqrt{5}-3}{2},\quad C_8(z_1) = \frac{\sqrt{5}-1}{2}.\label{V8C8z1}
\end{align}
For any fixed $\gamma\in(1,3]$, we write $C_8=C_8(z)$, $C'_8(z) = \frac{d C_8(z)}{dz}$ and $C''_8(z) = \frac{d^2 C_8(z)}{dz^2}$. We first show the concavity of $C_8^2$ with respect to $z$, which will be crucial for subsequent arguments. 

\begin{lemma}\label{4C8C8''+(C8')^2<0}
	For any fixed $\gamma\in(1,3]$, and any $z\in(0,z_M)$, 
	\begin{align*}
		C_8C''_8+(C'_8)^2<0.
	\end{align*}
\end{lemma}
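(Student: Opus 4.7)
The plan is to note that $C_8 C_8'' + (C_8')^2 = \tfrac12 \tfrac{d^2}{dz^2}(C_8^2)$, so the claim is equivalent to strict concavity of $z\mapsto C_8(z)^2$ on $(0, z_M)$. To exploit this, I would first establish the clean algebraic relation that $C_6$ and $C_8$ are the two roots of
\begin{equation}\label{plan:quad}
C^2 - s(z)C + p(z) = 0, \qquad s(z):= 1+(\gamma-2)z, \quad p(z):= \gamma z.
\end{equation}
Indeed, from \eqref{P6}--\eqref{P8} one computes $V_6+V_8 = -1+(\gamma-2)z$ and $V_6 V_8 = 2z$, and $C_i = 1+V_i$ gives $C_6+C_8 = s$, $C_6 C_8 = p$. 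Since $s$ and $p$ are linear in $z$, differentiating $w^2 = s^2 - 4p$ twice ($w>0$) yields $ww' = s(\gamma-2)-2\gamma$ and $(w')^2 + w w'' = (\gamma-2)^2$; after a short cancellation in which all explicit $z$-dependence drops out, one obtains
\[
C_8' = \frac{(\gamma-2)C_8 - \gamma}{w}, \qquad C_8'' = -\frac{4\gamma}{w^3}.
\]

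From these formulas, $C_8 C_8'' + (C_8')^2 = w^{-3}\bigl(-4\gamma C_8 + w[(\gamma-2)C_8-\gamma]^2\bigr)$, and since $w>0$, it suffices to prove
\begin{equation}\label{plan:red1}
w\bigl[(\gamma-2)C_8-\gamma\bigr]^2 < 4\gamma C_8.
\end{equation}
Using $C_8^2 = sC_8 - p$ one extracts the identity $(\gamma-2)C_8 - \gamma = C_8 V_8/z$, and combining this with $V_6V_8 = 2z$ and $C_6C_8 = \gamma z$, the inequality \eqref{plan:red1} reduces (after dividing by $V_8^2$ and $C_8$) to
\begin{equation}\label{plan:red2}
(C_8-C_6)(1-C_8) < 2C_6(1-C_6).
\end{equation}
Expanding and again eliminating $C_6^2, C_8^2$ via the quadratic \eqref{plan:quad} collapses \eqref{plan:red2} to the remarkably compact form
\begin{equation}\label{plan:target}
\xi\, C_8 > (2\xi-1)\, C_6, \qquad \xi := (\gamma-2)z.
\end{equation}

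The remaining task is to verify \eqref{plan:target} by splitting on the sign of $\gamma-2$. For $\gamma\in[2,3]$ one has $0\le \xi \le (\gamma-2)z_M = \tfrac{\gamma-2}{(\sqrt\gamma+\sqrt2)^2}$, which is bounded above by $\tfrac{1}{5+2\sqrt 6}<\tfrac12$, hence $2\xi-1<0$ makes the right-hand side of \eqref{plan:target} nonpositive while the left is nonnegative; strictness follows from $C_6, C_8 >0$ for $z>0$. For $\gamma\in(1,2)$ both sides are negative and \eqref{plan:target} rewrites as $|\xi|(C_8-2C_6) < C_6$; this is immediate if $C_8\le 2C_6$, and for $C_8 > 2C_6$ I would substitute $|\xi| = (2-\gamma)C_6C_8/\gamma$ (from $z = C_6C_8/\gamma$) and rearrange to
\[
(2-\gamma)\,C_8(C_8-2C_6) < \gamma.
\]
Bounding $C_8 < 1$ and $C_8-2C_6 < C_8 < 1$, the left side is strictly less than $2-\gamma$, which is in turn strictly less than $\gamma$ since $\gamma>1$. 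The main obstacle is not any single step but the sheer amount of algebraic massaging required to reach the clean form \eqref{plan:target}; once that reduction is in hand, the case analysis is short, and the fact that the key bound for $\gamma\in(1,2)$ uses only $C_8<1$ suggests the estimate is sharpest near $V_8 = 0$, i.e., $z\to 0^+$, consistent with the degeneracy $C_6 \to 0$ there.
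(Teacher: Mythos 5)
Your proof is correct, and it takes a genuinely different route from the paper's. Both arguments rest on the same closed form $C_8''=-4\gamma/w^3$. From there the paper does \emph{not} compute $C_8'$ explicitly: it uses $2C_8=1+(\gamma-2)z+w>2w$ (equivalently $C_6>0$) to bound $4C_8C_8''<-16\gamma/w^2$, then writes $4C_8C_8''+4(C_8')^2<(2C_8')^2-16\gamma/w^2=(2C_8'-4\sqrt{\gamma}/w)(2C_8'+4\sqrt{\gamma}/w)$ and checks by a short direct estimate that the second factor is positive (the first is clearly negative since $C_8'<0$). You instead push the algebra further, exploiting the Vieta relations $C_6+C_8=1+(\gamma-2)z$, $C_6C_8=\gamma z$ and the closed form $C_8'=((\gamma-2)C_8-\gamma)/w$ to reduce the claim to the compact inequality $\xi C_8>(2\xi-1)C_6$ with $\xi=(\gamma-2)z$, and then dispose of it by a case split on $\operatorname{sgn}(\gamma-2)$. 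I have checked each reduction step ($w<2V_6C_6/V_8$, then $(C_8-C_6)(1-C_8)<2C_6(1-C_6)$, then $\xi C_8>(2\xi-1)C_6$) and the final case analysis, and all are correct; the only point worth flagging is that you should note $V_8<0$ (equivalently $C_8<1$ for $z>0$) is what justifies the division and the flip of inequality when clearing $V_8$. The paper's route is shorter and avoids the case split; yours is more algebraic, isolates reusable identities (Vieta's formulas for $C_6,C_8$ and the first-order formula for $C_8'$), and makes transparent that the estimate is tightest for $\gamma$ near $1$ and small $z$.
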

\begin{proof}
	By direct computations, we have 
	\begin{align*}
		C''_8 = -\frac{4\gamma}{w^3}<0.
	\end{align*}
	For any $\gamma\in (1,3]$ and $z\in(0,z_M)$, recalling \eqref{P6}, we have
	\begin{align*}
		1+(\gamma-2)z+w = 2C_6+2w>2 w .
	\end{align*}
	Hence,
		\begin{align*}
			4C_8C_8''+4(C_8')^2 &= (1+(\gamma-2)z+w)\frac{-8\gamma}{w^3} + 4(C_8')^2
			\leq  (2C'_8)^2-\frac{16\gamma}{w^2} = (2C'_8-\frac{4\sqrt{\gamma}}{w})(2C'_8+\frac{4\sqrt{\gamma}}{w}).
	\end{align*}
	By Lemma \ref{Lemma V_6/8 monoton}, $C_8'<0$ and thus $2C'_8-\frac{4\sqrt{\gamma}}{w}<0$. We will show $2C'_8+\frac{4\sqrt{\gamma}}{w}>0$. By using $0<w<1$ (\textit{cf.} Remark \ref{bounds of w(z)}), we see that 
	\begin{align*}
		(2C'_8+\frac{4\sqrt{\gamma}}{w})w &= 4\sqrt{\gamma}-\gamma-2+(\gamma-2)w+(\gamma-2)^2 z>4\sqrt{\gamma}-\gamma-3= (3-\sqrt{\gamma})(\sqrt{\gamma}-1)>0
	\end{align*}
	for any $\gamma\in (1,3]$, thereby completing the proof.
\end{proof}
	
\begin{lemma}\label{leftgammabar}
	For any $\gamma\in (\gaSix,\gaOne]$, $z\in(0,z_1(\gamma)]$ gives a upper solution for $P_8$. 
\end{lemma}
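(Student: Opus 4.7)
The plan is to use $B_1(V) = \sqrt{-V}$ as a lower barrier (in the sense of Definition \ref{def:barrier}) for the solution $C(V;\gamma,z,P_8)$ on $[V_1, V_8]$. The key input is that $C_1 \le \sqrt{-V_1}$ reduces to $\gamma^2 - 2\gamma - 1 \le 0$, i.e.\ $\gamma \le \gaOne = 1+\sqrt{2}$, with equality only at $\gamma = \gaOne$. Consequently, once $B_1$ is established as a lower barrier, I obtain
$$C(V_1;\gamma,z,P_8) > B_1(V_1) = \sqrt{-V_1} \ge C_1,$$
which is precisely the upper-solution condition. The first sub-task is to verify the endpoint comparison at $V_8$: for $z \in (0,z_1]$, $C_8(z) \ge \sqrt{-V_8(z)}$, with equality iff $z = z_1$. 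At $z=z_1$ this is the defining identity $V_8(z_1) = (\sqrt{5}-3)/2$, $C_8(z_1) = (\sqrt{5}-1)/2$; for $z < z_1$, Lemma \ref{Lemma V_6/8 monoton} gives that both $V_8$ and $C_8$ are decreasing in $z$, so $C_8(z) > C_8(z_1) = \sqrt{-V_8(z_1)} > \sqrt{-V_8(z)}$. In the boundary case $z=z_1$ the two curves meet at $V_8$, so I would separately verify via the explicit formula \eqref{c1} that $c_1 < B_1'(V_8(z_1)) = -1/(2\sqrt{-V_8(z_1)})$, ensuring that the analytic solution emerges strictly above $B_1$ upon propagation leftward from $P_8$.

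Next, I would invoke the barrier framework \eqref{barrier_argument}: assume for contradiction that $\overline V \in [V_1, V_8)$ is the maximal point with $C(\overline V) = \sqrt{-\overline V}$, so $(C-B_1)'|_{\overline V} \ge 0$. To contradict, I need to show that whenever $C = \sqrt{-V}$ for some $V \in [V_1, V_8)$,
$$\frac{F(V,\sqrt{-V};\gamma,z)}{G(V,\sqrt{-V};\gamma,z)} + \frac{1}{2\sqrt{-V}} < 0.$$
A direct computation from \eqref{G(V,C)} gives $G(V,\sqrt{-V};\gamma,z) = -V\{(m+1)V + 2mz + (1+V)(\lambda+V)\}$, whose negativity on the relevant arc must be checked using the bounds $V_1 \le V < V_8$ and the constraints on $z$. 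After multiplying by $G<0$ this reduces to the polynomial inequality
$$\Barrier_1(V;\gamma,z) := 2\sqrt{-V}\, F(V, \sqrt{-V};\gamma,z) + G(V, \sqrt{-V};\gamma,z) > 0 \quad\text{on } [V_1, V_8).$$
The borderline case $\gamma = \gaOne$, where $B_1(V_1) = C_1$, is handled by extending the same contradiction argument to the left endpoint $V_1$ via the right-derivative.

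The main obstacle is proving $\Barrier_1 > 0$ uniformly on the parameter region $\gamma \in (\gaSix, \gaOne]$, $z \in (0, z_1]$, $V \in [V_1, V_8(z))$. Following the template of Lemmas \ref{lower barrier P6} and \ref{lower barrier sqrt(gamma/2)V}, I would factor out $V$ (which flips signs since $V<0$) and observe that $\Barrier_1$ vanishes identically at the corner $(z,V)=(z_1, V_8(z_1))$, since $F$ and $G$ both vanish at $P_8$ and $C_8(z_1) = \sqrt{-V_8(z_1)}$ there. A natural strategy is then to reduce to the critical case $z=z_1$ by establishing the appropriate sign of $\partial_z \Barrier_1$ on the arc $\{C=\sqrt{-V}\}$, and then establish $\Barrier_1>0$ on $[V_1, V_8(z_1))$ from the vanishing at $V=V_8(z_1)$ together with a monotonicity estimate in $V$. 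The technical delicacy is that both the coefficients of $\Barrier_1$ and the root $V_8(z)$ depend non-trivially on $z$, so coupling the monotonicity estimates with the explicit closed-form expressions \eqref{P8} and \eqref{z1} and the simultaneous restriction $\gamma \in (\gaSix, \gaOne]$ will demand careful case-splitting, much as in the proofs of Lemmas \ref{lower barrier P6} and \ref{lower barrier sqrt(gamma/2)V}.
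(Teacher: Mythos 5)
Your overall framework matches the paper: use $B_1(V)=\sqrt{-V}$ as a lower barrier for the backward solution from $P_8$, verify the endpoint comparison $C_8(z)\ge\sqrt{-V_8(z)}$ for $z\le z_1$ (with the strict slope inequality \eqref{ineq of c1(z1)} handling the boundary case $z=z_1$), and reduce via the barrier argument to a polynomial inequality $\Barrier_1>0$. Two points, however, go wrong in the last stage.

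First, the range on which $\Barrier_1>0$ must be shown is $[V_1,-C_8^2(z))$, not $[V_1,V_8(z))$: on $(-C_8^2,V_8)$ one has $B_1(V)<C_8\le C(V)$ automatically since the trajectory is decreasing in $V$, and the paper's monotonicity estimate $\partial_V\Barrier_1<0$ only holds on $[V_1,-C_8^2)$. Second, and more fundamentally, the proposed ``monotonicity in $z$'' step fails: $\partial_z\Barrier_1 = m\bigl[(\gamma-2)\gamma V-\gamma+\tfrac{2}{1+V}\bigr]$ is not of one sign on the region in question. For instance at $\gamma=\gaOne$, $V=V_1=\sqrt{2}-2$, one computes $\partial_z\Barrier_1/m = 2\sqrt{2}-1>0$, so $\Barrier_1$ is increasing in $z$ there — the wrong direction to transfer positivity from $z_1$ downward. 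The paper instead reduces first in $V$ to the single point $V=-C_8^2(z)$, then establishes that the resulting function $z\mapsto (1+\Jm(z))\Barrier_1(\Jm(z),z,m)$ (with $\Jm(z)=-C_8^2(z)$) is \emph{concave} in $z$ — a second-derivative argument relying on Lemma \ref{4C8C8''+(C8')^2<0} — and then checks positivity at both endpoints of a carefully chosen interval. This requires the auxiliary cutoff $\tilde z_m(\gamma)$, defined by $C_8(\tilde z_m)=C_1(\gaOne)$: for $z\le\tilde z_m$ one argues directly that $C_8\ge C_1$ forces an upper solution (no barrier needed), and for $z\in(\tilde z_m,z_1]$ the concavity plus the vanishing at $z_1$ and explicit positivity at $\tilde z_m$ give the result. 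Neither the concavity mechanism nor the $\tilde z_m$ splitting appears in your plan, and without them the claimed reduction to the critical case $z=z_1$ does not go through.
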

\begin{proof}
	In view of Lemma \ref{uppersolutionP8}, we may determine the existence of $\overline{z}(\gamma;P_8)$ such that for all $z\leq \overline{z}(\gamma;P_8)$, it holds that $C_8(\gamma,z)\geq C_1(\gamma)$ and each $z\in (0, \overline{z}(\gamma;P_8)]$ gives rise to an upper solution for $P_8$. However, the expression for such $\overline{z}(\gamma;P_8)$, derived from the equation $C_8(\gamma, \overline{z}(\gamma;P_8))=C_1(\gamma)$, is intricate and inconvenient. For that reason,  
	we  use 
	another value of $z$, which has a closed-form representation. 
	Specifically, we define $\tilde{z}_m(\gamma)$  
	to be the value satisfying
	\begin{align}\label{C8zm'}
		C_8(\gamma,\tilde{z}_m(\gamma)) = C_1(\gaOne)=\sqrt{2-\sqrt{2}}
	\end{align}
	so that 
	\begin{align*}
		\tilde{z}_m(\gamma)=\frac{-2^{\frac{1}{4}} \sqrt{1+\sqrt{2}}+2^{\frac{3}{4}}\sqrt{1+\sqrt{2}}}{2\sqrt{2}+ 2^{\frac{5}{4}} \sqrt{1+\sqrt{2}}+\gamma}.
	\end{align*}
	It is easy to check that 
	for any $\gamma\in (\gaSix,\gaOne]$,
	\begin{align}\label{lower bound of zm'}
		\tilde{z}_m(\gamma) \geq \tilde{z}_m(\gaOne)>\frac{2}{25}.
	\end{align}
	By Lemma \ref{Lemma V_6/8 monoton} and Lemma \ref{C1V1}, for any $\gamma\in (\gaSix,\gaOne]$ and $z\leq \tilde{z}_m(\gamma)$, we have
	\begin{align*}
		C_8(\gamma,\tilde{z}_m(\gamma)) = 
		C_1(\gaOne) \geq C_1(\gamma) =
		 C_8(\gamma,\overline{z}(\gamma;P_8)).
	\end{align*}
	It then follows that $\tilde{z}_m(\gamma)\leq \overline{z}(\gamma;P_8)$ and hence, each $z\in(0,\tilde{z}_m(\gamma)]$ serves as an upper solution for $P_8$.

To complete the proof, we must demonstrate that each $z\in(\tilde{z}_m(\gamma),z_1(\gamma)]$ serves as an upper solution for $P_8$. To achieve this, we employ the barrier function $B_1(V)=\sqrt{-V}$. 
	It suffices to establish that,  for any $\gamma\in (\gaSix,\gaOne]$ and $z\in(\tilde{z}_m(\gamma),z_1(\gamma)]$, $B_1(V)=\sqrt{-V}$ is  
	a lower barrier for the solution $C(V)$ of \eqref{ivp p*} with $P_*=P_8$. 
	By Lemma \ref{sign of dC/dV to the left}, $\frac{d C}{dV}<0$ to the left of the triple point $P_8$. Therefore, $$C(V)> C_8=B_1(-C_8^2)\geq B_1(V)\quad \text{ for } V\in[-C_8^2,V_8)$$   
and  $C(V_8)>B_1(V_8)$ for $z\in(\tilde{z}_m(\gamma),z_1(\gamma))$, 
$C(V_8)=B_1(V_8)$ for $z=z_1(\gamma)$,  and  
\begin{align}\label{ineq of c1(z1)}	
\frac{d C}{d V}|_{V=V_8(z_1),C=C_8(z_1),z=z_1}<- \frac{1}{2\sqrt{-V_8(z_1)}},
\end{align}
which 
guarantees the existence of $\overline{V}<V_8$ sufficiently close to $V_8$ 
so that also for $z=z_1$, the solution $C(V)$ to \eqref{ivp p*} enjoys 
$C(V)>B_1(V)$ for $V\in[\overline{V},V_8)$. The proof of  \eqref{ineq of c1(z1)} is given  
in Lemma \ref{Lemma: ineq of c1(z1)}. 
	Hence, by the barrier argument \eqref{barrier_argument}, we want to  
	show  
	\begin{align}
		\frac{F(V,\sqrt{-V};\gamma,z)}{G(V,\sqrt{-V};\gamma,z)} + \frac{1}{2\sqrt{-V}}<0. 
	\end{align} 
	Since this inequality is nothing but \eqref{barrier inequality} with $k(\gamma,z)$ replaced by $1$, and $G(V,\sqrt{-V};\gamma,z)< 0$ for any $V\in[V_1,-C_8^2(z))$ by Lemma \ref{sign of dC/dV to the left}, our goal is to show the positivity of the following function (\textit{cf.} \eqref{def_Bk}):
	\begin{align}
		\Barrier_1(V,z,m):= 
		 (m-1-m\gamma)V^2+(-3+2m-m\gamma+m(\gamma-2)\gamma z)V-m\gamma z-1+\frac{2mz}{1+V}\label{BP81}
	\end{align} 
	for each $\gamma\in(\gaSix,\gaOne]$, $z\in(\tilde{z}_m(\gamma),z_1(\gamma)]$, $V\in[V_1,-C_8^2(z))$ and $m=1,2$. Our strategy is the following: 
	\begin{enumerate}
		\item For $m=1,2$, we will show that it is enough to check the sign of $(1-C^2_8(z))\Barrier_1(-C^2_8(z),z,m)$.
		\item For $m=1,2$, we will show that $\frac{d^2}{dz^2}[(1-C^2_8(z))\Barrier_1(-C^2_8(z),z,m)]<0$ so that $(1-C^2_8(z))\Barrier_1(-C^2_8(z),z,m)$ is a concave function. Since $\Barrier_1(-C^2_8(z_1),z_1,m)=0$ by the definition of $z_1$,  it is sufficient to check the sign of $(1-C^2_8(\tilde{z}_m(\gamma)))\Barrier_1(-C^2_8(\tilde{z}_m(\gamma),\tilde{z}_m(\gamma),m)>0$. 	\end{enumerate}
For any fixed $\gamma\in(\gaSix,\gaOne]$, we write $C_8=C_8(z)$, $C'_8(z) = \frac{d C_8(z)}{dz}$ and $C''_8(z) = \frac{d^2 C_8(z)}{dz^2}$.

\

	$\underline{\text{Step 1}}:$ When $m=1$, for each 
	 $z\in(\tilde{z}_m(\gamma),z_1(\gamma)]$ and $V\in[V_1,-C_8^2(z))$, using $-\gamma(2V+1) \le -\gamma(2V_1+1) $ and $-\frac{2z}{(1+V)^2} < -2 z$, we derive  
	\begin{align*}
		\frac{\partial \Barrier_1(V,z,1)}{\partial V} 
		&<-\gamma(2V_1+1)-1+[(\gamma-2)\gamma-2]z 
		=\frac{-(\gamma-1)^2}{\gamma+1}+[(\gamma-2)\gamma-2]z<0
	\end{align*}
	where 
	we have used $(\gamma-2)\gamma-2<0$ for any $\gamma\in(\gaSix,\gaOne]$. Since $ \Barrier_1(V,z,1)$ is a decreasing function in $V$ and $1-C^2_8(\gamma,z)>0$, it is sufficient to check the sign of $(1-C^2_8(z))\Barrier_1(-C^2_8(z),z,1)$. 
	\
	
	When $m=2$, we compute $(1+V)\Barrier_1(V,z,2)$ to obtain
	\begin{align*}
		(1+V)\Barrier_1(V,z,2) = (1-2\gamma)V^3+2(1-2\gamma+(\gamma-2)\gamma z)V^2+(-2\gamma+2(\gamma-3)\gamma z)V-2\gamma z-1+4z.
	\end{align*}
	We next show that $	\frac{\partial}{\partial V}[(1+V)\Barrier_1(V,z,2)]<0$. Note that 
	\begin{align*}
		\frac{\partial}{\partial V}[(1+V)\Barrier_1(V,z,2)] = 3(1-2\gamma)V^2+4(1-2\gamma+(\gamma-2)\gamma z)V+(-2\gamma+2(\gamma-3)\gamma z),
	\end{align*}
	which is a quadratic polynomial of $V$. If the discriminant of the polynomial is negative for any $\gamma\in (\gaSix,\gaOne]$, $z\in(\tilde{z}_m(\gamma),z_1(\gamma)]$, then $\frac{d}{dV}[(1+V)\Barrier_1(V,z)]$ will always be negative because $3(1-2\gamma)<0$. The discriminant of the polynomial is given by
	\begin{align*}
		\Delta &= 16(1-2\gamma+(\gamma-2)\gamma z)^2-12(1-2\gamma)(-2\gamma+2(\gamma-3)\gamma z)=: 8 p(z)
	\end{align*} where 
	\begin{align*}
		p(z):=2(\gamma-2)^2\gamma^2z^2+(1-2\gamma)\gamma(\gamma+1) z+(1-2\gamma)(2-\gamma).
	\end{align*}
	When $\gamma=2$, it is clear that $p(z)<0$. When $\gamma\neq 2$, $p(z)$ is a quadratic polynomial in $z$. It has a local minimum at $z = \frac{(2\gamma-1)(\gamma+1)}{4(\gamma-2)^2\gamma} > \frac{1}{4}>z_1$. 
	Thus, by \eqref{lower bound of zm'}, to verify the negativity of $\Delta$, it is sufficient to check the negativity of $p(\frac{2}{25})$. This condition is checked in Proposition \ref{poly: Lemma 6.2-1}. Therefore, we have shown 
	\begin{align*}
		\frac{\partial}{\partial V}[(1+V)\Barrier_1(V,z,2)] <0,
	\end{align*}
	and hence, to show that $\Barrier_1(V,z,2)>0$, it is enough to check the sign of $(1-C^2_8(z))\Barrier_1(-C^2_8(z),z,2)$.

\smallskip 

$\underline{\text{Step 2}}:$ Our next goal is to show $(1-C^2_8(z))\Barrier_1(-C^2_8(z),z,m)> 0$ for any  
$z\in(\tilde{z}_m(\gamma),z_1(\gamma)]$, $V\in[V_1,-C_8^2(z))$, and $m=1,2$. We will first show $(1-C^2_8(z))\Barrier_1(-C^2_8(z),z,m)$ is a concave function in $z$.\\
	For notational convenience, we will write 
	\begin{align}
		\Jm(z) := -C^2_8(z).
	\end{align}
	By using Lemma \ref{Lemma V_6/8 monoton}, \eqref{V8C8z1} and \eqref{C8zm'}, we obtain
	\begin{align}\label{range of Jm(z)}
	\sqrt{2}-2=\Jm(\tilde{z}_m(\gamma))<	\Jm(z)\leq \Jm(z_1) = -\frac{3-\sqrt{5}}{2}.
	\end{align}
	Note that by using Lemma \ref{Lemma V_6/8 monoton} and Lemma \ref{4C8C8''+(C8')^2<0}, we obtain
	\begin{align}
		\Jm'(z) &= -C_8C'_8(z)>0,\label{1st dev of Jm(z)}\\
		 \Jm''(z)&= -[C_8C''_8(z)+(C'_8)^2]>0.\label{2nd dev of Jm(z)}
	\end{align}
	We rewrite $(1-C^2_8(z))\Barrier_1(-C^2_8(z),z,m)$ as 
	\begin{align*}
		(1+\Jm(z))\Barrier_1(\Jm(z),z,m)
		=&\,(m-1-m\gamma)\Jm^3(z)+(3m-4-2m\gamma+m(\gamma-2)\gamma z)\Jm^2(z)\\
		&+(2m-4-m\gamma+m(\gamma-3)\gamma z)\Jm(z)+m(2-\gamma) z-1,
	\end{align*}
	and compute the second $z$ derivative to obtain 
	\begin{align*}
		\frac{d^2}{dz^2}[(1+\Jm(z))\Barrier_1(\Jm(z),z,m)] := \Jm''(z) A(\gamma,z,m)+\Jm'(z) B(\gamma,z,m),
	\end{align*}
	where
	\begin{align*}
		A(\gamma,z,m) =& \,3(m-1-m\gamma)\Jm^2(z)+2[3m-4-2m\gamma+m(\gamma-2)\gamma z]\Jm(z)+2m-4-m\gamma+m(\gamma-3)\gamma z,\\
		B(\gamma,z,m) =& \,6(m-1-m\gamma)\Jm(z)\Jm'(z)+2[3m-4-2m\gamma+m(\gamma-2)\gamma z]\Jm'(z)\\
		&+4m(\gamma-2)\gamma \Jm(z)+2m(\gamma-3)\gamma.
	\end{align*}
	We claim $A(\gamma,z,m)$ and $B(\gamma,z,m)$ are negative. We first check $B(\gamma,z,m)<0$. We decompose $B(\gamma,z,m)$ into two parts
	\begin{align*}
		B(\gamma,z,m) = \Jm'(z)B_1(\gamma,z,m) + B_2(\gamma,z,m),
	\end{align*}
	where 
	\begin{align*}
		B_1(\gamma,z,m) &=6(m-1-m\gamma)\Jm(z)+6m-8-4m\gamma+2m(\gamma-2)\gamma z,\\
		B_2(\gamma,z,m) & 
		= 2m\gamma[ ( 2(\gamma-2) \Jm(z)+(\gamma-3)].
	\end{align*}
	For $B_1(\gamma,z,m)$, by using $m-1-m\gamma<0$, \eqref{range of Jm(z)}, $|(\gamma-2)\gamma|\leq 1$, and $z<z_M<\frac{1}{5}$, we obtain
	\begin{align*}
		B_1(\gamma,z,m) &< 6(m-1-m\gamma)\Jm(\tilde{z}_m(\gamma))+6m-8-4m\gamma+\frac{2m}{5} \\
		&= 4 - 6 \sqrt{2} - \frac{m(28-30 \sqrt{2}) }{5} + m(8  - 6 \sqrt{2} )\gamma <0
	\end{align*}
	for any $m=1,2$ and $\gamma\in(\gaSix,\gaOne]$.

	For $B_2(\gamma,z,m)$, when $\gamma\geq 2$, it is clear that $B_2(\gamma,z,m)<0$. When $\gamma\in(\gaSix,2)$, by using \eqref{range of Jm(z)},
	\begin{align*}
		B_2(\gamma,z,m)  
		< 2m\gamma (2(\gamma-2)\Jm(\tilde{z}_m(\gamma))+\gamma-3) = 2m\gamma[(2\sqrt{2}-3)\gamma+5-4\sqrt{2}]<0.
	\end{align*}
	Hence, we have shown that $B_2(\gamma,z,m)<0$ for any $m=1,2$, $\gamma\in(\gaSix,\gaOne]$ and $z\in(\tilde{z}_m(\gamma),z_1(\gamma)]$.

	Regarding $A(\gamma,z,m)$, we observe that
	\begin{align*}
		\frac{d A(\gamma,z,m)}{dz} = \Jm'(z)B_1(\gamma,z,m)+\frac{1}{2} B_2(\gamma,z,m) <0.
	\end{align*}
	Therefore, in order to show $A(\gamma,z,m)<0$, it is enough to verify $A(\gamma,\tilde{z}_m(\gamma),m)<0$. When $m=1$, by using \eqref{range of Jm(z)} and $B_2(\gamma,z,1)<0$, we have
	\begin{align*}
		A(\gamma,\tilde{z}_m(\gamma),1) &= -3\gamma\Jm^2(\tilde{z}_m(\gamma))+2(-1-2\gamma)\Jm(\tilde{z}_m(\gamma))-2-\gamma+\frac{\tilde{z}_m(\gamma)}{2}B_2(\gamma,\tilde{z}_m(\gamma),1)\\
		&<(8\sqrt{2}-11)\gamma+2(1-\sqrt{2})<0.
	\end{align*}
	When $m=2$, by using \eqref{range of Jm(z)}, \eqref{lower bound of zm'} and $(2\sqrt{2}-3)\gamma+5-4\sqrt{2}<0$, we have
	\begin{align*}
		A(\gamma,\tilde{z}_m(\gamma),2) &= 3(1-2\gamma)\Jm^2(\tilde{z}_m(\gamma))+2(2-4\gamma)\Jm(\tilde{z}_m(\gamma))-2\gamma+2\gamma[2(\gamma-2)\Jm(\tilde{z}_m(\gamma))+\gamma-3]\tilde{z}_m(\gamma)\\
		&<(16\sqrt{2}-22)\gamma+10-8\sqrt{2}+\frac{4\gamma[(2\sqrt{2}-3)\gamma+5-4\sqrt{2}]}{25}\\
		&=\frac{2}{25}\Big[(4\sqrt{2}-6)\gamma^2+(192\sqrt{2}-265)\gamma+125-100\sqrt{2}\Big]=:p(\gamma).
	\end{align*}
	Since $p(\gamma)$ has a global maximum at $\gamma = \frac{192\sqrt{2}-265}{12-8\sqrt{2}}>3>\gaOne$ and $p(\gaOne)=\frac{484-346\sqrt{2}}{25}<0$, $p(\gamma)<0$ for any $\gamma\in(\gaSix,\gaOne]$. We conclude that  
	$A(\gamma,z,m)<0$.
	
	Hence, $(1+\Jm(z))\Barrier_1(\Jm(z),z,m)$ is a concave function in $z$. It is then enough to check the sign for the function at two ends points of $z$. By definition of $z_1$ (\textit{cf.} \eqref{z1}), $(1+\Jm(z_1))\Barrier_1(\Jm(z_1),z_1,m)=0$. 
	At $z= \tilde{z}_m(\gamma)$, since 
	$1+\Jm(\tilde{z}_m(\gamma)= \sqrt2 -1 >0$, we only need to show $\Barrier_1(\Jm(\tilde{z}_m(\gamma)),\tilde{z}_m(\gamma),m)>0$.  
By direct computations,
\begin{align*}
&\Barrier_1(\Jm(\tilde{z}_m(\gamma)),\tilde{z}_m(\gamma),m)\\ &=m(3\sqrt{2}-4)\gamma+(1-\sqrt{2})(2m-1) +m[(\sqrt{2}-2)\gamma^2+(3-2\sqrt{2})\gamma+2\sqrt{2}+2]\tilde{z}_m(\gamma)\\
&>m(3\sqrt{2}-4)\gamma+(1-\sqrt{2})(2m-1) +\frac{2m}{25}[-(2-\sqrt{2})\gamma^2+(3-2\sqrt{2})\gamma+2\sqrt{2}+2]>0,
\end{align*}
where we have used $(\sqrt{2}-2)\gamma^2+(3-2\sqrt{2})\gamma+2\sqrt{2}+2>0$ for each $\gamma\in(\gaSix,\gaOne]$ and \eqref{lower bound of zm'} in the second line, while the positive sign of the last inequality is shown in Proposition \ref{poly: Lemma 6.2-2} and Proposition \ref{poly: Lemma 6.2-3} for $m=1$ and $m=2$ respectively.
\end{proof}

\subsection{Conditional existence for $\gamma\in(\gaOne, 3]$}

In this subsection, we will employ the barrier function $B_{\frac{3}{2}}(V)=\sqrt{-\frac{3}{2}V}$ to delineate a narrower and more precise range for the potential location of $z_{std}(\gamma;P_8)$. With 
the choice of the barrier function, we define $z_2(\gamma)$ to be the value such that $P_8$ lies on the curve $C=B_{\frac{3}{2}}(V)$: 
\begin{align}\label{z2}
	z_2 = \frac{\sqrt{33}-3}{6+2\sqrt{33}+4\gamma}.
\end{align}
Since $V_8(z_2)=C_8(z_2)-1$ and $1-\frac{2}{3}C_8^2(z_2)=C_8(z_2)$,
\begin{align}\label{C8z2}
	V_8(z_2) = \frac{\sqrt{33}-7}{4},\quad C_8(z_2) = \frac{\sqrt{33}-3}{4}.
\end{align}

\begin{lemma}\label{leftlargergammabar}
	For any $\gamma\in (\gaOne,3]$, any $z\in(0,z_2(\gamma)]$ gives an upper solution for $P_8$.
\end{lemma}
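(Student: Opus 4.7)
The plan is to mirror the two-step structure used in Lemma \ref{leftgammabar}, replacing the barrier $B_1(V)=\sqrt{-V}$ by $B_{\frac{3}{2}}(V)=\sqrt{-\tfrac{3}{2}V}$ and the sonic threshold $z_1$ by $z_2$. First I would split $(0,z_2(\gamma)]$ into two sub-intervals. On a small $z$-window $(0,\tilde z_m(\gamma)]$, defined by the analog of \eqref{C8zm'} so that $C_8(\gamma,\tilde z_m(\gamma))$ equals the maximum of $C_1$ over $\gamma\in(\gaOne,3]$ (namely $C_1(3)=\tfrac{\sqrt 3}{2}$), the monotonicity of $C_8$ in $z$ from Lemma \ref{Lemma V_6/8 monoton} together with Lemma \ref{C1V1} immediately yields $C_8(\gamma,z)\geq C_1(\gamma)$; combined with Lemma \ref{sign of dC/dV to the left} this gives $C(V_1;\gamma,z,P_8)>C_1$, so every such $z$ is an upper solution without any barrier argument.

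The non-trivial work lies in the complementary window $(\tilde z_m(\gamma),z_2(\gamma)]$, where I would use $B_{\frac32}$ as a lower barrier for the backward trajectory from $P_8$. First, I would check that $P_1$ lies on or below $B_{\frac32}$ for $\gamma\in(\gaOne,3]$: a direct calculation shows that $C_1^2 + \tfrac{3}{2}V_1 = \tfrac{(2\gamma+1)(\gamma-3)}{(\gamma+1)^2}\leq 0$ exactly on $\gamma\in(\gaOne,3]$, with equality at $\gamma=3$. At the other endpoint, by \eqref{C8z2} and Lemma \ref{Lemma V_6/8 monoton}, the triple point $(V_8(z),C_8(z))$ sits on or above $B_{\frac32}$ with equality precisely at $z=z_2$; a slope inequality analogous to \eqref{ineq of c1(z1)}, provable in the style of Lemma \ref{Lemma: ineq of c1(z1)}, then gives strict dominance of $C(V)$ over $B_{\frac32}(V)$ on an immediate one-sided neighbourhood of $V_8$ also in the boundary case $z=z_2$. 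The barrier argument \eqref{barrier_argument}, using that $G<0$ along this barrier by Lemma \ref{sign of dC/dV to the left}, then reduces to proving positivity of
\[
\Barrier_{\frac32}(V,z,m):=\frac{2}{\sqrt{-\tfrac32 V}}\Bigl(F(V,B_{\frac32}(V);\gamma,z)+\tfrac{1}{2}\sqrt{-\tfrac{3}{2V}}\,G(V,B_{\frac32}(V);\gamma,z)\Bigr)
\]
on the relevant $(V,z)$-range.

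The main effort would then go into the two-step reduction used for Lemma \ref{leftgammabar}. In Step 1 I would verify monotonicity of $\Barrier_{\frac32}(\cdot,z,m)$ in $V$ on the relevant interval (via a discriminant argument separately for $m=1$ and $m=2$), so that it suffices to show positivity at the right endpoint $V=-\tfrac{2}{3}C_8^2(z)$. In Step 2, with $\Jm(z):=-\tfrac{2}{3}C_8^2(z)$ and using the concavity input of Lemma \ref{4C8C8''+(C8')^2<0}, I would establish that $(1+\Jm(z))\Barrier_{\frac32}(\Jm(z),z,m)$ is concave in $z$, so that positivity reduces to the two endpoints $z=\tilde z_m(\gamma)$ and $z=z_2(\gamma)$. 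The upper endpoint yields zero by construction of $z_2$, while the lower endpoint produces explicit polynomial inequalities in $\gamma$ to be recorded in the appendix in the spirit of Propositions \ref{poly: Lemma 6.2-1}--\ref{poly: Lemma 6.2-3}.

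The hard part will be the quantitative concavity and monotonicity estimates in Steps 1 and 2. Because the range $\gamma\in(\gaOne,3]$ crosses the zeros of $(\gamma-2)\gamma$ and $(\gamma-3)\gamma$ that appear in the coefficients of $F$ and $G$, the signs of the analogs of $A(\gamma,z,m)$ and $B(\gamma,z,m)$ in the proof of Lemma \ref{leftgammabar} will flip on parts of the $\gamma$-range, and must be controlled more carefully. I expect that splitting the $\gamma$-interval at the natural threshold $\gamma=2$ and absorbing negative contributions using the a priori bounds $z\leq z_2<z_M$ and $\Jm(z)\in[\Jm(\tilde z_m(\gamma)),\Jm(z_2)]$ from the monotonicities of Lemma \ref{Lemma V_6/8 monoton} will suffice to close the estimates, with any remaining polynomial inequalities deferred to the computational appendix.
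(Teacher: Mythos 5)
Your proposal mirrors the paper's proof almost exactly: split $(0,z_2]$ at a threshold with closed form (the paper's $\hat z_m(\gamma)$, which is your $\tilde z_m(\gamma)$, defined so $C_8=C_1(3)=\sqrt3/2$), dispense with small $z$ by monotonicity of $C_8$, and on the remaining window use $B_{3/2}(V)=\sqrt{-\tfrac32 V}$ as a lower barrier, reducing to positivity of $\Barrier_{3/2}$ via (Step 1) a reduction in $V$ and (Step 2) concavity in $z$ of $(1+\Im(z))\Barrier_{3/2}(\Im(z),z,m)$ followed by polynomial verification at the lower endpoint and vanishing at $z_2$. Your explicit check that $P_1$ lies on or below $B_{3/2}$ (via $(2\gamma+1)(\gamma-3)\le 0$) is correct and is left implicit in the paper; the slope condition at $z=z_2$ is the paper's \eqref{ineq of c1(z2)}, proved in Lemma \ref{Lemma: ineq of c1(z2)}.

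The one place where your plan diverges in detail: in Step 1 you propose monotonicity of $\Barrier_{3/2}(\cdot,z,m)$ in $V$ for both $m=1,2$ via a discriminant argument (as in Lemma \ref{leftgammabar}), so that only the right endpoint $V=-\tfrac23 C_8^2(z)$ needs checking. That works for $m=1$, but for $m=2$ the paper does not prove monotonicity; instead it shows $\partial_V^2\big[(1+V)\Barrier_{3/2}(V,z,2)\big]<0$ (concavity in $V$, backed by Proposition \ref{poly: Lemma 6.3-1}) and then must additionally verify positivity at the left endpoint $\Barrier_{3/2}(V_1,z,2)>0$ (done via Proposition \ref{poly: Lemma 6.3-2}). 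As you anticipate, the $\gamma$-range $(\gaOne,3]$ straddles $\gamma=2$ and $\gamma=3$, so the sign structure is worse than in Lemma \ref{leftgammabar}; if your discriminant-based monotonicity claim fails for $m=2$, you will need to fall back on the concavity-in-$V$ argument and add the $V=V_1$ endpoint check, which is precisely what the paper does. This is a modest adjustment to your plan rather than a structural gap.
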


\begin{proof}
	Similar to our approach in Lemma \ref{leftgammabar}, we define $\overline{z}(\gamma;P_8)$ as  the solution to $C_8(\gamma, \overline{z}(\gamma;P_8))=C_1(\gamma)$ and 
	we further introduce $\hat{z}_m(\gamma)$, which satisfies the equation:
		\begin{align}\label{C8hatzm}
			C_8(\ga, \hat{z}_m(\ga)) &= C_1(3)= \frac{\sqrt{3}}{2}
		\end{align}
		so that 
		\begin{align*}
			\hat{z}_m(\gamma)=\frac{\sqrt{3}}{12+8\sqrt{3}+2\gamma}.
		\end{align*}
		It is easy to check that for any $\gamma\in(\gaOne,3]$,
		\begin{align}\label{lower bound of hatz_m}
			\frac{1}{8}>z_M(2)>z_M(\gamma)>\hat{z}_m(\gamma) \geq \hat{z}_m(3)>\frac{1}{20}.
		\end{align}
		By Lemma \ref{Lemma V_6/8 monoton} and Lemma \ref{C1V1}, for any $\gamma\in (\gaOne,3]$ and $z\leq \hat{z}_m(\gamma)$, we have
		\begin{align*}
			C_8(\gamma,\hat{z}_m(\gamma)) = C_1(3) \geq C_1(\gamma) = C_8(\gamma, \overline{z}(\gamma;P_8)),
		\end{align*}
		and it follows that $\hat{z}_m(\gamma)\leq \overline{z}(\gamma;P_8)$. Consequently, each $z\in(0,\hat{z}_m(\gamma)]$ serves as an upper solution for $P_8$.
		
		To conclude the proof, we need to establish that each $z\in(\hat{z}_m(\gamma),z_2(\gamma)]$ gives an upper solution for $P_8$. To this end, we will employ the barrier function $B_{\frac{3}{2}}(V)$.  
	Hence, it suffices to establish that,  for any $\gamma\in (\gaOne,3]$ and $z\in(\hat{z}_m(\gamma),z_2(\gamma)]$, $B_{\frac{3}{2}}(V)$ is a lower barrier for the solution $C(V)$ of \eqref{ivp p*} with $P_*=P_8$. 
By Lemma \ref{sign of dC/dV to the left}, $\frac{d C}{dV}<0$ to the left of the triple point $P_8$. Therefore, $C(V)> B_{\frac{3}{2}}(V)$ for $V\in[-\frac{2}{3}C_8^2,V_8]$ and $z\in(\hat{z}_m(\gamma),z_2(\gamma))$, while $C(V_8)= B_{\frac32} (V_8) $ for $z=z_2(\gamma)$ and it satisfies  
\begin{align}\label{ineq of c1(z2)}	
\frac{d C}{d V}|_{V=V_8(z_2),C=C_8(z_2),z=z_2}<- \frac{1}{2\sqrt{-V_8(z_2)}}, 
\end{align}
so that 
$C(V)>B_{\frac32}(V)$ for $V\in[\overline{V},V_8)$ for some $\overline{V}<V_8$.  
The proof of  \eqref{ineq of c1(z2)} is given in Lemma \ref{Lemma: ineq of c1(z2)}. 
	Now, by using the barrier argument (\textit{cf.}  \eqref{barrier_argument}), it is sufficient to show that
	\begin{align}
		 \frac{F(V,\sqrt{-\frac{3}{2}V};\gamma,z)}{G(V,\sqrt{-\frac{3}{2}V};\gamma,z)}+\frac{1}{2}\sqrt{\frac{-3}{2V}}<0.
	\end{align}
		We observe that this inequality is \eqref{barrier inequality} with $k(\gamma,z)$ replaced by $\frac{3}{2}$. As $G(V,\sqrt{-\frac{3}{2}V};\gamma,z)< 0$ for any $V\in[V_1,-\frac{2}{3}C_8^2(z))$ by Lemma \ref{sign of dC/dV to the left}, it suffices to show that 	for any $\gamma\in(\gaOne,3]$, $z\in(\hat{z}_m(\gamma),z_2(\gamma)]$, $V\in[V_1,-\frac{2}{3}C_8^2(z))$, and $m=1,2$, the following function (\textit{cf.} \eqref{def_Bk}) is positive.
	\begin{align}\label{BP82}
		\Barrier_{\frac32}(V,z,m):=
		 (m-1-m\gamma)V^2+(-4+2m+\frac{m+1}{2}-m\gamma+m(\gamma-2)\gamma z)V-m \gamma z-1+\frac{3mz}{1+V}.
	\end{align}
The strategy is the following: 
	\begin{enumerate}
		\item For $m=1,2$, we will show that it is enough to check the sign of $(1-\frac{2}{3}C^2_8(z))\Barrier_{\frac32}(-\frac{2}{3}C^2_8(z),z,m)$.
		\item For $m=1,2$, we will show that $\frac{d^2}{dz^2}[(1-\frac{2}{3}C^2_8(z))\Barrier_{\frac32}(-\frac{2}{3}C^2_8(z),z,m)]<0$ so that $(1-\frac{2}{3}C^2_8(z))\Barrier_{\frac32}(-\frac{2}{3}C^2_8(z),z,m)$ is a concave function. Since $\Barrier_{\frac32}(-\frac{2}{3}C^2_8(z_2),z_2,m)=0$ by the definition of $z_2$, it is sufficient to check the sign of $(1-\frac{2}{3}C^2_8(\hat{z}_m(\gamma)))\Barrier_{\frac32}(-\frac{2}{3}C^2_8(\hat{z}_m(\gamma)),\gamma,\hat{z}_m(\gamma),m)>0$.
	\end{enumerate}
	
	\smallskip 
	
	$\underline{\text{Step 1}}:$
	First of all, by Lemma \ref{C1V1},
	\begin{align}\label{V1>sqrt(2)-2}
		V_1(\gamma) \geq V_1(\gaOne) = \sqrt{2}-2.
	\end{align}
	When $m=1$, 
	by using \eqref{V1>sqrt(2)-2} and $0<1+V<1$, we have
	\begin{align}
		\frac{\partial \Barrier_{\frac32}(V,z,1)}{\partial V} 
		&< -2\gamma V_1-1-\gamma+[(\gamma-2)\gamma -3]z<(3-2\sqrt{2})\gamma-1+[(\gamma-2)\gamma -3]z<0. \label{sign_BV}
	\end{align} 
	Therefore, to establish the positivity of $\Barrier_{\frac32}(V,z,1)$, it suffices to show
	\begin{align*}
		(1-\frac{2}{3}C^2_8(z)) \Barrier_{\frac32}(-\frac{2}{3}C^2_8(z),z,1)\geq 0.
	\end{align*}

	As for $m=2$, we compute
	\begin{align*}
		(1+V)\Barrier_{\frac32}(V,z,2) = (1-2\gamma)V^3+(\frac{5}{2}-4\gamma+2(\gamma-2)\gamma z)V^2+(\frac{1}{2}-2\gamma+2(\gamma-3)\gamma z)V+2(3-\gamma) z-1.
	\end{align*}
For any $\gamma\in(\gaOne,3]$, $z\in(\hat{z}_m(\gamma),z_2(\gamma)]$ and $V\in[V_1,-\frac{2}{3}C_8^2(z))$, by using \eqref{lower bound of hatz_m}, 
	\begin{align*}
		\frac{\partial^2}{\partial V^2}[(1+V)\Barrier_{\frac32}(V,z,2)] &= 6(1-2\gamma)V+5-8\gamma+4(\gamma-2)\gamma z \\
		&<\frac{12(2\gamma-1)}{\gamma+1}+5-8\gamma+\frac{(\gamma-2)\gamma}{2}=\frac{\gamma^3-17\gamma^2+40\gamma-14}{2(\gamma+1)}<0
	\end{align*}
	where the negative sign of the cubic polynomial is shown in Proposition \ref{poly: Lemma 6.3-1}.
 Thus, $(1+V)\Barrier_{\frac32}(V,z,2)$ is a concave function in $V$. It is then enough to check the signs of $\Barrier_{\frac32}(V_1,z,2)$ and $\Barrier_{\frac32}(-\frac{2}{3}C^2_8(z),z,2)$. 
	We now compute $\Barrier_{\frac32}(V_1,z,2)$,
	\begin{align*}
		\Barrier_{\frac32}(V_1,z,2) 
		&=\frac{3\gamma (\gamma-3)}{(\gamma+1)^2} + \frac{6(\gamma^2(3-\gamma)+\gamma+1)}{\gamma^2-1}z\\ 
		&> \frac{3\gamma (\gamma-3)}{(\gamma+1)^2} + \frac{6(\gamma^2(3-\gamma)+\gamma+1)}{20(\gamma^2-1)}=\frac{3(-\gamma^4+12\gamma^3-36\gamma^2+32\gamma+1)}{10(\gamma-1)(\gamma+1)^2}>0
	\end{align*}
	where we have used \eqref{lower bound of hatz_m} in the second line, while the positive sign of the last inequality is shown in Proposition \ref{poly: Lemma 6.3-2}.
	Thus, in order to show $(1+V)\Barrier_{\frac32}(V,z,2)>0$ for any $\gamma\in(\gaOne,3]$, $z\in(\hat{z}_m(\gamma),z_2(\gamma)]$ and $V\in[V_1,-\frac{2}{3}C_8^2(z))$, it is sufficient to show that
	\begin{align*}
		(1-\frac{2}{3}C^2_8(z))\Barrier_{\frac32}(-\frac{2}{3}C^2_8(z),z,2) \geq 0.
	\end{align*}
	
	\smallskip 
	
	$\underline{\text{Step 2}}:$ Our goal is to show $(1-\frac{2}{3}C^2_8(z))\Barrier_{\frac32}(-\frac{2}{3}C^2_8(z),\gamma,z,m)\geq 0$ for any  $\gamma\in(\gaOne,3]$, $z\in(\hat{z}_m(\gamma),z_2(\gamma)]$, $V\in[V_1,-\frac{2}{3}C_8^2(z))$, and $m=1,2$. We will first show $(1-\frac{2}{3}C^2_8(z))\Barrier_{\frac32}(-\frac{2}{3}C^2_8(z),\gamma,z,m)$ is a concave function in $z$. For notational convenience, we will denote 
	\begin{align}
		\Im(z) := -\frac{2}{3}C^2_8(z).
	\end{align}
	By using Lemma \ref{Lemma V_6/8 monoton}, \eqref{C8z2} and \eqref{C8hatzm}, we obtain
	\begin{align}\label{range of Im(z)}
	-\frac{1}{2}=\Im(\hat{z}_m(\gamma))<	\Im(z)\leq \Im(z_2) = -\frac{7-\sqrt{33}}{4}.
	\end{align}
	Note that by using Lemma \ref{Lemma V_6/8 monoton} and Lemma \ref{4C8C8''+(C8')^2<0}, we obtain
	\begin{align}
		\Im'(z) &= -\frac{4}{3}C_8C'_8(z)>0,\label{1st dev of Im(z)}\\
		 \Im''(z)&= -\frac{4}{3}[C_8C''_8(z)+(C'_8)^2]>0.\label{2nd dev of Im(z)}
	\end{align}
	We rewrite $(1-\frac{2}{3}C^2_8(z))\Barrier_{\frac32}(-\frac{2}{3}C^2_8(z),\gamma,z,m)$ as 
	\begin{align*}
		(1+\Im(z))\Barrier_{\frac32}(\Im(z),z,m)&=(m-1-m\gamma)\Im^3(z)+(3m+\frac{m+1}{2}-5-2m\gamma+m(\gamma-2)\gamma z)\Im^2(z)\\
		&\ \ +(2m+\frac{m+1}{2}-5-m\gamma+m(\gamma-3)\gamma z)\Im(z)+m(3-\gamma) z-1,
	\end{align*}
	and  compute the second $z$ derivative to obtain
	\begin{align*}
		\frac{d^2}{dz^2}[(1+\Im(z))\Barrier_{\frac32}(\Im(z),z,m)] := \Im''(z) A(\gamma,z,m)+\Im'(z) B(\gamma,z,m),
	\end{align*}
	where
	\begin{align*}
		A(\gamma,z,m) =&\, 3(m-1-m\gamma)\Im^2(z)+2[3m+\frac{m+1}{2}-5-2m\gamma+m(\gamma-2)\gamma z]\Im(z)\\
		&+2m+\frac{m+1}{2}-5-m\gamma+m(\gamma-3)\gamma z,\\
		B(\gamma,z,m) =&\, 6(m-1-m\gamma)\Im(z)\Im'(z)+2[3m+\frac{m+1}{2}-5-2m\gamma+m(\gamma-2)\gamma z]\Im'(z)\\
		&+4m(\gamma-2)\gamma \Im(z)+2m(\gamma-3)\gamma.
	\end{align*}
	We claim $A(\gamma,z,m)$ and $B(\gamma,z,m)$ are negative. We will first show $B(\gamma,z,m)<0$. We decompose $B(\gamma,z,m)$ into two parts
	\begin{align*}
		B(\gamma,z,m) =\Im'(z) B_1(\gamma,z,m) + B_2(\gamma,z,m),
	\end{align*}
	where 
	\begin{align*}
		B_1(\gamma,z,m) &=6(m-1-m\gamma)\Im(z)+7m-9-4m\gamma+2m(\gamma-2)\gamma z,\\
		B_2(\gamma,z,m) &=4m(\gamma-2)\gamma \Im(z)+2m(\gamma-3)\gamma .
	\end{align*}
	Clearly, $B_2(\gamma,z,m)<0$. Regarding $B_1(\gamma,z,m)$, 
	 by using \eqref{lower bound of hatz_m} and \eqref{range of Im(z)}, 
	 we obtain
	 \begin{align*}
	 	B_1(\gamma,z,m) &< 6(m-1-m\gamma)\Im(\hat{z}_m(\gamma))+7m-9-4m\gamma+2m\gamma z_M= 4m-6-\frac{3m\gamma}{4}<0
	 \end{align*}
	 for any $m=1,2$ and $\gamma\in(\gaOne,3]$.

	For $A(\gamma,z,m)$, we notice that
	\begin{align*}
		\frac{d A(\gamma,z,m)}{dz} = \Im'(z) B_1(\gamma,z,m)+\frac{1}{2} B_2(\gamma,z,m) <0.
	\end{align*}
	Thus, to show $A(\gamma,z,m)<0$, it is enough to verify  $A(\gamma,\hat{z}_m(\gamma),m)<0$.  
	Using \eqref{lower bound of hatz_m} and \eqref{range of Im(z)},
		\begin{align*}
		A(\gamma,\hat{z}_m(\gamma),m) = \frac{m\gamma}{4}-\frac{m+3}{4}-m\gamma\hat{z}_m(\gamma) < \frac{m\gamma}{5}-\frac{m+3}{4} < 0
	\end{align*}
	for  $m=1,2$ and $\gamma\in(\gaOne,3]$. 
Hence, $(1+\Im(z))\Barrier_{\frac32}(\Im(z),z,m)$ is a concave function in $z$. It is enough to check the sign 
at two ends points of $z$. By the definition \eqref{z2} of $z_2$, $(1+\Im(z_2))\Barrier_{\frac32}(\Im(z_2),\gamma,z_2,m)=0$. For $(1+\Im(\hat{z}_m(\gamma)))\Barrier_{\frac32}(\Im(\hat{z}_m(\gamma)),\hat{z}_m(\gamma),m)$, we evaluate 
	\begin{align*}
		(1+\Im(\hat{z}_m(\gamma)))\Barrier_{\frac32}(\Im(\hat{z}_m(\gamma))),\hat{z}_m(\gamma)),m) = \begin{cases}
			&\frac{\gamma-2}{8}+\frac{(12-\gamma^2)\hat{z}_m(\gamma)}{4} \text{ when }m=1,\\
			&\frac{\gamma-3}{4}+\frac{(12-\gamma^2)\hat{z}_m(\gamma)}{2} \text{ when }m=2.
		\end{cases}
	\end{align*}
	Obviously, $(1+\Im(\hat{z}_m(\gamma)))\Barrier_{\frac32}(\Im(\hat{z}_m(\gamma))),\hat{z}_m(\gamma)),1)>0$. For $(1+\Im(\hat{z}_m(\gamma)))\Barrier_{\frac32}(\Im(\hat{z}_m(\gamma))),\hat{z}_m(\gamma)),2)$, by using \eqref{lower bound of hatz_m}, we  have
	\begin{align*}
		(1+\Im(\hat{z}_m(\gamma)))\Barrier_{\frac32}(\Im(\hat{z}_m(\gamma))),\hat{z}_m(\gamma)),2) > \frac{\gamma-3}{4}+\frac{12-\gamma^2}{40} = \frac{-\gamma^2+10\gamma-18}{40}>0
	\end{align*}
	for any $\gamma\in(\gaOne,3]$. This completes the proof.
\end{proof}

\section{Solving to right}\label{sec:solntoright}

In the previous sections, for each $\ga\in(1,3]$, we established  
the existence of $z_{std}(\gamma;P_*)$ and a  
range of $z$ which $z_{std}(\gamma;P_*)$ must belong to.  
This $z_{std}$ allows the solution $C(V;\gamma,z_{std}(\gamma;P_*),P_*)$ of \eqref{ODE} to pass smoothly from $P_1$ through  
the 
triple point $P_*$, where $P_*$ is either $P_6$ or $P_8$. 
The remaining goal 
is to extend this smooth solution from the triple point $P_*$ to the origin $P_0$ while ensuring that it remains within the second quadrant in the phase plane (that is, that we retain both $V<0$ and $C>0$ up to time the flow meets the origin). To prove this property for the solution associated to $z_{std}$, we will in fact prove the stronger property that the local solution to the right of the sonic $P_*$ always extends to $P_0$ within the second quadrant for all $z$ within the range containing $z_{std}(\gamma;P_*)$. 

For notational convenience, we define unified notation for the various possible ranges of $z$ containing $z_{std}$ from the results of Sections \ref{sec:leftbasic}--\ref{sec:P8toleft}.
\begin{align}
	\mathring{\mathcal{Z}}(\gamma;P_*) = \begin{cases}
		&(z_g(\gamma),z_M(\gamma)] \quad\text{for }\gamma\in(1,2] \text{ at }P_6,\\
		&(z_1(\gamma),z_M(\gamma)] \quad\text{for }\gamma\in(\gaSix,\gaOne] \text{ at }P_8,\\
		&(z_2(\gamma),z_M(\gamma)] \quad\text{for }\gamma\in(\gaOne,3] \text{ at }P_8.
	\end{cases}
\end{align}
In this section, we will show that for any $\gamma\in(1,3]$ and $z\in\mathring{\mathcal{Z}}(\gamma;P_*)$, the local analytic solutions around $P_*$ constructed in Theorem \ref{analytic} continue to the origin $P_0$ in the second quadrant.

From the phase portrait analysis, three possibilities arise for the extension of the local analytic solution.
	\begin{enumerate}
		\item The trajectory intersects the negative $V$-axis before reaching $V=0$.
		\item The trajectory intersects the positive $C$-axis when $V=0$.
		\item The trajectory converges to $P_0$ within the second quadrant.
	\end{enumerate}
To rule out the first two possibilities, we will use suitable barrier functions 
to establish an invariant region for the solutions ensuring convergence to $P_0$ within the second quadrant. We begin with 
the extension for the local analytic solution to the right of the triple point.

\begin{lemma}\label{local solution to the right} Let $\gamma\in(1,3]$ and $z\in\mathring{\mathcal{Z}}$ be given and 
let $P_*=(V_*,C_*)$ be either $P_6$ or $P_8$. Consider the local, analytic solution $C:[V_*,V_*+\epsilon]\to\R_+$, guaranteed by Theorem \ref{analytic}. This solution  extends smoothly
to the right within the second quadrant onto the domain 
$C:[V_*,V_0)\to\R_+$, where $V_0 = \min\{C^{-1}(0),0\}$. Furthermore, except at the triple point, the solution enjoys  
 $\frac{dC}{dV}<0$, $F<0$, $G>0$, $D<0$.
\end{lemma}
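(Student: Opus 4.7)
The plan is to proceed by standard ODE continuation, closely mirroring the argument for Lemma \ref{local solution to the left}. I would first invoke Theorem \ref{analytic} to obtain the local analytic solution $C:[V_*, V_* + \epsilon]\to \R_+$ with $C(V_*) = C_*$ and initial slope $c_1 < 0$ from the chosen branch. To verify the sign conditions immediately to the right of $P_*$, I would use the Taylor expansion $C(V) = C_* + c_1(V - V_*) + O((V - V_*)^2)$, so that for $V$ slightly above $V_*$ we have $\Delta V := V - V_* > 0$ and $\Delta C := C - C_* < 0$. Expanding to first order gives $G(V,C) = G_V(P_*)\Delta V + G_C(P_*)\Delta C + O((\Delta V)^2)$, which is strictly positive because $G_V(P_*) > 0$ (shown in the proof of Lemma \ref{nonvanish condition lemma}) and $G_C(P_*) < 0$ by \eqref{G_C}. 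Analogously, using $F_V(P_*) < 0$ (established in the proofs of Lemmas \ref{c_1 at P_8 different signs, R^2>0} and \ref{c_1 at P_6 must one neg and R^2>0}) together with $F_C(P_*) > 0$ from \eqref{F_C} yields $F < 0$. The sign of $D = (1+V-C)(1+V+C)$ follows immediately from $C_* = 1+V_*$, after which $\frac{dC}{dV} = F/G < 0$ locally.

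I would then extend this local solution by the standard maximal-existence argument, noting that $F/G$ is locally Lipschitz away from zeros of $G$. Because $\frac{dC}{dV} < 0$ wherever the signs $G > 0$, $F < 0$ persist, the solution $C(V)$ is monotone decreasing on its interval of existence and thus trapped in $(0, C_*)$. Combined with $V \in (V_*, 0)$, this rules out blow-up exactly as in Lemma \ref{local solution to the left}. The only remaining obstruction to extending all the way up to $V_0 = \min\{C^{-1}(0), 0\}$ is that $G$ vanishes at some interior $(\bar V, C(\bar V))$ with $\bar V \in (V_*, V_0)$. At such a point the boundedness of the slope forces $F(\bar V, C(\bar V)) = 0$ as well, so $(\bar V, C(\bar V))$ must coincide with one of the critical points enumerated in Lemmas \ref{triple roots} and \ref{double root}.

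The remaining step is to rule out each such critical point by a short case check. The points $P_3, P_5, P_7, P_9$ are immediately excluded by $V \le V_*$ or $C \le 0$. The companion triple point (i.e.\ $P_6$ when $P_* = P_8$, or $P_8$ when $P_* = P_6$) is excluded by Lemma \ref{Lemma V_6/8 monoton}: when $P_* = P_8$ we have $V_6 < V_8 = V_*$ so $P_6$ lies to the left; when $P_* = P_6$ we have $C_8 > C_6 = C_*$ so $P_8$ lies above a trajectory along which $C$ decreases from $C_*$. For $P_4$ with $P_* = P_6$ and $z \in (z_g, z_M]$, the definition of $z_g$ together with the monotonicity that $V_4$ is decreasing in $z$ while $V_6$ is increasing in $z$ (both recorded in the proof of Lemma \ref{g1f2-g2f1}) gives $V_4 < V_6$, placing $P_4$ to the left. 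For $P_* = P_8$, I would similarly exploit the refined windows $z \in (z_1, z_M]$ or $z \in (z_2, z_M]$ from Section \ref{sec:P8toleft} together with \eqref{P4}, \eqref{P8} to verify $V_4 < V_8$, so that $P_4$ again cannot obstruct the rightward extension.

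The step I expect to be the main obstacle is this $P_4$ exclusion in the $P_* = P_8$ setting, where $V_4$ and $V_8$ depend in a non-trivial way on $\gamma$ and $z$ and the inequality $V_4 < V_8$ must be teased out uniformly over the refined $z$-ranges; a complementary bound on $C_4$ via the formula $C_4 = H(V_4)$ may be required in corners of parameter space where $V_4$ is close to $V_8$. Once this is resolved, the monotone bounded solution extends exactly to $V_0$ with the advertised sign conditions holding throughout, completing the proof.
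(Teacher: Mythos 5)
Your Taylor-expansion verification of the local signs near $P_*$ is correct and fills out the paper's one-line proof (the first-order coefficient is $G_V + c_1 G_C > 0$ for $G$ and $F_V + c_1 F_C < 0$ for $F$). Incidentally, your own factorization $D = (1+V-C)(1+V+C)$ forces $D > 0$ to the right of $P_*$, not $D < 0$ as written in the lemma; the latter is a sign slip in the statement, consistent with $D(0,0)=1>0$ and the usage elsewhere.

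The genuine gap is in the continuation step. You assert that at a point $(\bar V, C(\bar V))$ where $G$ vanishes, ``the boundedness of the slope forces $F(\bar V, C(\bar V)) = 0$.'' But boundedness of the slope is precisely what is not guaranteed: if $G(\bar V, \bar C) = 0$ with $F(\bar V, \bar C) \neq 0$, then $dC/dV = F/G \to -\infty$ as $V\to\bar V^-$ while $C$ decreases to a finite limit $\bar C > 0$ (a vertical tangent of the phase-plane orbit), and the maximal interval of existence of $C(V)$ ends at $\bar V$ even though $(\bar V,\bar C)$ is \emph{not} one of the critical points of Lemmas \ref{triple roots} and \ref{double root}. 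Ruling out the critical points therefore does not close the argument; you must show the trajectory stays strictly in $\{G>0\}$. One route: on $V_*<V<-\tfrac{2mz}{m+1}$ (where $g_1<0$, hence $G_C<0$, and the $G$-nullcline $C=\sqrt{g_2/g_1}$ leaves $P_*$ with slope $-G_V/G_C>0>c_1$ and diverges to $+\infty$ at the right endpoint), compute $\tfrac{d}{dV}G(V,C(V)) = G_V + G_C F/G$, which is strictly positive once $G$ is small provided $F<0$ persists; this in turn requires a companion argument for the persistence of $F<0$. The $P_4$ exclusion for $P_*=P_8$ that you explicitly defer as the main obstacle would be absorbed by such control on the $G$-nullcline (since $P_4$ lies on $G=F=0$), but it is not automatic from the refined $z$-windows of Section \ref{sec:P8toleft} and does need to be carried out rather than deferred.
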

\begin{proof}
	The result follows 
	from our choice of $c_1$ (\textit{cf.}  \eqref{c1}), $F_C(V_*,C_*)>0$ (\textit{cf.}  \eqref{F_C}), $G_C(V_*,C_*)<0$ (\textit{cf.}  \eqref{G_C}).
\end{proof}

Next, we will eliminate the first possibility.
\begin{lemma}\label{lemma:Cneq0}
	For any $\gamma\in(1,3]$ and $z\in\mathring{\mathcal{Z}}$, the solution constructed in 
	Lemma \ref{local solution to the right} 
	does not intersect the negative $V$-axis (i.e., in the notation of that Lemma, $V_0=0$). 
\end{lemma}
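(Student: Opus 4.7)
The plan is to argue by contradiction: suppose there exists $V_0 \in (V_*, 0)$ such that $C(V_0) = 0$ and $C(V) > 0$ on $[V_*, V_0)$. The central observation is that the logarithmic form of the ODE,
\[
\frac{d \log C}{dV} \;=\; \frac{1}{C}\frac{dC}{dV} \;=\; \frac{C^2 f_1(V) - f_2(V)}{C^2 g_1(V) - g_2(V)},
\]
has a right-hand side that remains bounded as $(V,C)\to (V_0,0)$, which will contradict $C(V)\to 0$.

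First I would verify that $G(V_0,0) = -g_2(V_0) > 0$. Recalling $g_2(V)=V(1+V)(m\gamma z + 1 + V)$, on the interval $V_0\in(-1,0)$ the three factors have definite signs: $V_0<0$, $1+V_0>0$, and $m\gamma z + 1 + V_0>0$ (since $m\gamma z>0$). Hence $g_2(V_0)<0$ and $G(V_0,0)>0$. Since $V_*\in(-1,0)$ by \eqref{upper and lower bound for V/C_6/8} and $C(V_*)=C_*>0$, any hypothetical $V_0$ necessarily satisfies $V_0\in(V_*,0)\subset(-1,0)$, so the sign assertion is automatic.

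By continuity of $G$, there exist $\delta>0$ and $\eta>0$ such that $G(V,C)\geq \eta$ throughout the compact box $[V_0-\delta,V_0]\times[0,\delta]$; the numerator $C^2 f_1(V)-f_2(V)$ is also uniformly bounded on this box. Therefore $\left|\tfrac{d\log C}{dV}\right|\leq M$ for some finite $M$ whenever the trajectory lies in this box. By Lemma \ref{local solution to the right}, $C$ is strictly decreasing to the right of $P_*$, so for $V$ sufficiently close to $V_0$ from the left, say $V\in[V_1,V_0)$ for some $V_1\in(V_0-\delta,V_0)$, the trajectory is contained in the box. Integrating the bound yields
\[
|\log C(V)-\log C(V_1)|\leq M(V_0-V_1),
\]
a finite quantity, which contradicts the assumption $\log C(V)\to -\infty$ as $V\to V_0^-$. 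Hence no such $V_0$ exists, and $V_0=0$ as claimed.

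I do not anticipate a significant obstacle: this is essentially a bounded-derivative argument exploiting the factorization $F=C\cdot(\text{smooth})$, which makes $C\equiv 0$ a formal invariant of the phase plane and prevents nontrivial trajectories from reaching it in finite $V$. The only point requiring minimal care is checking $g_2(V_0)\neq 0$, which follows immediately from $V_0\in(-1,0)$.
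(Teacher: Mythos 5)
Your proof is correct, and it takes a genuinely different route from the paper's. The paper argues via the Picard--Lindel\"of uniqueness theorem: posing the IVP at the hypothetical intersection point $(\overline{V},0)$, it observes that $C\equiv 0$ solves it (this uses the factor of $C$ in $F$), and that $F/G$ is locally Lipschitz there, so uniqueness forces the trajectory to coincide with the zero solution, contradicting $C_*>0$. You instead pass to $\log C$ and exploit the same structural fact --- $F=C\cdot(\text{smooth})$ and $G(V_0,0)=-g_2(V_0)>0$ --- to obtain a uniform bound $\bigl|\tfrac{d\log C}{dV}\bigr|\le M$ in a box around $(V_0,0)$; integrating, $\log C$ stays finite and cannot reach $-\infty$, so $C$ cannot vanish. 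Both arguments rest on $C=0$ being an invariant set of the flow near a point where $G$ is nondegenerate. The paper's route is shorter and conceptually cleaner, relying on a black-box ODE theorem; yours is more quantitative and elementary, replacing the appeal to uniqueness by an explicit Gronwall-type estimate, and it has the side benefit of making explicit the sign condition $G(V_0,0)>0$ that the paper uses implicitly when asserting smoothness of $F/G$ near $(\overline{V},0)$.
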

\begin{proof}
	We argue by contradiction. 
	Suppose the solution intersects the negative $V$-axis before reaching $V=0$, and let $(\overline{V},0)$ denote the  point of intersection of the solution trajectory with the $V$-axis. 
Consider the initial value problem:
\begin{equation*}
	\begin{cases}
	&\cfrac{dC(V)}{dV} = \cfrac{F(V,C;\gamma,z)}{G(V,C;\gamma,z)},\\
	& C(\overline{V})=0.
\end{cases}
\end{equation*}
In a small rectangular region around $(\overline{V}, 0)$, it is evident that $\frac{F(V,C;\gamma,z)}{G(V,C;\gamma,z)}$ is continuously differentiable. By 
the standard theorem for existence and uniqueness of solutions to ODEs with locally Lipschitz right hand side (e.g.~\cite{Teschl12}), this initial value problem possesses a unique solution on the interval $(\overline{V}-\epsilon, \overline{V}+\epsilon)$ for sufficiently small $\epsilon$. However, 
we see trivially that $C(\overline{V}) \equiv 0$ solves this problem, and so must be the unique solution, leading to a contradiction. 
\end{proof}

We remark that Lemma \ref{lemma:Cneq0}  
yields $V_0=0$ in Lemma \ref{local solution to the right}. 
The rest of this section is devoted to ruling out the second possibility by the barrier argument. 

\subsection{Connecting $P_6$ to the origin}

In this subsection, we prove that the local analytic solutions around $P_6$ constructed in Theorem \ref{analytic} for any $\gamma\in (1,2]$ and $z\in(z_g(\gamma),z_M(\gamma)]$ continue to the origin and stay below the barrier curve $B_1(V)=\sqrt{-V}$. In particular, this implies that the solution trajectories to the right of the sonic point will stay between $C=0$ and $B_1(V)$ and must therefore strictly decrease to the origin.

\begin{lemma}\label{right of P6}
	For any $\gamma\in (1,2]$ and $z\in(z_g(\gamma),z_M(\gamma)]$, the solution constructed in  
	Lemma \ref{local solution to the right} with $P_*=P_6$, always lies below the curve $ B_1(V)= \sqrt{-V}$ for $V\in[V_6,0)$.
\end{lemma}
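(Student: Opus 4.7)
The natural approach is a barrier argument in the sense of \eqref{barrier_argument}, establishing $B_1(V)=\sqrt{-V}$ as an upper barrier for the forward trajectory from $P_6$ on $[V_6,0)$.

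\textbf{Base step.} First I would verify that $C(V_6)=C_6=1+V_6<\sqrt{-V_6}=B_1(V_6)$. Squaring, this reduces to the algebraic inequality $V_6^2+3V_6+1<0$, equivalently $V_6<(-3+\sqrt{5})/2\approx -0.382$. Combined with the upper bound $V_6\le -\sqrt{2}/(\sqrt{\gamma}+\sqrt{2})$ from \eqref{upper and lower bound for V/C_6/8}, a short monotonicity check in $\gamma$ (rationalizing $\sqrt{2}/(\sqrt\gamma+\sqrt 2)>(3-\sqrt 5)/2$ reduces to $\sqrt\gamma<\sqrt 2(\sqrt 5+1)/2\approx 2.29$) shows that the inequality holds uniformly for $\gamma\in(1,2]$. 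The strict inequality together with the continuity of the local analytic solution from Theorem \ref{analytic} yields $C(V)<B_1(V)$ on some right neighbourhood of $V_6$.

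\textbf{Contradiction step.} Suppose, for contradiction, there is a minimal $\bar V\in(V_6,0)$ with $C(\bar V)=B_1(\bar V)$; then $\frac{d(C-B_1)}{dV}|_{\bar V}\ge 0$. Specializing the algebraic identity derived in the proof of Lemma \ref{lower barrier P6} with $k=1$ in \eqref{def_Bk} gives
\[
F(V,\sqrt{-V})+\frac{G(V,\sqrt{-V})}{2\sqrt{-V}}=\tfrac{\sqrt{-V}}{2}\,\mathfrak{B}_1(V,z,m),
\]
and hence, using the ODE at the touching point,
\[
\frac{d(C-B_1)}{dV}\bigg|_{\bar V}=\frac{\sqrt{-\bar V}\,\mathfrak{B}_1(\bar V,z,m)}{2\,G(\bar V,\sqrt{-\bar V})}.
\]
The task therefore reduces to proving $\mathfrak{B}_1(V,z,m)/G(V,\sqrt{-V})<0$ throughout $(V_6,0)$ for all $\gamma\in(1,2]$, $z\in(z_g,z_M]$ and $m=1,2$.

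\textbf{Sign analysis (the main obstacle).} A direct computation yields
\[
G(V,\sqrt{-V})=-V\bigl[V^2+(m+3+m\gamma z)V+1+mz(2+\gamma)\bigr],
\]
and the discriminant of the bracketed quadratic is positive in the relevant parameter range with exactly one root in $(-1,0)$; consequently $G(V,\sqrt{-V})$ flips sign once on $(V_6,0)$. The essential difficulty, unlike in Lemma \ref{lower barrier P6} (where $G<0$ uniformly to the left of $P_6$), is to show that $\mathfrak{B}_1(V,z,m)$ flips sign at the very same value of $V$, so that the quotient retains a constant negative sign throughout. I would approach this by expanding $(1+V)\mathfrak{B}_1(V,z,m)$ as an explicit cubic in $V$ and displaying a common root with the quadratic factor of $G(V,\sqrt{-V})$; after cancelling this common root, the sign of the remaining linear factor can be pinned down by elementary bounds exploiting $V_4\le V_6$ (which holds precisely because $z\in(z_g,z_M]$) and the upper bound $V_6\le -\sqrt 2/(\sqrt\gamma+\sqrt 2)$, with the cases $m=1$ and $m=2$ handled separately by analogous polynomial book-keeping in the spirit of the proofs of Lemmas \ref{lower barrier P6} and \ref{upper barrier C1}. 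I anticipate this polynomial manipulation to be the most delicate part of the proof; once the factorization is in hand, the sign conclusion is routine.
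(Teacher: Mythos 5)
Your base step is correct and matches the paper's. But your next reduction --- proving $\mathfrak{B}_1(V,z,m)/G(V,\sqrt{-V})<0$ throughout $(V_6,0)$ --- asks for far more than the barrier argument requires, and is in fact false as stated. The observation you miss is that no touching point can occur on $[V_6,-C_6^2]$: by Lemma \ref{local solution to the right}, $C$ is strictly decreasing on $[V_6,0)$, so $C(V)<C_6$ for $V>V_6$; meanwhile $B_1(V)=\sqrt{-V}$ is also decreasing with $B_1(-C_6^2)=C_6$, so $B_1(V)\geq C_6>C(V)$ on $(V_6,-C_6^2]$, and your base-step inequality $C_6^2<-V_6$ guarantees $-C_6^2>V_6$. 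Consequently any touching point $\bar V$ lies in $(-C_6^2,0)$, and at such a point the barrier value equals the solution value, so $G(\bar V,\sqrt{-\bar V})=G(\bar V,C(\bar V))>0$ by Lemma \ref{local solution to the right}. The task therefore reduces to showing $\mathfrak{B}_1<0$ on $(-C_6^2,0)$ alone, which the paper handles cleanly by establishing $\partial_V\mathfrak{B}_1<0$ on $[-C_6^2,0)$, then $\frac{d}{dz}\mathfrak{B}_1(-C_6^2(z),z,m)>0$, and finally $\mathfrak{B}_1(-C_6^2(z_M),z_M,m)<0$.

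Your proposed workaround --- displaying a common root of $(1+V)\mathfrak{B}_1$ with the quadratic factor of $G(V,\sqrt{-V})$ --- does not hold. For example at $m=1$, $\gamma=2$, $z=z_M=\tfrac18$ (so $\lambda=\tfrac54$, $C_6=\tfrac12$), the quadratic is $V^2+\tfrac{17}{4}V+\tfrac32$ with root $V^*\approx-0.389\in(-1,0)$, yet $\mathfrak{B}_1(V^*,z_M,1)\approx 0.02>0$. In particular there is a small interval just to the right of $V^*$ (and entirely to the left of $-C_6^2=-\tfrac14$) on which $G(V,\sqrt{-V})>0$ and $\mathfrak{B}_1>0$ simultaneously, so the quotient is positive there --- which is precisely why the restriction of the barrier argument to $(-C_6^2,0)$ is essential rather than merely convenient, and why the polynomial factorization you anticipate cannot rescue the global sign claim.
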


\begin{proof} 
	According to \eqref{upper and lower bound for V/C_6/8}, 
	$P_6$ consistently remains below the curve $C=B_1(V)$ for any $\gamma\in (1,2]$. In other words, $C_6<B_1(V_6)$. Additionally, as per Lemma \ref{local solution to the right}, $\frac{dC}{dV}<0$ holds for $V\in [V_6,0)$. Observing that $-C_6^2>V_6$, we therefore see that the inequality $C(V)<B_1(V)$ holds trivially for $V\in[V_6,-C_6^2]$. Consequently, our analysis can be confined to $V\in(-C_6^2,0)$.
	Thus, employing the barrier argument \eqref{barrier_argument}, we verify the validity of the following inequality for any $\gamma\in(1,2]$, $z\in(z_g(\gamma),z_M(\gamma)]$, and $V\in(-C_6^2,0)$:
	\begin{align}\label{sign_check}
		\frac{F(V,\sqrt{-V};\gamma,z)}{G(V,\sqrt{-V};\gamma,z)}+\frac{1}{2\sqrt{-V}} <0.
	\end{align}
	Observe that the inequality is \eqref{barrier inequality} with $k(\gamma,z)$ replaced by $1$. So, by Lemma \ref{local solution to the right} which assures that $G(V,\sqrt{-V};\gamma,z)> 0$ for any $V\in(-C_6^2,0)$, our task reduces to proving 
	$$\Barrier_1(V,z,m)=\frac{2F(V,\sqrt{-V};\gamma,z)}{\sqrt{-V}}+\frac{G(V,\sqrt{-V};\gamma,z)}{-V}<0,$$
	for $m=1,2$, $\ga\in(1,2]$, $z\in(z_g(\ga),z_M(\ga)]$, $V\in(-C_6^2,0)$, 
where we recall from \eqref{BP81} that
\begin{align*}
	\Barrier_1(V,z,m) 
		&=(m-1-m\gamma)V^2+(-3+2m-m\gamma+m(\gamma-2)\gamma z)V-m\gamma z-1+\frac{2mz}{1+V}.
	\end{align*} 
	We note first that
	\begin{align}
		\frac{\partial \Barrier_1(V,z,m)}{\partial V} &= (m-1-m\gamma)(1+2V)+m-2+m(\gamma-2)\gamma z-\frac{2mz}{(1+V)^2}\notag\\
		&<(m-1-m\gamma)(1-2C_6^2)<0\label{positivity of dBar/dV}
	\end{align}
	for $V\in[-C_6^2,0)$, since $C_6 \le\frac{\sqrt{\gamma}}{\sqrt{\gamma}+\sqrt{2}} \leq\frac{1}{2}$ by using \eqref{upper and lower bound for V/C_6/8}. 
	 Thus, it is sufficient to check the negativity of $\Barrier_1(-C^2_6(\gamma,z),z, m)$. 

For any fixed $\gamma\in(1,2]$, we write $C_6=C_6(z)$ and $C_6'(z) = \frac{dC_6}{dz}$.
		By  
		\eqref{positivity of dBar/dV}, Lemma \ref{Lemma V_6/8 monoton} and the chain rule, 
 we check the sign of the $z$-derivative of $\Barrier_1(-C^2_6(z),z,m)$: 
	\begin{align*}
		\frac{d \Barrier_1(-C^2_6(z),z,m)}{d z} =  \frac{\partial \Barrier_1 }{\partial V}(-C^2_6(z),z,m)(-2C_6(z)C_6'(z)) + \frac{\partial \Barrier_1 }{\partial z}(-C^2_6(z),z,m)>0,		
	\end{align*}
	where we also used $\frac{\partial \Barrier_1 }{\partial z}(-C^2_6(z),z,m) =-m(\gamma-2)\gamma C^2_6(z) + m(\frac{2}{1-2C^2_6(z)}-\gamma)>0$ for any $\gamma\in(1,2]$. 
	As a consequence, it is now enough to show that $\Barrier_1(-C^2_6(\gamma,z_M),z_M)<0$ to accomplish \eqref{sign_check}. 
	By \eqref{V68(z_M) C68(z_M)} and \eqref{z_{max}}, we have $C_6(\gamma,z_M) = \sqrt{\gamma z_M}$ and hence, we obtain 
	\begin{align*}
		&\Barrier_1(-C^2_6(\gamma,z_M),z_M,m)\\ &= (m-1-m\gamma)\gamma^2z^2_M-(-3+2m-m\gamma+m(\gamma-2)\gamma z_M)\gamma z_M-m\gamma z_M-1+\frac{2mz_M}{1-\gamma z_M}\\
		&= (3m-1)\gamma^2z^2_M -2m\gamma^3z^2_M+3(1-m)\gamma z_M+m\gamma^2z_M-1+\frac{2mz_M}{1-\gamma z_M}.
	\end{align*}
	When $m=1$, we have
	\begin{align}
		\Barrier_1(-C^2_6(\gamma,z_M),z_M,1) 
		&=2(1-\gamma) \gamma^2z^2_M+\frac{\gamma^2z_M-\gamma^3z^2_M-1+(\gamma+2) z_M}{1-\gamma z_M}.\label{eq:B1C62m=1}
	\end{align}
	With $\gamma>1$, the first term in the equation above is negative. To find an explicit expression for the second term, 
	first, by \eqref{z_{max}}, we see that 
	\begin{align*} 
		1-\gamma z_M = 1-\frac{\gamma}{\gamma+2+2\sqrt{2\gamma}}>0
	\end{align*}
and
	\begin{align*}
		\gamma^2z_M-\gamma^3z^2_M-1+(\gamma+2) z_M &= \frac{\gamma^2(\sqrt{\gamma}+\sqrt{2})^2-\gamma^3-(\sqrt{\gamma}+\sqrt{2})^4+(\gamma+2)(\sqrt{\gamma}+\sqrt{2})^2}{(\sqrt{\gamma}+\sqrt{2})^4}\\
		&=\frac{(\gamma^2-2\sqrt{2\gamma})(\sqrt{\gamma}+\sqrt{2})^2-\gamma^3}{(\sqrt{\gamma}+\sqrt{2})^4}<0,
	\end{align*}
	where we have used $\gamma\leq 2$. Therefore, from \eqref{eq:B1C62m=1}, we deduce that $\Barrier_1(-C^2_6(\gamma,z_M),z_M,1)<0$ and hence $\Barrier_1(V,z,1)<0$ for all $V\in(-C_6^2,0)$, $z\in(z_g(\ga),z_M(\ga)]$.

	When $m=2$, we have
	\begin{align}
		\Barrier_1(-C^2_6(\gamma,z_M),z_M,2) 
		&=4(1-\gamma)\gamma^2z^2_M+\frac{4\gamma^2 z^2_M-2\gamma z_M+2\gamma^2z_M-\gamma^3z^3_M-2\gamma^3z^2_M-1+4z_M}{1-\gamma z_M}.\label{eq:B1C62m=2}
	\end{align}
	Since $\gamma>1$, the first term 
	is again negative. For the second term, we again apply \eqref{z_{max}} to rearrange the numerator 
	as 
	\begin{align}
		&4\gamma^2 z^2_M-2\gamma z_M+2\gamma^2z_M-\gamma^3z^3_M-2\gamma^3z^2_M-1+4z_M\notag\\ 
		 &= (4\gamma z_M-1)\gamma z_M-\gamma^3z^3_M-2\gamma^3z_M^2+2(\gamma^2+1-\sqrt{2\gamma}-\gamma)z_M\notag\\
		 &=\frac{3\gamma-2-2\sqrt{2\gamma}}{\gamma+2+2\sqrt{2\gamma}}\gamma z_M-\gamma^3z^3_M+2(\gamma^2+1-\sqrt{2\gamma}-\gamma-\gamma^3z_M)z_M.\label{eq:frog1}
	\end{align}
	The first two terms are negative because $\gamma\in(1,2]$. As for the last term, when $\gamma\in(1,\sqrt[3]{2}]$, we have
	\begin{align*}
		\gamma^2-\sqrt{2\gamma}<0.
	\end{align*}
	Thus, $\Barrier_1(-C^2_6(\gamma,z_M),z_M,2)<0$ when $\gamma\in(1,\sqrt[3]{2}]$.
	When $\gamma\in(\sqrt[3]{2},2]$, by using $z_M\geq\frac{1}{8}$, we obtain
	\begin{align}
		\gamma^2+1-\sqrt{2\gamma}-\gamma-\gamma^3z_M &< \gamma^2+1-\sqrt{2\gamma}-\gamma-\frac{\gamma^3}{8}.\label{eq:frog2}
	\end{align}
	This upper bound is increasing as a function of $\ga$ as
	\begin{align*}
		\frac{d}{d\gamma}\left(\gamma^2+1-\sqrt{2\gamma}-\gamma-\frac{\gamma^3}{8}\right) = -\frac{3\gamma^2}{8}+2\gamma-1-\frac{1}{\sqrt{2\gamma}}
		\geq -\frac{3\gamma^2}{8}+2\gamma-1-\frac{1}{\sqrt{2\sqrt[3]{2}}}>0.
	\end{align*}
	Hence, for any $\gamma\in(\sqrt[3]{2},2]$, we have
	\begin{align*}
		\gamma^2+1-\sqrt{2\gamma}-\gamma-\frac{\gamma^3}{8} \leq 0,
	\end{align*}
	which, combined with \eqref{eq:B1C62m=2}, \eqref{eq:frog1} and \eqref{eq:frog2}, leads to $\Barrier_1(-C^2_6(\gamma,z_M),z_M,2)<0$ in the remaining range $\ga\in[\sqrt[3]{2},2]$ and hence we have established $\Barrier_1(V,z,2)<0$ for any $V\in [-C^2_6,0)$, $z\in(z_g(\ga),z_M(\ga)]$. This concludes the proof. 
\end{proof}

\subsection{Connecting $P_8$ to the origin}

In this subsection we prove analogous results around $P_8$ for $\gamma\in (\gaSix, 3]$ to those of the previous subsection, that is, we show that the local analytic solutions around $P_8$ for $z\in \mathring{\mathcal{Z}}(\gamma;P_8)$ converge to the origin $P_0$ within the second quadrant by employing a barrier argument. We split into two cases: $\gamma\in(\gaSix,\gaOne]$ and $\gamma\in (\gaOne, 3]$. 

\subsubsection{$P_8$ for $\gamma\in(\gaSix,\gaOne]$}

\begin{lemma}\label{P8rightga6toga1}
	For any $\gamma\in (\gaSix,\gaOne]$ and $z\in(z_1(\gamma),z_M(\gamma)]$, the solution constructed in 
	Lemma \ref{local solution to the right} with $P_*=P_8$, always lies below the curve $ B_1(V)= \sqrt{-V}$ for $V\in(V_8,0)$.
\end{lemma}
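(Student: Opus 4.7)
The strategy parallels Lemma \ref{right of P6}, adapted to $P_8$ and exploiting the algebraic setup already developed in Lemma \ref{leftgammabar}. First I would verify that the barrier curve $B_1(V) = \sqrt{-V}$ lies above $P_8$ throughout $z \in (z_1(\gamma), z_M(\gamma)]$: by the definition \eqref{z1} of $z_1$, $P_8$ sits exactly on $B_1$ at $z = z_1$, and Lemma \ref{Lemma V_6/8 monoton} gives that $C_8(z)$ strictly decreases while $\sqrt{-V_8(z)}$ strictly increases in $z$, so $C_8(z) < \sqrt{-V_8(z)}$ for $z \in (z_1, z_M]$. Combined with $\frac{dC}{dV} < 0$ to the right of $P_8$ from Lemma \ref{local solution to the right}, one then has $C(V) \leq C_8 \leq \sqrt{-V}$ on $V \in (V_8, -C_8^2]$, so only the subinterval $V \in (-C_8^2, 0)$ requires the barrier argument.

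On this remaining interval, the barrier argument \eqref{barrier_argument} together with $G(V, \sqrt{-V}; \gamma, z) > 0$ (Lemma \ref{local solution to the right}) reduces matters to proving
\[
\Barrier_1(V, z, m) < 0 \quad \text{for } V \in (-C_8^2, 0),\ z \in (z_1, z_M],\ m = 1, 2,
\]
with $\Barrier_1$ as in \eqref{BP81}; this is precisely the opposite-sign inequality to the one established in Lemma \ref{leftgammabar}. The $V$-monotonicity estimates in Step 1 of that lemma (namely $\partial_V \Barrier_1 < 0$ for $m = 1$, and $\partial_V[(1+V)\Barrier_1] < 0$ for $m = 2$ via the discriminant computation) relied only on $V \geq V_1$ and $z \leq z_M$, so they carry over verbatim and the task reduces further to showing
\[
\Phi(z) := (1 + \Jm(z))\Barrier_1(\Jm(z), z, m) \leq 0 \quad \text{for } z \in (z_1, z_M],\ \Jm(z) := -C_8^2(z).
\]

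The final step exploits the concavity structure of $\Phi$ developed in Step 2 of Lemma \ref{leftgammabar}. Since $\Jm(z_1) = V_8(z_1)$ by the definition of $z_1$, and $F = G = 0$ at $P_8$, one has $\Phi(z_1) = 0$; that lemma moreover shows $\Phi(\tilde{z}_m(\gamma)) > 0$ and proves strict concavity $\Phi'' < 0$ on $(\tilde{z}_m, z_1]$ via the sign estimates for $A(\gamma, z, m)$ and $B(\gamma, z, m)$. Inspecting those estimates, one sees they used only $\Jm(z) > \Jm(\tilde{z}_m)$ (which continues to hold on $(z_1, z_M]$ since $\Jm$ is increasing) and $z \leq z_M$, so the concavity extends to the full interval $(\tilde{z}_m, z_M]$. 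Combining $\Phi(\tilde{z}_m) > 0$, $\Phi(z_1) = 0$, and strict concavity, a tangent-line comparison at $z = z_1$ forces $\Phi'(z_1) < 0$, and then strict concavity propagates this to give $\Phi(z) < 0$ throughout $(z_1, z_M]$, completing the proof.

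The main technical point requiring care is confirming that the bounds for $A$ and $B$ from Lemma \ref{leftgammabar} indeed persist on the extended $z$-range $(z_1, z_M]$; once this is verified, the remainder of the argument is a direct transcription from the existing machinery, together with the elementary secant/tangent argument that converts concavity plus a sign change at $z_1$ into the desired one-sided negativity.
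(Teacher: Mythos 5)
The proposal is correct, but you take a genuinely different route from the paper for the key $z$-monotonicity step. The paper computes the \emph{first} $z$-derivative directly, writing $\partial_z\Barrier_1(\Jm(z),z,m)=A(\gamma,z,m)\Jm'(z)+mB(\gamma,z)$ with a fresh, first-order decomposition into auxiliary functions $A$, $B$ (different from the second-order $A$, $B$ used in Lemma \ref{leftgammabar}), proves both are negative on $(z_1,z_M]$, and then concludes from $\Barrier_1(\Jm(z_1),z_1,m)=0$ and monotone decrease that $\Barrier_1(\Jm(z),z,m)<0$ for $z>z_1$. You instead recycle the \emph{concavity} of $\Phi(z)=(1+\Jm(z))\Barrier_1(\Jm(z),z,m)$ from Step~2 of Lemma \ref{leftgammabar}, observing (correctly) that the estimates on $A$, $B$ there rely only on $\Jm(z)>\Jm(\tilde{z}_m)$ and $z\leq z_M$, hence persist on $(\tilde{z}_m,z_M]$, and then combine $\Phi(\tilde{z}_m)>0$, $\Phi(z_1)=0$, and strict concavity via a tangent-line comparison. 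Both routes are sound: yours cleverly avoids new estimates by re-inspecting the earlier second-derivative bounds, while the paper's fresh first-derivative computation is self-contained and avoids invoking the two-point concavity argument. One minor efficiency note: for Step~1 ($V$-monotonicity), the paper exploits that on $(-C_8^2,0)$ one has $2V+1>1-2C_8^2>0$, so $\partial_V\Barrier_1<(m-1-m\gamma)(2V+1)<0$ directly for both $m=1,2$; importing the $(1+V)\Barrier_1$ discriminant argument for $m=2$ (as you do) is valid but heavier than needed on this shorter $V$-interval.
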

\begin{proof}
	By the definition of $z_1(\gamma)$ (\textit{cf.}  \eqref{z1}) and Lemma \ref{Lemma V_6/8 monoton}, $C_8\leq B_1(V_8)$ for any $\gamma\in (\gaSix,\gaOne]$ and $z\in(z_1(\gamma),z_M(\gamma)]$. Since $\frac{dC}{dV}<0$ by Lemma \ref{local solution to the right}, the solution stays below the curve $B_1(V)$ for $V\in(V_8,-C_8^2]$. Thus,  
	it suffices to show the claim for  
	$V\in(-C_8^2,0)$. By 
	employing the barrier argument  \eqref{barrier_argument}, we will establish 
	the following inequality for any $\gamma\in(\gaSix,\gaOne]$, $z\in(z_1(\gamma),z_M(\gamma)]$, and $V\in(-C_8^2,0)$: 
	\begin{align}
		\frac{F(V,\sqrt{-V};\gamma,z)}{G(V,\sqrt{-V};\gamma,z)}+\frac{1}{2\sqrt{-V}} <0.
	\end{align}
	By 
	Lemma \ref{local solution to the right}, $G(V,\sqrt{-V};\gamma,z)>0$ for all $V\in(-C_8^2,0)$ and hence, as in Lemma \ref{leftgammabar} ({\textit{cf.} }\eqref{BP81}), we again see that it is sufficient to  
	prove that
	\begin{align*}
		\Barrier_1(V,z,m)=(m-1-m\gamma)V^2+(-3+2m-m\gamma+m(\gamma-2)\gamma z)V-m\gamma z-1+\frac{2mz}{1+V}<0.
	\end{align*}
	We first derive an upper bound of $\frac{\partial \Barrier_1}{\partial V} $: 
	\begin{align*}
		\frac{\partial \Barrier_1(V,z,m)}{\partial V} &= 2(m-1-m\gamma)V-3+2m-m\gamma+m(\gamma-2)\gamma z-\frac{2mz}{(1+V)^2}\\
		&<(m-1-m\gamma)(2V+1)+m-2+[(\gamma-2)\gamma-2] mz\\
		&<(m-1-m\gamma)(2V+1)
	\end{align*}
	because $(\gamma-2)\gamma-2<0$ for $\gamma\in(\gaSix,\gaOne]$.
	By \eqref{upper and lower bound for V/C_6/8}, we observe that for any $\gamma\in(\gaSix,\gaOne]$,
	\begin{align}
		\frac{9}{20}<\frac{\sqrt{\gaSix}}{\sqrt{\gaSix}+\sqrt{2}}<\frac{\sqrt{\gamma}}{\sqrt{\gamma}+\sqrt{2}}\leq C_8 \leq C_8(z_1) = \frac{\sqrt{5}-1}{2}. \label{C8upperlower}
	\end{align} 
	Thus, $2V+1>-2C_8^2+1>0$, which  implies
	\begin{align}\label{ineq:B1P8right}
		\frac{\partial \Barrier_1(V,z,m)}{\partial V}<0.
	\end{align}
	It is therefore sufficient to show that $\Barrier_1(-C^2_8(\gamma,z),z,m)<0$. 
	We use the same notation as in Lemma \ref{leftgammabar}, denoting $\Jm(z) = -C^2_8(\gamma,z)$. 
	For any fixed $\gamma\in(\gaSix,\gaOne]$, we write $\Jm'(z) = \frac{d\Jm(z)}{dz}$ and $\Jm''(z) = \frac{d^2\Jm(z)}{dz^2}$. We then derive the $z$ derivative of $ \Barrier_1(\Jm(z),z,m)$ as 
	\begin{align*}
		\frac{\partial \Barrier_1(\Jm(z),z,m)}{\partial z} =\frac{\partial \Barrier_1}{\partial V} (\Jm(z),z,m) \Jm'(z) + \frac{\partial \Barrier_1}{\partial z}(\Jm(z),z,m)
			=: A(\gamma,z,m) \Jm'(z)+mB(\gamma,z),
	\end{align*}
	where 
	\begin{align*}
	&A(\gamma,z,m) : = 2(m-1-m\gamma)\Jm(z)-3+2m-m\gamma+m(\gamma-2)\gamma z,\\
	&B(\gamma, z):=\frac{2}{1+\Jm(z)}-\gamma+(\gamma-2)\gamma \Jm(z)-\frac{2z\Jm'(z)}{(1+\Jm(z))^2}.
	\end{align*}
	We claim that both $A(\gamma,z,m)$ and $B(\gamma,z)$ are negative. For  $A(\gamma,z,m)$, in the case $m=1$, we have 
	\begin{align*}
		A(\gamma,z,1) = -2\gamma \Jm(z)-1-\gamma+(\gamma-2)\gamma z < -\gamma(1+2\Jm(z))-1+|\gamma-2|\gamma z_M < 0,
	\end{align*}
	where we have used $|\gamma-2|\gamma z_M < 1$ in the last inequality. When $m=2$, by \eqref{C8upperlower} and $z<z_M(\gamma)< z_M(\gaSix)<\frac{1}{5}$ for any $\gamma\in(\gaSix,\gaOne]$ and $z\in(z_1(\gamma),z_M(\gamma)]$, we see
	\begin{align*}
		A(\gamma,z,2) = (1-2\gamma)(1+2\Jm(z))+2(\gamma-2)\gamma z < \frac{1-2\gamma+2|\gamma-2|\gamma}{5}<0
	\end{align*}
	for any $\gamma\in(\gaSix,\gamma_1]$. Hence, $A(\gamma,z,m)<0$.

	For $B(\gamma,z)$, we first observe that when $\gamma\in(\gaSix,\gaOne]$, by \eqref{C8upperlower},
	\begin{align}
		-\gamma+(\gamma-2)\gamma \Jm(z) = -\gamma[1-(\gamma-2)\Jm(z)] < 0 .\label{eq:toad1}
	\end{align}
	For the remaining two terms,  by Lemma \ref{Lemma V_6/8 monoton} and Lemma \ref{4C8C8''+(C8')^2<0}, we have  $\Jm'(z)>0$ and $\Jm''(z)>0$. Therefore, for any $z\in(z_1(\gamma),z_M(\gamma)]$, 
	\begin{align*}
		\frac{d}{dz}[1+\Jm(z)-z\Jm'(z)] =  -z \Jm''(z) <0 .
	\end{align*}
	Hence, using also \eqref{eq:toad1},
	\begin{align*}
		B(\gamma, z)<\frac{2}{1+\Jm(z)} -\frac{2z\Jm'(z)}{(1+\Jm(z))^2} = \frac{2}{(1+\Jm(z))^2} (1+\Jm(z)-z\Jm'(z)) < \frac{2}{(1+\Jm(z))^2} (1+\Jm(z_1)-z_1\Jm'(z_1)) .
	\end{align*}
	Recalling \eqref{P8} and \eqref{V8C8z1}, we have
	\begin{align*}
		C_8(z_1) = \frac{\sqrt{5}-1}{2} \text{ and } w(z_1) = \sqrt{5}-2-(\gamma-2)z_1>0,
	\end{align*}
	where the positivity of $w(z_1)$ is due to Remark \ref{bounds of w(z)}.
	By direct computation, using \eqref{P8}, \eqref{w(z)}, and  \eqref{V8C8z1}, 
	we obtain
	\begin{align*}
		1+\Jm(z_1)-z_1\Jm'(z_1) &= 1- C_8^2(z_1)+2z_1C_8(z_1)C_8'(z_1)\\
		&=\frac{C_8(z_1)}{w(z_1)} [\sqrt{5}-2+[(\sqrt{5}-4)\gamma-2(\sqrt{5}-2)]                                                                                                                                                                                                                           z_1]\\
		&=\frac{C_8(z_1)}{w(z_1)}\frac{(5-3\sqrt{5})\gamma+4\sqrt{5}-8}{2(\sqrt{5}+1+\gamma)}<0
	\end{align*}
	for any $\gamma\in(\gaSix,\gaOne]$, and therefore we obtain $B(\ga,z)<0$.

	Therefore, we have obtained $\frac{\partial }{\partial z}\Barrier_1(\Jm(z),z,m)<0$. 
	Hence, we conclude that for any fixed $\gamma\in (\gaSix,\gaOne]$, $m=1,2$, $z\in(z_1(\gamma),z_M(\gamma)]$, and $V\in (-C^2_8,0)$, applying also  \eqref{ineq:B1P8right}, we have $\Barrier_1(V,z,m)<\Barrier_1(\Jm(z),z,m)<\Barrier_1(\Jm(z_1),z_1,m)=0$ by the definition of $z_1$ (\textit{cf.}  \eqref{z1}).
	\end{proof}

\subsubsection{$P_8$ for $\gamma\in(\gaOne,3]$}

\begin{lemma}\label{P8rightga1to3}
	For any $\gamma\in (\gaOne,3]$ and $z\in(z_2(\gamma),z_M(\gamma)]$, the solution  
	constructed in  
	Lemma \ref{local solution to the right} with $P_*=P_8$, always lies below the curve $ B_\frac{3}{2}(V)= \sqrt{-\frac{3}{2}V}$ for $V\in(V_8,0)$.
\end{lemma}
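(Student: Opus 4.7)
The proof will proceed in direct parallel to that of Lemma \ref{P8rightga6toga1}, with $B_1(V)=\sqrt{-V}$ replaced by $B_{\frac32}(V)=\sqrt{-\tfrac32 V}$ and with all threshold computations recentred around the defining identity of $z_2$ rather than $z_1$. First, by the definition \eqref{z2} of $z_2$ together with the monotonicity of $C_8$ in $z$ from Lemma \ref{Lemma V_6/8 monoton}, we have $C_8 \le B_{\frac32}(V_8)$ for every $\ga\in(\gaOne,3]$ and $z\in(z_2(\ga),z_M(\ga)]$. Since $\tfrac{dC}{dV}<0$ to the right of $P_8$ by Lemma \ref{local solution to the right}, and since $B_{\frac32}(V)$ meets $C=C_8$ at $V=-\tfrac23 C_8^2 \ge V_8$, the inequality $C(V)<B_{\frac32}(V)$ holds trivially on $(V_8,-\tfrac23 C_8^2]$. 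It therefore remains to propagate the inequality on $(-\tfrac23 C_8^2, 0)$ by a barrier argument.

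On that interval, $G(V, B_{\frac32}(V);\ga,z)>0$ by Lemma \ref{local solution to the right}, so \eqref{barrier_argument} reduces, exactly as in the derivation of \eqref{BP82}, to showing
\[
\Barrier_{\frac32}(V,z,m) = (m-1-m\ga)V^2 + \Bigl(-4+2m+\tfrac{m+1}{2}-m\ga+m(\ga-2)\ga z\Bigr)V - m\ga z-1+\frac{3mz}{1+V} < 0
\]
for every $\ga\in(\gaOne,3]$, $z\in(z_2,z_M]$, $V\in(-\tfrac23 C_8^2,0)$ and $m=1,2$. Following the same two-step scheme as Lemma \ref{P8rightga6toga1}, I first show $\tfrac{\partial \Barrier_{\frac32}}{\partial V}(V,z,m)<0$ on this region, so that it suffices to check $\Barrier_{\frac32}(-\tfrac23 C_8^2(z),z,m)<0$. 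This monotonicity in $V$ is straightforward: differentiating and bounding $\tfrac{3mz}{(1+V)^2}\ge 0$, one obtains an expression dominated by the leading term $(m-1-m\ga)(1+2V)$, which is negative because $2V+1 > 1-\tfrac43 C_8^2 > 0$ in the relevant range (using the explicit bound $C_8 \le C_8(z_2)=\tfrac{\sqrt{33}-3}{4}$ from \eqref{C8z2}, which yields $\tfrac23 C_8^2<\tfrac12$ for all $\ga$ in question).

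Next, writing $\Im(z)=-\tfrac23 C_8^2(z)$ as in Lemma \ref{leftlargergammabar}, the derivative $\tfrac{d}{dz}\Barrier_{\frac32}(\Im(z),z,m)$ splits as $\tfrac{\partial \Barrier_{\frac32}}{\partial V}(\Im(z),z,m)\,\Im'(z) + \tfrac{\partial \Barrier_{\frac32}}{\partial z}(\Im(z),z,m)$. The first piece is negative because $\Im'(z)>0$ by Lemma \ref{Lemma V_6/8 monoton} and $\tfrac{\partial \Barrier_{\frac32}}{\partial V}<0$ from the previous step. For the second piece, direct computation gives
\[
\frac{\partial \Barrier_{\frac32}}{\partial z}(\Im(z),z,m) = m\bigl[(\ga-2)\ga\Im(z)-\ga\bigr] + \frac{3m}{(1+\Im(z))^2}\bigl(1+\Im(z)-z\Im'(z)\bigr),
\]
and I will close the argument by establishing both that $(\ga-2)\ga \Im(z) -\ga<0$ (trivial once $1-(\ga-2)\Im(z)>0$ is checked from $\Im(z)\ge -\tfrac12$) and that $1+\Im(z)-z\Im'(z)$ is a decreasing function of $z$ (from $\Im''>0$, Lemma \ref{4C8C8''+(C8')^2<0}) whose value at $z=z_2$ is negative. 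Plugging $C_8(z_2)=\tfrac{\sqrt{33}-3}{4}$ and the corresponding $w(z_2)$ into the explicit formula for $1-C_8^2+2zC_8C_8'$ yields, after simplification, a linear-in-$\ga$ expression whose negativity for $\ga\in(\gaOne,3]$ reduces to a one-line check.

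Combining these monotonicities, $\Barrier_{\frac32}(V,z,m) \le \Barrier_{\frac32}(\Im(z),z,m) \le \Barrier_{\frac32}(\Im(z_2),z_2,m) = 0$, the last equality holding by the very definition \eqref{z2} of $z_2$ (the curve $C=B_{\frac32}(V)$ passes through $P_8$, so $F=G=0$ there and the barrier expression vanishes). This produces the strict inequality $\Barrier_{\frac32}(V,z,m)<0$ for all $z\in(z_2,z_M]$ and concludes the proof. I expect the only delicate computational step to be the sign check of the boundary value $1+\Im(z_2)-z_2\Im'(z_2)$, which is where the specific numerical constant $\ga_1=1+\sqrt{2}$ enters the argument; everything else is a direct transcription of the $z_1$/$B_1$ argument of Lemma \ref{P8rightga6toga1}.
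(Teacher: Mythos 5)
The decisive gap is the sign claim for the boundary value $1+\Im(z_2)-z_2\Im'(z_2)$. You assert it is negative for all $\gamma\in(\gaOne,3]$, and on that basis conclude the second piece of your derivative decomposition is $\le 0$ for all $z\ge z_2$. This is false on part of the range. Using the definition of $z_2$ one has $1+\Im(z_2)-z_2\Im'(z_2)=C_8(z_2)\bigl(1+\tfrac43 z_2 C_8'(z_2)\bigr)$, and inserting the explicit values from \eqref{z2} and \eqref{C8z2} shows that its sign is the sign of $(33-7\sqrt{33})\gamma+12\sqrt{33}-48$, which vanishes at $\gamma^*=\tfrac{99+5\sqrt{33}}{44}\approx 2.90$ and is \emph{strictly positive} for $\gamma<\gamma^*$. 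Since $\gaOne\approx2.41<\gamma^*<3$, the quantity is positive on $(\gaOne,\gamma^*)$, and your argument cannot close there. (You also quote the expression $1-C_8^2+2zC_8C_8'$, which is the $\Jm=-C_8^2$ version from the $z_1$ argument; with $\Im=-\tfrac23 C_8^2$ the correct expression is $1-\tfrac23 C_8^2+\tfrac43 zC_8C_8'$ — and the sign analysis comes out differently.) The paper makes no sign claim on $1+\Im(z_2)-z_2\Im'(z_2)$; it instead bounds $\tfrac{3(1+\Im(z)-z\Im'(z))}{(1+\Im(z))^2}$ from above by $\tfrac{3|1+\Im(z_2)-z_2\Im'(z_2)|}{(1+\Im(z_2))^2}$ (the absolute value handles both signs, using that the numerator decreases and the denominator increases in $z$) and then shows by direct computation that this last quantity is $<\gamma$. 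The negativity of $B$ then comes from cancellation against the $-\gamma$ term, not from the numerator sign you claimed.

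A secondary inconsistency: what you write for the ``second piece'' is actually the paper's $mB(\gamma,z)$, which carries the extra term $-\tfrac{3mz\Im'(z)}{(1+\Im(z))^2}$ migrated from the $V$-differentiation of $\tfrac{3mz}{1+V}$; it is not $\tfrac{\partial\Barrier_{\frac32}}{\partial z}(\Im(z),z,m)=m\bigl[(\gamma-2)\gamma\Im(z)-\gamma+\tfrac{3}{1+\Im(z)}\bigr]$. If your first piece really is $\tfrac{\partial\Barrier_{\frac32}}{\partial V}(\Im(z),z,m)\,\Im'(z)$, that cross term is counted twice and the decomposition does not sum to $\tfrac{d}{dz}\Barrier_{\frac32}$. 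If instead your first piece is the paper's truncated $A(\gamma,z,m)\Im'(z)$, then invoking $\tfrac{\partial\Barrier_{\frac32}}{\partial V}<0$ from Step~1 is not sufficient: $A$ exceeds $\tfrac{\partial\Barrier_{\frac32}}{\partial V}$ by the positive quantity $\tfrac{3mz}{(1+\Im)^2}$, so $A<0$ is a strictly stronger statement and requires its own verification (the paper supplies one, using $1+2\Im(z)>1+2\Im(z_2)=\tfrac{\sqrt{33}-5}{2}$ together with a bound on $(\gamma-2)\gamma z_M$). Note also that the honest chain-rule alternative would require bounding $\tfrac{3}{1+\Im}$, and at $z=z_2$ one has $\tfrac{3}{1+\Im(z_2)}=\tfrac{\sqrt{33}+3}{2}\approx 4.37>\gamma$, so that route does not give a sign either; the paper's regrouping into $A$ and $B$ is essential.
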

\begin{proof}
	Using a similar argument as in Lemma \ref{P8rightga6toga1}, it suffices to verify the following inequality:
	\begin{align*}
	 \frac{F(V,\sqrt{-\frac{3}{2}V};\gamma,z)}{G(V,\sqrt{-\frac{3}{2}V};\gamma,z)}+\frac{1}{2}\sqrt{\frac{-3}{2V}}<0
	\end{align*}
	for any $m=1,2$, $\gamma\in (\gaOne,3]$, $z\in(z_2(\gamma),z_M(\gamma)]$ and $V\in(-\frac{2}{3}C_8^2,0)$. Given that $G(V,\sqrt{-\frac{3}{2}V},\gamma,z)>0$ for any $V\in(-\frac{2}{3}C_8^2,0)$ as shown in 
	Lemma \ref{local solution to the right}, and by the same calculations  
	as in Lemma \ref{leftlargergammabar} (\textit{cf. }\eqref{BP82}), it is sufficient to demonstrate that
	\begin{align*}
		\Barrier_{{\scriptstyle \frac32}}(V,z,m)=(m-1-m\gamma)V^2+(-4+2m+\frac{m+1}{2}-m\gamma+m(\gamma-2)\gamma z)V-m z\gamma-1+\frac{3mz}{1+V}<0
	\end{align*}
	for any $m=1,2$, $\gamma\in (\gaOne,3]$, $z\in(z_2(\gamma),z_M(\gamma)]$ and $V\in (-\frac{2}{3}C^2_8,0)$. The $V$ derivative of $\Barrier_{\frac32}$ is given by 
	\begin{align*}
		\frac{\partial \Barrier_{\frac32}(V,z,m)}{\partial V} =2(m-1-m\gamma)V+(-4+2m+\frac{m+1}{2}-m\gamma+m(\gamma-2)\gamma z)-\frac{3mz}{(1+V)^2}.
	\end{align*}
	When $m=1$, the same argument as in \eqref{sign_BV} implies $\frac{\partial \Barrier_{{\scriptstyle \frac32}}(V,z,1)}{\partial V}<0$.  
	When $m=2$, we have
	\begin{align*}
		\frac{\partial \Barrier_{\frac32}(V,z,2)}{\partial V} =2(1-2\gamma)V+\frac{3}{2}-2\gamma+2[(\gamma-2)\gamma -\frac{3}{(1+V)^2}]z.
	\end{align*}
	Note that $(\gamma-2)\gamma -\frac{3}{(1+V)^2}<0$ for $\gamma\in (\gaOne,3]$ and $V\in (-\frac{2}{3}C_8^2,0)$. 
	Moreover, 
	$2(1-2\gamma)V+\frac{3}{2}-2\gamma<0$, 
	since for any fixed $\gamma\in (\gaOne,3]$,
	\begin{align*}
		2(1-2\gamma)V+\frac{3}{2}-2\gamma < -2(1-2\gamma)\frac{2}{3}C_8^2+\frac{3}{2}-2\gamma
		=\frac{\sqrt{33}-4}{2}+(5-\sqrt{33})\gamma <0 .
	\end{align*}
	Hence, 
	\begin{align*}
		\frac{\partial \Barrier_{\frac32}(V,z,m)}{\partial V}<0.
	\end{align*}
	It is therefore sufficient to show that $\Barrier_{\frac32}(-\frac{2}{3}C^2_8(z),z)<0$.   
	Let $\Im(z) = -\frac{2}{3}C^2_8(z)$ and, for any fixed $\gamma$, we write $\Im'(z) = \frac{d \Im(z)}{dz}$ and $\Im''(z) = \frac{d^2 \Im(z)}{dz^2}$. By Lemma \ref{Lemma V_6/8 monoton} and Lemma \ref{4C8C8''+(C8')^2<0}, $\Im'(z)>0$ and $\Im''(z)>0$.
	The $z$ derivative of $ \Barrier_{\frac32}(\Jm(z),z,m)$ can therefore be written as 
	\begin{align*}
		\frac{d \Barrier_{\frac32}(\Im(z),z,m)}{d z} = \frac{\partial \Barrier_{\frac32}}{\partial V} (\Im(z),z,m)\Im'(z)+ \frac{\partial \Barrier_{\frac32}}{\partial z}(\Im(z),z,m)
		=:A(\gamma,z,m)\Im'(z)+mB(\gamma,z),
	\end{align*}
		where 
	\begin{align*}
	 &A(\gamma,z,m):=2(m-1-m\gamma)\Im(z)-4+2m+\frac{m+1}{2}-m\gamma+m\gamma(\gamma-2)z,\\
	 &B(\gamma,z):=(\gamma-2)\gamma  \Im(z)-\gamma+\frac{3(1+\Im(z))-3z\Im'(z) }{(1+\Im(z))^2} .
	\end{align*}
	We claim that both $A(\gamma,z,m)$ and $B(\gamma,z)$ are negative. We first check $A(\gamma,z,m)$. When $m=1$,  
	\begin{align}
		A(\gamma,z,1)=-2\gamma\Im(z)-1-\gamma+(\gamma-2)\gamma z = -\gamma(1+2\Im(z)) - 1+(\gamma-2)\gamma z <0,\label{ineq:P8Am=1}
	\end{align}
	where we have used $(\gamma-2)\gamma z\le (\gamma-2)\gamma z_M\le \frac{\gamma}{\gamma+2+2\sqrt{2\gamma}}\leq\frac{3}{5+2\sqrt{6}}$ and $1+2\Im(z)> 1+2\Im(z_2)= \frac{\sqrt{33}-5}{2}$ 
	in the last inequality.
	When $m=2$, using $1+2\Im(z)> 1+2\Im(z_2)= \frac{\sqrt{33}-5}{2}>\frac{3}{5+2\sqrt{6}}\geq \gamma z_M\geq \gamma z$, 
	\begin{align}
		A(\gamma,z,2)&=2(1-2\gamma)\Im(z)+\frac{3}{2}-2\gamma+2(\gamma-2)\gamma z = 2(2-\gamma)[1+2\Im(z)-\gamma z]-3(1+2\Im(z))+\frac{1}{2}\notag
		\\
		&\le -3(1+2\Im(z))+\frac{1}{2} \le -3 (\frac{\sqrt{33}-5}{2}) + \frac12= \frac{16-3\sqrt{33}}{2}<0.\label{ineq:P8Am=2}
	\end{align}
	For $B(\gamma,z)$, the first term is trivially negative for any $\gamma\in(\gaOne, 3]$ since $\Im(z)<0$. We will show that the remaining term is negative as well. 
	By using \eqref{V68(z_M) C68(z_M)}, we obtain that for any $z\in(z_2(\gamma),z_M(\gamma)]$,
	\begin{align*}
		0<C_8(z_2)=1-\frac{2}{3}C_8^2(z_2) < 1+\Im(z)\leq 1-\frac{2}{3}C_8^2(z_M) = 1-\frac{2\gamma z_M(\gamma)}{3}.
	\end{align*} Since $\frac{d}{dz}(-z\Im'(z)+1+\Im(z)) = -z\Im''(z)<0$, 
		\begin{align}
		 \frac{3(-z\Im'(z)+1+\Im(z))}{(1+\Im(z))^2} < \frac{3|-z_2\Im'(z_2)+1+\Im(z_2)|}{(1+\Im(z_2))^2}.\label{ineq:toad2}
	\end{align}
		Recalling \eqref{P8}, Remark \ref{bounds of w(z)} and \eqref{C8z2}, we have
	\begin{align*}
		C_8(z_2) =  \frac{\sqrt{33}-3}{4} \text{ and } w(z_2) =  \frac{\sqrt{33}-5}{2}-(\gamma-2)z_2>0.
	\end{align*}
	Thus, by a  direct computation, we obtain
	\begin{align}
		\frac{3|-z_2\Im'(z_2)+1+\Im(z_2)|}{(1+\Im(z_2))^2} &= \frac{3}{C_8^2(z_2)}|1-\frac{2}{3}C^2_8(z_2)+\frac{4}{3}z_2C_8(z_2)C_8'(z_2)|\notag\\
		&=\frac{1}{C_8(z_2)}\frac{|(33-7\sqrt{33})\gamma+12\sqrt{33}-48|}{(\sqrt{33}-7)\gamma+12}\notag\\
		&=\frac{4}{\sqrt{33}-3}|\sqrt{33}-\frac{48}{(\sqrt{33}-7)\gamma+12}|\notag\\
		&\leq \frac{4}{\sqrt{33}-3}(\sqrt{33}-\frac{48}{(\sqrt{33}-7)\gaOne+12})<\gamma \label{ineq:toad3}
	\end{align}
	for any $\gamma\in(\gaOne,3]$. 
	Therefore, combining \eqref{ineq:P8Am=1}--\eqref{ineq:toad3}, we have found
	\begin{align*}
	\frac{d \Barrier_{\frac32}(\Im(z),z,m)}{d z} =&\,A(\gamma,z,m)\Im'(z)+mB(\gamma,z)\\
	<&\, m\Big((\gamma-2)\gamma  \Im(z)-\gamma+\frac{3(1+\Im(z))-3z\Im'(z) }{(1+\Im(z))^2} \Big)\\
	<&\, m\Big( -\ga + \frac{3|-z_2\Im'(z_2)+1+\Im(z_2)|}{(1+\Im(z_2))^2}\Big)<0.
	\end{align*}
	Hence, for any fixed $\gamma\in (\gaOne,3]$, $m=1,2$, $z\in(z_2(\gamma),z_M(\gamma)]$, and $V\in (-\frac{2}{3}C^2_8,0)$, we have $\Barrier_{\frac32}(V,z,m)<\Barrier_{\frac32}(\Im(z),z,m)<\Barrier_{\frac32}(-\frac{2}{3}C_8^2(z_2),z_2,m)=0$ by the definition of $z_2$ (\textit{cf.}  \eqref{z2}).
	\end{proof}

\section{Proof of the main theorem}\label{sec:mainthmproof}

We now prove Theorem \ref{main theorem}.

$(i)$ follows by $(ii)$, $(iii)$, and $(iv)$.

By Theorem \ref{existence of zstd}, there exists a $z_{std}(\gamma)$ such that the local real analytic solution $C(V;\gamma,z,P_*)$ around $P_*$ for either $P_*=P_6$ or $P_*=P_8$ given by Theorem \ref{analytic}  extends on the left to $P_1$.

$(ii)$. Let $\gamma\in(1,\gaSix]$ be fixed. 
By Lemma \ref{onlypass P6}, any such $z_{std}(\ga)$ must connect $P_1$ to $P_6$ and, by Proposition \ref{prop:zglower}, $z_{std}(\gamma) \in (z_g(\gamma),z_M(\gamma)]$, that is, $z_{std}(\gamma) \in \mathring{\mathcal{Z}}(\gamma;P_6)$. Then, Lemma \ref{uniquenessP6} gives that in fact  $z_{std}(\gamma)$ is  unique. 
By Lemma \ref{right of P6}, this unique solution  extends to $P_0$ 
in the second quadrant to give a unique connection from $P_1$ to $P_0$ which passes through $P_6$ and is monotone.

$(iii)$. Let $\gamma\in(\gaSix,2)$ be fixed. Given such a $z_{std}(\ga)$, if $P_1$ is connected analytically to $P_6$, by Proposition \ref{prop:zglower}, we must have $z_{std}(\gamma) \in (z_g(\gamma),z_M(\gamma)]$ and so, by Lemma \ref{right of P6}, the solution extends to the right of $P_6$ to connect to $P_0$ within the second quadrant.
 On the other hand, if the solution connects $P_1$ to $P_8$ analytically, then by Lemma \ref{leftgammabar}, $z_{std}(\gamma)\in(z_1(\gamma),z_M(\gamma)]$, and, applying Lemma \ref{P8rightga6toga1}, the solution again extends inside the second quadrant to connect to $P_0$. Thus, in either case, we have $z_{std}(\gamma) \in \mathring{\mathcal{Z}}(\gamma;P_*)$ and have obtained a monotone analytic solution connecting $P_1$ to $P_0$ through a single triple point.

$(iv)$. Let $\gamma\in[2,3]$ be fixed. By Proposition \ref{onlypass P8}, the solution for $z=z_{std}(\ga)$ must connect $P_1$ to $P_8$ analytically and, by Lemmas \ref{leftgammabar} and \ref{leftlargergammabar}, $z_{std}(\gamma) \in \mathring{\mathcal{Z}}(\gamma;P_8)$. Therefore, by  Lemma \ref{P8rightga6toga1} and Lemma \ref{P8rightga1to3}, in each case, the solution  extends to the right in the second quadrant to connect $P_8$ to $P_0$ and we again have obtained an analytic, monotone solution connecting $P_1$ to $P_0$.

\

{\bf Acknowledgements.} JJ and JL are supported in part by the NSF grants DMS-2009458 and DMS-2306910. MS is supported by the EPSRC Post-doctoral Research Fellowship EP/W001888/1.

\begin{appendices}
\section{Calculation for Taylor Expansion}\label{caltaylorexp}
The purpose of this Appendix is to establish the proof of Lemma \ref{lemma:Taylorcoeffs}. To this end, we begin from \eqref{ODE} in the form
$$(1+V)F(C,V)- (1+V)C'(V)G(C,V)=0.$$
We write the left hand side of this equation as a power series in $v=V-V_*$ as
$$(1+V)F(C,V)- (1+V)C'(V)G(C,V)=\sum_{\ell=0}^\infty \mathcal{C}_\ell v^\ell.$$
Substituting in \eqref{Taylor Series}, \eqref{taylor expansion of dc/dv}, \eqref{v(C^2)_l(c)^3_l taylor expansion}, and \eqref{C^2C^3C'C^2 taylor expansion}, the first term of this identity expands as
\begin{allowdisplaybreaks}
\begin{align}
    &(1+v+V_*)F(V,C)\notag  \\ 
    =&\,(1+v+V_*)C(V)\big\{C^2(V)[1+\frac{mz}{(1+v+V_*)}]- a_1(1+v+V_*)^2+a_2(1+v+V_*)-a_3\big\}\notag\\
    =&\,vC^3(V)+(1+V_*+mz)C^3(V)+\Big[-a_1v^3+[-3a_1(1+V_*)+a_2]v^2+[-3a_1(1+V_*)^2+2a_2(1+V_*)-a_3]v\notag\\
    &+[-a_1(1+V_*)^3+a_2(1+V_*)^2-a_3(1+V_*)]\Big]C(V)\notag\\
   = &\,  \sum_{\ell =0}^{\infty}(c^3)_{\ell}v^{\ell+1}+(1+V_*+mz)\sum_{\ell =0}^{\infty}(c^3)_{\ell}v^{\ell}-a_1\sum_{\ell =0}^{\infty}c_{\ell}v^{\ell+3}+\Big[-3a_1(1+V_*)+a_2\Big]\sum_{\ell =0}^{\infty}c_{\ell}v^{\ell+2}\label{eq:(1+V)Fexp}\\
   &+\Big[-3a_1(1+V_*)^2+2a_2(1+V_*)-a_3\Big]\sum_{\ell =0}^{\infty}c_{\ell}v^{\ell+1}+\Big[-a_1(1+V_*)^3+a_2(1+V_*)^2-a_3(1+V_*)\Big]\sum_{\ell =0}^{\infty}c_{\ell}v^{\ell}.\notag
\end{align}
Expanding also the second term, we obtain
\begin{align}
  &(1+v+V_*)C'(V)G(V,C) \notag\\
  =&\,(1+v+V_*)C'(V)\bigg\{C^2(V)\Big[(m+1)(v+V_*)+2mz\Big]-(v+V_*)(1+v+V_*)(\lambda+v+V_*)\bigg\} \notag\\
  =& \,(m+1)v^2C'(V)C^2(V)+\Big[(m+1)(1+2V_*)+2mz\Big]vC'(V)C^2(V)+(1+V_*)\Big[(m+1)V_*+2mz\Big]C'(V)C^2(V)\notag\\
  &-\Big[v^4+(\lambda +2+4V_*)v^3
    +[6V_*^2+(3\lambda + 6)V_*+2\lambda +1]v^2+[4V_*^3+(3\lambda+6)V_*^2+(4\lambda+2)V_*+\lambda]v\notag\\
    &+V_*(1+V_*)^2(\lambda+V_*)\bigg]C'(V)\notag\\
  =& \,\frac{m+1}{3}\sum_{\ell =1}^{\infty}\ell(c^3)_{\ell}v^{\ell+1}+\frac{(m+1)(1+2V_*)+2mz}{3}\sum_{\ell =1}^{\infty}\ell(c^3)_{\ell}v^{\ell} + \frac{(1+V_*)\Big[(m+1)V_*+2mz\Big]}{3}\sum_{\ell =1}^{\infty}\ell(c^3)_{\ell}v^{\ell-1}\notag\\
	&-\sum_{\ell =1}^{\infty}\ell c_{\ell}v^{\ell+3}-(\lambda +2+4V_*)\sum_{1}^{\infty}\ell c_{\ell}v^{\ell+2}-\Big[6V_*^2+(3\lambda + 6)V_*+2\lambda +1\Big]\sum_{\ell =1}^{\infty}\ell c_{\ell}v^{\ell+1}\label{eq:(1+V)C'Gexp}\\
	&-\Big[4V_*^3+(3\lambda+6)V_*^2+(4\lambda+2)V_*+\lambda\Big]\sum_{\ell =1}^{\infty}\ell c_{\ell}v^{\ell}-V_*(1+V_*)^2(\lambda+V_*)\sum_{\ell =1}^{\infty}\ell c_{\ell}v^{\ell-1}.\notag
\end{align}
We now proceed to study the difference of \eqref{eq:(1+V)Fexp} and \eqref{eq:(1+V)C'Gexp} and to group terms at each order in $v$ to simplify the resulting identity for $\mathcal{C}_\ell$. First, at order zero, we have
\begin{align}
	\mathcal{C}_0=&\,(1+V_*+mz)c_0^3+\Big[-a_1(1+V_*)^3+a_2(1+V_*)^2-a_3(1+V_*)\Big]c_0-\frac{(1+V_*)\Big[(m+1)V_*+2mz\Big]}{3}(3c_0^2c_1)\notag\\
	&+V_*(1+V_*)^2(\lambda+V_*)c_1\notag\\
  =&\,\Big[c_0^2(1+V_*+mz)-a_1(1+V_*)^3+a_2(1+V_*)^2-a_3(1+V_*)\Big]c_0\notag\\
  &-\Big[c_0^2[(m+1)V_*+2mz]-V_*(1+V_*)(\lambda+V_*)\Big]\Big](1+V_*)c_1\notag\\
  =&\,F(V_*,C_*)(1+V_*)c_0-G(V_*,C_*)(1+V_*)c_1=0.\label{eq:Taylororder0}
\end{align}
Next, the first order coefficient in $v$ simplifies as 
\begin{align}
	\mathcal{C}_1=&\,c_0^3+(1+V_*+mz)3c_0^2c_1+\Big[-3a_1(1+V_*)^2+2a_2(1+V_*)-a_3\Big]c_0+\Big[-a_1(1+V_*)^3+a_2(1+V_*)^2-a_3(1+V_*)\Big]c_1\notag\\
  &-\frac{(m+1)(1+2V_*)+2mz}{3}3c_0^2c_1-\frac{(1+V_*)\Big[(m+1)V_*+2mz\Big]}{3}2(3c_0^2c_2+3c_0c_1^2)\label{eq:Taylororder1i}\\
  &+\Big[4V_*^3+(3\lambda+6)V_*^2+(4\lambda+2)V_*+\lambda\Big]c_1+V_*(1+V_*)^2(\lambda+V_*)2c_2.\notag
\end{align}
In order to simplify this identity, we recall that as $G(V_*,C_*)=0$, we have $c_0^2((m+1)+2mz)=V_*(1+V_*)(\la+V_*)$ and thus, recalling also \eqref{all partials}, we have the auxiliary identity
\begin{equation}\label{eq:Taylorsimpleaux}
-[(m+1)(1+2V_*)+2mz]c_0^2+\Big[4V_*^3+(3\lambda+6)V_*^2+(4\lambda+2)V_*+\lambda\Big]=-G_V(V_*,C_*)(1+V_*).
\end{equation}
Substituting this along with the other identities in \eqref{all partials} into \eqref{eq:Taylororder1i} and grouping terms, we find $\mathcal{C}_1$ simplifies to
\begin{align}
	\mathcal{C}_1=&\,2\Big\{V_*(1+V_*)^2(\lambda+V_*)-(1+V_*)\Big[(m+1)V_*+2mz\Big]c_0^2\Big\}c_2+\Big\{-2(1+V_*)\Big[(m+1)V_*+2mz\Big]c_0\Big\}c_1^2\notag\\
	&+\Big\{3(1+V_*+mz)c_0^2+\Big[-a_1(1+V_*)^3+a_2(1+V_*)^2-a_3(1+V_*)\Big]-[(m+1)(1+2V_*)+2mz]c_0^2\notag\\
	&\quad+\Big[4V_*^3+(3\lambda+6)V_*^2+(4\lambda+2)V_*+\lambda\Big]\Big\}c_1+c_0^3+\Big[-3a_1(1+V_*)^2+2a_2(1+V_*)-a_3\Big]c_0\notag\\
	=&-2G(V_*,C_*)(1+V_*)c_2-G_C(V_*,C_*)(1+V_*)c_1^2+[F_C(V_*,C_*)-G_V(V_*,C_*)](1+V_*)c_1+F_V(V_*,C_*)c_0\notag\\
	=&-G_C(V_*,C_*)(1+V_*)c_1^2+[F_C(V_*,C_*)-G_V(V_*,C_*)](1+V_*)c_1+F_V(V_*,C_*)c_0.\label{eq:Taylororder1}
\end{align}
Finally, we group coefficients at order $\ell\geq 2$, recalling the convention that $c_k=0$ if $k<0$, to obtain the coefficient
\begin{align}
\mathcal{C}_\ell=&\,(c^3)_{\ell-1}+\big(1+V_*+mz)(c^3)_{\ell}-a_1c_{\ell-3}+\big(-3a_1(1+V_*)+a_2\big)c_{\ell-2}\notag\\
&+\big(-3a_1(1+V_*)^2+2a_2(1+V_*)-a_3\big)c_{\ell-1}+\big(-a_1(1+V_*)^3+a_2(1+V_*)^2-a_3(1+V_*)\big)c_\ell\notag\\
&-\frac{m+1}{3}(\ell-1)(c^3)_{\ell-1}-\frac{(m+1)(1+2V_*)+2mz}{3}\ell(c^3)_\ell -\frac{(1+V_*)\big((m+1)V_*+2mz\big)}{3}(\ell+1)(c^3)_{\ell+1}\notag\\
&+(\ell-3)c_{\ell-3}+(\la+2+4V_*)(\ell-2)c_{\ell-2}+\big(6V_*^2+(3\la+6)V_*+2\la+1\big)(\ell-1)c_{\ell-1}\label{eq:Taylororderelli}\\
&+\big(4V_*^3+(3\la+6)V_*^2+(4\la+2)V_*+\la\big)\ell c_\ell +V_*(1+V_*)^2(\la+V_*)(\ell+1)c_{\ell+1}.\notag
\end{align}
Recalling that  
\begin{align*}
(c^3)_\ell=&\,\sum_{i+j+k=\ell}c_ic_jc_k=3c_0^2c_\ell +\sum_{\substack{i+j+k=\ell\\ i,j,k\leq \ell-1}}c_ic_jc_k\\
(c^3)_{\ell+1}=&\,\sum_{i+j+k=\ell+1}c_ic_jc_k=3c_0^2c_{\ell+1}+6c_0c_1c_{\ell} +\sum_{\substack{i+j+k=\ell+1\\ i,j,k\leq \ell-1}}c_ic_jc_k,\\
\end{align*}
we isolate the highest order terms in $c_\ell$ and $c_{\ell+1}$ from \eqref{eq:Taylororderelli} as
\begin{align*}
&c_{\ell+1}(\ell+1)\Big(3c_0^2\big(-\frac{(1+V_*)\big((m+1)V_*+2mz\big)}{3}+V_*(1+V_*)^2(\la+V_*)\Big)=-c_{\ell+1}(\ell+1)(V_*+1)G(V_*,C_*)=0,\\
&c_\ell\Big(3(1+V_*+mz)c_0^2-a_1(1+V_*)^3+a_2(1+V_*)^2-a_3(1+V_*)-\big((m+1)(1+2V_*)+2mz)\ell c_0^2\\
&\quad-2(1+V_*)\big((m+1)V_*+2mz\big)(\ell+1)c_0c_1+\big(4V_*^3+(3\la+6)V_*^2+(4\la+2)V_*+\la\big)\ell\Big)\\
&=c_\ell(1+V_*)\Big(F_C-\ell G_V-(\ell+1)c_1 G_C\Big)\\
&=A_\ell c_\ell,
\end{align*}
where we have again applied \eqref{all partials} and \eqref{eq:Taylorsimpleaux} and where $A_\ell$ is as defined in Lemma \ref{lemma:Taylorcoeffs}. Thus, substituting these into \eqref{eq:Taylororderelli} and grouping terms by order of $c_k$, we have obtained that
\begin{align}
\mathcal{C}_\ell=&\,A_\ell c_\ell - \frac{(1+V_*)\Big[(m+1)V_*+2mz\Big]}{3}(\ell +1)\sum_{\substack{i+j+k = \ell+1\\ i,j,k\leq \ell-1}}c_ic_jc_k\notag\\
&+\Big[(1+V_*+mz)-\frac{(m+1)(1+2V_*)+2mz}{3}\ell\Big]\sum_{\substack{i+j+k = \ell\\ i,j,k\leq \ell-1}}c_ic_jc_k\notag\\
  &+\Big[1-\frac{m+1}{3}(\ell -1)\Big]\sum_{\substack{i+j+k = \ell-1\\ i,j,k\geq 0}}c_ic_jc_k\notag\\
  &+\Big[\big[6V_*^2+(3\lambda + 6)V_*+2\lambda +1\big](\ell -1)-3a_1(1+V_*)^2+2a_2(1+V_*)-a_3\Big]c_{\ell-1}\label{eq:Taylororderell}\\
  &+\Big[(\lambda +2+4V_*)(\ell -2)-3a_1(1+V_*)+a_2\Big]c_{\ell-2}+\Big[\ell-3-a_1\Big]c_{\ell-3},\notag
\end{align}
which, recalling the definition of $B_\ell$ from Lemma \ref{lemma:Taylorcoeffs}, concludes the proof.\qed

\end{allowdisplaybreaks}

\section{$z_m<z_g$ for $\gamma\in(1,2]$}\label{zm<zg}
As the expression for $z_g$ depends on $m$, we will prove the inequality first in the case $m=1$ and then for $m=2$. Recall first that $z_m$ is defined by \eqref{zm},
\begin{align*}
	z_m = \cfrac{(\gamma-1)}{(2\gamma-1)(\gamma+1)} = \cfrac{(\gamma-1)}{2\gamma^2+\gamma-1}.
\end{align*}
On the other hand, for $z_g$,
when $m=1$, we have
\begin{align*}
	z_g = \frac{\gamma-1}{\gamma(\sqrt{\gamma^2+(\gamma-1)^2}+\gamma)}.
\end{align*}
Therefore, it is sufficient to check $\gamma^2+\gamma-1 > \gamma\sqrt{\gamma^2+(\gamma-1)^2}$ for $\gamma\in(1,2]$.
Since
\begin{align*}
	(\gamma^2+\gamma-1)^2-\gamma^2(\gamma^2+(\gamma-1)^2) = -(\gamma-1)(\gamma^2(\gamma-3)+1-\gamma)>0
\end{align*}
for any $\gamma\in(1,2]$, we conclude
\begin{align*}
	z_m < z_g
\end{align*}
for any $\gamma\in(1,2]$ as desired.

When $m=2$, we have
\begin{align*}
	 z_g=\frac{2(\gamma-1)}{\sqrt{(2\gamma^2-\gamma+1)^2 +2\gamma(\gamma-1)[4\gamma(\gamma-1)+\frac{8}{3}]}+(2\gamma^2-\gamma+1)}.
\end{align*}
Therefore, in order to show $z_g>z_m$, it is enough to check $2\gamma^2+3\gamma-3>\sqrt{(2\gamma^2-\gamma+1)^2 +2\gamma(\gamma-1)[4\gamma(\gamma-1)+\frac{8}{3}]}$. Since
\begin{align*}
	(2\gamma^2+3\gamma-3)^2 - (2\gamma^2-\gamma+1)^2 +2\gamma(\gamma-1)[4\gamma(\gamma-1)+\frac{8}{3}] = \frac{8}{3}(\gamma-1)(3-\gamma)(3\gamma^2-1)>0
\end{align*}
for any $\gamma\in(1,2]$, we again conclude the claimed inequality.

\section{Proof of \eqref{C'(V6)<C6/2V6}}\label{proofofC'(V6)<C6/2V6}

Let $\ga\in(1,2]$ and $z\in[z_g,z_M]$, where we recall that $z_g$ is defined as in \eqref{zg}. In this section, for notational convenience, we will use $G_C$, $G_V$, $F_C$, $F_V$, $R$ to represent their evaluations at $P_6=(V_6,C_6)$ where $G_C$, $G_V$, $F_C$, $F_V$, $R$ are given in \eqref{G_C}--\eqref{F_V} and \eqref{R} respectively. 

Recall from \eqref{c1} that the slope of the curve $C=C(V)$ at $P_6=(V_6,C_6)$ is given by 
\begin{align*}
	\frac{d C}{d V}|_{V=V_6,C=C_6} =c_1 = \frac{F_C-G_V+ R}{2G_C}.
\end{align*}
Thus  \eqref{C'(V6)<C6/2V6} holds if and only if 
\begin{align*}
	&\frac{F_C-G_V+ R}{2G_C} - \frac{1+V_6}{2V_6}\\
	&=\frac{V_6F_C-V_6G_V+ V_6R-(1+V_6)G_C}{2V_6G_C}<0.
\end{align*}
Note that $G_C<0$ by \eqref{G_C} and hence $V_6G_C>0$.  Therefore, showing \eqref{C'(V6)<C6/2V6} is equivalent to proving that the numerator is negative for each $\gamma\in (1,2]$ and $z\in[z_g,z_M]$. 
Using \eqref{R}, 
it is then sufficient to show 
\begin{align}
	&V_6^2[(F_C-G_V)^2+4F_VG_C]-[V_6F_C-V_6G_V-(1+V_6)G_C]^2>0,
\end{align} which is equivalent to 
\begin{align}	
	& \mathcal Q:= 4V_6^2F_VG_C-(1+V_6)^2G_C^2+2(1+V_6)V_6(F_C-G_V)G_C>0 \label{inequality of C'(V6)<(sqrt(kV))}.
\end{align}
The rest of this section is devoted to the proof of \eqref{inequality of C'(V6)<(sqrt(kV))}. We  first rewrite $ \mathcal Q$ by using various identities satisfied by $V_6, C_6, F_C, F_V, G_C, G_V$: 
\begin{align*}
	\mathcal Q=&\,G_C\Big[2V_6^2(m(\gamma-1)wC_6-2F_C)-C_6^2G_C+2C_6V_6(F_C+mwC_6+G_C)\Big]\\
	=&\,G_CV_6C_6\Big[-2V_6^2+2m[\gamma w+(\gamma-2)z]V_6+2m([2-\gamma]z+w)+2\Big]
\end{align*}
where we have used  $2F_V= m(\gamma-1) wC_6- 2F_C $ and  $-G_V= mwC_6 + G_C$ from \eqref{F_C+F_V=constant(G_C+G_V)} and \eqref{G_C+G_V=C_6} as well as $C_6= 1+V_6$ in the first line and used \eqref{F_C} and \eqref{G_C} in the second line. Recalling the formula for $V_6$ in \eqref{P6} and using $V_6^2= ((\gamma-2)z-1) V_6- 2z$, we next rearrange the bracket as a linear function in $w$ whose coefficients are polynomials in $\gamma,z$ so that 
\begin{align*}
	\mathcal Q	=& \,G_CV_6C_6\Big[A(\gamma,z,m)w+B(\gamma,z,m)\Big]
\end{align*}
where \begin{align*}
	A(\gamma,z,m) &= 2m-1-m\gamma+(\gamma-2+2m-3m\gamma+m\gamma^2)z,\\
	B(\gamma,z,m) &=1-m\gamma+(6m+(2+m)\gamma+2m\gamma^2)z+(4m-4+4(1-2m)\gamma+(5m-1)\gamma^2-m\gamma^3)z^2.
\end{align*}
Since $G_CV_6C_6 >0$, our aim is to show $A(\gamma,z,m)w+B(\gamma,z,m) >0$ for each $\gamma\in (1,2]$,  $z\in[z_g,z_M]$ and $m=1, 2$. We will treat $m=1$ and $m=2$ separately. 

\

\underline{Case 1: $m=1$.} When $m=1$, we have 
\begin{align*}
	A(\gamma,z,1)&=1-\gamma+\gamma(\gamma-2)z<0, \\
	B(\gamma,z,1)&=1-\gamma+(2\gamma^2+3\gamma+6)z-\gamma(\gamma-2)^2z^2
\end{align*}
for all $\gamma\in(1,2]$. On the other hand, for any fixed $\gamma\in(1,2]$, 
\begin{align*}
	&\frac{d}{d z}(A(\gamma,z,1)w+B(\gamma,z,1)) \\
	&= 2\gamma^2+3\gamma+6-2\gamma(\gamma-2)^2z+A(\gamma,z,1)\frac{d w}{dz}+\gamma(\gamma-2) w\\
	&=\gamma(2\gamma+(\gamma-2)w)+2\gamma(1-(\gamma-2)^2z)+6+\gamma-A(\gamma,z,1)\frac{(\gamma+2)-(\gamma-2)^2z}{w}\\
	&>0,
\end{align*}
where the last inequality follows from $-1<\gamma-2\leq 0$, $0\le w<1$ and $z<1$ for any $\gamma\in(1,2]$ and $z\in[z_m,z_M]$. Here we extend the domain of $z$ into $[z_m, z_M]$ to facilitate computations. Since $A(\gamma,z,1)w+B(\gamma,z,1)$ is increasing in $z$, 
it is then enough to check that  $A(\gamma,z_m,1)w(z_m)+B(\gamma,z_m,1) >0$. 
From \eqref{zm}, we obtain the following relation between $w(z_m)$ and $z_m$: 
\begin{align}\label{w(zm)}
	V_6(\gamma,z_m) = \frac{-2}{\gamma+1} \ \Longleftrightarrow \ \frac{-1+(\gamma-2)z_m-w(z_m)}{2} = \frac{-2}{\gamma+1} \ \Longleftrightarrow \ w(z_m) = \frac{3-\gamma}{\gamma+1}+(\gamma-2)z_m.
\end{align}
Hence, using \eqref{zm} again we get for any $\gamma\in(1,2]$, 
\begin{align*}
	&A(\gamma,z_m,1)w(z_m)+B(\gamma,z_m,1) \\
	&= (1-\gamma+\gamma(\gamma-2)z_m)(\frac{3-\gamma}{\gamma+1}+(\gamma-2)z_m)+1-\gamma+(2\gamma^2+3\gamma+6)z_m-\gamma(\gamma-2)^2z_m^2\\
	&=\frac{4(\gamma-1)(\gamma^2+2)}{(\gamma+1)^2(2\gamma-1)}\\
	&>0.
\end{align*}
Therefore, we deduce that 
$A(\gamma,z,1)w+B(\gamma,z,1)>0$ for any $\gamma\in(1,2]$ and $z\in[z_m,z_M]$, which in turn implies  \eqref{inequality of C'(V6)<(sqrt(kV))}.

\

\underline{Case 2: $m=2$.} When $m=2$, the sign of $A(\gamma,z,2) = 3-2\gamma+(2-5\gamma+2\gamma^2)z$ changes for $\gamma\in(1,2]$ and $z\in[z_g,z_M]$ and the argument for $m=1$ is not applicable. We will employ another approach. 
We first decompose $\mathcal Q$ in \eqref{inequality of C'(V6)<(sqrt(kV))} into two parts
\[
\mathcal Q =: I + 2 \, II 
\]
where 
\begin{align*}
	I &:=2V_6^2F_VG_C-(1+V_6)^2G_C^2, \\
	II& := V_6^2F_VG_C+(1+V_6)V_6(F_C-G_V)G_C.
\end{align*}
We claim that $I$ and $II$ are both positive. For $I$, we first rewrite it by again using the identities \eqref{F_C}, \eqref{G_C}, \eqref{F_C+F_V=constant(G_C+G_V)}, and \eqref{G_C+G_V=C_6}, as well as $C_6= 1+V_6$ to get 
\begin{align*}
	I &= G_C[2V_6^2F_V-(1+V_6)^2G_C]\\
	&=G_C\Big[V_6^2(2(\gamma-1)wC_6-4C_6(C_6+2z))-2V_6C_6^2(C_6+2\gamma z)\Big]\\
	&=G_C\Big[2(\gamma-1)wV_6^2C_6-4V_6^2C_6^2-8zV_6^2C_6-2V_6C_6^3-4\gamma zV_6C_6^2\Big]\\
	&=G_CV_6C_6\Big[2(-V_6-C_6)C_6 +4z(-2V_6-\gamma C_6) + (2(\gamma-1)w-2C_6)V_6 \Big].
\end{align*}
The goal is to show the bracket is positive. For any $\gamma\in(1,2]$ and $z\in[z_g,z_M]$, by Lemma \ref{Lemma V_6/8 monoton}, we have $V_6 \leq V_6(z_M(2)) = -\frac{1}{2}$. Thus, $-V_6-C_6\geq 0$ since $1+V_6 = C_6$. Next, we  show that the remainder of the bracket is also positive. By using $C_6 = 1+V_6$, we have 
\begin{align*}
	4z(-2V_6-\gamma C_6) + (2(\gamma-1)w-2C_6)V_6 = -2V_6^2-(2+(8+4\gamma)z-2(\gamma-1)w)V_6-4\gamma z =:p_1(V_6).
\end{align*}
We observe that $p_1(V_6)$ is a quadratic polynomial of $V_6$. Since the coefficient of $V_6^2$ is negative and $-1<V_6\leq -\frac{1}{2}$ for $\gamma\in(1,2]$ and $z\in[z_g,z_M]$, to show $p_1(V_6)>0$, it is sufficient to check that $p_1(-1)>0$ and $p_1(-\frac{1}{2})>0$. Note that 
\begin{align*}
	p_1(-1)& = -2+2+(8+4\gamma)z-2(\gamma-1)w-4\gamma z = 8z-2(\gamma-1)w\\
	& > 8z_m-2(\gamma-1)w(z_m)
\end{align*}
for any $\gamma\in(1,2]$ and $z\in[z_g,z_M]$ because $\frac{dw(z)}{dz}<0$. By \eqref{zm} and \eqref{w(zm)}, we deduce that 
\begin{align*}
	p_1(-1)>8z_m-2(\gamma-1)w(z_m) &= \frac{8(\gamma-1)}{(2\gamma-1)(\gamma+1)} - 2(\gamma-1)[\frac{3-\gamma}{\gamma+1}+\frac{(\gamma-2)(\gamma-1)}{(2\gamma-1)(\gamma+1)}]\\
	&= \frac{2(\gamma-1)(\gamma^2-4\gamma+5)}{(2\gamma-1)(\gamma+1)}>0.
\end{align*}
For $p_1(-\frac{1}{2})$, observe that 
\begin{align*}
	p_1(-\frac{1}{2})&= -\frac{1}{2}+1+(4+2\gamma)z-(\gamma-1)w-4\gamma z = \frac{1}{2}+(4-2\gamma)z-(\gamma-1)w \\
	&> \frac{1}{2}+(4-2\gamma)z_m-(\gamma-1)w(z_m)
\end{align*}
for any $\gamma\in(1,2]$ and $z\in[z_g,z_M]$ because $\frac{dw(z)}{dz}<0$. By \eqref{zm} and \eqref{w(zm)}, we have 
\begin{align*}
	p_1(-\frac12)>\frac{1}{2}+(4-2\gamma)z_m-(\gamma-1)w(z_m) &= \frac{1}{2} + \frac{(4-2\gamma)(\gamma-1)}{(2\gamma-1)(\gamma+1)} -(\gamma-1)[\frac{3-\gamma}{\gamma+1}+\frac{(\gamma-2)(\gamma-1)}{(2\gamma-1)(\gamma+1)}]\\
	&= \frac{2\gamma^3-12\gamma^2+23\gamma-11}{2(2\gamma-1)(\gamma+1)}>0
\end{align*}
where the positive sign is shown in Proposition \ref{poly: App C-1}. Therefore, we conclude that $I>0$ since $G_C V_6 C_6>0$. 

For $II$, by using \eqref{G_C}, \eqref{F_C}, \eqref{F_C+F_V=constant(G_C+G_V)} and \eqref{G_C+G_V=C_6}, we have the following: 
\begin{align*}
	II &= G_C V_6 \Big [V_6(F_V+F_C)+F_C+C_6(2wC_6+G_C)\Big ]\\
	&= G_C V_6 \Big [\frac{2(\gamma-1)w}{2}V_6C_6+2C_6(C_6+2z)+C_6(2wC_6+2V_6(C_6+2\gamma z))\Big ]\\
	&= G_C V_6 C_6 \Big [2w(\frac{\gamma-1}{2}V_6+C_6)+4z(1+\gamma V_6)+2(1+V_6)^2\Big]\\
	&=  G_C V_6 C_6 \Big [\tilde{A}(\gamma,z)w+\tilde{B}(\gamma,z)\Big],
\end{align*}
where
\begin{align*}
	\tilde{A}(\gamma,z) &= \frac{1-\gamma}{2}+\frac{\gamma^2-7\gamma+2}{2}z<0,\\
	\tilde{B}(\gamma,z) &=-\frac{\gamma-1}{2}+(\gamma^2+\gamma+2)z+\frac{(2-\gamma)(\gamma^2-7\gamma+2)}{2}z^2
\end{align*}
for $\gamma\in(1,2]$.
We will first show $\tilde{B}(\gamma,z)>0$ for any $\gamma\in(1,2]$ and $z\in[z_m,z_M]$. When $\gamma=2$, $\tilde{B}(2,z)= - \frac12 + 8 z \ge  - \frac12 + 8 z_m =  - \frac12 + \frac{8}{15} >0$. 
For any fixed $\gamma\in(1,2)$, $\tilde{B}(\gamma,z)$ is a quadratic polynomial of $z$. Since $\frac{(2-\gamma)(\gamma^2-7\gamma+2)}{2}<0$, $\tilde{B}(\gamma,z)$ has global maximum at $z = \frac{(\gamma^2+\gamma+2)}{(2-\gamma)(\gamma^2-7\gamma+2)}>\frac{1}{2}$ since $2(\gamma^2+\gamma+2)>\gamma^2-7\gamma+2$ and $z\leq z_M =\frac{1}{\gamma+2+2\sqrt{2\gamma}}<\frac{1}{2}$ for any $\gamma\in(1,2)$. Hence, to show $\tilde{B}(\gamma,z)>0$, it is sufficient to check the sign of $\tilde{B}(\gamma,z_m)$ for any $\gamma\in(1,2)$. Direct computations show that 
\begin{align*}
	\tilde{B}(\gamma,z_m) &= -\frac{\gamma-1}{2}+\frac{(\gamma^2+\gamma+2)(\gamma-1)}{(2\gamma-1)(\gamma+1)}+\frac{(2-\gamma)(\gamma^2-7\gamma+2)(\gamma-1)^2}{2(2\gamma-1)^2(\gamma+1)^2}\\
	&=\frac{(\gamma-1)(-\gamma^4+12\gamma^3-14\gamma^2+24\gamma-9)}{2(2\gamma-1)(\gamma+1)}>0,
\end{align*}
where the positive sign is verified in Proposition \ref{poly: App C-2}. 
Now since $\tilde{B}(\gamma,z)>0$, to show $\tilde{A}(\gamma,z)w+\tilde{B}(\gamma,z)>0$, it is sufficient to check the sign of $\tilde{B}^2(\gamma,z)-\tilde{A}^2(\gamma,z)w^2>0$. By direct computations, 
\begin{align*}
	&\tilde{B}^2(\gamma,z)-\tilde{A}^2(\gamma,z)w^2\\ 
	&=2z\Big[-2\gamma^2+\gamma+1+(4\gamma^3+4\gamma^2-6\gamma+4)z+(-2\gamma^4+13\gamma^3+5\gamma^2-16\gamma+4)z^2\Big]\\
	&=2z\Big[-2\gamma^2+\gamma+1+2(\gamma+2)(2\gamma^2-2\gamma+1)z-(\gamma^2 - 7 \gamma + 2) (2 \gamma^2 + \gamma - 2)z^2\Big]=:2zp_2(z).
\end{align*}
Clearly $p_2(z)$ is a quadratic polynomial in $z$.  We notice that $-(\gamma^2 - 7 \gamma + 2) (2 \gamma^2 + \gamma - 2)>0$, and $\tilde{B}(\gamma,z)$ has global minimum  
at $z = \frac{(\gamma+2)(2\gamma^2-2\gamma+1)}{(\gamma^2 - 7 \gamma + 2) (2 \gamma^2 + \gamma - 2)}<0$ for any $\gamma\in(1,2]$. Hence, in order to prove $p_2(z)>0$ on $[z_g,z_M]$, it is enough to check the sign of $p_2(z_g)$ for each $\gamma\in(1,2]$. From \eqref{zgm2}, we write $z_g$ as 
\begin{align*}
	z_g=\frac{2(\gamma-1)}{\sqrt{(2\gamma^2-\gamma+1)^2 +2\gamma(\gamma-1)[4\gamma(\gamma-1)+\frac{8}{3}]}+(2\gamma^2-\gamma+1)}=:\frac{2(\gamma-1)}{q(\gamma)+(2\gamma^2-\gamma+1)}.
\end{align*}
We then compute $p_2(z_g)[q(\gamma)+(2\gamma^2-\gamma+1)]^2$ to obtain 
\begin{align*}
	&p_2(z_g)[q(\gamma)+(2\gamma^2-\gamma+1)]^2 \\
	=& (-2\gamma^2+\gamma+1)[q(\gamma)+(2\gamma^2-\gamma+1)]^2+4(\gamma+2)(2\gamma^2-2\gamma+1)(\gamma-1)[q(\gamma)+(2\gamma^2-\gamma+1)]\\
	&-4(\gamma-1)^2(\gamma^2 - 7 \gamma + 2) (2 \gamma^2 + \gamma - 2)\\
	=&(-2\gamma^2+\gamma+1)[2(2\gamma^2-\gamma+1)^2 +2\gamma(\gamma-1)[4\gamma(\gamma-1)+\frac{8}{3}]]+4(\gamma+2)(2\gamma^2-2\gamma+1)(\gamma-1)(2\gamma^2-\gamma+1)\\
	&-4(\gamma-1)^2(\gamma^2 - 7 \gamma + 2) (2 \gamma^2 + \gamma - 2)+2q(\gamma)[(-2\gamma^2+\gamma+1)(2\gamma^2-\gamma+1)+2(\gamma+2)(2\gamma^2-2\gamma+1)(\gamma-1)]\\
	=&\frac{2(\gamma-1)^2(-36\gamma^4+114\gamma^3-4\gamma^2-83\gamma+15)}{3}+2(\gamma-1)^2(4\gamma-3)q(\gamma)>0,
\end{align*}
where the positive sign of the quartic polynomial  is shown in Proposition \ref{poly: App C-3}. Therefore, we deduce that $\tilde{A}(\gamma,z)w+\tilde{B}(\gamma,z)>0$ and hence $II>0$. This completes the proof of \eqref{inequality of C'(V6)<(sqrt(kV))} for $m=2$.

\section{Proof of \eqref{ineq of c1(z1)} and \eqref{ineq of c1(z2)} }
In this section, we consider specific values of the parameters: $z=z_1(\gamma)$ or $z_2(\gamma)$, and $\kappa=1$ or $\frac{3}{2}$. For notational convenience, we use $G_C$, $G_V$, $F_C$, $F_V$, $R$ to represent their evaluations at $P_8=(V_8,C_8)$ where $G_C$, $G_V$, $F_C$, $F_V$, $R$ are given in \eqref{G_C}--\eqref{F_V} and \eqref{R} respectively. Recalling \eqref{c1} 
, the two inequalities \eqref{ineq of c1(z1)} and \eqref{ineq of c1(z2)} can be written as
\begin{align*}
	\frac{F_C-G_V+R}{2G_C} < -\frac{1}{2}\sqrt{\frac{\kappa}{-V_8}} = -\frac{\kappa}{2C_8},
\end{align*}
where $\kappa=1$ corresponds to \eqref{ineq of c1(z1)} and $\kappa=\frac32$ is equivalent to \eqref{ineq of c1(z2)}.
From \eqref{G_C}, we see that $G_C<0$. Moreover, $R>0$, and hence it is equivalent to prove that
$$ R^2 > (-G_C\sqrt{\frac{\kappa}{-V_8}}-F_C+G_V)^2.$$
Expanding $R$ using the definition in \eqref{R}, we find that this is equivalent to
\begin{align}\label{equivalent ineq}
	-\frac{\kappa G_C}{V_8}+\frac{2\kappa }{C_8}(F_C-G_V)-4F_V>0.
\end{align} 
By using \eqref{F_C}, \eqref{F_C+F_V=constant(G_C+G_V)}, \eqref{G_C+G_V=C_8} and $w = 2C_8-1-(\gamma-2)z$ which is given by \eqref{P8}, we compute
\begin{align}
	4F_V &= 4(-\frac{\gamma-1}{2}mwC_8 - 2C_8(C_8+mz)) = -2m(\gamma-1)wC_8-8C_8^2-8mzC_8\notag\\
	&=-[4m(\gamma-1)+8] C_8^2+2m(\gamma-1)C_8+2m(\gamma^2-2\gamma-3)zC_8.\label{LHS}
\end{align}
Thanks to  \eqref{G_C}, \eqref{F_C}, \eqref{G_C+G_V=C_8} , $C_8^2 = -\kappa V_8$,  $w = 2C_8-1-(\gamma-2)z$ and $C_8=1+V_8$, we obtain
\begin{align}
-\frac{\kappa G_C}{V_8}+\frac{2\kappa}{C_8}(F_C-G_V) &= 2\kappa\left( -m\gamma z -1+V_8+2C_8+2mz-mw \right) - 4m\gamma z C_8\notag\\
&=2\kappa\left[ m-2+(3-2m)C_8\right] - 4m\gamma z C_8. \label{RHS}
\end{align}
Now, we are ready to show \eqref{ineq of c1(z1)} and \eqref{ineq of c1(z2)} hold.
\begin{lemma}\label{Lemma: ineq of c1(z1)}
For any $\gamma\in(\gaSix, \gaOne]$, $z=z_1$ and $\kappa=1$, \eqref{equivalent ineq} holds,  and therefore so does \eqref{ineq of c1(z1)}.
\end{lemma}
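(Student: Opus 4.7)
The plan is to verify \eqref{equivalent ineq} directly by substituting the explicit values at $z = z_1(\gamma)$ and $\kappa = 1$. The key simplification comes from the definition of $z_1$, which forces $C_8^2(z_1) = -V_8(z_1)$; by \eqref{V8C8z1}, both $C_8 = \tfrac{\sqrt{5}-1}{2}$ and $C_8^2 = \tfrac{3-\sqrt{5}}{2}$ are then $\gamma$-independent constants, and only $z_1 = \tfrac{\sqrt{5}-1}{2(1+\sqrt{5}+\gamma)}$ retains a $\gamma$-dependence. This reduces \eqref{equivalent ineq} to an explicit scalar inequality in $\gamma$ and $m$ alone.

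Concretely, combining \eqref{LHS} and \eqref{RHS} with $\kappa = 1$, the left-hand side of \eqref{equivalent ineq} simplifies to
\[
L_m(\gamma) := 2(m-2) + \bigl(4m(\gamma-1)+8\bigr)C_8^2 + \Bigl(2(3-2m) - 2m(\gamma-1) - 2m(\gamma^2-3)z_1\Bigr)C_8,
\]
where the coefficient of $C_8$ is obtained by noting that $-4m\gamma$ (from \eqref{RHS}) combined with $-2m(\gamma^2-2\gamma-3)$ (the $C_8$-coefficient appearing in $-4F_V$ via \eqref{LHS}) equals $-2m(\gamma^2-3)$. After clearing the positive denominator $2(1+\sqrt{5}+\gamma)$ of $z_1$, positivity of $L_m(\gamma)$ becomes positivity of an explicit polynomial $P_m(\gamma)$ in $\gamma$ whose coefficients involve $\sqrt{5}$.

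The cases $m=1$ and $m=2$ are then handled separately. In each case $P_m$ is at most quadratic in $\gamma$, so positivity on $(\gaSix, \gaOne]$ can be verified by proving the stronger statement $P_m > 0$ on the entire interval $(1, \gaOne]$: evaluate $P_m$ at the endpoints $\gamma = 1$ and $\gamma = \gaOne = 1+\sqrt{2}$, control the sign of $P_m'$ on the interval, or rearrange $P_m$ into a manifestly nonnegative form. Working with $(1, \gaOne]$ rather than $(\gaSix, \gaOne]$ avoids any need for numerical bounds on the implicitly defined $\gaSix$.

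The main obstacle is purely algebraic bookkeeping: the arithmetic combines $\sqrt{5}$ (from $C_8(z_1)$) with $\sqrt{2}$ (from $\gaOne$), producing expressions that are not aesthetically clean, and care is needed to avoid sign errors when clearing denominators and collecting powers of $\gamma$. This is nevertheless a finite, essentially mechanical calculation in the same spirit as the polynomial-positivity verifications invoked elsewhere in the paper (\emph{cf.}\ Propositions \ref{poly: Lemma 6.2-1}--\ref{poly: Lemma 6.2-3}), and no conceptual difficulty arises.
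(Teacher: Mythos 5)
Your proof is correct, and its opening step — combining \eqref{LHS} and \eqref{RHS} with $\kappa=1$ and $V_8(z_1)=-C_8^2(z_1)$, and noting that the coefficient of $z_1C_8$ collapses to $-2m(\gamma^2-3)$ — is precisely what the paper does. The two arguments diverge only in how they finish. The paper regroups $L_m$ as $-2m\gamma(2V_8+C_8)$ plus a remainder (exploiting the algebraic identity $C_8^2+C_8=1$ that holds at $z_1$), then bounds each piece over $(\gaSix,\gaOne]$ by monotonicity in $\gamma$; this imports the implicitly defined $\gaSix$ into the final numerical check. Your alternative of clearing the denominator $2(1+\sqrt{5}+\gamma)$ of $z_1$ leaves a quadratic $P_m(\gamma)$ with negative leading coefficient ($4-2\sqrt{5}$ for $m=1$, $8-4\sqrt{5}$ for $m=2$), so concavity reduces the whole verification to the endpoint evaluations $P_1(1)=9+\sqrt{5}$, $P_1(1+\sqrt{2})=17+2\sqrt{2}-3\sqrt{5}$, $P_2(1)=13-\sqrt{5}$, $P_2(1+\sqrt{2})=29-\sqrt{2}-9\sqrt{5}+\sqrt{10}$, all plainly positive. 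This gives the slightly stronger statement (positivity on all of $(1,\gaOne]$), avoids any reference to $\gaSix$, and sits comfortably alongside the polynomial-positivity propositions in Appendix~\ref{poly}.
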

\begin{proof}
To show that \eqref{ineq of c1(z1)} holds, by using \eqref{equivalent ineq},  it is enough to check the positivity of $-\frac{\kappa G_C}{V_8}+\frac{2\kappa}{C_8}(F_C-G_V)-4F_V$. 
When $m=1$, by using \eqref{LHS}, \eqref{RHS}, $V_8(z_1)=-C_8^2(z_1)$, we obtain that, for any $\gamma\in(\gaSix,\gaOne]$,
	\begin{align*}
	-\frac{ G_C}{V_8}+\frac{2}{C_8}(F_C-G_V)-4F_V &=  2C_8-2+4(\gamma+1) C_8^2 -2(\gamma-1)C_8+2(3-\gamma^2) z_1C_8\\
	&=-2\gamma(2 V_8 +C_8)+2[(3-\gamma^2)z_1C_8+1] \\
	&> (7-3\sqrt{5})\gaSix + 2[(3-\gaOne^2)z_1(\gaSix)C_8+1] >0,
	\end{align*}
	where we have used \eqref{z1} and \eqref{V8C8z1} in the last two inequalities.

	When $m=2$, by using \eqref{LHS}, \eqref{RHS} and $V_8(z_1)=-C_8^2(z_1)$, we have,  for any  $\gamma\in(\gaSix, \gaOne]$,
	\begin{align*}
		-\frac{ G_C}{V_8}+\frac{2}{C_8}(F_C-G_V)-4F_V &= 8\gamma C_8^2-(4\gamma-2)C_8+4(3-\gamma^2)z_1C_8\\
		&=-4\gamma(2V_8+C_8)+2C_8+4(3-\gamma^2) z_1 C_8\\
		&>2(7-3\sqrt{5})\gamma +2[2(3-\gaOne^2)z_1(\gaSix)+1]C_8>0,
	\end{align*}
	where we have used \eqref{z1} and \eqref{V8C8z1} in the last two inequalities. 
\end{proof}

\begin{lemma}\label{Lemma: ineq of c1(z2)}
For any $\gamma\in(\gaOne,3]$, $z=z_2$ and $\kappa=\frac{3}{2}$, \eqref{equivalent ineq} holds and therefore so does \eqref{ineq of c1(z2)}.
\end{lemma}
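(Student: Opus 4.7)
The proof will follow the same blueprint as Lemma~\ref{Lemma: ineq of c1(z1)}: substituting $\kappa = 3/2$ into \eqref{LHS} and \eqref{RHS} and combining, it suffices to verify positivity of
\begin{equation*}
\mathcal{E}_m(\gamma) := -\frac{3G_C}{2V_8} + \frac{3}{C_8}(F_C - G_V) - 4F_V
\end{equation*}
at $z = z_2(\gamma)$ for each $m\in\{1,2\}$ and $\gamma \in (\gaOne, 3]$. The decisive simplification, analogous to the use of $V_8(z_1) = -C_8^2(z_1)$ in the previous lemma, is that at $z=z_2$ both $C_8$ and $V_8$ are independent of $\gamma$: by \eqref{C8z2} we have the fixed constants $C_8(z_2) = \tfrac{\sqrt{33}-3}{4}$ and $V_8(z_2) = C_8(z_2) - 1$. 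Using the defining relation $C_8^2 = -\tfrac32 V_8 = \tfrac{3}{2}(1 - C_8)$ to eliminate the quadratic term coming from $[4m(\gamma-1)+8]\,C_8^2$ in $-4F_V$, the quantity $\mathcal{E}_m(\gamma)$ reduces to an explicit expression affine in $\gamma$ and $z_2(\gamma)$ with constant coefficients.

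Carrying out this reduction, I expect to arrive at
\begin{align*}
\mathcal{E}_1(\gamma) &= 3 + 6\gamma - \bigl[(8\gamma + 1) + 2(\gamma^2 - 3)\,z_2(\gamma)\bigr]\,C_8(z_2),\\
\mathcal{E}_2(\gamma) &= 12\gamma - \bigl[(16\gamma - 1) + 4(\gamma^2 - 3)\,z_2(\gamma)\bigr]\,C_8(z_2).
\end{align*}
Since $\gaOne = 1 + \sqrt{2} > \sqrt{3}$, the factor $\gamma^2 - 3$ is strictly positive on $(\gaOne, 3]$, so each bracketed coefficient of $C_8(z_2)$ is a sum of two positive pieces that must be controlled by the leading affine term in $\gamma$. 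The cleanest route is a monotonicity/endpoint argument: compute $\tfrac{d}{d\gamma}\mathcal{E}_m(\gamma)$ using the explicit formula $z_2'(\gamma) = -\tfrac{4(\sqrt{33}-3)}{(6+2\sqrt{33}+4\gamma)^2} < 0$ and show either that $\mathcal{E}_m$ is monotone, or that it is concave, on $(\gaOne, 3]$; in either case the verification reduces to showing $\mathcal{E}_m(\gaOne) > 0$ and $\mathcal{E}_m(3) > 0$ at the two endpoints, where $z_2(\gaOne)$ and $z_2(3)$ admit closed-form expressions and the resulting numerical inequalities follow from direct arithmetic.

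The principal obstacle is that a naive uniform estimate replacing $z_2(\gamma)$ by $z_2(\gaOne)$ and $(\gamma^2 - 3)$ by its maximum $6$ simultaneously is too lossy, because these extrema occur at opposite endpoints of the range. One must therefore exploit the precise product structure of $(\gamma^2-3)\,z_2(\gamma)$, using that $z_2(\gamma) = \tfrac{\sqrt{33}-3}{6+2\sqrt{33}+4\gamma}$ decays as $\gamma$ grows. Equivalently, after clearing the denominator $(6 + 2\sqrt{33} + 4\gamma)$ and the common factor $\sqrt{33}-3$, the inequality $\mathcal{E}_m(\gamma) > 0$ becomes a polynomial inequality in $\gamma$ with coefficients in $\mathbb{Q}[\sqrt{33}]$, which can then be verified on $(\gaOne, 3]$ by evaluating at the endpoints together with a single derivative sign check. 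This polynomial verification, analogous in spirit to the polynomial estimates deferred to Propositions in the appendix, is the technical core of the proof.
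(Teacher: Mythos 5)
Your reduction is essentially identical to the paper's: substitute $\kappa=\tfrac32$ into \eqref{LHS} and \eqref{RHS}, use $C_8^2=-\tfrac32 V_8=\tfrac32(1-C_8)$, and arrive at exactly the same affine-in-$\gamma$ expressions $\mathcal E_1,\mathcal E_2$ that the paper writes (after regrouping, $\mathcal E_1 = 2(6-\sqrt{33})\gamma -V_8 + 2\bigl[(3-\gamma^2)z_2C_8+1\bigr]$, and similarly for $\mathcal E_2$). The one place where your judgement is off is the final estimate: you claim that replacing $z_2(\gamma)$ by its value at the left endpoint and $\gamma^2-3$ by its value at the right endpoint simultaneously is ``too lossy,'' but this is precisely what the paper does and it suffices. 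For $m=1$ it renders all three pieces of $\mathcal E_1$ separately positive (one checks $6\,z_2(\gaOne)C_8<1$); for $m=2$ it leaves a small negative constant $16\sqrt{33}-93\approx -1.08$, which is absorbed by the positive affine term $4(6-\sqrt{33})\gamma$ already at $\gamma=\gaOne$ (where it equals $\approx 2.46$). So there is no need for the monotonicity/concavity analysis or the polynomial verification in $\mathbb Q[\sqrt{33}]$ you propose --- those would work, but the paper's argument is a single crude uniform bound followed by a numerical check at the lower endpoint. In short: correct approach, same algebra, a slight overestimate of the difficulty of the last step.
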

\begin{proof}
To show that \eqref{ineq of c1(z2)} holds, by using \eqref{equivalent ineq},  it is enough to check the positivity of $-\frac{\kappa G_C}{V_8}+\frac{2\kappa}{C_8}(F_C-G_V)-4F_V$. When $m=1$, by using \eqref{LHS}, \eqref{RHS} and $C_8^2 = -\frac{3}{2}V_8$, we have, for any $\gamma\in(\gaOne,3]$,
	\begin{align*}
	-\frac{3}{2}\frac{ G_C}{V_8}+\frac{3}{C_8}(F_C-G_V)-4F_V &= 3C_8-3+4(\gamma+1) C_8^2 -2(\gamma-1)C_8+2(3-\gamma^2)z_2C_8\\
	&=-2\gamma(3V_8+C_8)-V_8+2[(3-\gamma^2)z_2C_8+1]\\
	&>2(6-\sqrt33)\gamma-V_8+2(-6z_2(\gaOne)C_8+1)>0,
	\end{align*}
	where we have used \eqref{z2} and \eqref{C8z2} in the last two equalities.

	When $m=2$, by using \eqref{LHS}, \eqref{RHS} and $C_8^2 = -\frac{3}{2}V_8$, we obtain, for any $\gamma\in(\gaOne,3]$,
	\begin{align*}
		-\frac{3}{2}\frac{ G_C}{V_8}+\frac{3}{C_8}(F_C-G_V)-4F_V &= 8\gamma C_8^2-(4\gamma-1)C_8+4(3-\gamma^2)z_2C_8\\
		&=-4\gamma(3V_8+C_8)+[4(3-\gamma^2)z_2+1]C_8\\
		&>-4\gamma(3V_8+C_8)+[-24z_2(2)+1]C_8\\
		&=4(6-\sqrt33)\gamma+16\sqrt{33}-93>0,
	\end{align*}
	where we have used \eqref{z2} and \eqref{C8z2} in the last two inequalities.
\end{proof}

\section{Calculation of polynomials}\label{poly}
\begin{lemma}[\cite{Irving04}, Exercise 10.14]\label{cubic poly}
	Consider the cubic polynomial 
	\begin{align*}
		p(x)=ax^3+bx^2+cx+d
	\end{align*}
	where $a\neq 0$, $b$, $c$ and $d$ are all real numbers. Define the discriminant $\Delta$ of $p(x)$ as
	\begin{align}\label{discriminant of cubic poly}
		\Delta = b^2c^2-4ac^3-4b^3d-27a^2d^2+18abcd.
	\end{align}
	Then,
		\begin{enumerate}
		\item If $\Delta = 0$, then $p(x)$ has a multiple root and all its roots are real;
		\item If $\Delta >0$, then $p(x)$ has three distinct real roots;
		\item If $\Delta<0$, then $p(x)$ has one real root and two complex conjugate roots.
	\end{enumerate}

\end{lemma}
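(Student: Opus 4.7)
The plan is to reduce everything to the factorization
\[
\Delta \;=\; a^{4}\,(r_{1}-r_{2})^{2}(r_{1}-r_{3})^{2}(r_{2}-r_{3})^{2},
\]
where $r_1,r_2,r_3\in\mathbb{C}$ denote the roots of $p(x)=a(x-r_1)(x-r_2)(x-r_3)$, and then to read off each of the three cases from the sign of the right-hand side together with the fact that non-real roots of a real polynomial occur in complex-conjugate pairs.

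First I would verify the identity. The cleanest route is to pass to the depressed cubic $y^3+py+q$ via the translation $x = y - b/(3a)$, under which the coefficients become $p = (3ac-b^2)/(3a^2)$ and $q = (2b^3 - 9abc + 27a^2d)/(27a^3)$. Since this shift sends the roots $r_i$ to $r_i + b/(3a)$, the quantity $\prod_{i<j}(r_i-r_j)^2$ is unchanged, so it suffices to establish $\prod_{i<j}(y_i-y_j)^2 = -4p^3-27q^2$ in the depressed setting, which is a short symmetric-function computation using the Vieta relations $y_1+y_2+y_3=0$, $y_1y_2+y_1y_3+y_2y_3=p$, $y_1y_2y_3=-q$ together with Newton's identities for the power sums. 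Substituting the formulas for $p$ and $q$ into $a^4(-4p^3-27q^2)$ and simplifying recovers $\Delta = b^2c^2-4ac^3-4b^3d-27a^2d^2+18abcd$.

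Once the identity is in hand, the three cases follow cleanly. In case (i) $\Delta=0$, some factor on the right must vanish, so two roots coincide; a non-real repeated root $\alpha+i\beta$ with $\beta\neq 0$ would force its conjugate $\alpha-i\beta$ to also be a repeated root by the real-coefficient property, giving four roots in a cubic, which is impossible; hence the repeated root is real, and the remaining root is then forced to be real as well. In case (ii) $\Delta>0$, a non-real conjugate pair $\alpha\pm i\beta$ among the roots would contribute the factor $(2i\beta)^2=-4\beta^2<0$ while the remaining factor $(r_1-r_2)^2(r_1-r_3)^2 = \bigl((r_1-\alpha)^2+\beta^2\bigr)^2$ is positive, forcing the overall product negative, a contradiction; thus all roots are real, and strict positivity of the product rules out any coincidences. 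Finally, in case (iii) $\Delta<0$, the roots cannot all be real (otherwise the product of squared differences would be non-negative), so exactly one non-real conjugate pair together with one real root must occur.

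The main obstacle is the algebraic identity in the first step, which is in fact the only computational content of the lemma. The direct symmetric-function expansion for the original cubic is tedious, but the depressed-cubic reduction isolates the essential content in the short expression $-4p^3-27q^2$, after which the back-substitution into $(a,b,c,d)$ is routine bookkeeping; the auxiliary negative powers of $a$ appearing in $p$ and $q$ cancel precisely against the overall factor $a^4$, producing a polynomial expression in the coefficients. Once the identity is established, the three cases of the lemma are one-line consequences of complex-conjugation symmetry, so no further subtlety arises.
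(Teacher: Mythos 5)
Your argument is correct, but note that the paper does not prove this lemma at all: it is quoted verbatim from Irving's textbook (Exercise 10.14) and used as a black box in Appendix E, so there is no in-paper proof to compare against. What you give is the standard self-contained proof: establish $\Delta = a^4\prod_{i<j}(r_i-r_j)^2$ (via the depressed cubic, where the identity reduces to $\prod_{i<j}(y_i-y_j)^2=-4p^3-27q^2$ with $e_1=0$), then read off the three cases from conjugation symmetry — the only checks needed are that a non-real conjugate pair contributes the negative factor $(2i\beta)^2=-4\beta^2$ while the mixed factors combine to $\bigl((r_1-\alpha)^2+\beta^2\bigr)^2>0$, and that a repeated non-real root would force four roots of a cubic; you do both correctly. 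One small wording caveat: after back-substitution you get $a^4(-4p^3-27q^2)=\frac{4(b^2-3ac)^3-(2b^3-9abc+27a^2d)^2}{27a^2}$, so the factor $a^4$ alone does not clear all denominators; one also needs the (easily verified) fact that the numerator is divisible by $27a^2$, which is exactly what produces the stated polynomial $b^2c^2-4ac^3-4b^3d-27a^2d^2+18abcd$. That is routine bookkeeping, not a gap.
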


\begin{lemma}[\cite{Irving04}, Exercise 10.30 and Exercise 10.36]\label{quartic poly}
	Consider the quartic polynomial 
	\begin{align*}
		p(x)=ax^4+bx^3+cx^2+dx+e
	\end{align*}
	where $a\neq 0$, $b$, $c$, $d$ and $e$ are all real numbers. Define the discriminant $\Delta$ of $p(x)$ as 
	\begin{align}\label{discriminant of quartic poly}
		\Delta = & 18abcd^3+18b^3cde-80abc^2de-6ab^2d^2e+144a^2cd^2e+144ab^2ce^2-128a^2c^2e^2-192a^2bde^2+b^2c^2d^2\nonumber\\
		&-4b^3d^3-4c^3d^3-4b^2c^3e+16ac^4e-27a^2d^4-27b^4e^2+256a^3e^3.
	\end{align}
	Then,
	\begin{enumerate}
		\item If $\Delta = 0$, then $p(x)$ has a multiple root;
		\item If $\Delta >0$, then the roots of $p(x)$ are either all real or all complex;
		\item If $\Delta<0$, then $p(x)$ has two real roots and two complex conjugate roots.
	\end{enumerate}
\end{lemma}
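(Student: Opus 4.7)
The lemma is a classical fact about the discriminant of a real quartic; I would prove it along the standard lines based on the factorization $\Delta = a^{2n-2}\prod_{i<j}(\alpha_i-\alpha_j)^2$ where $\alpha_1,\ldots,\alpha_n$ are the roots (with multiplicity), specialised here to $n=4$.

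The plan is to proceed in two stages. First, I would \emph{define} a quantity $\tilde\Delta := a^{6}\prod_{1\leq i<j\leq 4}(\alpha_i-\alpha_j)^{2}$ in terms of the (complex) roots, and verify that the right-hand side of \eqref{discriminant of quartic poly} agrees with $\tilde\Delta$ as a symmetric polynomial in the roots, expressed via the elementary symmetric functions $e_k(\alpha_1,\ldots,\alpha_4)$ and hence via $-b/a, c/a, -d/a, e/a$ through Vieta's formulas. Since $\tilde\Delta$ is a symmetric polynomial in $\alpha_1,\ldots,\alpha_4$ of degree $12$, it is a polynomial in $e_1,\ldots,e_4$; the identification with the explicit formula is then a finite (if tedious) algebraic verification that can be done, for example, via the resultant identity $\tilde\Delta=(-1)^{n(n-1)/2}a^{-1}\mathrm{Res}(p,p')$ for $n=4$, or by a direct Newton-identity expansion. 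This step is purely algebraic and I would only sketch it, citing the textbook computation.

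Second, with $\tilde\Delta$ in hand as the quantity on the right of \eqref{discriminant of quartic poly}, I would classify its sign by reading off the root structure. Because $p$ has real coefficients, its non-real roots come in complex-conjugate pairs, so the only admissible root configurations (counting multiplicity) are: (A) four real; (B) two real and one conjugate pair; (C) two conjugate pairs; or a degenerate configuration with a multiple root. If any $\alpha_i=\alpha_j$, then $\tilde\Delta=0$, giving (1). If all roots are simple, I analyse $\prod_{i<j}(\alpha_i-\alpha_j)^2$ case by case. In case (A) every factor is the square of a real number, so $\tilde\Delta>0$. In case (C), writing the roots as $u_1\pm iv_1$, $u_2\pm iv_2$ with $v_1,v_2\neq 0$, the two within-pair factors are $(2iv_k)^2=-4v_k^2<0$, contributing a positive product, while the four cross factors $(u_1+iv_1-u_2\pm iv_2)^2$ and their conjugates come in complex-conjugate pairs whose combined product is a positive real; therefore $\tilde\Delta>0$. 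In case (B), with real roots $\alpha_1,\alpha_2$ and conjugate pair $u\pm iv$, the single within-pair factor contributes $-4v^2<0$, the factor $(\alpha_1-\alpha_2)^2$ is positive, and the four remaining factors $(\alpha_k-(u\pm iv))^2$ pair up as conjugates with positive product, so $\tilde\Delta<0$. Combining the three sign computations with part (1) gives (2) and (3).

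The main obstacle is the first stage: the algebraic verification that the explicit degree-$12$ polynomial on the right of \eqref{discriminant of quartic poly} is exactly $a^6\prod(\alpha_i-\alpha_j)^2$. Rather than expanding by hand, I would invoke the resultant definition $\tilde\Delta=(-1)^{6}a^{-1}\mathrm{Res}(p,p')$ and compute the $7\times 7$ Sylvester determinant, or equivalently appeal to the standard reduction to the depressed quartic $y^4+py^2+qy+r$ (obtained by $x\mapsto x-b/(4a)$), for which $\Delta_{\text{dep}}=-4p^3q^2-27q^4+16p^4r+144pq^2r-128p^2r^2+256r^3$ is well known, and then track how the affine substitution and the factor $a^6$ transform this into the stated formula. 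The sign analysis in the second stage is elementary once the identification is done.
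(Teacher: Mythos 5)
The paper does not actually prove this lemma: it is quoted verbatim from Irving's textbook (Exercises 10.30 and 10.36), so your argument supplies a proof where the paper only gives a citation. Your route is the standard one and is correct: identify the displayed expression with $a^{6}\prod_{1\le i<j\le 4}(\alpha_i-\alpha_j)^{2}$ (equivalently $a^{-1}\mathrm{Res}(p,p')$, or via the depressed quartic), then read off the sign from the admissible root configurations of a real quartic. Your case analysis is sound, including the implicit point that $\Delta\neq 0$ forces all roots simple, so the configurations (A), (B), (C) are exhaustive for parts (2) and (3), and the conjugate pairing of the cross factors does give positive contributions, leaving exactly one negative factor $-4v^{2}$ in case (B). One concrete issue you will hit in your first stage: the formula \eqref{discriminant of quartic poly} as printed contains a typo. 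The term $-4c^{3}d^{3}$ cannot occur in the discriminant, since under the grading $w(a)=0$, $w(b)=1$, $w(c)=2$, $w(d)=3$, $w(e)=4$ the discriminant is isobaric of weight $12$, whereas $c^{3}d^{3}$ has weight $15$; the correct term is $-4ac^{3}d^{2}$. With that correction your identification with $a^{6}\prod_{i<j}(\alpha_i-\alpha_j)^{2}$ goes through, and none of your sign conclusions are affected because they use only the root-product form of the discriminant, not the explicit coefficient formula.
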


\begin{lemma}[\cite{Sturm09}, Sturm's Theorem]\label{Sturm's theorem}
	Take any polynomial $p(x)$, and let $p_0(x), \ldots p_m(x)$ denote the Sturm chain corresponding to $p(x)$. Take any interval $(a, b)$ such that $p_i(a), p_i(b) \neq$ 0 , for any $i\in\{0,1,\dots,m\}$. For any constant $c$, let $\sigma(c)$ denote the number of changes in sign in the sequence $p_0(c), \ldots p_m(c)$. Then $p(x)$ has $\sigma(a)-\sigma(b)$ distinct roots in the interval $(a, b)$.
\end{lemma}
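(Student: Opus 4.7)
The plan is to prove Sturm's theorem by tracking how $\sigma(c)$ changes as $c$ varies from $a$ to $b$, and showing that the only net contribution comes from crossings of zeros of $p_0=p$ itself. First I would recall the construction of the Sturm chain, where $p_0 = p$, $p_1 = p'$, and $p_{i+1} = -\operatorname{rem}(p_{i-1}, p_i)$ for $i\ge 1$, terminating when we reach a nonzero constant (after possibly dividing through by $\gcd(p,p')$ we may assume $p$ is squarefree, so that $p_m$ is a nonzero constant and consecutive $p_i, p_{i+1}$ share no common root). The function $c \mapsto \sigma(c)$ is piecewise constant and can only change at points where some $p_i(c)=0$; the strategy is to analyze each type of crossing separately.

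Second, I would handle crossings through an interior zero of the chain. Suppose $p_i(x_0)=0$ for some $1\le i \le m-1$. The recursion $p_{i-1} = q_i p_i - p_{i+1}$ evaluated at $x_0$ gives $p_{i-1}(x_0) = -p_{i+1}(x_0)$, so $p_{i-1}(x_0)$ and $p_{i+1}(x_0)$ are nonzero with opposite signs. By continuity, this sign configuration is preserved in a small neighborhood of $x_0$, so the triple $(p_{i-1}, p_i, p_{i+1})$ contributes exactly one sign change on each side of $x_0$ regardless of the sign of $p_i$ just to the left or right. Hence $\sigma$ is unchanged across $x_0$.

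Third, I would handle crossings through a zero of $p_0$. If $p(x_0) = 0$, then since $p$ is squarefree, $p_1(x_0) = p'(x_0) \ne 0$. By Taylor expansion, $p(x_0-\varepsilon)$ and $p'(x_0-\varepsilon)$ have opposite signs (a sign change in the pair $(p_0,p_1)$), while $p(x_0+\varepsilon)$ and $p'(x_0+\varepsilon)$ have the same sign (no sign change). Thus crossing a zero of $p$ from left to right decreases $\sigma$ by exactly $1$, and no other contribution to the chain is created because $p_1(x_0)\neq 0$ keeps the rest of the chain's sign pattern fixed. Summing these contributions over the interval $(a,b)$ yields $\sigma(a) - \sigma(b) = \#\{\text{distinct roots of } p \text{ in } (a,b)\}$.

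The main obstacle, or at least the point requiring the most care, is the interior-zero analysis: one must verify that even when several $p_i$ vanish simultaneously at the same point (which the squarefree assumption and the coprimality of consecutive terms rule out for $i$ and $i{+}1$, but not automatically for non-adjacent indices), the local count of sign changes is preserved. A clean way to organize this is to show that consecutive $p_i, p_{i+1}$ are coprime by induction using the Euclidean structure of the recursion, so that no two adjacent entries vanish together, and then apply the triple-sign argument above to each isolated zero of an intermediate $p_i$. The reduction to the squarefree case (by passing to $p/\gcd(p,p')$, which has the same distinct roots as $p$) handles the general statement. Since the statement is quoted verbatim from the classical reference \cite{Sturm09} and is used only as a black-box computational tool in Appendix E, no further refinement is needed beyond this outline.
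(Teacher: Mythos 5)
The paper does not prove this lemma at all: it is quoted verbatim from the classical reference and used purely as a black-box tool for the sign checks in Appendix E, so there is no in-paper argument to compare against. Your outline is the standard textbook proof of Sturm's theorem and is essentially correct: the reduction to the squarefree case via $p/\gcd(p,p')$ (which has the same distinct roots), the observation that consecutive chain elements are coprime so that adjacent entries never vanish simultaneously, the triple argument $p_{i-1}(x_0)=-p_{i+1}(x_0)\neq 0$ showing that $\sigma$ is locally constant across a zero of an intermediate $p_i$, and the Taylor-expansion argument showing that each simple zero of $p_0$ decreases $\sigma$ by exactly one as $c$ increases. The only points requiring slightly more care than your sketch provides are (i) in the non-squarefree case the canonical chain of $p$ is the chain of $p/\gcd(p,p')$ multiplied by $\gcd(p,p')$, and one should note that the hypothesis $p_i(a),p_i(b)\neq 0$ guarantees the sign-change counts at the endpoints are unaffected by this common factor; and (ii) when several non-adjacent $p_i$ vanish at the same point, the triple argument must be applied to each index separately in a common small neighborhood — both of which you flag explicitly, so the outline stands as a complete plan. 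Relative to the paper, your approach buys a self-contained justification of the tool, at the cost of material the authors deliberately delegated to the literature.
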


\begin{proposition}\label{poly: lemma 5.2-1}
	For any $\gamma\in(1,2]$, the cubic polynomial
	\begin{align*}
		p(\gamma)=-\gamma^3-7\gamma^2+\frac{106}{3}\gamma-36<0.
	\end{align*}
\end{proposition}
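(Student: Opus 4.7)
The plan is to show that $p$ attains its maximum on $[1,2]$ in the interior and that this maximum is still negative, via a standard single-critical-point calculus argument. First I would compute the endpoint values $p(1) = -1 - 7 + \tfrac{106}{3} - 36 = -\tfrac{26}{3} < 0$ and $p(2) = -8 - 28 + \tfrac{212}{3} - 36 = -\tfrac{4}{3} < 0$, so the assertion already holds on the boundary and it remains to control the interior.

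Next I would compute $p'(\gamma) = -3\gamma^2 - 14\gamma + \tfrac{106}{3}$, which is a downward parabola. Evaluating at the endpoints gives $p'(1) = \tfrac{55}{3} > 0$ and $p'(2) = -\tfrac{14}{3} < 0$, so there is a unique critical point $\gamma_* \in (1,2)$ at which $p$ attains its maximum on $[1,2]$, and $\gamma_*$ satisfies the quadratic relation
\begin{equation}\label{appC:quad}
3\gamma_*^2 + 14\gamma_* = \tfrac{106}{3}, \qquad \text{equivalently} \qquad 9\gamma_*^2 + 42\gamma_* - 106 = 0.
\end{equation}

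Using \eqref{appC:quad} to eliminate $\gamma_*^2$ and $\gamma_*^3$ from $p(\gamma_*)$, a direct calculation (substitute $\gamma_*^2 = \tfrac{106}{9} - \tfrac{14}{3}\gamma_*$ and $\gamma_*^3 = \tfrac{302}{9}\gamma_* - \tfrac{1484}{27}$) reduces the expression to the linear form
\[
p(\gamma_*) = \tfrac{310}{9}\gamma_* - \tfrac{1714}{27}.
\]
Thus it remains only to bound $\gamma_*$ from above. From the quadratic formula applied to \eqref{appC:quad}, $\gamma_* = \tfrac{-42 + \sqrt{5580}}{18}$, and since $5580 < 5625 = 75^2$ we obtain the explicit upper bound $\gamma_* < \tfrac{-42 + 75}{18} = \tfrac{11}{6}$. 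Plugging this in,
\[
p(\gamma_*) < \tfrac{310}{9}\cdot\tfrac{11}{6} - \tfrac{1714}{27} = \tfrac{3410}{54} - \tfrac{3428}{54} = -\tfrac{1}{3} < 0,
\]
so $p(\gamma) \leq p(\gamma_*) < 0$ on $[1,2]$. There is no real obstacle here; the only mild subtlety is keeping the arithmetic clean when extracting an explicit rational upper bound on $\gamma_*$ sharp enough to beat $\tfrac{1714}{27}\cdot\tfrac{9}{310}$, and the bound $\gamma_* < \tfrac{11}{6}$ is comfortably tight for this purpose.
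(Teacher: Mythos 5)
Your proof is correct — I checked each step: the endpoint values $p(1)=-26/3$, $p(2)=-4/3$, the derivative signs $p'(1)=55/3>0$, $p'(2)=-14/3<0$, the reduction $p(\gamma_*)=\tfrac{310}{9}\gamma_*-\tfrac{1714}{27}$ via $9\gamma_*^2+42\gamma_*-106=0$, and the bound $\gamma_*<\tfrac{11}{6}$ from $5580<75^2$ all hold, yielding $p(\gamma_*)<-\tfrac{1}{3}$. It is, however, a genuinely different route from the paper's. The paper computes the discriminant of the cubic, $\Delta=-189956/27<0$, invokes the discriminant criterion (their Lemma on cubic polynomials) to conclude $p$ has exactly one real root, and then observes $p(-\infty)>0$ together with $p(0)=-36<0$ forces that root to be negative, so $p<0$ everywhere on $(0,\infty)\supset(1,2]$. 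The paper's argument is shorter, is uniform with the style of the other polynomial checks in the appendix, and gives slightly more global information (a single real root, all of it to the left of $0$); your calculus argument is more elementary (no discriminant machinery, just a maximum on a compact interval) at the cost of more arithmetic and a rational upper bound on the critical point, and it only establishes negativity on $[1,2]$ rather than on $(0,\infty)$ — which is all that is needed here, so there is no loss.
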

\begin{proof}
	Since the discriminant of $p(x)$ is
	\begin{align*}
		\Delta = \frac{-189956}{27} <0,
	\end{align*}
	$p(x)$ has only one real root by Lemma \ref{cubic poly}. Moreover, $p(-\infty)>0$ and $p(0)=-36<0$, which implies that the real root must be negative. Thus, $p(\gamma)<0$ for $\gamma\in(1,2]$.
\end{proof}

\begin{proposition}\label{poly: lemma 5.2-2}
	For any $\gamma\in(1,2]$, the quintic polynomial
	\begin{align*}
		p(\gamma)=81(\gamma-1)(\gamma+1)^4-8\gamma(3\gamma-1)^4<0.
	\end{align*}
\end{proposition}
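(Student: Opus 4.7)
The plan is to expand the quintic $p(\gamma)$ explicitly, establish the negativity at the endpoints $\gamma=1$ and $\gamma=2$ by direct evaluation, and then rule out any interior zero in the open interval $(1,2)$ by means of Sturm's theorem (Lemma B.3). By continuity, the absence of real zeros together with a negative value at $\gamma=1$ will force $p(\gamma)<0$ on the whole of $(1,2]$.

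First, expanding the two contributions via the binomial theorem gives
\begin{align*}
81(\gamma-1)(\gamma+1)^4 &= 81\gamma^5 + 243\gamma^4 + 162\gamma^3 - 162\gamma^2 - 243\gamma - 81, \\
8\gamma(3\gamma-1)^4 &= 648\gamma^5 - 864\gamma^4 + 432\gamma^3 - 96\gamma^2 + 8\gamma,
\end{align*}
so that
\[
p(\gamma) = -567\gamma^5 + 1107\gamma^4 - 270\gamma^3 - 66\gamma^2 - 251\gamma - 81.
\]
A direct substitution then yields $p(1) = -128 < 0$ and $p(2) = -3439 < 0$.

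Next I would construct the Sturm chain $p_0 = p$, $p_1 = p'$, and $p_{k+1} = -\mathrm{rem}(p_{k-1}, p_k)$ by a finite sequence of Euclidean divisions, compute the sign variations $\sigma(1)$ and $\sigma(2)$ in this chain evaluated at the endpoints, and verify $\sigma(1) - \sigma(2) = 0$. By Sturm's theorem this shows that $p$ admits no real root in $(1,2]$, and combined with $p(1)<0$ and continuity of $p$, this gives the claimed strict inequality.

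The main obstacle is the book-keeping of the rational coefficients in the Sturm remainders, which grow in denominator size through the divisions; this is however a purely mechanical verification of fixed length. If this direct approach proves too cumbersome, a backup strategy is to use the derivative analysis: computing $p'(\gamma)=-2835\gamma^4 + 4428\gamma^3-810\gamma^2-132\gamma-251$ and $p''(\gamma) = -11340\gamma^3+13284\gamma^2-1620\gamma-132$, one can show that $p''$ changes sign exactly once on $(1,2)$ (since $p''(1)=192>0$ and $p''(2)=-40956<0$ while $p'''$ is monotone decreasing on this interval), so $p'$ has at most two critical points, from which one deduces (using $p'(1)=400>0$ and $p'(2)=-13691<0$) that $p'$ has a unique zero $\gamma_\star \in (1,2)$, namely the global maximum of $p$ on $[1,2]$. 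A quantitative upper bound on $\gamma_\star$ (e.g.\ via a coarse bisection showing $\gamma_\star\in[1.2,1.3]$) together with monotonicity then permits bounding $p(\gamma_\star)$ above by a negative constant, closing the argument.
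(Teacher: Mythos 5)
Your primary approach is correct and is in essence the same method the paper uses: both rely on Sturm's theorem (Lemma B.3 in the paper) applied to the explicit quintic $p(\gamma)=-567\gamma^5+1107\gamma^4-270\gamma^3-66\gamma^2-251\gamma-81$; your expansion and endpoint values $p(1)=-128$, $p(2)=-3439$ are all correct. The only difference is the interval: the paper computes $\sigma(-\infty)-\sigma(+\infty)=3-2=1$, concludes $p$ has exactly one real root, and locates it below $\gamma=1$ (using $p(-\infty)>0$, $p(1)<0$ and the odd degree with negative leading coefficient), whereas you propose to compute $\sigma(1)-\sigma(2)$ directly on $(1,2)$. Checking the paper's Sturm chain at the endpoints, the sign sequences are $(-,+,+,-,-,-)$ at $\gamma=1$ and $(-,-,-,+,-,-)$ at $\gamma=2$, each with two sign changes, so $\sigma(1)-\sigma(2)=0$ as you anticipated; together with $p(1),p(2)<0$ this closes the argument. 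Your interval choice is arguably slightly more direct (no need to discuss the sign at $-\infty$ or the location of the root), at the cost of plugging in numerical values rather than just reading off leading coefficients.

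The backup derivative argument is conceptually sound but is not yet a complete proof as written. The chain $p''''<0\Rightarrow p'''$ decreasing, $p'''(1)<0\Rightarrow p'''<0$ on $(1,2)$, $p''$ strictly decreasing with a unique zero $\gamma_2$, then $p'$ unimodal with a unique zero $\gamma_\star$, is all fine and the numbers you quote ($p''(1)=192$, $p''(2)=-40956$, $p'(1)=400$, $p'(2)=-13691$) check out. The gap is in the final step: you localize $\gamma_\star\in(1.2,1.3)$ but then merely assert that this "permits bounding $p(\gamma_\star)$ above by a negative constant." That is not automatic, because $p(\gamma_\star)$ is the \emph{maximum} of $p$, so evaluating $p$ at the endpoints of a bracketing interval gives lower bounds, not an upper bound. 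One way to close it: since $p'$ is decreasing on $[\gamma_2,2]\supset[1.25,1.3]$, one has $p(\gamma_\star)\le p(1.25)+(\gamma_\star-1.25)\,p'(1.25)\le p(1.25)+0.05\,p'(1.25)$, and a direct (rational) computation of $p(1.25)$ and $p'(1.25)$ then yields a negative bound. Without some such estimate the backup argument does not actually conclude.
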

\begin{proof}
By direct computations, the Sturm chain of $p(\gamma)$ is given by
\begin{align*}
	p_0(\gamma)&=-81 - 251 \gamma - 66 \gamma^2 - 270 \gamma^3 + 1107 \gamma^4 - 567 \gamma^5,\\
	p_1(\gamma)&=-251 - 132 \gamma - 810 \gamma^2 + 4428 \gamma^3 - 2835 \gamma^4,\\
	p_2(\gamma)&=\frac{52816}{525} + \frac{36944 \gamma}{175} + \frac{720 \gamma^2}{7} - \frac{41616 \gamma^3}{175},\\
	p_3(\gamma)&=-\frac{92164800}{83521}-\frac{126201600 \gamma}{83521}+\frac{162187200 \gamma^2}{83521},\\
	p_4(\gamma)&=-\frac{14651587904}{271832505}-\frac{5446905536 \gamma}{453054175},\\
	p_5(\gamma)&=-\frac{3876577982771200}{86724949081}.
\end{align*} 
Therefore, $\sigma(-\infty)=3$ and $\sigma(\infty)=2$. Thus, $p(\gamma)$ only has one real root. Since $p(-\infty)>0$ and $p(1)<0$, $p(\gamma)<0$ for any $\gamma\in(1,2]$.
\end{proof}

\begin{proposition}\label{poly: Lemma 6.2-1}
	For any $\gamma\in(\gaSix,\gaOne]$, the quartic polynomial
	\begin{align*}
		p(\gamma)=\frac{8(\gamma-2)^2\gamma^2}{625}+ \frac{2(1-2\gamma)\gamma(\gamma+1)}{25} +(1-2\gamma)(2-\gamma)<0.
	\end{align*}
\end{proposition}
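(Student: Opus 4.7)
My plan is to reduce the inequality to the negativity of an explicit quartic and then exploit convexity. Multiplying through by $625$ gives
\[
625\,p(\gamma) = q(\gamma) := 8\gamma^4 - 132\gamma^3 + 1232\gamma^2 - 3075\gamma + 1250,
\]
so it suffices to show $q(\gamma) < 0$ on $(\gaSix, \gaOne]$. Since $\gaSix > 1$ (which follows from the defining equation \eqref{gaSix} together with the observation that $C_1(1) = 0$ while $\sqrt{-k(1,z_M)V_1(1)} > 0$, so the strict inequality $C_1 < \sqrt{-k V_1}$ persists for $\gamma$ near $1$) and $\gaOne = 1+\sqrt{2} < 5/2$, it is enough to establish $q(\gamma) < 0$ on the larger, explicit interval $[1,5/2]$.

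The key step will be to prove that $q$ is strictly convex on all of $\mathbb{R}$. To that end, I would compute
\[
q''(\gamma) = 96\gamma^2 - 792\gamma + 2464
\]
and verify that its discriminant $792^2 - 4\cdot 96\cdot 2464 = 627264 - 946176 = -318912$ is negative; combined with the positive leading coefficient, this yields $q''(\gamma) > 0$ for every real $\gamma$. Consequently $q$ is strictly convex on $\mathbb{R}$, so
\[
\max_{\gamma \in [1,5/2]} q(\gamma) = \max\{q(1),\,q(5/2)\}.
\]

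The remainder is a routine arithmetic evaluation: one checks that $q(1) = 8 - 132 + 1232 - 3075 + 1250 = -717$ and $q(5/2) = 625/2 - 4125/2 + 7700 - 15375/2 + 1250 = -975/2$, both of which are strictly negative. This concludes the argument. I do not anticipate a substantive obstacle: the main benefit of the convexity observation is that it bypasses the more elaborate Sturm-chain root-counting used in adjacent lemmas of the appendix, reducing the entire proof to two rational-arithmetic evaluations at the endpoints $\gamma = 1$ and $\gamma = 5/2$.
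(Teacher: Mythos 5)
Your proof is correct, and it takes a genuinely different route from the paper. The paper's argument invokes its quartic-discriminant criterion (Lemma \ref{quartic poly}): it computes the discriminant $\Delta = -\tfrac{1586137650624}{3814697265625}<0$ to conclude $p$ has exactly two real roots, then checks the signs $p(-\infty)>0$, $p(1)=-\tfrac{717}{625}<0$, $p(\tfrac{5}{2})=-\tfrac{39}{50}<0$, $p(+\infty)>0$ to locate those roots outside $[1,\tfrac{5}{2}]$. Your argument instead observes that $q''(\gamma)=96\gamma^2-792\gamma+2464$ has negative discriminant $(792^2-4\cdot 96\cdot 2464=-318912)$ and positive leading coefficient, so $q$ is globally strictly convex; the maximum of a convex function over $[1,\tfrac{5}{2}]$ is attained at an endpoint, and both $q(1)=-717$ and $q(\tfrac{5}{2})=-\tfrac{975}{2}$ are negative (these agree with the paper's values after dividing by $625$). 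Your version trades a quartic-discriminant computation for a much smaller quadratic one and removes the need for the root-counting lemma and the $\pm\infty$ sign checks, so it is arguably the more elementary and self-contained of the two; the paper's approach has the advantage of reusing the same Lemma \ref{quartic poly} template it applies to the neighboring propositions in the appendix. Your justification of $\gaSix>1$ (via $C_1(1)=0$ while $\sqrt{-k(1,z_M)V_1(1)}>0$) is also sound and is consistent with the monotonicity the paper establishes in the proof of Lemma \ref{upper barrier C1}.
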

\begin{proof}
	By \eqref{discriminant of quartic poly}, the discriminant of $p(\gamma)$ is
	\begin{align*}
		\Delta = -\frac{1586137650624}{3814697265625}<0.
	\end{align*}
	By Lemma \ref{quartic poly}, $p(x)$ has two real roots. Since $p(-\infty) >0$, $p(1)=-\frac{717}{625}<0$, $p(\frac{5}{2})=-\frac{39}{50}<0$, and $p(+\infty) >0$, $p(\gamma)<0$ for any $\gamma\in(\gaSix,\gaOne]$. 
\end{proof}

\begin{proposition}\label{poly: Lemma 6.2-2}
	For any $\gamma\in(\gaSix,\gaOne]$, the quadratic polynomial
	\begin{align}
		p(\gamma)=(3\sqrt{2}-4)\gamma+1-\sqrt{2}+\frac{2}{25}[-(2-\sqrt{2})\gamma^2+(3-2\sqrt{2})\gamma+2\sqrt{2}+2]>0.
	\end{align}
\end{proposition}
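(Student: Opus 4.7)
The key observation is that $p(\gamma)$ is a quadratic with negative leading coefficient $-\tfrac{2(2-\sqrt{2})}{25}$, so it is concave, and its unique critical point (vertex) $\gamma^{*}$ can be computed explicitly from $p'(\gamma)=0$:
$$
\gamma^{*} \;=\; \frac{24\sqrt{2}-23}{4}.
$$
The first step of the plan is to show that $\gamma^{*}$ lies to the right of the interval of interest, i.e.\ $\gamma^{*} > \gaOne = 1+\sqrt{2}$. This reduces to checking $20\sqrt{2}>27$, equivalently $800>729$. Consequently $p$ is strictly increasing on $(-\infty,\gamma^{*}]$, and in particular on all of $[\sqrt{2},\gaOne]$.

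With monotonicity in hand, the second step is to replace the implicit left endpoint $\gaSix$ by a convenient explicit lower bound. The claim is that $\gaSix>\sqrt{2}$. Using the defining equation $C_{1}(\gaSix)^{2}=-k(\gaSix,z_{M})V_{1}(\gaSix)$ from \eqref{gaSix}, together with \eqref{V after shock}, \eqref{C after shock}, and \eqref{k(r,zM)}, one has
$$C_1(\ga)^2+k(\ga,z_M)V_1(\ga)= \frac{2\gamma}{\gamma+1}\Big(\frac{\gamma-1}{\gamma+1}-\frac{1}{2+\sqrt{2\gamma}}\Big),$$
which at $\gamma=\sqrt{2}$ is negative (equivalent to $2^{3/4}<1+2\sqrt{2}$, trivial since $2^{3/4}<2$). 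As in the proof of Lemma \ref{upper barrier C1}, the map $\gamma\mapsto C_1(\gamma)-\sqrt{-k(\gamma,z_M)V_1(\gamma)}$ is monotone increasing and vanishes precisely at $\gaSix$, so $\gaSix>\sqrt{2}$. Combined with Step one, this gives $(\gaSix,\gaOne]\subset(\sqrt{2},\gamma^{*}]$, and hence $p(\gamma)\geq p(\sqrt{2})$ on $(\gaSix,\gaOne]$.

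The third and final step is a direct evaluation at the auxiliary point $\sqrt{2}$, which has explicit algebraic form. Expanding,
$$
p(\sqrt{2}) \;=\; \frac{163-111\sqrt{2}}{25},
$$
and positivity follows from $163^{2}=26569>24642=2\cdot 111^{2}$. Chaining the three steps gives $p(\gamma)\geq p(\sqrt{2})>0$ for every $\gamma\in(\gaSix,\gaOne]$.

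There is no real analytic obstacle here: the only mildly delicate point is the need to avoid working directly at the implicit endpoint $\gaSix$, which we bypass by exploiting monotonicity of $p$ on the relevant interval (via the vertex computation) to shift the evaluation point to the algebraically clean value $\sqrt{2}$. Had the vertex fallen inside $(\gaSix,\gaOne]$ the argument would have had to be supplemented by a two-endpoint check together with explicit numerical bounds on $\gaSix$; the fact that $\gamma^{*}\approx 2.73$ is comfortably to the right of $\gaOne\approx 2.414$ means this complication does not arise.
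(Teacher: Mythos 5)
Your proposal is correct: the vertex $\gamma^{*}=\frac{24\sqrt{2}-23}{4}$, the inequality $\gamma^{*}>\gaOne$ (reducing to $800>729$), the lower bound $\gaSix>\sqrt{2}$ (via the numerator identity and the monotonicity argument from Lemma~\ref{upper barrier C1}), and the value $p(\sqrt{2})=\frac{163-111\sqrt{2}}{25}$ with $163^{2}=26569>24642=2\cdot111^{2}$ all check out. However, it takes a noticeably more elaborate route than the paper. The paper's proof exploits only concavity plus a crude containment: since $p$ is a downward-opening parabola and $p(1)>0$, $p(3)>0$, the parabola is positive on $[1,3]$, and $(\gaSix,\gaOne]\subset(1,3)$ because $\gaSix>1$ is immediate (at $\gamma=1$ one has $C_1=0<\sqrt{-k\,V_1}$) and $\gaOne=1+\sqrt{2}<3$. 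You instead locate the vertex to get monotonicity on the interval, prove the non-trivial refinement $\gaSix>\sqrt{2}$ — which requires importing the derivative computation from Lemma~\ref{upper barrier C1} — and then evaluate at $\sqrt{2}$. The trade-off is that your route yields a sharper lower bound for $p$ on the interval, but at the cost of an auxiliary lemma about $\gaSix$ that the paper never needs; the paper's two-endpoint check is more elementary and entirely self-contained. (As a minor aside, the paper's stated values $p(1)=\frac{58\sqrt{2}-77}{25}$ and $p(3)=\frac{264\sqrt{2}-361}{25}$ appear to be typos — direct substitution gives $\frac{52\sqrt{2}-69}{25}$ and $\frac{210\sqrt{2}-289}{25}$ — but both are positive, so the paper's conclusion stands.)
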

\begin{proof}
	Since $p(-\infty)<0$, $p(1)=\frac{58\sqrt{2}-77}{25}>0$, $p(3)=\frac{264\sqrt{2}-361}{25}>0$, and $p(\infty)<0$. So, $p(\gamma)>0$ for any $\gamma\in(\gaSix,\gaOne]$. 
\end{proof}

\begin{proposition}\label{poly: Lemma 6.2-3}
	For any $\gamma\in(\gaSix,\gaOne]$, the quadratic polynomial
	\begin{align}
		p(\gamma)=2(3\sqrt{2}-4)\gamma+3-3\sqrt{2}+\frac{4}{25}[-(2-\sqrt{2})\gamma^2+(3-2\sqrt{2})\gamma+2\sqrt{2}+2]>0.
	\end{align}
\end{proposition}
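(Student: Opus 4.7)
The plan is to exploit the concavity of the quadratic $p$: since the coefficient of $\gamma^2$ is $-\tfrac{4(2-\sqrt2)}{25}<0$, $p$ is concave down, so on any closed interval $[\gamma_a,\gamma_b]$ we have $p(\gamma)\ge \min(p(\gamma_a),p(\gamma_b))$. Thus to prove positivity on $(\gaSix,\gaOne]$ it suffices to exhibit a rational lower bound $\gamma_L\le \gaSix$ with $p(\gamma_L)>0$ and to verify $p(\gaOne)>0$. This parallels the endpoint-checking strategy of Proposition \ref{poly: Lemma 6.2-2}, with the key difference that $p(1)=\frac{79\sqrt2-113}{25}<0$, so one cannot use $\gamma=1$ as the left endpoint and must instead work closer to $\gaSix$.

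First I would derive an explicit rational lower bound for $\gaSix$. Squaring the defining relation $C_1(\gaSix)=\sqrt{-k(\gaSix,z_M)V_1(\gaSix)}$, substituting the formulas \eqref{V after shock}, \eqref{C after shock}, and \eqref{k(r,zM)}, and clearing denominators, one finds that $\gaSix$ is a root of
\[
q(\gamma)\;:=\;2\gamma(\gamma-1)^2-(3-\gamma)^2\;=\;2\gamma^3-5\gamma^2+8\gamma-9.
\]
Since $q'(\gamma)=6\gamma^2-10\gamma+8$ has negative discriminant $100-192<0$ and positive leading coefficient, $q'>0$, so $q$ is strictly increasing and has a unique real root. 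A direct rational arithmetic check gives $q(\tfrac{17}{10})=\tfrac{-24}{1000}<0$, and therefore $\gaSix>\tfrac{17}{10}$.

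Next I would evaluate $p$ at $\gamma=\tfrac{17}{10}$ and at $\gamma=\gaOne=1+\sqrt2$ by direct substitution. Using $(1+\sqrt2)^2=3+2\sqrt2$, a short computation yields
\[
p(\gaOne)=\tfrac{171-117\sqrt2}{25},\qquad p(\tfrac{17}{10})=\tfrac{4649\sqrt2-6493}{625},
\]
and both are positive by the elementary rational comparisons $171^2=29241>27378=2\cdot 117^2$ and $2\cdot 4649^2=43\,226\,402>42\,159\,049=6493^2$.

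Finally, combining these: concavity of $p$ gives $p(\gamma)\ge\min(p(\tfrac{17}{10}),p(\gaOne))>0$ for all $\gamma\in[\tfrac{17}{10},\gaOne]$, and the inclusion $(\gaSix,\gaOne]\subset[\tfrac{17}{10},\gaOne]$ (from Step 1) finishes the proof. The main obstacle is that $\gaSix$ is defined only implicitly, so the whole argument hinges on producing an \emph{effective} rational lower bound; the cubic $q$ above is tailored to make this a one-line check. Once that lower bound is in hand, the rest is routine rational arithmetic, parallel in spirit to Proposition \ref{poly: Lemma 6.2-2}.
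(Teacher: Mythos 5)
Your proof is correct and takes essentially the same route as the paper (concavity of the quadratic plus endpoint checking), but it is more careful on one point. The paper tests $p$ at $\gamma=\tfrac32$ and $\gamma=3$, implicitly relying on $\gaSix\ge\tfrac32$ but never rigorously establishing that bound; it only records the numerical estimate $\gaSix\approx1.7$. You supply exactly the missing ingredient: by clearing denominators in the defining relation \eqref{gaSix} you arrive at $(\gaSix-1)\sqrt{2\gaSix}=3-\gaSix$, hence $\gaSix$ is the unique real root of the strictly increasing cubic $q(\gamma)=2\gamma^3-5\gamma^2+8\gamma-9$, and the one-line rational check $q(\tfrac{17}{10})=-\tfrac{24}{1000}<0$ gives the effective lower bound $\gaSix>\tfrac{17}{10}$. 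Your endpoint evaluations $p(\gaOne)=\tfrac{171-117\sqrt2}{25}>0$ and $p(\tfrac{17}{10})=\tfrac{4649\sqrt2-6493}{625}>0$ check out (the paper's displayed values of $p(\tfrac32)$ and $p(3)$ appear to contain arithmetic slips, though they are still positive). You also evaluate directly at the right endpoint $\gaOne=1+\sqrt2$ rather than at $\gamma=3$, which is a minor aesthetic difference. In short, the structure is the same, but your version closes a small gap in rigor by pinning down $\gaSix$ with an explicit rational bound rather than a numerical approximation.
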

\begin{proof}
	Since $p(-\infty)<0$, $p(\frac{3}{2})=\frac{182 \sqrt{2}-253}{25}>0$, $p(3)=\frac{503\sqrt{2}-697}{25}>0$, and $p(\infty)<0$. So, $p(\gamma)>0$ for any $\gamma\in(\gaSix,\gaOne]$. 
\end{proof}

\begin{proposition}\label{poly: Lemma 6.3-1}
	For any $\gamma\in(\gaOne,3]$, the cubic polynomial
	\begin{align*}
		p(\gamma) = \gamma^3-17\gamma^2+40\gamma-14<0.
	\end{align*}
\end{proposition}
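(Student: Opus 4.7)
The plan is to exploit the monotonicity of $p$ on the interval of interest, reducing the problem to a single endpoint evaluation. First I would compute
\[
p'(\gamma) = 3\gamma^2 - 34\gamma + 40,
\]
whose discriminant $34^2 - 4\cdot 3 \cdot 40 = 1156 - 480 = 676 = 26^2$ is a perfect square, giving the two real roots $\gamma = \frac{34 \pm 26}{6}$, that is, $\gamma = \frac{4}{3}$ and $\gamma = 10$. Since the leading coefficient of $p'$ is positive, we have $p'(\gamma) < 0$ exactly on $(\frac{4}{3}, 10)$.

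Next I would verify the inclusion $[\gaOne, 3] \subset (\frac{4}{3}, 10)$, which reduces to the trivial inequality $\gaOne = 1 + \sqrt{2} > \frac{4}{3}$ (equivalently $\sqrt{2} > \frac{1}{3}$). Consequently $p'(\gamma) < 0$ for all $\gamma \in [\gaOne, 3]$, so $p$ is strictly decreasing on this interval, and it suffices to show $p(\gaOne) < 0$.

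The remaining step is a direct evaluation. Using $(1+\sqrt{2})^2 = 3 + 2\sqrt{2}$ and $(1+\sqrt{2})^3 = 7 + 5\sqrt{2}$, we compute
\[
p(1+\sqrt{2}) = (7 + 5\sqrt{2}) - 17(3 + 2\sqrt{2}) + 40(1+\sqrt{2}) - 14 = -18 + 11\sqrt{2}.
\]
Squaring the inequality $11\sqrt{2} < 18$ gives the numerical check $242 < 324$, which holds, so $p(\gaOne) < 0$. By monotonicity, $p(\gamma) < p(\gaOne) < 0$ for every $\gamma \in (\gaOne, 3]$, completing the proof.

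There is no real obstacle here; the only mild subtlety is the good luck that $p'$ factors over the rationals, which makes the location of the critical points of $p$ transparent and immediately reveals that the interval $(\gaOne, 3]$ sits strictly between the two critical points, where $p$ is monotone. Alternative strategies such as computing the discriminant of $p$ and using Sturm's chain to count real roots (as in Proposition~\ref{poly: lemma 5.2-2}) would also work but would be unnecessarily cumbersome compared to the monotonicity argument above.
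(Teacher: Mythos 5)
Your proof is correct, and it takes a genuinely different route from the paper's. The paper relies on root-counting via the intermediate value theorem: evaluating $p$ at $-\infty$, $1$, $\gaOne$, $3$, $+\infty$ yields the sign pattern $-,+,-,-,+$, which pins all three real roots of the cubic into the intervals $(-\infty,1)$, $(1,\gaOne)$, and $(3,\infty)$, forcing $p<0$ on $(\gaOne,3]$. You instead observe that $p'(\gamma)=3\gamma^2-34\gamma+40$ factors over $\mathbb{Q}$ with roots $\tfrac{4}{3}$ and $10$, so $p$ is strictly decreasing on $[\gaOne,3]\subset(\tfrac{4}{3},10)$, and the single evaluation $p(\gaOne)=11\sqrt{2}-18<0$ finishes the argument. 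Both approaches evaluate $p$ at $\gaOne$ (and get the same value $11\sqrt{2}-18$); the difference is that the paper needs four additional sign checks while you need one derivative factorization. Your argument is a bit cleaner and more conceptually transparent here, though it does hinge on the lucky rational factorization of $p'$, which the paper's IVT/discriminant style sidesteps and is therefore more uniformly applicable across the other polynomial propositions in the appendix.
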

\begin{proof}
	Since $p(-\infty)<0$, $p(\infty)>0$, $p(1)=10>0$, $p(\gaOne)=11 \sqrt{2}-18<0$, $p(3) = -20<0$. Thus, $p(x)<0$ for any $\gamma\in(\gaOne,3]$.
\end{proof}

\begin{proposition}\label{poly: Lemma 6.3-2}
	For any $\gamma\in(\gaOne,3]$, the quartic polynomial
	\begin{align*}
		p(\gamma)=-\gamma^4+12\gamma^3-36\gamma^2+32\gamma+1>0.
	\end{align*}
\end{proposition}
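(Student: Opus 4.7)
The plan is to show $p$ is monotone increasing on $[\gaOne,3]$ and that $p(\gaOne)>0$. The key observation is that the derivative $p'(\gamma)=-4\gamma^3+36\gamma^2-72\gamma+32=-4(\gamma^3-9\gamma^2+18\gamma-8)$ admits an explicit factorization. A direct check shows $p'(2)=0$, so $(\gamma-2)$ divides $\gamma^3-9\gamma^2+18\gamma-8$, and polynomial division gives
\[
p'(\gamma)\;=\;-4\,(\gamma-2)\!\left(\gamma-\tfrac{7-\sqrt{33}}{2}\right)\!\left(\gamma-\tfrac{7+\sqrt{33}}{2}\right).
\]

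First I would verify the numerical locations of the three roots of $p'$. Since $\tfrac{7-\sqrt{33}}{2}\approx 0.628$, $2<\gaOne=1+\sqrt{2}\approx 2.414$, and $\tfrac{7+\sqrt{33}}{2}\approx 6.372>3$, for every $\gamma\in[\gaOne,3]$ the first two factors are strictly positive and the third is strictly negative. Hence $p'(\gamma)>0$ on $[\gaOne,3]$, so $p$ is strictly increasing there, and it suffices to check $p(\gaOne)>0$.

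Next I would evaluate $p$ at the left endpoint using the identities $(1+\sqrt{2})^2=3+2\sqrt{2}$, $(1+\sqrt{2})^3=7+5\sqrt{2}$, and $(1+\sqrt{2})^4=17+12\sqrt{2}$. Substituting into $p$,
\[
p(1+\sqrt{2}) \;=\; -(17+12\sqrt{2}) + 12(7+5\sqrt{2}) - 36(3+2\sqrt{2}) + 32(1+\sqrt{2}) + 1 \;=\; 8(\sqrt{2}-1)\;>\;0.
\]
Combining this with the monotonicity yields $p(\gamma)\geq p(\gaOne)=8(\sqrt{2}-1)>0$ for all $\gamma\in[\gaOne,3]$, which establishes the claim.

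I do not anticipate a serious obstacle: once the factorization of $p'$ is in hand, the sign analysis is routine, and the only arithmetic is the evaluation at $\gaOne$. The only place requiring care is confirming that $\gaOne$ lies strictly between the two smaller roots of $p'$ and that $3$ lies strictly below the largest root, so that $p'$ has constant positive sign on the whole interval; this is immediate from the numerical bounds above.
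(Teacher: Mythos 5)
Your proof is correct, and it takes a genuinely different route from the paper. The paper computes the discriminant of the quartic $p$, invokes Lemma E.2 to conclude $p$ has exactly two real roots, and then uses the sign pattern $p(-\infty)<0$, $p(1)=8>0$, $p(3)=16>0$, $p(+\infty)<0$ to place both real roots outside $[1,3]$, hence $p>0$ there. You instead factor the derivative explicitly,
\[
p'(\gamma)=-4(\gamma-2)\Bigl(\gamma-\tfrac{7-\sqrt{33}}{2}\Bigr)\Bigl(\gamma-\tfrac{7+\sqrt{33}}{2}\Bigr),
\]
note that all three roots lie outside $(\gaOne,3]$, deduce $p'>0$ on the interval, and evaluate $p(\gaOne)=8(\sqrt{2}-1)>0$ at the left endpoint (I checked both the factorization and the endpoint arithmetic; they are right). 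Your argument is more elementary and self-contained -- it avoids the discriminant criterion entirely and only uses polynomial division plus one exact evaluation -- whereas the paper's discriminant-plus-sign-change scheme is chosen for uniformity, since the same template handles the other polynomial lemmas in Appendix E where the derivative does not factor so cleanly. For this particular polynomial your approach is arguably cleaner, and it also gives the slightly stronger quantitative statement that $p$ is strictly increasing on the interval with explicit minimum value $8(\sqrt{2}-1)$.
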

\begin{proof}
	Since  the discriminant of $p(\gamma)$ is
	\begin{align*}
		\Delta = -495616<0,
	\end{align*}
	by Lemma \ref{quartic poly}, $p(x)$ has two real roots. Thus, $p(-\infty)<0$, $p(1) =8 >0$, $p(3)=16$, and $p(\infty)<0$  implies that $p(x)>0$ for any $\gamma\in(\gaOne,3]$.
\end{proof}

\begin{proposition}\label{poly: App C-1}
	For any $\gamma\in(1,2]$, the cubic polynomial
	\begin{align*}
		p(\gamma) = 2\gamma^3-12\gamma^2+23\gamma-11>0.
	\end{align*}
\end{proposition}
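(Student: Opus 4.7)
The plan is to apply the cubic discriminant criterion of Lemma \ref{cubic poly} together with sign checks at a few explicit values. Concretely, I will first compute the discriminant $\Delta$ of $p(\gamma)=2\gamma^{3}-12\gamma^{2}+23\gamma-11$ using formula \eqref{discriminant of cubic poly} with $a=2,\;b=-12,\;c=23,\;d=-11$; a direct substitution yields $\Delta=-964<0$, so by Lemma \ref{cubic poly}, $p$ has exactly one real root and a pair of complex conjugate roots.

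Once uniqueness of the real root is established, the proof reduces to locating this root below $1$. Since $p(-\infty)=-\infty$ and $p(+\infty)=+\infty$, the single real root is the unique sign change of $p$ on $\mathbb{R}$, and $p$ is negative to the left of it and positive to the right. I will evaluate $p(0)=-11<0$ and $p(1)=2-12+23-11=2>0$, which by the intermediate value theorem places the root in the open interval $(0,1)$. Consequently $p(\gamma)>0$ for all $\gamma>1$, and in particular for all $\gamma\in(1,2]$, which is the desired inequality.

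I do not anticipate any genuine obstacle: the argument is a completely mechanical application of the tools already assembled in Appendix \ref{poly} (specifically Lemma \ref{cubic poly}), and all arithmetic is elementary. The only mild care needed is bookkeeping in the discriminant computation, since the formula \eqref{discriminant of cubic poly} has mixed signs; as a backup, if one prefers to avoid the discriminant calculation, one can instead build the Sturm chain of $p$ and apply Lemma \ref{Sturm's theorem} on the interval $(1,2]$ to conclude $p$ has no roots there, combined with $p(1)=2>0$. Either route terminates in a handful of lines.
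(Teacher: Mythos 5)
Your proof is correct and matches the paper's approach: both compute $\Delta=-964<0$, invoke Lemma \ref{cubic poly} to conclude there is a unique real root, and use $p(1)=2>0$ together with the sign at $-\infty$ (you additionally check $p(0)=-11<0$, which locates the root more precisely but is not strictly needed) to conclude positivity on $(1,2]$. The alternative via Sturm's theorem you mention is also mentioned as a viable route in the appendix (used for Proposition \ref{poly: lemma 5.2-2}), but here the discriminant argument is what the paper uses.
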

\begin{proof}
	Since  the discriminant of $p(\gamma)$ is
	\begin{align*}
		\Delta = -964<0,
	\end{align*}
	by Lemma \ref{cubic poly}, $p(x)$ has only one real root. Thus, as $p(-\infty)<0$ and $p(1)=2$, we must have that $p(x)>0$ for any $\gamma\in(1,2]$.
\end{proof}

\begin{proposition}\label{poly: App C-2}
	For any $\gamma\in(1,2]$, the quartic polynomial
	\begin{align*}
		p(\gamma) = -\gamma^4+12\gamma^3-14\gamma^2+24\gamma-9>0.
	\end{align*}
\end{proposition}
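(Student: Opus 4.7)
I would prove this quartic positivity via an iterated monotonicity argument rather than the discriminant route used for Proposition \ref{poly: Lemma 6.3-2}, because the derivative here has a particularly transparent sign. The plan is to show that $p$ is strictly increasing on $(1,2]$ and then simply evaluate at the left endpoint. A direct calculation gives
\[
p(1) = -1 + 12 - 14 + 24 - 9 = 12 > 0,
\]
so it suffices to establish $p'(\gamma) > 0$ for all $\gamma \in (1,2]$, whence $p(\gamma) > p(1) = 12 > 0$.

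To analyze the derivative, I would compute
\[
p'(\gamma) = -4\gamma^3 + 36\gamma^2 - 28\gamma + 24 = -4\, q(\gamma), \qquad q(\gamma) := \gamma^3 - 9\gamma^2 + 7\gamma - 6,
\]
so positivity of $p'$ on $(1,2]$ is equivalent to $q < 0$ on $(1,2]$. I would then study $q$ by its own derivative $q'(\gamma) = 3\gamma^2 - 18\gamma + 7$, whose roots are $\gamma = 3 \pm \sqrt{20/3}$, located near $0.42$ and $5.58$. In particular $q' < 0$ throughout the interval $(1,2]$, so $q$ is strictly decreasing there. Combined with $q(1) = 1 - 9 + 7 - 6 = -7 < 0$, this yields $q(\gamma) \leq q(1) < 0$ on $(1,2]$, and therefore $p'(\gamma) > 0$ on $(1,2]$, as required.

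There is no real obstacle in this proof; all steps reduce to evaluating polynomials at $\gamma = 1$ and locating the roots of a quadratic. The one point to be slightly careful about is verifying that the interval $(1,2]$ lies strictly between the two critical points of $q$, but this is immediate from $3 - \sqrt{20/3} < 1$ and $3 + \sqrt{20/3} > 2$, both of which follow from $1 < 20/3 < 4$. Alternatively, one could apply the discriminant test from Lemma \ref{quartic poly} and perform sign checks at several points in the style of Proposition \ref{poly: Lemma 6.3-2}, but the monotonicity route is more economical and avoids the cumbersome discriminant computation for this particular quartic.
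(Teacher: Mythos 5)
Your monotonicity argument is correct and takes a genuinely different route from the paper, which instead computes the discriminant $\Delta = -30235392 < 0$ of the quartic, invokes Lemma \ref{quartic poly} to deduce exactly two real roots, and then checks the signs $p(\pm\infty)<0$, $p(1)=12>0$, $p(2)=63>0$ to conclude $(1,2]$ lies between the two roots. Your approach is arguably more economical: differentiating twice, you reduce to the sign of the quadratic $q'(\gamma)=3\gamma^2-18\gamma+7$ on the interval, then to the single evaluation $q(1)=-7<0$, after which $p' = -4q > 0$ and $p(\gamma) > p(1) = 12$. No discriminant bookkeeping is needed, and the algebra is entirely transparent. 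The paper's method has the virtue of being uniform across the several quartic positivity claims in the appendix, whereas yours exploits the fortunate factor of $-4$ in $p'$; either is a legitimate choice here.

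One small slip to fix in the final sentence: you state that $3-\sqrt{20/3}<1$ and $3+\sqrt{20/3}>2$ ``both follow from $1<20/3<4$.'' In fact $20/3\approx 6.67$, so $20/3<4$ is false. What you actually need is $\sqrt{20/3}>2$, i.e.\ $20/3>4$, which gives $3-\sqrt{20/3}<1$; the inequality $3+\sqrt{20/3}>2$ is then trivial. Alternatively, you could skip the root computation entirely and just note $q'(1)=-8<0$ and $q'(2)=-17<0$: since $q'$ is an upward-opening parabola, negativity at both endpoints forces $q'<0$ throughout $[1,2]$.
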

\begin{proof}
		Since  the discriminant of $p(\gamma)$ is
	\begin{align*}
		\Delta = -30235392<0,
	\end{align*}
	by Lemma \ref{quartic poly}, $p(x)$ has two real roots. Thus, as $p(-\infty)<0$, $p(\infty)<0$, $p(1) = 12>0$ and $p(2)=63>0$, we have that  $p(x)>0$ for any $\gamma\in(1,2]$.
\end{proof}

\begin{proposition}\label{poly: App C-3}
	For any $\gamma\in(1,2]$, the quartic polynomial
	\begin{align*}
		p(\gamma) = -36\gamma^4+114\gamma^3-4\gamma^2-83\gamma+15>0.
	\end{align*}
\end{proposition}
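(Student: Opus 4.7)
The plan is to follow the same template used in Propositions \ref{poly: Lemma 6.3-2} and \ref{poly: App C-2}, based on Lemma \ref{quartic poly}. First, I would apply the discriminant formula \eqref{discriminant of quartic poly} with coefficients $a=-36$, $b=114$, $c=-4$, $d=-83$, $e=15$ and verify by direct (though tedious) arithmetic that the discriminant $\Delta$ is strictly negative. By Lemma \ref{quartic poly}, this implies that $p(\gamma)$ has exactly two real roots (and two complex conjugate roots).

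Second, I would evaluate $p$ at a few judiciously chosen points. Since the leading coefficient $-36$ is negative, we have $p(-\infty)=-\infty$ and $p(+\infty)=-\infty$. Computing the endpoint values gives
\begin{align*}
p(1) &= -36+114-4-83+15 = 6 > 0, \\
p(2) &= -576+912-16-166+15 = 169 > 0.
\end{align*}
Since $p$ has exactly two real roots but changes sign on each of $(-\infty,1)$ and $(2,+\infty)$, the intermediate value theorem forces one real root to lie in $(-\infty,1)$ and the other in $(2,+\infty)$. Consequently, $p(\gamma)$ does not vanish on $[1,2]$, and since $p(1)>0$, continuity yields $p(\gamma)>0$ for all $\gamma\in(1,2]$.

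The only nontrivial step is the computation of the discriminant $\Delta$; this is purely mechanical but error-prone given the seventeen terms in the formula. As a contingency, if the discriminant approach proves unwieldy, I would fall back on the Sturm-chain approach used in Proposition \ref{poly: lemma 5.2-2}: compute the Sturm sequence $p_0,\ldots,p_4$ of $p$, evaluate $\sigma(1)$ and $\sigma(2)$, and verify $\sigma(1)=\sigma(2)$, which by Lemma \ref{Sturm's theorem} shows that $p$ has no real roots in $[1,2]$; combined with $p(1)>0$, this again yields the claim. Either route is routine; no conceptual obstacle is anticipated.
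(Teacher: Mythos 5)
Your primary route fails at the discriminant step: the discriminant of $p$ is in fact \emph{positive}, not negative, because $p$ has four distinct real roots. Indeed, compute $p(0)=15>0$, $p(\tfrac12) = -\tfrac{36}{16}+\tfrac{114}{8}-1-\tfrac{83}{2}+15 = -\tfrac{31}{2}<0$, and $p(1)=6>0$; together with your observations $p(\pm\infty)=-\infty$ and $p(2)=169>0$, this produces sign changes on each of $(-\infty,0)$, $(0,\tfrac12)$, $(\tfrac12,1)$, and $(2,\infty)$, so there are four real roots and thus $\Delta>0$ by Lemma~\ref{quartic poly}. A positive discriminant only tells you the roots are all real or all complex; it does not say anything about whether they avoid $[1,2]$, so even after a correct discriminant computation you would need a further argument to handle the two roots sitting inside $(0,1)$. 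The paper sidesteps this entirely by differentiating: it shows that the cubic $p'(\gamma)=-144\gamma^3+342\gamma^2-8\gamma-83$ has all three of its real roots in $(-\infty,0)$, $(0,1)$, and $(2,\infty)$, hence $p'>0$ throughout $(1,2]$, and monotonicity together with $p(1)=6>0$ finishes. Your Sturm-chain fallback would succeed if actually carried out, but as written the proposal rests on an incorrect sign for the discriminant and therefore has a genuine gap.
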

\begin{proof}
	Note that  the first derivative of $p(\gamma)$ is
	\begin{align*}
		\frac{d p(\gamma)}{d \gamma} = -144\gamma^3+342\gamma^2-8\gamma-83.
	\end{align*}
	Since $\frac{d p}{d \gamma}(-\infty)>0$, $\frac{d p}{d \gamma}(\infty)<0$, $\frac{d p}{d \gamma}(0)=-83<0$, $\frac{d p}{d \gamma}(1) = 107>0$, and $\frac{d p}{d \gamma}(2) = 117>0$, $\frac{d p(\gamma)}{d \gamma}>0$ for any $\gamma\in(1,2]$. Thus, $p(1)=6$ implies that $p(x)>0$ for any $\gamma\in(1,2]$.
\end{proof}

\end{appendices}


\begin{thebibliography}{99}

\bibitem{Axford81}
Axford, R. A. and Holm, D. D., Converging finite-strength shocks, {\it Physica D} {\bf 2} (1981), 194--202 

\bibitem{Abbrescia22}
{Abbrescia, L., Speck, J.},
The emergence of the singular boundary from the crease in 3D compressible Euler flow, arXiv preprint, arXiv:2207.07107 (2022)

\bibitem{Biasi21}
{Biasi, A.},  
{Self-similar solutions to the compressible Euler equations and their instabilities}, 
\emph{Commun. Nonlinear Sci. Numer. Simul.,} {\bf 103} (2021), Paper No. 106014

\bibitem{Bilbao96}
Bilbao, L. E. and Gratton, J., Spherical and cylindrical convergent shocks, \emph{Il Nuovo Cimento D} {\bf 18} (1996), 1041--1060 

\bibitem{BCG22} {Buckmaster, T., Cao-Labora,  G.,  Gomez-Serrano, J.},  
Smooth imploding solutions for 3D compressible fluids, arXiv preprint,  arXiv:2208.09445 (2022)

\bibitem{Buckmaster22b}
{Buckmaster, T., Drivas, T., Shkoller, S., Vicol, V.}, Simultaneous development of shocks and cusps for 2D Euler with azimuthal symmetry from smooth data,
{\em Ann. PDE} {\bf 8} (2022), Paper No. 26

\bibitem{Buckmaster23}
{Buckmaster, T., Shkoller, S., Vicol, V.}, Shock formation and vorticity creation for 3D Euler,
{\em Comm. Pure Appl. Math.} {\bf 76} (2023), 1965--2072

\bibitem{CGSS23} {Cao-Labora,  G.,  Gomez-Serrano, J., Shi, J., Staffilani, G.},  
Non-radial implosion for compressible Euler and Navier-Stokes in $\mathbb T^3$ and $\mathbb R^3$, arXiv preprint, arXiv: 2310.05325 (2023)

\bibitem{Chen97}
{Chen, G.-Q.}, Remarks on spherically symmetric solutions of the compressible Euler equations, {\em Proc. Roy. Soc. Edinburgh Sect. A} {\bf 127} (1997), 243--259

\bibitem{Chen15} 
{Chen, G.-Q.,  Perepelitsa, M.}, Vanishing viscosity solutions of the compressible Euler equations with spherical symmetry and large initial data, {\em Comm. Math. Phys.} {\bf 338} (2015), 771--800 

\bibitem{Chen18}
{Chen, G.-Q., Schrecker, M. R. I.}, Vanishing viscosity approach to the compressible Euler equations for transonic nozzle and spherically symmetric flows, {\em Arch. Ration. Mech. Anal.} {\bf 229} (2018), 1239--1279 

\bibitem{Chen22}
{Chen, G.-Q., Wang, Y.}, Global Solutions of the Compressible Euler Equations with Large Initial Data of Spherical Symmetry and Positive Far-Field Density, {\em Arch. Ration. Mech. Anal.} {\bf 243} (2022), 1699--1771

\bibitem{Chisnell98}
{Chisnell, R. F.},  An analytic description of converging shock waves, 
{\em Journal of Fluid Mechanics}, {\bf 354} (1998), 357--375

\bibitem{Christodoulou07}
{Christodoulou, D.}, {\em The formation of shocks in 3-dimensional fluids},
EMS Monogr. Math.
European Mathematical Society (EMS), Zürich, 2007

\bibitem{Christodoulou16}
{Christodoulou, D., Lisibach, A.}, Shock development in spherical symmetry, {\em Ann. PDE} {\bf 2} (2016), Paper No. 3

\bibitem{ChrisMiao14}
{Christodoulou, D., Miao, S.}, \emph{Compressible Flow and Euler's Equations}, Surveys in Modern Mathematics Volume 9, International Press, 2014

\bibitem{Courant48}
Courant, R., Friedrichs, K.O., \textit{Supersonic Flow and Shock Waves}, Springer, New York, 1948

\bibitem{Dafermos16}
{Dafermos, C.}, {\em Hyperbolic conservation laws in continuum physics},
Grundlehren Math. Wiss., {\bf 325} 
Springer-Verlag, Berlin, 2016

\bibitem{Giron23}
Giron, I., Balberg, S., Krief, M., Solutions of the converging and diverging shock problem in a medium with varying density, \emph{Physics of Fluids} {\bf 35} (2023), 066112 

\bibitem{Guderley42}
Guderley, G., Starke kugelige und zylindrische verdichtungsst\"osse in der n\"ahe des kugelmittelpunktes bzw. der
zylinderachse, \emph{Luftfahrtforschung} \textbf{19} (1942), 302--311

\bibitem{GHJ21}
{Guo, Y., Had\v zi\'c, M., Jang, J.}, Larson-Penston self-similar gravitational collapse,
\emph{Comm. Math. Phys.} \textbf{386} (2021), 1551--1601 

\bibitem{GHJ23}
{Guo, Y., Had\v zi\'c, M., Jang, J.}, Naked singularities in the Einstein-Euler system, \emph{Ann. PDE} {\bf 9} (2023), Paper No. 4

\bibitem{GHJS22}
Guo, Y., Had\v zi\'c, M., Jang, J., Schrecker, M.,   
Gravitational Collapse for Polytropic Gaseous Stars: Self-similar Solutions, \emph{Arch. Ration. Mech. Anal.} {\bf 246} (2022), 957--1066   


\bibitem{Irving04}
{Irving, R.S.}, 
\emph{Integers, polynomials, and rings: A course in Algebra}, Springer, New York 2004

 \bibitem{Jenssen18}
{Jenssen, H.K.,  Tsikkou, C.},  
 On similarity flows for the compressible Euler system, 
\emph{J. Math. Phys.} \textbf{59} (2018), 121507 

\bibitem{Jenssen23}
{Jenssen, H.K.,  Tsikkou, C.},  
Radially symmetric non-isentropic Euler flows: Continuous blowup with positive pressure, \emph{Physics of Fluids} {\bf 35}  (2023), 016117



\bibitem{Landau87}
{Landau, L.D., Lifshitz, E.M.}:
\textit{Fluid Mechanics}, Course of Theoretical Physics, Vol. 6, 2nd edition, Elsevier, London, 1987



\bibitem{Lazarus81}
{Lazarus, R. B.}, 
Self-similar solutions for converging shocks and collapsing cavities, \emph{SIAM J. Numer. Anal.} {\bf 18}
(1981), 316--371 



\bibitem{Luk18}
{Luk, J., Speck, J.},
Shock formation in solutions to the 2D compressible Euler equations in the presence of non-zero vorticity,
{\em Invent. Math.} {\bf 214} (2018), 1--169

\bibitem{Makino92}
{Makino, T., Mizohata, K., Ukai, S.}, Global weak solutions of the compressible Euler equations with
spherical symmetry I, {\em Jpn. J. Ind. Appl. Math.} {\bf 9} (1992), 431--449

\bibitem{Makino94}
{Makino, T., Mizohata, K., Ukai, S.}, Global weak solutions of the compressible Euler equations with
spherical symmetry II, {\em Jpn. J. Ind. Appl. Math.} {\bf 11} (1994), 417--426

\bibitem{Merle22a} {Merle, F., Rapha\"el, P.,  Rodnianski, I., Szeftel, J.}, On the implosion of a compressible fluid i: smooth self-similar inviscid profile, \emph{Ann. of Math. (2)}, {\bf 196} (2022), 567--778 

\bibitem{Merle22b}
{Merle, F., Rapha\"el, P.,  Rodnianski, I., Szeftel, J.},
On the implosion of a compressible fluid
ii: singularity formation, \emph{Ann. of Math. (2)} {\bf 196} (2022), 779--889

\bibitem{Morawetz51}
{Morawetz, C. L.}, Contracting spherical shocks treated by a perturbation method, PhD Thesis, New York University (1951)

\bibitem{Ponchaut06}
Ponchaut, N. F., Hornung, H. G., Pullin, D. I., and Mouton, C. A., On imploding cylindrical and spherical shock waves in
a perfect gas, \emph{J. Fluid Mech.} {\bf 560} (2006), 103–122

\bibitem{Ramsey12}
Ramsey, S. D., Kamm, J. R., and Bolstad, J. H., The Guderley problem revisited, \emph{Int. J. Comput. Fluid Dyn.} {\bf 26} (2012), 79–99

\bibitem{Schrecker20}
{Schrecker, Matthew R. I.}, Spherically symmetric solutions of the multidimensional, compressible, isentropic Euler equations,
{\em Trans. Amer. Math. Soc.} {\bf 373} (2020), 727--746

\bibitem{Sed}
{Sedov, L. I.}:  
Propagation of strong shock waves, \emph{Journal of Applied Mathematics and Mechanics} {\bf 10} (1946), 241--250 

\bibitem{Sedov}
{Sedov, L. I. }: 
\emph{Similarity and dimensional methods in mechanics}, "Mir", Moscow, 1982. Translated from the Russian by V. I. Kisin. 

\bibitem{SV23}
Shkoller, S., Vicol, V., 
The geometry of maximal development for the Euler equations, arXiv preprint,  arXiv: 2310.08564 (2023)

\bibitem{Sideris85}
{Sideris, T. C.}, Formation of singularities in three-dimensional compressible fluids, {\em Comm. Math. Phys.} {\bf 101} (1985), 475--485

\bibitem{Stanyukovich}
Stanyukovich, K. P. \emph{Unsteady motion of continuous media}, Pergamon Press, 1960

\bibitem{Sturm09}
{Sturm, Par C.}, 
\emph{M\'{e}moire sur la R\'{e}solution des \'{e}quations num\'{e}riques}, Collected Works of Charles Francois Sturm, Birkh\"{a}user Basel,  (2009), 345--390

\bibitem{Teschl12}
{Teschl, G.},  
\emph{Ordinary differential equations and dynamical systems}, American Mathematical Soc. {\bf 140} (2012)

\bibitem{Welsh67}
Welsh, R. L., Imploding shocks and detonations, \emph{J. Fluid Mech.} {\bf 29} (1967), 61--79

\bibitem{Yin04}
{Yin, H.},
Formation and construction of a shock wave for 3-D compressible Euler equations with the spherical initial data, {\em Nagoya Math. J.} {\bf 175} (2004), 125--164

\bibitem{ZRY67}
Zel’dovich, Ya. B; Razier, Yu. P., 
\emph{Physics of shock waves and high temperature hydrodynamic
phenomena}, Volume II, Academic Press, New York and London, 1967 

\end{thebibliography}
\end{document}